\crefname{equation}{}{}
\DeclareMathOperator*{\argmax}{arg\,max}
\DeclareSymbolFont{symbolsC}{U}{pxsyc}{m}{n}
\DeclareMathSymbol{\medcircle}{\mathbin}{symbolsC}{7}
\crefname{algocf}{Algorithm}{Algorithms}
\crefname{equation}{}{} 
\colorlet{refkey}{orange!20}
\colorlet{labelkey}{blue!30}
\crefname{algocf}{Algorithm}{Algorithms}
\numberwithin{equation}{section}
\newtheorem{theorem}{Theorem}[section]
\newtheorem{proposition}[theorem]{Proposition}
\newtheorem{lemma}[theorem]{Lemma}
\newtheorem{claim}[theorem]{Claim}
\crefname{claim}{Claim}{Claims}
\newtheorem*{question*}{Question}
\newtheorem*{assumption*}{Assumption}
\theoremstyle{definition}
\newtheorem{definition}[theorem]{Definition}
\newtheorem{assumption}[theorem]{Assumption}
\newtheorem*{definition*}{Definition}
\theoremstyle{remark}
\newtheorem*{remark}{Remark}
\newcommand{\snorm}[1]{\lVert#1\rVert}
\newcommand{\sang}[1]{\langle #1 \rangle}
\newcommand{\imod}[1]{\mathrm{(mod}~#1\mathrm{)}}
\newcommand{\mb}{\mathbb}
\newcommand{\mbf}{\mathbf}
\newcommand{\mbm}{\mathbbm}
\newcommand{\mc}{\mathcal}
\newcommand{\mf}{\mathfrak}
\newcommand{\mr}{\mathrm}
\newcommand{\ol}{\overline}
\newcommand{\on}{\operatorname}
\newcommand{\wt}{\widetilde}
\newcommand{\eps}{\varepsilon}
\let\originalleft\left
\let\originalright\right
\renewcommand{\left}{\mathopen{}\mathclose\bgroup\originalleft}
\renewcommand{\right}{\aftergroup\egroup\originalright}
\title{On Perfectly Friendly Bisections of Random Graphs}
\author[Minzer]{Dor Minzer}
\author[Sah]{Ashwin Sah}
\author[Sawhney]{Mehtaab Sawhney}
\address{Department of Mathematics, Massachusetts Institute of Technology, Cambridge, MA 02139, USA}
\email{minzer.dor@gmail.com, \{asah,msawhney\}@mit.edu}
\begin{document}

\thanks{Minzer was supported by a Sloan Research Fellowship and NSF CCF award 2227876. Sah was supported by the PD Soros Fellowship. Sawhney was supported by the Churchill Foundation. Sah and Sawhney were supported by NSF Graduate Research Fellowship Program DGE-2141064.}

\begin{abstract}
We prove that there exists a constant $\gamma_{\mr{crit}}\approx .17566$ such that if $G\sim \mb{G}(n,1/2)$ then for any $\eps > 0$ with high probability $G$ has a equipartition such that each vertex has $(\gamma_{\mr{crit}}-\eps)\sqrt{n}$ more neighbors in its own part than in the other part and with high probability no such partition exists for a separation of $(\gamma_{\mr{crit}}+\eps)\sqrt{n}$. The proof involves a number of tools ranging from isoperimetric results on vertex-transitive sets of graphs coming from Boolean functions, switchings, degree enumeration formulas, and the second moment method. Our results substantially strengthen recent work of Ferber, Kwan, Narayanan, and the last two authors on a conjecture of F\"uredi from 1988 and in particular prove the existence of fully-friendly bisections in $\mb{G}(n,1/2)$.
\end{abstract}

\maketitle

\section{Introduction}\label{sec:introduction}

In this paper we consider the circumstances under which the random graph $\mb{G}(n,1/2)$ can be partitioned into two roughly equal size sets $A\cup B$ such that every vertex in $A$ has more substantially more neighbors in $A$ and analogously for $B$. We formalize this with the notion of an $H$-friendly equipartition.

\begin{definition}\label{def:friend-part}
Given a graph $G$ with $n$ vertices an $H$-friendly equipartition is a partition $A_1\cup A_2 = V(G)$ such that $||A_1|-|A_2||\le 1$ and $\deg(v,A_i)\ge H + \deg(v,A_{i+1})$ for all choices of $i\in\{1,2\}$ and $v\in A_i$. (Here we take indices $\imod{2}$, which we do without further notice.)
\end{definition}

We now define a constant $\gamma_{\mr{crit}}$ which will be used throughout the paper. 
\begin{definition}\label{def:const}
Define $\gamma_{\mr{crit}}>0$ to be the constant such that 
\[\log 2+\sup_{\alpha\in\mb{R}}(-\alpha^2 + \log(\mb{P}_{Z\sim{\mc{N}(0,1)}}[Z\ge(\gamma + \alpha)\sqrt{2}])) = 0.\]
\end{definition}
\begin{remark}
Noting that $\gamma\mapsto\log 2+\sup_{\alpha\in\mb{R}}(-\alpha^2 + \log(\mb{P}_{Z\sim{\mc{N}(0,1)}}[Z\ge(\gamma + \alpha)\sqrt{2}]))$ is a strictly decreasing function therefore $\gamma_{\mr{crit}}$ is unique. Furthermore one can prove via numerical computation that $.2484195\le\gamma_{\mr{crit}}\le .2484196$ (\cref{clm:numeric-bounds}).
\end{remark}

Our main theorem proves, conditional on a numerical hypothesis \cref{asm:comp}, that an $H$-friendly partition in $\mb{G}(n,1/2)$ exists with $H$ being order $\Omega(\sqrt{n})$ and pins down the precise leading order behavior for the best-possible $H$.

\begin{theorem}\label{thm:main}
Fix $\eps > 0$ and let $G\sim\mb{G}(n,1/2)$. Given \cref{asm:comp}, with high probability, $G$ has a $(\gamma_{\mr{crit}}/\sqrt{2}-\eps)\sqrt{n}$-friendly equipartition. Furthermore with high probability $G$ does not have a $(\gamma_{\mr{crit}}/\sqrt{2}+\eps)\sqrt{n}$-friendly equipartition.
\end{theorem}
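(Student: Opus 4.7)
The plan is a first- and second-moment approach, using refined large-deviation estimates to extract the sharp constant $\gamma_{\mr{crit}}$.

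\textbf{Upper bound.} Let $N$ count $H$-friendly equipartitions with $H = (\gamma_{\mr{crit}}/\sqrt{2}+\eps)\sqrt{n}$. I would encode partitions as balanced $\sigma\in\{\pm 1\}^n$, and edges by centered variables $Z_{vw} := 2\mbm{1}[vw\in E]-1$. Writing $W_v := \sigma_v\sum_{w\ne v}\sigma_w Z_{vw}$, the friendly condition is $W_v \ge 2H+1$. For a fixed $\sigma$, the variables $(W_v)_v$ are correlated only through the global statistic $T := \sum_{v<w}\sigma_v\sigma_w Z_{vw}$, itself approximately $\mc{N}(0,\binom{n}{2})$. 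Conditional on $T = t$, the $W_v$ are asymptotically independent Gaussians with mean $2t/n$ and variance $\sim n$. Integrating over the large-deviation scale $T\sim n^{3/2}$ and applying Laplace's method, $\tfrac{1}{n}\log\mb{E}[N]$ matches the expression defining $\gamma_{\mr{crit}}$ with $\gamma_{\mr{crit}}$ replaced by $\gamma_{\mr{crit}}+\Theta(\eps)$. By the stated strict monotonicity, this limit is negative, so $\mb{E}[N]\to 0$ and no such equipartition exists with high probability.

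\textbf{Lower bound.} For $H=(\gamma_{\mr{crit}}/\sqrt{2}-\eps)\sqrt{n}$, I would apply the second moment method to $N$. Decompose
\[
\mb{E}[N^2] = \sum_\rho \#\{(\sigma,\sigma') : n^{-1}\textstyle\sum_v\sigma_v\sigma'_v = \rho\}\cdot p_\rho,
\]
where $p_\rho$ is the joint validity probability at overlap $\rho$. Each $p_\rho$ is computed by an analogous LDP analysis, now conditioning on the three natural joint statistics determined by $(\sigma,\sigma')$. This yields an exponential rate $F(\rho)$ at each overlap. The second moment succeeds provided $F(\rho)\le 2F(0)$ for all $\rho\in[-1,1]$ with equality only at $\rho=0$. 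Near $\rho=0$ this follows from Edgeworth-type expansions establishing near-independence of orthogonal partitions; away from $\rho=0$ the inequality reduces to a numerical variational statement, which is precisely the content of \cref{asm:comp}. Granting this, Paley--Zygmund gives $\mb{P}[N>0]\to 1$.

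\textbf{Main obstacle.} The heart of the argument is controlling $F(\rho)$ in the intermediate and large-$|\rho|$ regimes of the second moment. A naive Gaussian calculation does not directly succeed, because the contribution from highly correlated pairs can dominate and the balancedness/degree-sequence corrections sit at the order that matters for the sharp constant. This is why the paper invokes isoperimetric inequalities on vertex-transitive sets and switching arguments: they allow truncation to a well-behaved subfamily of partitions, within which precise degree-enumeration estimates are available. Reducing the remaining variational inequality to a numerical verification (\cref{asm:comp}) is what ultimately pins down the value of $\gamma_{\mr{crit}}$ in the sharp form stated.
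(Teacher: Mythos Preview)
Your upper bound sketch is fine and matches the paper. The gap is in the lower bound: Paley--Zygmund does \emph{not} give $\mb{P}[N>0]\to 1$. Even if your second-moment analysis succeeds perfectly, you only obtain $\mb{E}[N^2]\lesssim(\mb{E}[N])^2$ (the moments match up to a constant, not up to $1+o(1)$), so Paley--Zygmund yields only $\mb{P}[N>0]\ge\Omega_\eps(1)$. The paper is explicit that the ratio $\mb{E}[N^2]/(\mb{E}[N])^2$ is believed to be a constant strictly larger than $1$, so no refinement of the second moment alone will get you to high probability.

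You have also misidentified the role of the isoperimetric inequality. It is \emph{not} a truncation device inside the moment computation; the switchings and degree-enumeration machinery handle that part. Rather, the isoperimetric result (\cref{thm:boolean-function}) is what bridges the gap from constant probability to $1-o(1)$: once you know the $\mf{S}_n$-invariant family of graphs with a $(\gamma_{\mr{crit}}/\sqrt{2}-\eps/2)\sqrt{n}$-friendly bisection has density $\Omega_\eps(1)$, the isoperimetric statement says that almost every graph is within $O(\sqrt{n/\log n})$ in the max-degree metric from this family, and such a perturbation only degrades friendliness by $o(\sqrt{n})$. This boosting step is the essential missing ingredient in your plan.
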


We state \cref{asm:comp} precisely in \cref{sub:computer} and discuss a rather careful floating-point verification of \cref{asm:comp} which is carried out in \cref{app:computer-assist} which gives a procedure to reduce the verification of \cref{asm:comp} to a series of non-exact integration computations. We then verify the most delicate of these numerical computations using interval arithmetic; the necessary numerical procedure and implementation details are discussed in \cref{sub:computer} and in \cref{app:computer-assist}. 

\subsection{Background on friendly partitions}
Questions about friendly bisections and the analogous concept of friendly partitions (where one drops the condition that the partition is equitable) have been studied under a host of names as recognized in recent work of Behrens, Arpino, Kivva, and Zdeborov\'{a} \cite{BAKZ22}; these include satisfactory graph partition \cite{GK00, BTV10}, generalized matching cuts \cite{GS21}, local minimum cut \cite{AMS21}, and assortative partitions \cite{BAKZ22}. By complementing the graph, one is also naturally led to considering the partitioning problem where one aims to have more neighbors across the cut instead of fewer; this variant has similarly gone under a host of different names.

Our primary motivation comes from seeking to understand the behavior dictating an old conjecture due to F\"uredi~\cite{F88} from 1988 which was popularized by its inclusion in Green's list of 100 open problems~\cite[Problem~91]{GreOp}. F\"uredi conjectured the existence of an bisection where only $n-o(n)$ vertices on both sides are required to be $0$-friendly; note in particular that this conjecture is weaker even than the existence of a $0$-friendly bisection. This conjecture of F\"uredi was recently resolved in work of Ferber, Kwan, Narayanan, and the last two authors \cite{FKNSS22}. However as mentioned in \cite{FKNSS22}, based on computer simulations, and even to F\"uredi\footnote{We thank Benny Sudakov for this remark.}, the stronger conjecture that there exists a partition where all nodes are friendly appeared plausible.

This conjecture is also closely related to the existence of gapped states in the Sherrington--Kirkpatrick (SK) model \cite{SK75} predicted in work of Treves and Amit \cite{TA88}. To state the model precisely, considered the a random symmetric matrix $J$ where entries are equally likely to be $\pm 1$. Treves and Amit consider the gap of a signing $\vec{x}\in \{\pm 1\}^{n}$
\[\frac{1}{\sqrt{n}}\min_{i\in [n]}\sum_{j=1}^{n}J_{ij}x_ix_j\]
and conjectured that such a signing $\vec{x}$ exists for gap at least $2\gamma_{\mr{crit}}$. A straightforward translation via assigning edges of $\mb{G}(n,1/2)$ to the state $+1$ and assigning the partition according to the vector $\vec{x}$ in the obvious manner, \cref{thm:main} immediately gives a rigorous proof of the existence of such states (in fact with $|\sum_i x_i|\le 1$). 

Finally, recent work of Behrens, Arpino, Kivva, and Zdeborov\'{a} \cite{BAKZ22} conjectures, based on the replica method in statistical physics, that the analogue of \cref{thm:main} holds when the underlying graph $G$ is a sparse random regular graph. When translating the predictions of these authors into the dense regime corresponding to $\mb{G}(n,1/2)$ (see \cite[Section~3,~Gapped states in spin glasses are computationally hard to find]{BAKZ22}), the results of Behrens, Arpino, Kivva, and Zdeborov\'{a} \cite{BAKZ22} suggest that gapped states such that 
\[\frac{1}{\sqrt{n}}\min_{i\in [n]}\sum_{j=1}^{n}J_{ij}x_ix_j\ge \eps \sqrt{n}\]
are likely computationally difficult to find for any $\eps > 0$. Furthermore the proof of \cref{thm:main} implicitly proves that states which are $(\gamma_{\mr{crit}}/\sqrt{2}-\eps)\sqrt{n}$-friendly exhibit the overlap gap property (OGP) whp; in particular there exists $[\beta_1,\beta_2] \subseteq (0,1/2)$ such that whp any two $H$-friendly partitions do not differ on a fraction of nodes between $\beta_1$ and $\beta_2$. The overlap gap property (and variants) have been used to provide evidence for the computation hardness of numerous constraint satisfaction problems (see e.g.~\cite{GS17, RV17, GS17b, CGPR19, BH22, BS22}); we refer the reader to \cite{Gam21} for a recent survey on the key role of the overlap gap property in optimization over random structures. In particular this evidence in part explains why we divert from the constructive approach adopted in \cite{FKNSS22} (which gives an $O_{\eps}(n^2)$-time algorithm to compute a partition where $(1-\eps)n$ nodes on both sides are friendly) and instead opt for a nonconstructive approach based on the second moment method. 

We note here briefly that in fact \cite{BAKZ22} suggest a frozen 1-RSB structure for the space of $H$-friendly partitions and providing further mathematical proof of these predictions remains of interest. In particular we find the question of whether their exists a polynomial time algorithm to compute a $\gamma\sqrt{n}$-friendly partition in $\mb{G}(n,1/2)$ for $\gamma>0$ a fascinating open question; in particular the work of \cite{BAKZ22} (based on evidence in the sparse graph analog) suggests that the above is likely computationally difficult and giving evidence based on the overlap gap property or more recent variants would be enticing.

\subsection{Discussion of techniques}
The proof of \cref{thm:main} broadly proceeds via the second moment method on the number of $H$-friendly partitions. The first and second moment of the number of $H$-friendly partitions is computed via specially adapting machinery developed in the context of enumeration of graphs with a given degree sequence. However it appears likely that the underlying moments do not match up to a $(1+o(1))$ factor but only a constant factor.

Thus, we can only show that the number of $H$-friendly partitions will be nonzero with some positive constant probability directly, and therefore we require separate tools to boost the constant probability result to high probability. For this we rely on a specially tailored isoperimetric result which is ultimately derived from recent work on Talagrand's conjecture in Boolean function analysis. As such the proof of \cref{thm:main} breaks into a series of essentially separate steps which we now discuss in detail.

\subsubsection{Vertex-adapted isoperimetry in graphs}\label{sub:vertex-iso}
As mentioned, we prove that the first and second moment of the number of bisections which are $(\gamma_{\mr{crit}}/\sqrt{2}-\eps)\sqrt{n}$-friendly match up to a constant factor. Therefore an application of Paley--Zygmund inequality implies the existence of a partition in \cref{thm:main} with at least constant probability. We wish to boost this to $1-o(1)$ probability. Before delving into precise statement proved, let $\mc{F}$ denote the family of graphs on $\{0,1\}^{\binom{[n]}{2}}$ with a $(\gamma_{\mr{crit}}/\sqrt{2}-\eps)\sqrt{n}$-friendly partition and let us consider what naive considerations provide for. Simply viewing a graph as an element in $\{0,1\}^{\binom{[n]}{2}}$, an application of the edge isoperimetric inequality on the hypercube shows that at least $1-\eps$ fraction of graphs are within an edit distance of $O_\eps(n)$ edges from a graph in $\mc{F}$. This immediately gives a partition with at most $O_\eps(\sqrt{n})$ many $(\gamma_{\mr{crit}}/\sqrt{2}-\eps^{1/2})\sqrt{n}$-unfriendly vertices. While this is a promising first step, this proof ignores the underlying $\mc{S}_n$-invariant nature of $\mc{F}$ (since the property of having a $(\gamma_{\mr{crit}}/\sqrt{2}-\eps)\sqrt{n}$-friendly partition is a graph property).

We are thus led to the idea that the optimal vertex isoperimetric statement might be that given a family of graphs on $\{0,1\}^{\binom{[n]}{2}}$ of constant density, if we are allowed to modify at most $O_\eps(1)$ edges at each vertex we may reach nearly all graphs. However, one can consider $\Delta$ so that the probability $\mb{G}(n,1/2)$ has maximum degree at most $\Delta$ is $\eps$. By a result of Bollob\'as \cite{Bol80}, one can see that we must be allowed to modify roughly order $\log(1/\eps)\sqrt{n/\log n}$ edges at some vertex to have a chance at reaching a $1-\eps$ fraction of all graphs.

Our main isoperimetric result (\cref{thm:boolean-function}) proves that the above example is the worst possible up to an absolute constant. This immediately allows one to derive the necessary sharp threshold result, since modifying $o(\sqrt{n})$ edges at every vertex leaves a graph essentially unchanged with respect to having an $(\gamma_{\mr{crit}}/\sqrt{2}-\eps)\sqrt{n}$-friendly partition. Furthermore we note that while this is certainly not the first use of Boolean functions to establish a sharp threshold theorem (see e.g.~\cite{FK96, Fri99, BKS99}), the underlying property considered here is not monotone on the Boolean cube and statement is also novel in proving isoperimetry with respect to a metric (maximum degree distance) that is tailored to the problem at hand.

The proof of this isoperimetric result proceeds via an iterative application of Talagrand's inequality \cite{EG22,EKLM22}. By using (a variant of) Talagrand's inequality applied to symmetric functions one finds that given a family a graphs with density in $[\eps,1-\eps]$, it must have many points of reasonably large (positive) sensitivity, i.e., graphs such that modification of many edges will yield many new graphs. Specifically, we obtain a trichotomy of possible good properties (\cref{lem:trichotomy}). Using this one can consider a sequence of ``expansion moves'' of the underlying family $\mc{F}$ of graphs depending on which case we are in at each step. We can use this to show that it is possible to expand the family to double its density without traversing very far in the relevant metric (maximum degree distance), which is the key \cref{prop:boolean-dyadic}. Proving closeness in the metric requires one to reverse the expansion process and ``trace backwards'' how a graph may have been added to the family under consideration. This isoperimetry argument is carried out in \cref{sec:boolean-functions} and is independent from the remainder of the paper. 

Finally, we note that the main isoperimetry result can also be used to re-derive the threshold of the symmetric binary perceptron (originally proven by Abbe, Li, and Sly \cite{ALS21} and independently Perkins and Xu \cite{PX21}); we comment on this relation further in \cref{sub:boolean-application}.

\subsubsection{Second-moment computation}
For the remainder of the paper the focus is on proving that if $X_\gamma$ is is the number of $\gamma\sqrt{n}$-friendly partitions then $\mb{E}X_\gamma^2\lesssim(\mb{E}X_\gamma)^2$ for $\gamma < \gamma_{\mr{crit}}$ (see \cref{lem:first-moment,lem:second-moment}). We will actually only prove such a statement in the neighborhood of the critical threshold; this is an artifact of our verification of \cref{asm:comp} and extending the range of $\gamma$ in this computation naturally extends the range of validity of \cref{lem:first-moment,lem:second-moment}. Before proceeding further, it is worth noting that the matching of the first and second moment of the number of solutions all the way to the critical threshold is by no means \emph{a priori} obvious; in particular this relies on the the frozen 1-RSB nature of the model which suggest that near criticality the associated solution space consists of subexponential size clusters. 

To get a sense for $\mb{E}X_\gamma$, note that it suffices via linearity of expectation to compute the probability that a given partition is $\gamma\sqrt{n}$-friendly. At a heuristic level, one can replace each edge with a Gaussian random variable of mean $1/2$ and variance $1/4$; such a model has matching mean and variance to $\mb{G}(n,1/2)$. Note however in this Gaussian model that the degree sequence of the vertices in $\mb{G}(n,1/2)$ form a multidimensional Gaussian vector and a straightforward computation implies that the degrees can be expressed in terms of the sum of this vector as well as a collection of independent random variables, one for each vertex. This heuristic, which explicitly appears in work of Riordan and Selby \cite[Section~3]{RS00}, was used to compute the log-probability that a given graph in $\mb{G}(n,1/2)$ has maximum degree bounded by $n/2$. As such this approach is not directly useful since the $\exp(o(n))$-order multiplicative error term is far to large to be directly useful (we remark that however if one only asks for a friendly partition with up to $o(n)$ exceptions such a moment based computation is sufficient due to the edge isoperimetry argument sketched in the \cref{sub:vertex-iso}). However, the work of Riordan and Selby was substantially refined in work of McKay, Wanless, and Wormald \cite{MWW02} to give the asymptotic \emph{probability} that a graph in $\mb{G}(n,1/2)$ has maximum degree bounded by $n/2$. These results in turn relied on powerful tools of McKay and Wormald \cite{MW90} which enumerate dense graphs with sufficiently regular degree sequences.

The key difficulty given the results of \cite{MW90} within the work of McKay, Wanless, and Wormald \cite{MWW02} is summing over all degree sequence with a given maximum degree; in the work of McKay, Wanless, and Wormald \cite{MWW02} this is accomplished by modeling the degree sequence with a certain tilted binomial distribution and reducing the question to certain local central limit theorem considerations for these distributions. Such an approach, while in theory possible for our problem, appears rather unsuitable since one would require (for the second moment) tracking $10$ different tilting patterns and the computations quickly appear to become completely infeasible. We instead adopt a framework given in work of McKay and Wormald \cite{MW97} (extended to bipartite graphs by McKay and Skerman \cite{MS16}) which models the degree sequence of $\mb{G}(n,1/2)$ by first sampling some $p$ near $1/2$ (which controls the total edge density) and then treats each degree as independent binomial distribution conditional on the total sum being even. One may note that this result in fact immediately makes precise the heuristic degree distribution given in the previous paragraph. This is used in work of McKay and Wormald \cite{MW97}, to (say) compute the asymptotic distribution of the median degree in $\mb{G}(n,1/2)$. However, directly as stated the work of McKay and Wormald \cite{MW97} is not suitable for exponentially rare events. Instead, a sufficiently close examination reveals that a modification of their methods allows one to, losing constant factors in the probability, handle such exponentially rare events. This is modification is handled in \cref{sec:degree-models}.

Using the results in \cref{sec:degree-models}, a direct but lengthy computation reduces computing the first and second moments to certain explicit $3$ and $10$ variable Gaussian integrals. While in theory one could apply a grid search to compute the maximum of these integrals, this appears computationally intractable. Instead we use an observation of Gamarnik and Li \cite{GL18} which considered friendly partitions in regular constant degree random graphs, and noted that a certain Gaussian optimization problem could be simplified via using the log-concavity of certain tail probabilities of Gaussian random variables. Via a delicate application of these techniques the first and second moment (involving careful uses of symmetry present in combination with the log-concavity) can be reduced to the problem to checking the optima of a certain univariate and $3$-dimensional functions (which appear in \cref{asm:comp}) match in an appropriate manner. 

Finally, we remark that implicit in the above discussion we have assumed that the degree enumeration results of McKay and Wormald \cite{MW90} (and the extension to bipartite graphs by McKay and Skerman \cite{MS16}) apply uniformly to all degree sequences; unfortunately they only naturally apply to degree sequences where the maximum and minimum degrees are within $O(n^{1/2+\delta})$ of the median degree. As such we use switchings to \emph{a priori} prove that the contribution of atypical degree sequences, even conditional on the exponentially small probability of a given pair of overlapping partitions being $\gamma\sqrt{n}$-friendly, is small. A similar strategy was used in the work of McKay, Wanless, and Wormald \cite{MWW02}, but that work relies on the fact that the event of being below a given maximum degree is monotone in all the edges and this is used crucially in their argument. When considering a pair of overlapping partitions one encounters more complex constraints and thus a more involved argument is necessary.

The argument instead proceeds by using the subset of edges which are naturally decreasing or increasing with respect to the constraints in order to prove that, with super-polynomially high probability, at most a $O(n^{-1/2+\eta})$ fraction of vertices will exactly meet the threshold constraints. Then under the event that there are very few such critical vertices, a relatively straightforward switching argument can then be used prove the necessary maximum and minimum degree events; the fact that there are few critical vertices is used here in order to prove that one has essentially the expected number of switches.

\subsubsection{Verification of \cref{asm:comp}}\label{sub:computer}
We now finally state the crucial numerical computation. We first formally define a series of special functions which will appear throughout the paper. 

\begin{definition}\label{def:special-func}
Fix $\gamma\in\mb{R}$. Then 
\begin{align*}
F_1(\alpha) &:=\log 2-\alpha^2 + \log(\mb{P}_{Z\sim{\mc{N}(0,1)}}[Z\ge(\gamma + \alpha)\sqrt{2}]),\\
f(\beta,\alpha) &:= \mb{P}\bigg[\sqrt{\frac{\beta}{2}}Z_1+\sqrt{\frac{1-\beta}{2}}Z_2\ge\gamma+\alpha\wedge\sqrt{\frac{\beta}{2}}Z_1-\sqrt{\frac{1-\beta}{2}}Z_2\ge\gamma+\alpha\bigg],\\
F_2(\beta,\alpha_1,\alpha_2) &:= 2\log 2-2\beta\log\beta-2(1-\beta)\log(1-\beta) -2 \alpha_1^2-2\alpha_2^2 \\
&\qquad+ 2\beta\log f(\beta,\alpha_1) + 2(1-\beta)\log f(1-\beta,\alpha_2).
\end{align*}
\end{definition}

The crucial computational component which is verified with a computer assisted computation is the following claim. 

\begin{assumption}\label{asm:comp}
Fix $\eps_{\ref{asm:comp}} = 10^{-25}$ and $\gamma \in [\gamma_{\mr{crit}}-\eps_{\ref{asm:comp}}, \gamma_{\mr{crit}} + \eps_{\ref{asm:comp}}]$. We have that
\[\sup_{\substack{\beta\in[0,.001]\\\alpha_1, \alpha_2\in \mb{R}}} F_2(\beta,\alpha_1,\alpha_2) = 2\sup_{\alpha\in \mb{R}}F_1(\alpha)\]
and
\[\sup_{\substack{\beta\in[.001,.999]\\\alpha_1, \alpha_2\in \mb{R}}} F_2(\beta,\alpha_1,\alpha_2) = 4\sup_{\alpha\in \mb{R}}F_1(\alpha).\]
Furthermore let $\alpha(\gamma) := \argmax_{\alpha\in\mb{R}}F_1(\alpha)$. Then the Hessian of $F_2$ evaluated at $(1/2, \alpha(\gamma), \alpha(\gamma))$ is strictly negative definite and the unique optimizer of $F_2(\beta,\alpha_1,\alpha_2)$ for $\beta\in[.001,.999]$ occurs at $(1/2, \alpha(\gamma), \alpha(\gamma))$.
\end{assumption}

As stated \cref{asm:comp} asserts an equality between two presumably transcendental quantities and a bit of care is required in verifying such an assumption. An equivalent manner of phrasing \cref{asm:comp} is that the maximum value of $F_2(\beta,\alpha_1,\alpha_2)$ occurs either at $(1/2, \alpha(\gamma), \alpha(\gamma))$, or at $(0, 0, \alpha(\gamma))$, or at $(1,\alpha(\gamma), 0)$. Naively, this suggests verifying that $F_2$ is concave in the neighborhood of these points and then performing a grid search in the remainder of the parameter space. However such a procedure appears to be computationally prohibitive and thus we opt for a more delicate approach. The crucial feature of $F_2(\beta,\alpha_1,\alpha_2)$ is that it is strictly convex for fixed $\beta$; therefore at least for fixed $\beta$ one can simply produce a pair of values $(\alpha_1,\alpha_2)$ and certify that $F_2(\beta,\alpha_1,\alpha_2)$ is sufficiently small, with sufficiently small derivative in $\alpha_1, \alpha_2$ that it cannot reach the desired supremum.

To handle a range of $\beta$ simultaneously, we produce a majorant for $\tilde{F}(\alpha_1,\alpha_2)$ which handles a range of $\beta$ uniformly. Furthermore this majorant has a particular structure that allows one to produce a near-optimum for $\tilde{F}(\alpha_1,\alpha_2)$ via a certain Newton iteration-type procedure thus allowing for efficient optimization. This allows one to essentially handle $\beta\notin [0,.001]\cup [.495,.505]\cup[.999,1]$. Due to symmetry it suffices to handle $\beta\in [0,.001]\cup [.495,.50]$.

For the first interval, one can verify that for all relevant $(\alpha_1,\alpha_2)$, we have $F(0,\alpha_1,\alpha_2)\ge F(\beta,\alpha_1,\alpha_2)$ via a direct mathematical argument; such an argument is plausible as in the neighborhood of $0$ the derivative in $\beta$ of $F(\beta,\alpha_1,\alpha_2)$ is $-\infty$ (this is a manifestation of the model exhibiting frozen $1$-RSB behavior). Finally for $\beta\in [.495,.5]$, one can use a similar numerical procedure to the initial parts handling $\beta$ not near $0,1/2,1$ to localize $(\alpha_1,\alpha_2)$ to a certain small ball around the desired optimizer and one can computationally verify via grid-search that the Hessian is negative definite.

The various numerical claims in \cref{app:computer-assist} are carried out in Python 3 using \texttt{numpy} and \texttt{scipy}, and then separately in \texttt{mpmath} which use floating point arithmetic and \texttt{python-flint} which is a rigorous library for interval arithmetic. (We note here that \texttt{python-flint} is wrapper for the interval arithmetic package \texttt{Arb} package in C++ \cite{Joh17}.) We give formal proofs for all claims in the numerical appendix except \cref{clm:prob-bound,clm:first-derivative,clm:second-derivative} which are used to verify \cref{clm:hessian-verification}; these numerical computations have only been verified in \texttt{numpy} and \texttt{scipy}, and separately in \texttt{mpmath}. This is due primarily to the authors not being aware of any package which has rigorous multidimensional numerical integration; however, the necessary bounds only require estimates to $2$ or $3$ decimal places and are not nearly as sensitive as the others. We have attached Python code in the arXiv listing; the computer assisted portions of \cref{clm:initial-interval,clm:middle-segment,clm:middle-segment-local} are carried out in \texttt{InitialLocalizationViaConvexity.py} and the computer assisted portion of \cref{clm:hessian-verification} is carried out in \texttt{VerificationOfHessianConditions.py} and \texttt{IntegralSupplement.py}.

\subsection{Notation}\label{sub:notation}
We use standard asymptotic notation throughout, as follows. For functions $f=f(n)$ and $g=g(n)$, we write $f=O(g)$ or $f \lesssim g$ to mean that there is a constant $C$ such that $|f(n)|\le C|g(n)|$ for sufficiently large $n$. Similarly, we write $f=\Omega(g)$ or $f \gtrsim g$ to mean that there is a constant $c>0$ such that $f(n)\ge c|g(n)|$ for sufficiently large $n$. Finally, we write $f\asymp g$ or $f=\Theta(g)$ to mean that $f\lesssim g$ and $g\lesssim f$, and we write $f=o(g)$ or $g=\omega(f)$ to mean that $f(n)/g(n)\to0$ as $n\to\infty$. Subscripts on asymptotic notation indicate quantities that should be treated as constants.

Furthermore logarithms are natural unless a base is specified. For an integer $k\ge 0$ we write $\log^{(k)}x$ for the $k$ times iterated logarithm $\log\cdots\log x$. Then $\log^\ast x$ for $x\ge 1$ is defined as the unique nonnegative integer $k$ so that $\log^{(k)}x\in[1,e)$. Finally, we write $C_{\ref{thm:boolean-function}}$ or $c_{\ref{lem:trichotomy}}$ to mean that there is such a positive absolute constant which makes the relevant proposition hold as stated, but we do not care to specify an explicit value.

\subsection{Organization of the paper}
In \cref{sec:boolean-functions} we prove the main vertex adapted isoperimetry result \cref{thm:boolean-function}. In \cref{sec:setup} we reduce \cref{thm:main} to a pair of moment computations (\cref{lem:first-moment,lem:second-moment}). In \cref{sec:degree-models} we provide the necessary comparison of degree sequence to a degree sequence models. In \cref{sec:atypical-switching} we use switchings to eliminate the contributions for degree sequences which are atypical. In \cref{sec:convex}, we state and prove the necessary log-convexity claims which are used in the proof of \cref{thm:main}. Finally in \cref{sec:moment} we prove \cref{lem:first-moment,lem:second-moment}.

In \cref{app:binomial} we compute precisely various tails of binomial coefficients which are used in the main moments claims and in \cref{app:computer-assist} we carry out the mathematical part of computer-assisted verification of \cref{asm:comp}.

\subsection{Concurrent work}
In concurrent work, Dandi, Gamarnik and Zdeborov\'{a} prove an analogue of \cref{thm:main}, allowing for $o(n)$ exceptional vertices. The proof proceeds via the Lindeberg exchange method to reduce the problem to a question on sparse graphs. Here they then extend work of Gamarnik and Li \cite{GL18} to prove the desired result for sparse graphs; here sparsity is used in a crucial manner as it allows one to compute various probabilities in the configuration model.

\subsection*{Acknowledgments}
The second and third authors thank David Gamarnik for bringing \cite{GL18} to our attention as this served as the starting point for our work. The third author thanks George Barbulescu and Naveen Rameen for help installing the \texttt{python-flint} library and for helping write multiprocessing code to improve the runtime.

\section{Concentration of Friendliness via Boolean Functions}\label{sec:boolean-functions}
We now prove the desired expansion result regarding subset of graphs under the metric of max-degree differences at a vertex.
\begin{definition}\label{def:dist}
For any two graphs on a fixed (labeled) vertex set $V$, let 
\[d(G,H) = \Delta(G\triangle H),\]
where $\triangle$ denotes taking the graph with edge set equal to the symmetric difference of the given edge sets, namely, $(E(G)\setminus E(H))\cup(E(H)\setminus E(G))$, and where $\Delta$ denotes maximum degree.
\end{definition}
Note that $d(G,H)$ trivially defines a metric on the set of graphs. The main result of this section is the following expansion result.
\begin{theorem}\label{thm:boolean-function}
Let $\mc{G}$ be a family of graphs on a labeled vertex set $V$ that is $\mf{S}_n$-invariant \footnote{$\mc{G}$ can be equivalently be viewed as a family of undirected graphs.} and let $\mu(\cdot)$ be the uniform measure on labeled graphs on $n$ vertices\footnote{Equivalently, $\mu(\mc{S}) = \mb{P}_{G\sim\mb{G}(n,1/2)}[G\in\mc{S}]$}. Let $\eps\in(0,1)$, possibly depending on $n$. Suppose that $\mu(\mc{G})\ge\eps$ and let 
\[\mc{G}_T = \{H\colon\min_{G\in\mc{G}}d(G,H)\le T\}.\]
Then 
\[\mu(\mc{G}_{C_{\ref{thm:boolean-function}}\log(1/\eps)\sqrt{n/\log n}})\ge 1-\eps.\]
\end{theorem}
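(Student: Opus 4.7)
The plan is to derive \cref{thm:boolean-function} from an iterative dyadic doubling of the family $\mathcal{G}$: at each stage expand an $\mathfrak{S}_n$-invariant subfamily $\mathcal{F}$ of measure $\mu$ to a superfamily $\mathcal{F}'$ of measure at least $\min(2\mu, 1/2)$ while enlarging distances in the max-degree-difference metric $d$ by only an absolute constant times $\sqrt{n/\log n}$. Iterating this $O(\log(1/\varepsilon))$ times carries $\mathcal{G}$ up to measure $1/2$; a complementary argument (doubling the complement down to measure below $\varepsilon$) finishes covering the remaining $1/2-\varepsilon$ mass. The telescoped triangle inequality for $d$ then yields the stated $C_{\ref{thm:boolean-function}}\log(1/\varepsilon)\sqrt{n/\log n}$ neighborhood radius.

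The engine for one doubling step should be the key proposition flagged in \cref{sub:vertex-iso} (call it the boolean-dyadic proposition), whose proof invokes a symmetric-function variant of Talagrand's inequality in the form of \cite{EG22, EKLM22}. Applied to the indicator of $\mathcal{F}$ regarded as an $\mathfrak{S}_n$-invariant function on $\{0,1\}^{\binom{[n]}{2}}$, this produces \cref{lem:trichotomy}: either $\mathcal{F}$ has many points of large positive sensitivity, or it is concentrated on a structured set of edges with strong influence, or one recovers a Boolean-analytic configuration from which a large collection of graphs outside $\mathcal{F}$ can be exhibited at short degree-distance. In each branch one designs a canonical rule for adding graphs to $\mathcal{F}$ so that the measure doubles, while the flipped edges between any newly added $H$ and its preimage in $\mathcal{F}$ are spread across many vertex orbits thanks to $\mathfrak{S}_n$-symmetry.

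The main obstacle, and the point emphasized in the introduction, is controlling $d(G,H) = \Delta(G\triangle H)$ rather than ordinary Hamming distance. A naive boundary or edge-isoperimetric argument only controls the total number of flipped edges, which in the worst case could concentrate at a single vertex and blow up $d$. To sidestep this I would implement the trace-back described in \cref{sub:vertex-iso}: every $H \in \mathcal{F}' \setminus \mathcal{F}$ is produced by a Talagrand-type witness that comes with a symmetric distribution over potential preimages $G \in \mathcal{F}$; averaging this preimage against the $\mathfrak{S}_n$-action and using a concentration estimate on the per-vertex edge load, one extracts a single $G$ with no vertex accumulating more than $O(\sqrt{n/\log n})$ flipped edges. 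The Bollob\'as-type lower bound from \cite{Bol80} mentioned in \cref{sub:vertex-iso} shows this scale is essentially optimal, which is why the trichotomy must be pushed to this precision; this reverse-tracing is where I expect the heaviest lifting.

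With the single-step doubling proposition and its trace-back in hand, the outer iteration is mechanical: run the proposition $\lceil\log_2(1/(2\varepsilon))\rceil$ times in the forward direction, each step adding at most $C_1\sqrt{n/\log n}$ to the running neighborhood radius, then complement and repeat to drive the complement measure below $\varepsilon$; a minor symmetric reformulation of the proposition covers the second phase (or follows by applying it to the complement). Summing both phases gives the bound $C_{\ref{thm:boolean-function}}\log(1/\varepsilon)\sqrt{n/\log n}$ on the radius with $\mu(\mathcal{G}_{C_{\ref{thm:boolean-function}}\log(1/\varepsilon)\sqrt{n/\log n}}) \geq 1-\varepsilon$, as required.
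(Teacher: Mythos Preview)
Your outer iteration is exactly the paper's: reduce \cref{thm:boolean-function} to a single dyadic step (\cref{prop:boolean-dyadic}), run it $O(\log(1/\eps))$ times to reach measure $1/2$, then apply the same step to the complement. Your identification of the symmetric Talagrand inequality and the resulting trichotomy (\cref{lem:trichotomy}) as the driver is also right.

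The genuine gap is your trace-back mechanism. You propose that a newly added $H$ comes with ``a symmetric distribution over potential preimages $G\in\mc{F}$'', and that ``averaging this preimage against the $\mf{S}_n$-action and using a concentration estimate on the per-vertex edge load'' produces a $G$ with $\Delta(G\triangle H)=O(\sqrt{n/\log n})$. This does not work: the $\mf{S}_n$-action stabilizes the family $\mc{F}$ but \emph{not} the specific target $H$, so you cannot average over it while keeping $H$ fixed; and Talagrand's inequality gives no canonical distribution over preimages, only a count $s_f^+(H)$ of edges whose flip lands in $\mc{F}$. There is no symmetry or concentration available at the level of a single $H$.

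What the paper actually does inside one doubling step is run a long sequence of single-edge expansions (many sub-steps of types \cref{Inf1}/\cref{Inf2}/\cref{Inf3}), bound the \emph{number} of sub-steps of each type using the measure gain guaranteed by the trichotomy, and then trace back from $H$ one edge at a time. The crucial point you are missing is that in the \cref{Inf2} and \cref{Inf3} branches, $H$ has sensitivity $\asymp 2^j$, i.e.\ there are $\ge 2^j$ \emph{choices} of edge to flip. The paper maintains, for each parameter bucket $(j)$ or $(\ell,j)$, a ``record graph'' $R_j$ accumulating all edges flipped in that bucket, and at each trace-back step greedily chooses an edge whose addition does not raise $\Delta(R_j)$. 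A counting argument (if every candidate edge touched a high-degree vertex of $R_j$, then $R_j$ would already have more edges than the total number of sub-steps of that type allows) shows a safe choice always exists once $\Delta(R_j)$ reaches its target threshold. Summing the resulting per-bucket degree caps over all buckets gives the $O(\sqrt{n/\log n})$ bound. This greedy record-graph argument is the substantive content of the proof, and nothing like it appears in your sketch.
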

\begin{remark}
This theorem is optimal up to a constant factor (both in $n$ and $\eps$ dependence) due to considering graphs in $\mb{G}(n,1/2)$ with maximum degree below a fixed threshold, using the result of Bollob\'as \cite{Bol80} on this distribution.
\end{remark}
\begin{remark}
An analogue of this result holds for properties of bipartite graphs with sides $n,m$ satisfying $n=\Theta(m)$ which are $\mf{S}_n\times\mf{S}_m$-invariant.
\end{remark}

We first make a preliminary reduction.
\begin{proposition}\label{prop:boolean-dyadic}
Let $\eps\in (0,1/4]$. Let $\mc{G}$ be a family of graphs on a labeled vertex set $V$ that is $\mf{S}_n$-invariant and let $\mu(\cdot)$ be the uniform measure on labeled graphs on $n$ vertices. Suppose that $\mu(\mc{G})\ge \eps$ and let 
\[\mc{G}_{T} = \{H\colon\min_{G\in \mc{G}}d(G,H)\le T\}.\] Then 
\[\mu(\mc{G}_{C_{\ref{prop:boolean-dyadic}}\sqrt{n/\log n}})\ge 2\eps.\]
\end{proposition}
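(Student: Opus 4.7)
The plan is to use the symmetric-function Talagrand-type trichotomy (Lemma~\ref{lem:trichotomy}) as an engine to iteratively enlarge $\mc{G}$, then trace backwards through the expansion to bound the maximum degree of the symmetric difference. First, I would set $\mc{G}^{(0)} = \mc{G}$ and, assuming inductively that $\mu(\mc{G}^{(i)}) < 2\eps$, apply Lemma~\ref{lem:trichotomy} to the indicator $\mathbbm{1}_{\mc{G}^{(i)}}$. Because $\mc{G}^{(i)}$ is $\mf{S}_n$-invariant, the trichotomy's conclusions can be made symmetric in the vertex labels; in each of the three cases one obtains a symmetric template of edge modifications that produces many new graphs, which we adjoin to $\mc{G}^{(i)}$ to form $\mc{G}^{(i+1)}$. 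The quantitative content is a density gain $\mu(\mc{G}^{(i+1)}) \ge (1+\delta_i)\mu(\mc{G}^{(i)})$ with $\sum_i \delta_i$ bounded below by a constant, while each step only modifies a bounded number of edges at each vertex.

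After $k = O(\sqrt{n/\log n})$ iterations the density has doubled. To convert this into the desired distance bound I would trace backwards: for each $H \in \mc{G}^{(k)}$, construct (measurably and symmetrically) a chain $H = H_k, H_{k-1}, \dots, H_0 \in \mc{G}$ with $H_{j-1}$ a witness for $H_j \in \mc{G}^{(j)}$. The triangle inequality yields $d(H_0,H_k) \le \sum_j d(H_{j-1},H_j)$, and the per-step summand is bounded by the max-degree of the expansion template. Summing over $j$, one obtains $d(H_0, H_k) \le C\sqrt{n/\log n}$ for all but an $\eps$-fraction of $H$ drawn uniformly from $\mc{G}^{(k)}$. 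Since $\mu(\mc{G}^{(k)}) \ge 2\eps$, this produces at least $2\eps - \eps \cdot \mu(\mc{G}^{(k)}) \ge \eps$ graphs in $\mc{G}^{(k)}$ within the allowed distance of $\mc{G}$; re-combining with the original $\mu(\mc{G}) \ge \eps$ (which is trivially in $\mc{G}_T$) via inclusion-exclusion then yields $\mu(\mc{G}_T) \ge 2\eps$.

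The main obstacle I anticipate is controlling the maximum degree of the cumulative symmetric difference: naively, backward chains could route modifications repeatedly through the same vertex. Averting this requires exploiting the full $\mf{S}_n$-symmetry, both of the family and of the expansion operators, to randomize the vertex targeted at each step, so that the per-vertex cost accumulation behaves like the maximum of $n$ weakly-correlated walks of length $k$ rather than a single walk of length $nk$. A Bollob\'as-type maximal inequality for such walks then yields the $\sqrt{n/\log n}$ bound --- this is precisely where the $\log n$ factor over the generic edge-isoperimetric estimate is gained. The construction of the backward chain measure, and verification that it has the right exchangeability structure across vertices given the symmetric trichotomy cases, is the most delicate step of the argument.
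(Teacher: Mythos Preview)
Your overall architecture---iterate the trichotomy to expand, then trace backward---matches the paper, but the heart of the argument is missing and two claims are wrong as stated.

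First, the iteration count. You assert that after $k=O(\sqrt{n/\log n})$ iterations the density doubles. This is only the bound on the number of \cref{Inf1}-type steps (where $\mb{P}[s^+>0]\gtrsim\eps\sqrt{\log n/n}$, so each step gains density $\gtrsim\eps\sqrt{\log n/n}$). The \cref{Inf2} and \cref{Inf3} cases can occur far more often: for \cref{Inf2} with parameter $j$ near $\log_2\binom{n}{2}$ one has density gain only $\gtrsim\eps(\log n)^5/n^{3/2}$ per step, allowing up to $O(n^{3/2}/(\log n)^5)$ steps of that type alone. So the total number of single-edge modifications along a backward chain can be polynomially large in $n$, and the naive triangle-inequality bound $\sum_j d(H_{j-1},H_j)\le k$ is useless.

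Second, and more seriously, your proposed mechanism for controlling the maximum degree---randomize the backward choice using $\mf{S}_n$-symmetry and invoke a maximal inequality for exchangeable walks---does not work, because the symmetry is broken once you fix the terminal graph $H$. The set of admissible predecessors $H_{i-1}\in\mc{F}_{i-1}$ at step $i$ depends on the specific $H_i$, not on some vertex-transitive template; there is no reason the available edge-flips should be spread uniformly over vertices of $H_i$. The paper's argument is entirely different and deterministic: when the step came from \cref{Inf2} or \cref{Inf3} with sensitivity in $[2^j,2^{j+1}]$, there are at least $2^j$ admissible edge-flips. One maintains a separate ``record graph'' $R_j$ (or $R_{\ell,j}$) for each parameter bucket, and at each backward step \emph{greedily} picks an edge-flip that does not increase $\Delta(R_j)$. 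If no such choice existed, every one of the $\ge 2^j$ candidate edges would be incident to a high-degree vertex of $R_j$, forcing $|E(R_j)|$ to exceed the total number of steps of that type---a contradiction. This pigeonhole/greedy argument, not randomization, is what buys the $\sqrt{n/\log n}$ bound, and summing $\Delta(R_j)$ over the buckets is where the iterated-logarithm bookkeeping in \cref{Inf3} is used.

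Finally, your inclusion--exclusion endgame is unnecessary: once the backward chain is built correctly, \emph{every} $H$ in the expanded family is within distance $O(\sqrt{n/\log n})$ of $\mc{G}$, so the expanded family is contained in $\mc{G}_T$ and $\mu(\mc{G}_T)\ge 2\eps$ directly.
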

We briefly sketch the reduction.
\begin{proof}[Proof of \cref{thm:boolean-function} given \cref{prop:boolean-dyadic}]
Iterating \cref{prop:boolean-dyadic} allows one to immediately prove $\mu(\mc{G}_{C\log(1/\eps)\sqrt{n/\log n}})\ge 1/2$ for an appropriately chosen absolute constant $C$. To boost from probability $1/2$ to $1-\eps$, let $\mc{G}_0 = \mc{G}_{C\log(1/\eps)\sqrt{n/\log n}}$. Suppose that $\mu(\mc{G}_0) = 1-\delta$ for $\delta\in(0,1/2]$. Consider the graphs which are distance at least $C\sqrt{n/\log n}$ in distance away from $\mc{G}_0$, call it $\mc{G}_1$. Applying \cref{prop:boolean-dyadic} implies that $\mu(\mc{G}_1)\le\delta/2$ (since the $C\sqrt{n/\log n}$-neighborhood is disjoint from $\mc{G}_0$). Iterating this procedure $O(\log(1/\eps))$ times gives the claimed result.
\end{proof}

To prove \cref{prop:boolean-dyadic} we will require a number of standard definitions from Boolean function analysis.
\begin{definition}
Given a function $f\colon\{0,1\}^{n}\to\{0,1\}$, define the \emph{influence} of the $i$th variable to be $I_i[f]= \mb{P}[f(x)\neq f(x\oplus e_i)]$, where $e_i$ is the $i$th basis vector and $\oplus$ is $\mb{F}_2$ addition. Furthermore define the \emph{total influence} of a function $f$ to be $I[f] = \sum_{i\in [n]}I_i[f]$. Finally define the \emph{positive sensitivity} of a function $f$ at a point $x$ to be
\[s_f^{+}(x) = 
\begin{cases}
0&\text{ if } f(x) = 1 \\
\sum_{i\in[n]}f(x\oplus e_i) &\text{ if } f(x) = 0.
\end{cases}
\]
\end{definition}

We will need the following variant of a strengthening of Talagrand's inequality, which follows from the proof given by Eldan, Kindler, Lifshitz, and the first author \cite[Theorem~1.2]{EKLM22}. The only change required is noting that \cite[Lemma~3.1]{EKLM22} can be replaced with $\mb{E}\bigg[\sqrt{s_f^{+}(x)}\bigg]\ge \sqrt{\sum_{i\in[n]}\hat{f}(\{i\})^2}$.
\begin{theorem}\label{thm:Talagrand}
For any function $f\colon\{0,1\}^{n}\to\{0,1\}$,
\[\mb{E}\bigg[\sqrt{s_f^{+}(x)}\bigg]\gtrsim \on{var}(f)\cdot\sqrt{\log\bigg(1+\frac{1}{\sum_{i\in[n]}I_i[f]^2}\bigg)}.\]
In particular, if $\sum_{i\in[n]}I_i[f]^2\le n^{-1/4}$ then
\[\mb{E}\bigg[\sqrt{s_f^{+}(x)}\bigg]\gtrsim \on{var}(f)\sqrt{\log n}.\]
\end{theorem}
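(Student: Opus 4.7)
The plan is to follow the proof of Theorem~1.2 in \cite{EKLM22}, which establishes the same inequality but with the full sensitivity $s_f(x)$ in place of the positive sensitivity $s_f^{+}(x)$, and to upgrade the single input of that argument which is sensitive to this distinction. The outer machinery of \cite{EKLM22} is a martingale / random-restriction iteration that manipulates only level-1 Fourier mass, and as the authors indicate it is driven through a single ingredient, namely Lemma~3.1 of \cite{EKLM22}, which is a bound of the form $\mb{E}[\sqrt{s_f(x)}] \gtrsim \sqrt{\sum_i \hat{f}(\{i\})^2}$. Thus it suffices to establish the one-sided analogue
\[\mb{E}\bigl[\sqrt{s_f^{+}(x)}\bigr] \ge \sqrt{\sum_{i \in [n]} \hat{f}(\{i\})^2}\]
and then replay the rest of their argument verbatim.

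To prove the one-sided lemma I would appeal to Minkowski's inequality for the Euclidean norm on $\mb{R}^n$. Define the vector $v_x \in \{0,1\}^n$ by $v_x[i] = \mathbf{1}[f(x) = 0,\ f(x \oplus e_i) = 1]$, so that $\|v_x\|_2 = \sqrt{s_f^{+}(x)}$. A direct case analysis, classifying each edge $\{x, x \oplus e_i\}$ on which $f$ differs according to the value of $x_i$ and the direction of the flip, yields the Fourier identity $\hat{f}(\{i\}) = \mb{E}[v_x[i](2x_i - 1)]$; concretely, the two "constant $f$" contributions to $\hat f(\{i\}) = \mb{P}[f=1,x_i=0]-\mb{P}[f=1,x_i=1]$ cancel in pairs under $x \mapsto x\oplus e_i$, leaving only the edges on which $f$ flips. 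Writing $\epsilon(x) = (2x_1 - 1, \dots, 2x_n - 1) \in \{\pm 1\}^n$ and letting $\odot$ denote entrywise product, this rearranges to $(\hat{f}(\{i\}))_{i \in [n]} = \mb{E}[v_x \odot \epsilon(x)]$, and since $\|v_x \odot \epsilon(x)\|_2 = \|v_x\|_2$ Minkowski's inequality for the $\ell^2$ norm gives
\[\sqrt{\sum_{i \in [n]} \hat{f}(\{i\})^2} = \|\mb{E}[v_x \odot \epsilon(x)]\|_2 \le \mb{E}\bigl[\|v_x \odot \epsilon(x)\|_2\bigr] = \mb{E}[\sqrt{s_f^{+}(x)}],\]
which is the desired bound. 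The corollary under the hypothesis $\sum_i I_i[f]^2 \le n^{-1/4}$ is then immediate from $\log(1 + n^{1/4}) \gtrsim \log n$.

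The main thing to verify is that no downstream step in the proof of Theorem~1.2 in \cite{EKLM22} silently requires full two-sided sensitivity, for instance via a symmetrization exchanging $f$ and $1 - f$ or an intermediate estimate that combines up- and down-flips with cancellation. If such a step were present one could run the argument twice, tracking $s_f^{+}$ and $s_{1-f}^{+}$ separately and aggregating at the end, since $\hat{f}(\{i\}) = -\widehat{1-f}(\{i\})$ preserves level-1 mass. The authors' remark that the substitution is purely cosmetic suggests no real obstruction arises, so the principal conceptual content is the Minkowski argument above.
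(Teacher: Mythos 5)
Your proposal matches the paper's approach exactly: the paper also proves the theorem by citing the proof of Theorem~1.2 in \cite{EKLM22} and noting that the only needed change is replacing Lemma~3.1 there with the one-sided bound $\mb{E}[\sqrt{s_f^{+}(x)}]\ge\sqrt{\sum_{i}\hat f(\{i\})^2}$, which the paper asserts without proof. You additionally supply a correct justification of that one-sided bound via the identity $(\hat f(\{i\}))_i=\mb{E}[v_x\odot\epsilon(x)]$ and the triangle inequality for the $\ell^2$ norm, which is a welcome (if easy) detail the paper omits.
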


We in fact will only apply \cref{thm:Talagrand} for functions $f$ on the set of graphs $\{0,1\}^{\binom{[n]}{2}}$ which are symmetric under relabeling the vertices. Therefore we have that $I_{(i_1,i_2)}[f]$ is independent of the pair $(i_1,i_2)$. From this we now derive the following trichotomy for functions $f$ which are symmetric.

\begin{lemma}\label{lem:trichotomy}
Let $\eps\in(0,1/4]$ and $f\colon\{0,1\}^{\binom{[n]}{2}}\to\{0,1\}$ such that $\mb{E}f\in[\eps,1-\eps]$. Then if $f$ is symmetric under the natural $\mf{S}_n$-action, then one of the following holds, where we sample $x$ from the uniform measure on labeled graphs $\mu$:
\begin{enumerate}[{\bfseries{Inf\arabic{enumi}}}]
    \item\label{Inf1} $\mb{P}[s_f^{+}(x)>0]\ge c_{\ref{lem:trichotomy}}\eps\sqrt{\log n/n}$
    \item\label{Inf2} There is an integer $j\in[\lfloor(\log_2n)/2\rfloor,\lfloor\log_2\binom{n}{2}\rfloor]$ such that
    \[2^{j}\mb{P}[s_f^{+}(x)\in[2^{j},2^{j+1}]]\ge c_{\ref{lem:trichotomy}}\eps\sqrt{n}(\log n)^{5}.\]
    \item\label{Inf3} There are integers $\ell\in[2,\log^\ast n-1]$, $j\in[\lfloor\log_2n + 30\log^{(\ell+1)}n\rfloor,\lfloor\log_2n + 30\log^{(\ell)}n\rfloor]$ such that
    \[2^{j/2}\mb{P}[s_f^{+}(x)\in[2^j,2^{j+1}]]\ge c_{\ref{lem:trichotomy}}\eps\sqrt{\log n}/(\log^{(\ell)}n)^2.\]
\end{enumerate}
\end{lemma}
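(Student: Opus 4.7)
The plan is to argue by contradiction: suppose that none of \ref{Inf1}, \ref{Inf2}, \ref{Inf3} holds (with the constant $c_{\ref{lem:trichotomy}}$ to be chosen sufficiently small), and derive a contradiction from \cref{thm:Talagrand}. Since $f$ is symmetric under $\mf{S}_n$, all coordinate influences agree: writing $\rho := I_e[f]$, we have $\sum_i I_i[f]^2 = \binom{n}{2}\rho^2$ and $\mb{E}[s_f^+(x)] = \binom{n}{2}\rho/2$, while the variance satisfies $\on{var}(f) \gtrsim \eps$. The first task is to bound $\rho$ by controlling $\mb{E}[s_f^+]$: dyadically decomposing this expectation and applying the failures of \ref{Inf1} (to the range $s_f^+ \in [1,\sqrt{n}]$, where one bounds by $\sqrt{n}\cdot \mb{P}[s_f^+>0]$) and \ref{Inf2} (to the range $s_f^+ \ge \sqrt{n}$, where each of the $O(\log n)$ dyadic pieces is at most $O(\eps\sqrt{n}(\log n)^5)$) yields $\mb{E}[s_f^+] \lesssim \eps\sqrt{n}(\log n)^6$, so that $\sum_i I_i^2 \lesssim \eps^2(\log n)^{12}/n$. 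Plugging into \cref{thm:Talagrand} then gives the lower bound $\mb{E}[\sqrt{s_f^+(x)}] \gtrsim \eps\sqrt{\log n}$.

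The second task is to upper bound $\mb{E}[\sqrt{s_f^+(x)}] \le \sqrt{2}\sum_j 2^{j/2}\mb{P}[s_f^+\in[2^j,2^{j+1})]$ and show it is strictly smaller than the Talagrand lower bound. I would split the sum over $j$ into three regimes, using a different failure hypothesis in each. For $j \le \log_2 n + O(1)$, apply \ref{Inf1} failure, so each term is at most $2^{j/2}\mb{P}[s_f^+ > 0]$; the geometric sum $\sum_{j \le \log_2 n + O(1)} 2^{j/2} = O(\sqrt{n})$ combined with $\mb{P}[s_f^+ > 0] \lesssim \eps\sqrt{\log n/n}$ yields total $O(\eps\sqrt{\log n})$. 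For $j$ inside the \ref{Inf3} window for some $\ell$, apply \ref{Inf3} failure to bound each term by $\lesssim \eps\sqrt{\log n}/(\log^{(\ell)}n)^2$; summing over the $O(\log^{(\ell)}n)$ values of $j$ in the $\ell$-window and then over $\ell \in [2,\log^*n - 1]$ gives $\lesssim \eps\sqrt{\log n}\cdot\sum_\ell 1/\log^{(\ell)}n = O(\eps\sqrt{\log n})$. For $j > \log_2 n + 30\log\log n$, apply \ref{Inf2} failure, which bounds each term by $\lesssim \eps\sqrt{n}(\log n)^5/2^{j/2}$; at the starting $j$ this is already $O(\eps(\log n)^{-10})$, and the remaining sum is a geometric tail of the same order. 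Summing the three contributions gives $\mb{E}[\sqrt{s_f^+}] \lesssim c_{\ref{lem:trichotomy}}\eps\sqrt{\log n}$, which contradicts the lower bound once $c_{\ref{lem:trichotomy}}$ is small enough.

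The main obstacle is the bookkeeping in the middle regime, which requires the iterated-logarithm structure in \ref{Inf3} to be precisely tuned so that the sum $\sum_{\ell=2}^{\log^* n -1} 1/\log^{(\ell)}n$ is bounded by an absolute constant uniformly in $n$. This holds because the tail terms $1/\log^{(\log^* n - 1)}n, 1/\log^{(\log^* n - 2)}n, \ldots$ form a rapidly decaying sequence dominated by a geometric-like tail (since $\log^{(\log^* n)}n \in [1,e)$, the next iterate lies in $[e,e^e)$, the one after that in $[e^e,e^{e^e})$, and so on), while the early terms $1/\log\log n, 1/\log^{(3)}n,\ldots$ vanish as $n\to\infty$. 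A secondary subtlety is that the range boundaries in the three regimes must match up so that every relevant $j$ is covered by at least one hypothesis: the \ref{Inf1} coverage must extend a little past $\log_2 n$ to meet the lower end of the \ref{Inf3} window at $j \approx \log_2 n + 30$, and the crossover to \ref{Inf2} at $j \approx \log_2 n + 30\log\log n$ is where \ref{Inf2}'s pointwise bound first becomes strong enough to give a summable tail.
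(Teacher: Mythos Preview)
Your proposal is correct and follows essentially the same argument as the paper: assume all three alternatives fail, use the failures of \ref{Inf1} and \ref{Inf2} together with symmetry to bound $\sum_i I_i[f]^2$ so that \cref{thm:Talagrand} yields $\mb{E}[\sqrt{s_f^+}]\gtrsim\eps\sqrt{\log n}$, then dyadically upper-bound $\mb{E}[\sqrt{s_f^+}]$ with the three regimes (low via \ref{Inf1}, high via \ref{Inf2}, middle via \ref{Inf3}) to reach a contradiction. The only cosmetic difference is that the paper bounds $I[f]$ by arguing ``if $I[f]\ge n^{1/2}(\log n)^6$ then pigeonhole forces \ref{Inf2}'' whereas you sum the dyadic contributions to $\mb{E}[s_f^+]$ directly; these are equivalent, and your boundary bookkeeping (low regime up to $\log_2 n+O(1)$ to meet the $\ell=\log^\ast n-1$ window, crossover to \ref{Inf2} at $\log_2 n+30\log\log n$, and the uniform bound on $\sum_\ell 1/\log^{(\ell)}n$) matches the paper's.
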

\begin{proof}
Note that we may assume $n$ is larger than an absolute constant (not dependent on $\eps$). Suppose that \cref{Inf1} fails, i.e., $\mb{P}[s_f^{+}(x)>0] < c_{\ref{lem:trichotomy}}\eps\sqrt{\log n/n}$ where the constant will be chosen later. Furthermore note that
\[I[f]\asymp\sum_{j=0}^{\lfloor\log_2\binom{n}{2}\rfloor}2^{j}\mb{P}[s_f^{+}(x)\in[2^j,2^{j+1}]]\lesssim\sum_{j=\lfloor(\log_2n)/2\rfloor}^{\lfloor\log_2\binom{n}{2}\rfloor}2^{j}\mb{P}[s_f^{+}(x)\in[2^j,2^{j+1}]]+n^{1/2}.\]
If $I[f]\ge n^{1/2}(\log n)^6$ then there exists $j\in[\lfloor(\log_2n)/2\rfloor,\lfloor\log_2\binom{n}{2}\rfloor]$ such that
\[2^{j}\mb{P}[s_f^{+}(x)\in[2^j,2^{j+1}]]\gtrsim\sqrt{n}(\log n)^5,\]
which will satisfy \cref{Inf2} for appropriately chosen $c_{\ref{lem:trichotomy}} > 0$.

Thus, if we further assume that \cref{Inf2} fails, then in the remaining case we may assume $I[f]\le n^{1/2}(\log n)^6$, $\mb{P}[s_f^{+}(x)>0]\le c_{\ref{lem:trichotomy}}\eps\sqrt{\log n/n}$, and $w_j := 2^{j}\mb{P}[s_f^{+}(x)\in[2^j,2^{j+1}]]\le c_{\ref{lem:trichotomy}}\eps\sqrt{n}(\log n)^{5}$ for all $j\in[\lfloor(\log_2n)/2\rfloor,\lfloor\log_2\binom{n}{2}\rfloor]$. Using that $f$ is symmetric we have
\[\sum_{(i_1,i_2)\in\binom{[n]}{2}}I_{(i_1,i_2)}[f]^2 = \binom{n}{2}^{-1}I[f]^2\le n^{-1/2}.\]
Thus we have by \cref{thm:Talagrand} and the condition on $\mb{E}f$ that $\mb{E}[\sqrt{s_f^{+}(x)}]\gtrsim\eps\sqrt{\log n}$. Note that 
\[\mb{E}[\sqrt{s_f^{+}(x)}]\asymp\sum_{j=0}^{\lfloor\log_2\binom{n}{2}\rfloor}2^{-j/2}w_j\]
and that
\[\sum_{j=0}^{\lfloor\log_2n+100\rfloor}2^{-j/2}w_j\le 2^{(\log_2n+100)/2}\mb{P}[s_{f}^{+}(x)>0]\lesssim c_{\ref{lem:trichotomy}}\eps\sqrt{\log n}.\]
Furthermore
\[\sum_{j=\lfloor\log_2n+30\log\log n\rfloor}^{\lfloor\log_2\binom{n}{2}\rfloor}2^{-j/2}w_j\le 2^{-(\log_2n+30\log\log n)/2} c_{\ref{lem:trichotomy}}\eps\sqrt{n}(\log n)^{5}\lesssim c_{\ref{lem:trichotomy}}\eps(\log n)^{-1}.\]
Combining these three inequalities we have (choosing $c_{\ref{lem:trichotomy}}$ appropriately small) that
\begin{equation}\label{eq:large-intervals}
\sum_{j=\lfloor\log_2n+100\rfloor}^{\lfloor\log_2n+30\log\log n\rfloor}2^{-j/2}w_j\gtrsim \eps\sqrt{\log n}.
\end{equation}
Now $\ell\in[2,\log^\ast n-1]$, $j\in[\lfloor\log n+30\log^{(\ell+1)} n\rfloor,\lfloor\log n+30\log^{(\ell)} n\rfloor]$ cover the interval of values of $j$ between $\lfloor\log_2n+100\rfloor$ and $\lfloor\log_2n+30\log\log n\rfloor$. Thus, if none of the desired indices satisfy \cref{Inf3} then the total contribution on the left of \cref{eq:large-intervals} would be bounded via
\[\sum_{j=\lfloor\log_2n+100\rfloor}^{\lfloor\log_2n+30\log\log n\rfloor}2^{-j/2}w_j\le\sum_{\ell\in[2,\log^\ast n-1]}\frac{c_{\ref{lem:trichotomy}}\eps\sqrt{\log n}}{\log^{(\ell)}n}\lesssim c_{\ref{lem:trichotomy}}\eps\sqrt{\log n}\]
which provides a contradiction to \cref{eq:large-intervals} if $c_{\ref{lem:trichotomy}}$ is sufficiently small.
\end{proof}

Given these facts we are now in position to prove \cref{prop:boolean-dyadic} and hence \cref{thm:boolean-function}.
\begin{proof}[Proof of \cref{prop:boolean-dyadic}]
We define a series of expansions of $\mc{F}_0 = \mc{G}$ as follows, each of being an $\mf{S}_n$-symmetric Boolean function. For each $i\ge 1$, if $\mu(\mc{F}_{i-1})\ge 2\eps$ we stop. Otherwise suppose that $\mu(\mc{F}_{i-1})\in[\eps,2\eps)$. Let $f_{i-1} = \mbm{1}_{\mc{F}_{i-1}}$ and note that $f_{i-1}$ is a symmetric Boolean function.
\begin{enumerate}[{\bfseries{Alg\arabic{enumi}}}]
    \item\label{Alg1} If $\mb{P}[s_{f_{i-1}}^{+}(x)>0]\ge c_{\ref{lem:trichotomy}}\eps\sqrt{\log n/n}$, define $\mc{F}_i=\mc{F}_{i-1}\cup\{x\colon s_{f_{i-1}}^{+}(x)>0\}$.
    \item\label{Alg2} Else if there is $j\in[\lfloor(\log_2n)/2\rfloor,\lfloor\log_2\binom{n}{2}\rfloor]$ satisfying
    \[2^j\mb{P}[s_{f_{i-1}}^{+}(x)\in[2^j,2^{j+1}]]\ge c_{\ref{lem:trichotomy}}\eps\sqrt{n}(\log n)^5\]
    then define $\mc{F}_i = \mc{F}_{i-1}\cup\{x\colon s_{f_{i-1}}^{+}(x)\in[2^j,2^{j+1}]\}$.
    \item\label{Alg3} Else if there are $\ell\in[2,\log^\ast n-1]$, $j\in[\lfloor\log_2n + 30\log^{(\ell+1)} n\rfloor,\lfloor\log_2n + 30\log^{(\ell)} n\rfloor]$ such that 
    \[2^{j/2}\mb{P}[s_{f_{i-1}}^{+}(x)\in[2^j,2^{j+1}]]\ge c_{\ref{lem:trichotomy}}\eps\sqrt{\log n}/(\log^{(\ell)}n)^2\] then define $\mc{F}_i = \mc{F}_{i-1}\cup\{x\colon s_{f_{i-1}}^{+}(x)\in[2^j,2^{j+1}]\}$.
\end{enumerate}
By \cref{lem:trichotomy}, at least one of these situations occurs so this is well-defined. Note that at each stage of this expansion, symmetry of $f_{i-1}$ implies that $f_i$ is also symmetric. Combined with the fact $2\eps\le 1-\eps$, we see the application of \cref{lem:trichotomy} is always valid. Additionally, note that every $H\in\mc{F}_i$ is either in $\mc{F}_{i-1}$ or obtained from a graph in $\mc{F}_{i-1}$ by flipping an edge in or out.

Note that $s_{f_{i-1}}^+(x) > 0$ is only possible for $x\notin\mc{F}_{i-1}$. Thus $\mu(\mc{F}_i\setminus\mc{F}_{i-1})\gtrsim\eps\sqrt{\log n/n}$ if \cref{Alg1} occurs, $\mu(\mc{F}_i\setminus\mc{F}_{i-1})\gtrsim\eps\sqrt{n}(\log n)^5/2^j$ if \cref{Alg2} with parameter $j$ occurs, and $\mu(\mc{F}_i\setminus\mc{F}_{i-1})\gtrsim\eps\sqrt{\log n}/(2^{j/2}(\log^{(\ell)}n)^2)$ if \cref{Alg3} with parameters $\ell$ and $j$ occurs. 

Since we stop the process once the density is at least $2\eps$, there must be at most $O(\sqrt{n/\log n})$ steps of \cref{Alg1}, at most $O(2^{j}/(\sqrt{n}(\log n)^{5}) + 1)$ steps of \cref{Alg2} for each possible parameter $j$, and at most $O(2^{j/2}(\log^{(\ell)}n)^2/\sqrt{\log n})$ steps of \cref{Alg3} for each possible choice of parameters $\ell, j$, where the implicit constants are independent of $\eps$. Thus the process described above must eventually stop at some time $t$ with $\mu(\mc{F}_t)\ge 2\eps$. We now prove any graph in $\mc{F}_t$ is close to $\mc{F}_0$ in graph distance in the sense of \cref{def:dist}, which will finish.

Let us consider some $H\in\mc{F}_t$. Let $H_t=H$. Additionally, let $R_j$ for $j\in[\lfloor(\log_2n)/2\rfloor,\lfloor\log_2\binom{n}{2}\rfloor]$ and $R_{\ell,j}$ for $\ell\in[2,\log^\ast n-1]$ and $j\in[\lfloor\log_2n+30\log^{(\ell+1)}n\rfloor,\lfloor\log_2n+30\log^{(\ell)}n\rfloor]$ be ``record graphs'' which start as the empty graph on $V=V(H)$. For $i\in[t]$ in decreasing order, we perform the following:
\begin{enumerate}[{\bfseries{Alg\arabic{enumi}'}}]
    \setcounter{enumi}{-1}
    \item\label{Alg0'} If $H_i\in\mc{F}_{i-1}$ let $H_{i-1}=H_i$. Decrement $i$ and restart the loop.
    \item\label{Alg1'} Otherwise, if further $\mc{F}_i\setminus\mc{F}_{i-1}$ was generated using \cref{Alg1}, let $H_{i-1}\in\mc{F}_{i-1}$ be chosen to be one of the graphs distance $1$ (a single edge flip) from $H_i$ arbitrarily.
    \item\label{Alg2'} Else if $\mc{F}_i\setminus\mc{F}_{i-1}$ was generated using \cref{Alg2} with parameter $j$, let $H_{i-1}\in\mc{F}_{i-1}$ be chosen to be one of the graphs at distance $1$ from $H_i$. If possible, we choose it with the additional property that if $e_i$ is the differing edge, adding $e_i$ to record graph $R_j$ does not increase the maximum degree of $R_j$. (If $e_i$ is already present in $R_j$, adding it in leaves $R_j$ the same.)
    \item\label{Alg3'} Else if $\mc{F}_i\setminus\mc{F}_{i-1}$ was generated using \cref{Alg3} with parameters $\ell,j$, let $H_{i-1}\in\mc{F}_{i-1}$ be chosen to be one of the graphs at distance $1$ from $H_i$. If possible, we choose it with the additional property that if $e_i$ is the differing edge, adding $e_i$ to record graph $R_j$ does not increase the maximum degree of $R_j$.
\end{enumerate}
This is clearly well-defined since every $\mc{F}_i\setminus\mc{F}_{i-1}$ is generated via one of \cref{Alg1,Alg2,Alg3} and also any graph in $\mc{F}_i\setminus\mc{F}_{i-1}$ is generated by flipping an edge of some graph in $\mc{F}_{i-1}$. Furthermore, we have $H_0\in\mc{F}_0=\mc{G}$.

For all $i\in[t]$, let $e_i$ be the differing edge between $H_{i-1},H_i$. Clearly $H_0\triangle H_t$ is a subgraph of the union of all these edges, call it $R$ ($H_0\triangle H_t$ may be a strict subgraph of $R$ if say at one step we flip an edge, and at another step flip it again). It suffices to bound the maximum degree of $R$. Let $R^\ast$ be the union of edges from \cref{Alg1}, and let $R_j^\ast,R_{\ell,j}^\ast$ be the final values of the record graphs $R_j,R_{\ell,j}$ for the various parameter choices.

The edges $e_i$ corresponding to \cref{Alg1} are at most $O(\sqrt{n/\log n})$ in number by the earlier analysis and hence $\Delta(R^\ast)=O(\sqrt{n/\log n})$.

For \cref{Alg2} with parameter $j$, let $k = 2^j$ and recall that there are at most $O(k/(\sqrt{n}(\log n)^5)+1)$ corresponding relevant steps $i$. For $j$ such that $k\le n(\log n)^2$, this amounts to $O(\sqrt{n}/(\log n)^3)$ total edges, so certainly $\Delta(R_j^\ast)=O(\sqrt{n}/(\log n)^3)$.

For $j$ such that $n(\log n)^2<k\le n^2$, consider any relevant step $i$. We have a graph $H_i\in\mc{F}_i$ and a current record graph $R_j$, and we choose $H_{i-1}\in\mc{F}_{i-1}$ at distance $1$ from $H_i$. Let $\Delta=\lfloor\sqrt{n}/(\log n)^2\rfloor$. If $\Delta(R_j)<\Delta$ before step $i$ then clearly $\Delta(R_j)\le\Delta$ after step $i$. Otherwise $\Delta(R_j)\ge\Delta$ before step $i$. By the definition of \cref{Alg2}, there are at least $k$ choices of $H_{i-1}\in\mc{F}_{i-1}$ at distance $1$ to $H_i$. Equivalently, there are at least $k$ choices of $e_i$. If all of these choices have at least one endpoint with degree at least $\Delta$ in $R_j$, then $R_j$ has at least $k/n$ vertices of degree at least $\Delta$, and hence at least $(k\Delta)/(2n)$ total edges. This contradicts our bound of $O(k/(\sqrt{n}(\log n)^5)+1)$ on the number of steps of \cref{Alg2} with parameter $j$. That is, there will always be a choice which maintains the maximum degree once if it ever hits $\Delta$. Thus we see that $\Delta(R_j^\ast)\le\Delta=O(\sqrt{n}/(\log n)^2)$.

Finally for \cref{Alg3} with parameters $\ell,j$, let $k=2^j$ and recall that earlier analysis showed that there are at most $O(\sqrt{k}(\log^{(\ell)}n)^2/\sqrt{\log n})$ total relevant steps $i$. We can perform a similar argument as the previous case. Let $\Delta_{\ell,j}=Kn(\log^{(\ell)}n)^4/\sqrt{2^j\log n}$ where $K$ is a sufficiently large absolute constant. By the definition of \cref{Alg3}, there are at least $k$ choices of $H_{i-1}$ or equivalently edges $e_i$. If all of these choices have at least one endpoint with degree at least $\Delta_{\ell,j}$ in $R_{\ell,j}$, then $R_{\ell,j}$ has at least $(k\Delta_{\ell,j})/(2n)$ total edges. This violates our bound on the number of relevant steps $i$ as long as $K$ was chosen sufficiently large. Hence a similar argument as in the analysis of \cref{Alg2} shows that $\Delta(R_{\ell,j}^\ast)\le\Delta_{\ell,j}$.

Finally,
\begin{align*}
\Delta(R)&\le\Delta(R^\ast)+\sum_{j=\lfloor(\log_2n)/2\rfloor}^{\lfloor\log_2\binom{n}{2}\rfloor}\Delta(R_j^\ast)+\sum_{\ell=2}^{\log^\ast n-1}\sum_{j=\lfloor\log_2n+30\log^{(\ell+1)}n\rfloor}^{\lfloor\log_2n+30\log^{(\ell)}n\rfloor}\Delta(R_{\ell,j}^\ast)\\
&\lesssim\sqrt{\frac{n}{\log n}}+(\log n)\cdot\frac{\sqrt{n}}{(\log n)^2}+\sum_{\ell=2}^{\log^\ast n-1}\frac{n(\log^{(\ell)}n)^4}{\sqrt{2^{\lfloor\log_2n+30\log^{(\ell+1)}n\rfloor}\log n}}\\
&\lesssim\sqrt{\frac{n}{\log n}}+\sqrt{\frac{n}{\log n}}\sum_{\ell=2}^{\log^\ast n-1}\frac{(\log^{(\ell)}n)^4}{2^{15\log^{(\ell+1)}n}}\lesssim\sqrt{\frac{n}{\log n}}\bigg(1+\sum_{\ell=2}^{\log^\ast n-1}\frac{1}{\log^{(\ell)}n}\bigg)\lesssim\sqrt{\frac{n}{\log n}}.
\end{align*}
We are done, since this establishes that every $H\in\mc{F}_t$ satisfies $\Delta(H\triangle H_0)=O(\sqrt{n/\log n})$ for some $H\in\mc{G}$, and since $\mu(\mc{F}_t)\ge 2\eps$.
\end{proof}

\subsection{Applications beyond the present work}\label{sub:boolean-application}
We finally elaborate on a remark made in \cref{sub:vertex-iso} stating that \cref{thm:boolean-function} may be used to derive the threshold for the capacity of the symmetric binary perceptron. Recall that the threshold for the symmetric binary perceptron with parameter $\kappa$ (with Bernoulli disorder) is defined via taking a series of random vectors $x_1,\ldots\in\{\pm 1\}^n$ and asking for the first time $\tau$ at which there does not exist $y\in\{\pm 1\}^n$ such that $|\sang{x_i, y}|\le \kappa\sqrt{n}$ for all $i\in[\tau]$. The work of Aubins, Perkins, and Zdeborov\'{a} \cite{APZ19} establish (based on the second moment method) the existence of a continuous function $\alpha(\kappa)$ such that with high probability the threshold is less than $(\alpha(\kappa)+o(1))n$ and with $\Omega(1)$ probability the threshold is at least $(\alpha(\kappa)-o(1))n$, if the disorder is Gaussian, and their technique applies in the case of Bernoulli disorder as well. Treating the vectors $x_1,\ldots,x_m$ as an $m\times n$ adjacency matrix of a bipartite graph, the remark following \cref{thm:boolean-function} regarding bipartite graphs immediately implies that at $(\alpha(\kappa)-o(1))n$ whp there exists a vector $y$ such that $|\sang{x_i,y}|\le(\kappa+o(1))\sqrt{n}$. Via continuity of $\alpha(\kappa)$, this establishes the sharp threshold for the symmetric perceptron model. We also note that an essentially identical argument proves that the number of solutions for a given $m$ concentrates on the exponenetial scale. (We note sharpness and counting solutions was shown without \cref{thm:boolean-function} by Abbe, Li, and Sly \cite{ALS21} for Bernoulli disorder and Perkins and Xu \cite{PX21} for Gaussian disorder.)

We note that in general arguments in the flavor of \cref{thm:boolean-function} in the context of perceptron model can prove sharp thresholds given ``monotonicity of the activation functions with respect to deformation'' in an appropriate sense. If one can prove a priori that the (not necessarily sharp) threshold is continuous with respect to an appropriate ``deformation parameter'', then \cref{thm:boolean-function} or similar arguments may apply to achieve sharpness. General results of this nature were established in work of Xu \cite{Xu21}, with a substantially simplified proof given in work of Nakajima and Sun \cite{NS22}. We note here that the techniques of Perkins and Xu \cite{Xu21} and  Nakajima and Sun \cite{NS22} rely on the ``row by row'' independence of the perceptron model and hence appear fundementally unsuited to give sharpness for our situation. The work of Xu \cite{Xu21} more similarly relies on Boolean functions and symmetry; however it requires as input deep work of Hatami \cite{Hat21}. Finally the work of Abbe, Li, and Sly \cite{ALS21} is based on an analogue of small subgraph conditioning for dense graphs and at the very least requires one to be able to compute the moments underlying \cref{thm:main} to a multiplicative $1+o(1)$ accuracy. This appears to be nigh computationally infeasible. Furthermore, one would need to show asymptotic normality for subgraph counts in various associated degree-constrained models (which has only recently seen progress \cite{SS22clt}).

\section{Setup for Second Moment Calculation}\label{sec:setup}
We now define the necessary definitions for the remainder of the paper. 
\begin{definition}\label{def:setup}
Let $G\sim \mb{G}(2n,1/2)$ and $V$ denote the vertex set of $G$. Furthermore, let $X_{\gamma}$ denote the number of equipartitions $V = Y_1\cup Y_2$ of $G$ such that for every vertex $y\in Y_1$ we have $\deg_{Y_1}(y)\ge\deg_{Y_2}(y)+\gamma\sqrt{n}$ and for every $y\in Y_2$ we have $\deg_{Y_2}(y)\ge\deg_{Y_1}(y)+\gamma\sqrt{n}$. We will always consider $\gamma\in[-1,1]$ with the property that $\gamma\sqrt{n}\in\mb{Z}$.
\end{definition}

The two crucial computations give a precise (up to constant) asymptotic for $\mb{E}[X_{\gamma}]$ and $\mb{E}[X_{\gamma}^2]$.  
\begin{lemma}\label{lem:first-moment}
Suppose that $|\gamma|\le 1$ and $\gamma\sqrt{n}\in\mb{Z}$. Then with notation as in \cref{def:setup},
\[\mb{E}X_\gamma\asymp\binom{2n}{n}\bigg(\sup_{\alpha\in \mb{R}} \exp(-\alpha^2)\mb{P}_{Z\sim \mc{N}(0,1)}[Z\ge(\gamma + \alpha)\sqrt{2}]\bigg)^{2n}=4^{-n}\binom{2n}{n}\exp(2n\sup_{\alpha\in\mb{R}}F_1(\alpha)).\]
\end{lemma}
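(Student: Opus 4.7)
By linearity of expectation, $\mb{E}X_\gamma = \binom{2n}{n}\cdot P$, where $P$ is the probability that a fixed equipartition $V = Y_1 \sqcup Y_2$ is $\gamma\sqrt{n}$-friendly, so it suffices to prove
\[P \;\asymp\; \left(\sup_{\alpha\in\mb{R}} e^{-\alpha^2}\,\mb{P}_{Z\sim\mc{N}(0,1)}[Z \ge (\gamma+\alpha)\sqrt{2}]\right)^{2n}.\]

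\textbf{Heuristic.} The signed-degree vector $(E_y)_{y\in V}$, with $E_y := \deg_{Y_i}(y) - \deg_{Y_{i+1}}(y)$ for $y\in Y_i$, is approximately Gaussian and exchangeable with variance $(2n-1)/4$ and all pairwise covariances equal to $1/4$. Writing the exchangeable decomposition $E_y = U/2 + \sqrt{(n-1)/2}\,V_y$ with $U, V_y$ i.i.d.\ standard normals, the constraint $E_y \ge \gamma\sqrt{n}$ becomes $V_y \ge \gamma\sqrt{2} - U/\sqrt{2n}$. Integrating over $U$, substituting $U = -2\alpha\sqrt{n}$, and applying Laplace's method yields
\[P \;\approx\; \int\phi(u)\,\mb{P}[Z\ge\gamma\sqrt{2} - u/\sqrt{2n}]^{2n}\,du \;\asymp\; \left(\sup_{\alpha} e^{-\alpha^2}\,\mb{P}[Z\ge(\gamma+\alpha)\sqrt{2}]\right)^{2n},\]
where the $\sqrt{n}$ Jacobian from the substitution cancels the $1/\sqrt{n}$ Laplace prefactor to give a genuine constant of proportionality.

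\textbf{Rigorous route.} To make this precise I decompose the edges of $G$ into three independent subgraphs $G[Y_1]$, $G[Y_2]$, and $B = G[Y_1,Y_2]$, then condition on $B$ to obtain vertex-wise cross-thresholds $c_y = \deg_B(y)$. The friendliness probability factorizes as an expectation over $B$ of the product of two analogous threshold probabilities for the internal graphs. I then apply the tilted-binomial degree-sequence comparison of \cref{sec:degree-models} to each internal graph (and its bipartite analogue to $B$), which introduces three tilting parameters $(p_{11}, p_{22}, p_{12})$ and, restricted to ``typical'' degree sequences, rewrites $P$ as a $3$-variable Gaussian integral of a product of per-vertex binomial tails. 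The $Y_1 \leftrightarrow Y_2$ symmetry forces $p_{11} = p_{22}$ at the saddle, and the remaining saddle-point condition pins down $p_{12}$ as a function of the common value, collapsing the Laplace optimization to the one-dimensional supremum of $F_1(\alpha)$. Strict concavity of $F_1$ at its maximizer, following from log-concavity of the Gaussian survival function, legitimizes Laplace and furnishes the constant prefactor required for the $\asymp$.

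\textbf{Main obstacle.} The hardest step is to a priori exclude the contribution of atypical degree sequences---those with internal or cross degrees far from the median---because the McKay--Wormald comparison is only uniformly accurate on typical sequences, while the friendliness event is exponentially rare. This is handled by the switching argument of \cref{sec:atypical-switching}: unlike the monotone maximum-degree constraint considered by McKay--Wanless--Wormald, friendliness is non-monotone and couples degrees across the partition through the cross edges, so the switchings must be adapted to the three-edge-group decomposition and exploit the fact that only a sublinear fraction of vertices saturate the threshold.
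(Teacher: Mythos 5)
Your proposal matches the paper's proof in all essential respects: linearity of expectation, decomposition into the two internal graphs and the bipartite cross-graph, transfer to the integrated (tilted-binomial) degree model from \cref{sec:degree-models}, elimination of atypical degree sequences via the switching machinery of \cref{sec:atypical-switching}, and a log-concavity/Laplace reduction of the resulting three-parameter Gaussian integral to the one-dimensional supremum of $F_1$. The only presentational difference is that the paper reduces from three tilting parameters to one by using log-concavity of the Gaussian survival function as an \emph{inequality} (merging the two per-part probabilities and then observing the integrand depends only on a single linear combination of the tilts, so the orthogonal directions integrate to a constant), rather than by literally locating the symmetric saddle; but this is the same underlying idea, and your mention of log-concavity indicates you see this.
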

\begin{lemma}\label{lem:second-moment}
Suppose that \cref{asm:comp} holds. Fix $\eta>0$ and suppose $\gamma \in [\gamma_{\mr{crit}}-\eps_{\ref{asm:comp}}, \gamma_{\mr{crit}} + \eps_{\ref{asm:comp}}]$ and $\gamma\sqrt{n}\in\mb{Z}$. Then with notation as in \cref{def:setup},
\[\mb{E}X_\gamma^2\lesssim_\eta(1+\eta)^n\mb{E}X_\gamma+16^{-n}\binom{2n}{n}^2\exp\bigg(n\sup_{\substack{\beta\in[0,1]\\\alpha_1,\alpha_2\in \mb{R}}} F_2(\beta,\alpha_1,\alpha_2)\bigg).\]
\end{lemma}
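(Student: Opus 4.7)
The plan is to parametrize the sum defining $\mb{E}X_\gamma^2$ by the overlap pattern of the two ordered equipartitions $(Y_1,Y_2)$ and $(Z_1,Z_2)$, evaluate the per-pair probability using the degree-sequence models of \cref{sec:degree-models}, and then invoke \cref{asm:comp} together with the log-concavity reductions of \cref{sec:convex}. Since $|Y_i|=|Z_j|=n$ for all $i,j$, the overlap pattern is determined by a single parameter $\beta$ with $|Y_1\cap Z_1|=|Y_2\cap Z_2|=\beta n$ and $|Y_1\cap Z_2|=|Y_2\cap Z_1|=(1-\beta)n$, and the number of ordered pairs of equipartitions with overlap $\beta$ is exactly $\binom{2n}{n}\binom{n}{\beta n}^2$.

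The next step is to compute the simultaneous friendliness probability for a fixed such pair. Each vertex in a given block $A=Y_1\cap Z_1$, $B=Y_1\cap Z_2$, $C=Y_2\cap Z_1$, $D=Y_2\cap Z_2$ must satisfy two linear inequalities in its degrees to the four blocks (one per partition), and these pair up by symmetry: $A$ and $D$ have size $\beta n$, while $B$ and $C$ have size $(1-\beta)n$. First I would apply the switching argument of \cref{sec:atypical-switching} to discard the contribution of atypical degree sequences (so that the McKay--Wormald machinery of \cref{sec:degree-models} applies uniformly), then use the tilted-binomial degree model adapted to the four-block setup. After a saddle-point reduction in the two global tilt parameters $(\alpha_1,\alpha_2)$ that govern the edge densities of the two "sides" of the overlap, the probability factorizes into a product $f(\beta,\alpha_1)^{2\beta n}\,f(1-\beta,\alpha_2)^{2(1-\beta)n}$ weighted by the Gaussian factors $e^{-2n\alpha_i^2}$, precisely matching the definition of $F_2$ from \cref{def:special-func}.

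Multiplying by the counting factor $\binom{2n}{n}\binom{n}{\beta n}^2$, invoking Stirling, and grouping the $2\log 2-2\beta\log\beta-2(1-\beta)\log(1-\beta)$ entropy contributions into $F_2$ produces a per-$\beta$ bound of the shape $16^{-n}\binom{2n}{n}^2 \exp(n\sup_{\alpha_1,\alpha_2} F_2(\beta,\alpha_1,\alpha_2))$. I then split the sum over $\beta$ into a bulk regime $\beta\in[0.001,0.999]$ and a near-diagonal regime $\beta\in[0,0.001]\cup[0.999,1]$. In the bulk regime, the second equation of \cref{asm:comp} pins the supremum of $F_2$ at $(1/2,\alpha(\gamma),\alpha(\gamma))$ with strictly negative definite Hessian and value $4\sup F_1$; a Laplace-type estimate turns the discrete sum into a bounded integral and recovers the second term in the claim. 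In the near-diagonal regime, the first equation of \cref{asm:comp} bounds $\sup F_2$ by $2\sup F_1$; combining with the crude estimate $\binom{n}{\beta n}^2\le(1+\eta/2)^n$ valid throughout $\beta\in[0,0.001]\cup[0.999,1]$ (once the threshold $0.001$ is small enough relative to $\eta$), and invoking \cref{lem:first-moment}, this contribution is at most $(1+\eta)^n\mb{E}X_\gamma$, matching the first term.

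The main obstacle will be the clean reduction of the per-pair friendliness probability to the three-parameter functional $F_2$. The McKay--Wormald degree enumeration most naturally introduces on the order of ten independent tilt parameters (one per directed block interaction encoded by an edge of the partition-overlap graph), and the log-concavity statements of \cref{sec:convex} are essential to collapse this high-dimensional optimization down to the $(\beta,\alpha_1,\alpha_2)$-optimization captured by $F_2$. A secondary delicacy is uniformity in $\beta$: the tilted-binomial degree model is only valid for typical degree sequences, so the switching argument of \cref{sec:atypical-switching} must be engineered with enough slack to absorb the atypical contribution into the stated $(1+\eta)^n$ factor even in the near-diagonal regime, where the "one-sided" constraints are highly correlated and double-counting must be avoided.
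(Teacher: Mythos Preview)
Your overall strategy matches the paper's: parametrize by the overlap $\beta$, transfer to the integrated degree model of \cref{sec:degree-models}, collapse the resulting ten-parameter Gaussian integral down to two via the log-concavity of \cref{lem:log-concave-2} combined with the $A\leftrightarrow B$ and $(A_1,B_2)\leftrightarrow(A_2,B_1)$ symmetries, and then split according to \cref{asm:comp} with a Laplace estimate around $(1/2,\alpha(\gamma),\alpha(\gamma))$ in the bulk. You correctly flag the ten-to-two collapse as the main obstacle (and note it is a symmetry/log-concavity upper bound on the integrand, not a saddle-point argument).

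There is, however, a genuine gap in your near-diagonal handling. The threshold $0.001$ in \cref{asm:comp} is fixed and cannot be moved, so your parenthetical ``once the threshold $0.001$ is small enough relative to $\eta$'' reflects a misconception: for small $\eta$ the estimate $\binom{n}{\beta n}^2\le(1+\eta/2)^n$ fails badly on $[0,0.001]$. More seriously, the degree-model transfer (\cref{lem:integrate-p-bip-graph}) and the switching lemmas (\cref{lem:friendly-exp-lower}, \cref{prop:friendly-var-deg}, \cref{prop:large-degree-2}) all require the block sizes $\beta n,(1-\beta)n$ to be comparable, i.e.\ $\beta\in[c,1-c]$ for some fixed $c>0$; they do not apply as $\beta\to 0$, and no amount of slack in the switchings fixes this. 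The paper therefore uses \emph{three} regimes, not two. One first chooses $c=c(\eta)$ small and on $\beta\in[0,c)\cup(1-c,1]$ uses the trivial bound $\mb{P}[Y_A=Y_B=1]\le\mb{P}[Y_A=1]$ together with $\sum_{\beta<c}\binom{n}{\beta n}^2\le(1+\eta)^n$ to produce the $(1+\eta)^n\mb{E}X_\gamma$ term directly (this is \cref{eq:second-moment-small-overlap}). Only on the remaining range $\beta\in[c,0.001]\cup[0.999,1-c]$ does one invoke the degree model and the first equation of \cref{asm:comp}; since $F_2$ already contains the entropy term $-2\beta\log\beta-2(1-\beta)\log(1-\beta)$, no separate bound on $\binom{n}{\beta n}^2$ is needed there, and the polynomial prefactor from summing $O(n)$ values of $\beta$ is absorbed into $(1+\eta)^n$ via \cref{lem:first-moment}. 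This extra splitting is what makes the $\lesssim_\eta$ dependence come out correctly.
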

\begin{remark}
We certainly believe that \cref{lem:second-moment} holds for all $\gamma$; the restriction on the range of $\gamma$ in consideration comes precisely from \cref{asm:comp}.
\end{remark}

We now prove \cref{thm:main} given \cref{asm:comp,lem:first-moment,lem:second-moment}. In the following argument we will not be too sensitive about $\gamma\sqrt{n}$ being an integer as it is inconsequential here, but we note that in \cref{sec:moment} where \cref{lem:first-moment,lem:second-moment} are proven one must be careful about integrality concerns.
\begin{proof}
Choose $\eps>0$ sufficiently small and note direct numerical computation verifies $0\le\gamma_{\mr{crit}}\le 1/2$. We first prove that with high probability, $G\sim\mb{G}(2n,1/2)$ has no equipartitions where each vertex has at least $(\gamma_{\mr{crit}}/\sqrt{2}+\eps)\sqrt{2n}$ more vertices in its own part than in the other side. Note that $\gamma\mapsto\sup_{\alpha\in\mb{R}}(-\alpha^2 + \mb{P}_{Z\sim{\mc{N}(0,1)}}[Z\ge(\gamma + \alpha)\sqrt{2}])$ is trivially a strictly decreasing function. Therefore by \cref{lem:first-moment,def:const}, the number of $(\gamma_{\mr{crit}}/\sqrt{2}+\eps)\sqrt{2n}$-friendly equipartitions in expectation is exponentially small. By Markov's inequality, there are no such equipartitions with probability $1-\exp(-\Omega_\eps(n))$. The analogous result for odd numbers can be deduced as follows: if there exists a sufficiently friendly partition in $\mb{G}(2n-1,1/2)$ then adding a vertex with uniformly random neighbors to the smaller part, making the total number of vertices even, gives a partition which is $(\gamma_{\mr{crit}}/\sqrt{2}+\eps/2)\sqrt{2n}$-friendly with probability $\Omega(1)$ (namely, approximately the probability that the extra vertex is sufficiently friendly), a contradiction to the even case.

We next prove that with high probability, $G\sim\mb{G}(2n,1/2)$ has a $(\gamma_{\mr{crit}}/\sqrt{2}-\eps)\sqrt{2n}$-friendly equipartition whp. Let $\gamma=\gamma_{\mr{crit}}-\eps/2$. By $\sup_{\alpha\in\mb{R}}F_1(\alpha)$ decreasing as $\gamma$ increases and the definition of $\gamma_{\mr{crit}}$, along with \cref{asm:comp}, we know that $\sup_{\alpha\in\mb{R}}F_1(\alpha)>0$ hence $\sup_{\substack{\beta\in[0,1]\\\alpha_1,\alpha_2\in\mb{R}}}F_2(\beta,\alpha_1,\alpha_2)=4F_1(\alpha)$. Then \cref{lem:first-moment,lem:second-moment} with $\eta$ sufficiently small in terms of $\eps$ demonstrates
\[\mb{E}X_\gamma^2\lesssim_\eps(\mb{E}X_\gamma)^2.\]
Therefore, since $X_\gamma$ takes on nonnegative integer values, we have $\mb{P}[X_\gamma\neq 0]=\Omega_\eps(1)$. That is, we have at least a constant lower bound on the desired probability. The next step is to use the techniques of \cref{sec:boolean-functions} to boost this probability to $1-o(1)$.

Let $\mc{P}$ be the property of graphs on $2n$ vertices defined by having a $(\gamma_{\mr{crit}}-\eps/2)$-friendly bisection. We have $\mu(\mc{P})=\Omega_\eps(1)$ by the above analysis, borrowing the notation from \cref{sec:boolean-functions}. Then \cref{thm:boolean-function} implies $\mu(\mc{P}_{\sqrt{n}(\log n)^{-1/4}})=1-o(1)$. But notice that under the metric defined in \cref{def:dist}, at every vertex the set of neighbors differs by at most $\sqrt{n}(\log n)^{-1/4}$ additions and deletions when moving from a graph in $\mc{P}$ to $\mc{P}_{\sqrt{n}(\log n)^{-1/4}}$. Therefore a $(\gamma_{\mr{crit}}-\eps/2)$-friendly bisection guaranteed in $\mc{P}$ will certainly remain at least $(\gamma_{\mr{crit}}-\eps)$-friendly in $\mc{P}_{\sqrt{n}(\log n)^{-1/4}}$. The result follows for graphs with $2n$ vertices.

Finally, to prove this part of \cref{thm:main} for graphs with $2n-1$ vertices, simply add a vertex with uniformly random neighborhood to form $\mb{G}(2n,1/2)$, find a $(\gamma_{\mr{crit}}-\eps/2)$-friendly bisection, and then remove the extra vertex and check that the result is still $(\gamma_{\mr{crit}}-\eps)$-friendly. We are done.
\end{proof}

In the remainder of the body of the paper we focus on proving \cref{lem:first-moment,lem:second-moment}.

\section{Degree Models}\label{sec:degree-models}
\subsection{Degree enumeration}\label{sub:enumeration}
We will first require the following results regarding degree enumeration due to \cite{MW90} and \cite{CGM08}, respectively.
\begin{theorem}[{\cite{MW90}}]\label{thm:enum-graph}
There exists a fixed constant $\eps=\eps_{\ref{thm:enum-graph}}>0$ such that the following holds. Consider a sequence $\mbf{d} = (d_1, \dots, d_n)$ with even sum such that, writing $\ol{d}=(1/n)\sum_{i=1}^n d_i$, we have
\begin{itemize}
    \item $|d_i-\ol{d}|\le n^{1/2+\eps}$ for $1\le i\le n$, and
    \item $\ol{d}\ge n/\log n$.
\end{itemize}
Writing $m = \ol{d}n/2\in\mb{Z}$, $\mu = \ol{d}/(n-1)$, and $\gamma_2^2 = (1/(n-1)^2)\sum_{i=1}^n(d_i-\ol{d})^2$, the number of labeled graphs with degree sequence $\mbf{d}$ is
\begin{align*}
(1\pm O(n^{-1/4}&))  \exp\left(\frac{1}{4}-\frac{\gamma_2^2}{4\mu^2(1-\mu)^2}\right)\binom{n(n-1)/2}{m}\binom{n(n-1)}{2m}^{-1}\prod_{i=1}^n\binom{n-1}{d_i}. 
\end{align*}
\end{theorem}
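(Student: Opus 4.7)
The plan is to adapt the standard saddle-point / complex-analytic approach to enumeration of labeled graphs with prescribed degree sequences. The starting point is the identity
\[
R(n,\mathbf{d}) \;=\; [x_1^{d_1}\cdots x_n^{d_n}]\prod_{1\le i<j\le n}(1+x_ix_j),
\]
which by Cauchy's integral formula can be rewritten as an $n$-fold contour integral over circles $|x_i|=r_i$. I would choose the radii as the (unique real positive) saddle point of the integrand, which by taking logarithmic derivatives is given by the system $d_i \;=\;\sum_{j\ne i}\frac{r_i r_j}{1+r_i r_j}$; a short perturbative argument around the uniform choice $r_i\equiv \sqrt{\mu/(1-\mu)}$ shows such saddle values exist under the hypotheses $|d_i-\ol d|\le n^{1/2+\eps}$ and $\ol d\ge n/\log n$, and satisfy $r_i = \sqrt{\mu/(1-\mu)}(1+O(n^{-1/2+\eps}))$.

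After substituting $x_j = r_j e^{i\theta_j}$, the integral becomes
\[
R(n,\mathbf{d}) \;=\;\frac{1}{(2\pi)^n}\prod_j r_j^{-d_j}\int_{[-\pi,\pi]^n}\prod_{i<j}(1+r_ir_j e^{i(\theta_i+\theta_j)})\,d\vec\theta.
\]
I would first confine attention to a box $|\theta_j|\le n^{-1/2+\delta}$ by proving a standard tail bound: away from the saddle, $|\prod_{i<j}(1+r_ir_je^{i(\theta_i+\theta_j)})|$ is suppressed by a factor of the form $\exp(-cn\|\vec\theta\|_2^2)$, so the tail contributes a super-polynomially small relative error. Inside the box, expand $\log(1+r_ir_je^{i(\theta_i+\theta_j)})$ in a Taylor series. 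The quadratic terms yield a Gaussian integral with covariance matrix that is, up to rank-one correction from the constraint $\sum d_i=2m$, a scalar multiple of the identity; computing this integral gives the leading product $\binom{n-1}{d_i}$ along with the binomial normalizers appearing in the stated formula. The cubic terms vanish after averaging in $\vec\theta$ by symmetry. The quartic terms, evaluated using standard Gaussian-moment formulas (Isserlis/Wick), produce the explicit constant $1/4$; keeping track of how the saddle depends on the individual $d_i-\ol d$ through the system above contributes the quadratic piece $-\gamma_2^2/(4\mu^2(1-\mu)^2)$, since this term measures the deviation of the log-partition function at the true saddle from its value at the ``uniform'' saddle.

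The main obstacle is making the saddle-point expansion rigorous with error $O(n^{-1/4})$ rather than $o(1)$: this requires (i) a sharp tail bound that beats the constant contributions from quartic cumulants, (ii) control over the sixth-order terms in the Taylor expansion which contribute at order $n^{-1/2}$ or smaller after Gaussian integration, and (iii) careful tracking of how the chosen saddle varies with $\mathbf{d}$ so that the perturbation in $r_i$ does not spoil the polynomial approximation $\binom{n-1}{d_i}$ for $d_i$'s that deviate from $\ol d$ on the scale $n^{1/2+\eps}$. The hypothesis $\ol d\ge n/\log n$ is precisely what is needed so that $\mu(1-\mu)$ is not too small, ensuring the Gaussian covariance is well-conditioned and the quartic correction remains bounded; a parallel argument would handle the sparse case but requires different estimates. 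A cleaner, purely combinatorial route goes through switching: express $R(n,\mathbf{d})/R(n,\mathbf{d}')$ for degree sequences $\mathbf{d},\mathbf{d}'$ differing by a single switch as a ratio that concentrates to an explicit formula in $\mu$ and $\gamma_2^2$, then integrate this along a path from $\mathbf{d}$ to the regular sequence for which the count is known. Either way, the bottleneck is the quartic term controlling the absolute constant $1/4$.
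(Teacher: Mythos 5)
This statement is imported directly from McKay--Wormald \cite{MW90}; the paper does not prove it, so there is no in-paper argument to compare against. Your sketch is indeed the strategy of the cited paper: coefficient extraction rewritten as an $n$-fold Cauchy contour integral, radii placed at the real positive saddle solving $d_i=\sum_{j\ne i}r_ir_j/(1+r_ir_j)$, a concentration estimate confining the integral to a polydisc of scale $n^{-1/2+\delta}$, and an Edgeworth-type expansion of the integrand near the saddle whose quadratic part produces the $\binom{n-1}{d_i}$ factors and binomial normalizers, with higher-order cumulants supplying the $\exp(1/4-\gamma_2^2/(4\mu^2(1-\mu)^2))$ correction.

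One substantive imprecision worth flagging: you assert that the cubic terms in the expansion of $\log(1+r_ir_je^{i(\theta_i+\theta_j)})$ ``vanish after averaging by symmetry.'' That is only true at first order. The cubic contribution in the exponent is purely imaginary, so its linear term vanishes under the even Gaussian measure, but its square does \emph{not}, and that square contributes at the same order as the first-order quartic term. In other words, the constant in front of $\exp(\cdot)$ receives contributions both from quartic cumulants and from squared cubic cumulants, and the $1/4$ and the $\gamma_2^2$-piece only come out right after combining them; dropping the squared cubic would give the wrong constant. This is the usual structure in Edgeworth expansions of lattice local limit theorems, and it is one of the places where the $O(n^{-1/4})$ error bound requires genuine care. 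Separately, the alternative ``purely combinatorial'' switching route you float is not known to work at this density; switchings are the workhorse for sparse degree sequences, while the dense regime addressed by \cite{MW90} has been handled via the analytic saddle-point method precisely because a switching path from $\mathbf{d}$ to a regular sequence does not concentrate cleanly at the $1+O(n^{-1/4})$ level.
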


\begin{theorem}[{\cite{CGM08}}]\label{thm:enum-bigraph}
There exists a fixed constant $\eps=\eps_{\ref{thm:enum-bigraph}}>0$ such that the following holds. 
Consider a pair of sequences $(\mbf{s} = (s_1, \dots, s_n), \mbf{t} = (t_1, \dots, t_m))$ with identical sums such that, writing $\ol{s}=(1/n)\sum_{i=1}^n s_i$ and $\ol{t}=(1/n)\sum_{i=1}^m t_i$, we have

\begin{itemize}
    \item $n/(\log n)^{1/2}\le m\le n(\log n)^{1/2}$,
    \item $|s_i-\ol{s}|\le n^{1/2+\eps}$ for $1\le i\le n$ and $|t_i-\ol{t}|\le m^{1/2+\eps}$ for $1\le i\le m$, and
    \item $\ol{s}\ge n/(\log n)^{1/2}$ and $\ol{t} \ge m/(\log m)^{1/2}$.
\end{itemize}

Writing $\mu = \sum_{i=1}^n s_i/(mn) = \sum_{i=1}^m t_i/(mn)$, $\gamma_2(\mbf s)^2 = (1/(mn))\sum_{i=1}^n(s_i-\ol{s})^2$ and $\gamma_2(\mbf t)^2 = (1/(mn))\sum_{i=1}^m(t_i-\ol{t})^2$, the number of labeled bipartite graphs whose partition classes have degree sequences $\mbf{s}$ and $\mbf{t}$ is
\begin{align*}
(1\pm O(n^{-1/8}&))\exp\left(-\frac{1}{2}\left(1-\frac{\gamma_2(\mbf{s})^2}{\mu(1-\mu)}\right)\left(1-\frac{\gamma_2(\mbf{t})^2}{\mu(1-\mu)}\right)\right)\binom{mn}{mn\mu}^{-1}\prod_{i=1}^n\binom{m}{s_i}\prod_{i=1}^m\binom{n}{t_i}.
\end{align*}
\end{theorem}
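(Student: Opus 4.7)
The plan is to establish \cref{thm:enum-bigraph} via the probabilistic identity that $N(\mbf{s},\mbf{t})$ equals, up to a simple multiplicative factor, the joint probability that an i.i.d.\ Bernoulli$(\mu)$ array on the $n\times m$ grid has row sums $\mbf{s}$ and column sums $\mbf{t}$, and then applying a multivariate local central limit theorem to that joint probability.

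First, let $(X_{ij})$ be i.i.d.\ Bernoulli$(\mu)$ on the $n\times m$ grid, with row sums $R_i$ and column sums $C_j$. Since $\mb{P}[X=x]=\mu^{|x|}(1-\mu)^{mn-|x|}$ depends only on the number of ones, we have $N(\mbf{s},\mbf{t})=\mu^{-mn\mu}(1-\mu)^{-mn(1-\mu)}\cdot\mb{P}[R=\mbf{s},C=\mbf{t}]$. Applying Bayes' rule to the event $\{T=mn\mu\}$ where $T=\sum_{ij} X_{ij}$ gives
\[
\mb{P}[R=\mbf{s},C=\mbf{t}]=\frac{\mb{P}[R=\mbf{s}]\,\mb{P}[C=\mbf{t}]}{\mb{P}[T=mn\mu]}\cdot\Phi(\mbf{s},\mbf{t}),
\]
where $\Phi$ measures the (small) deviation of $R$ from being conditionally independent of $C$ given $T=mn\mu$. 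Because rows and columns are independent vectors of i.i.d.\ Bernoullis, the prefactor rearranges to $\binom{mn}{mn\mu}^{-1}\prod_i\binom{m}{s_i}\prod_j\binom{n}{t_j}\cdot\mu^{mn\mu}(1-\mu)^{mn(1-\mu)}$, which exactly reproduces the product of binomial coefficients appearing in \cref{thm:enum-bigraph}. What remains is to prove that
\[
\Phi(\mbf{s},\mbf{t})=(1+O(n^{-1/8}))\exp\Bigl(-\tfrac12(1-\gamma_2(\mbf{s})^2/(\mu(1-\mu)))(1-\gamma_2(\mbf{t})^2/(\mu(1-\mu)))\Bigr).
\]

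Second, I would extract $\Phi$ from a Fourier-inversion / saddle-point analysis of the characteristic function
\[
\mb{P}[R=\mbf{s},C=\mbf{t}]=\frac{1}{(2\pi)^{n+m}}\int_{[-\pi,\pi]^{n+m}}e^{-i(\theta\cdot\mbf{s}+\phi\cdot\mbf{t})}\prod_{i,j}\bigl(\mu e^{i(\theta_i+\phi_j)}+(1-\mu)\bigr)\,d\theta\,d\phi.
\]
The Hessian of the log-integrand at $\theta=\phi=0$ has nonzero eigenvalues $\mu(1-\mu)(m+n)$, $m\mu(1-\mu)$ (multiplicity $n-1$), and $n\mu(1-\mu)$ (multiplicity $m-1$), with a one-dimensional kernel spanned by $(\mbf{1}_n,-\mbf{1}_m)$ reflecting $\sum R_i=\sum C_j$. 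Localizing to a box $|\theta_i|,|\phi_j|\le n^{-1/2+\eta}$ modulo this gauge direction and expanding the log-integrand to quadratic order yields a Gaussian integral whose value at $(\mbf{s}-m\mu\mbf{1}_n,\mbf{t}-n\mu\mbf{1}_m)$, when divided by the Stirling approximations of the binomial coefficients on the other side of the ratio, collapses to the asserted factor; the product structure $\exp(-\tfrac12\prod\cdots)$ arises because the bulk quadratic pieces in $s_i-m\mu$ and $t_j-n\mu$ are already absorbed into the marginal binomials, leaving only the cross-interaction between $R$ and $C$ mediated by the constraint on $T$.

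The main obstacle will be the quantitative tail bound required to justify this localization, with relative error $O(n^{-1/8})$. The key analytic input is the uniform inequality $|\mu e^{i\psi}+(1-\mu)|^2\le 1-c\mu(1-\mu)\psi^2$ for $|\psi|\le\pi$, which after summing over all $nm$ pairs $(i,j)$ yields Gaussian suppression of order $\exp(-c\mu(1-\mu)\cdot mn\cdot n^{-1+2\eta})$ on the complement of the saddle box. The hypotheses $\mu\gtrsim(\log n)^{-1/2}$ and $n/\sqrt{\log n}\le m\le n\sqrt{\log n}$ are precisely what make this suppression beat the error target, while the near-regularity bound $|s_i-\bar s|,|t_j-\bar t|\le n^{1/2+\eps}$ is needed to control the cubic and higher Taylor terms inside the central region; without such a bound the cubic remainders would dominate the quadratic saddle and the formula asserted in \cref{thm:enum-bigraph} would cease to be the correct leading order.
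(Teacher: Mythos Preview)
The paper does not prove \cref{thm:enum-bigraph}; it is quoted verbatim as an external result from \cite{CGM08} (Canfield--Greenhill--McKay) and used as a black box in \cref{sub:enumeration}. There is therefore no ``paper's own proof'' to compare against.

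Your sketch is in the spirit of the saddle-point method that \cite{CGM08} (and its predecessor \cite{MW90} for the non-bipartite case) actually use, and the probabilistic reformulation in terms of a Bernoulli$(\mu)$ array with fixed row and column sums is correct. The identification of the gauge direction $(\mathbf{1}_n,-\mathbf{1}_m)$ and the quadratic saddle is right. That said, the sketch glosses over exactly the part where the work lies: controlling the integral away from the main saddle, and in particular away from the \emph{secondary} stationary points of the integrand (the characteristic function has modulus $1$ whenever every $\theta_i+\phi_j\in 2\pi\mb{Z}$, not only at the origin modulo gauge). In the actual proofs this is handled by a careful decomposition of $[-\pi,\pi]^{n+m}$ into boxes and a separate argument to show the other lattice saddles contribute negligibly; your tail bound as written only suppresses the region far from the full lattice of saddles, not the non-principal saddles themselves. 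Also, extracting the specific constant $-\tfrac12(1-\gamma_2(\mathbf{s})^2/(\mu(1-\mu)))(1-\gamma_2(\mathbf{t})^2/(\mu(1-\mu)))$ requires carrying the quartic Taylor terms through the Gaussian integral, not just the quadratic ones; the cross-term you allude to does not fall out of the Hessian alone. These are the genuine technical hurdles, and your outline would need a full saddle-point expansion of the type in \cite{CGM08} to close them.
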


We now define a plethora of degree sequence models for random graphs that will be needed for the computations. At a high level, the work of McKay and Wormald \cite{MW97} and McKay and Skerman \cite{MS16} demonstrate that degrees of random graphs look independent conditional on, for example, total edge count. These models provide a way to encapsulate these facts quantitatively; however the precise result in \cite{MW97, MS16} are not sufficient for our work and we will require a number of suitable modifications. 
\begin{definition}[Degree sequence domains]\label{def:degree-sets}
Let $I_n = \{0,\ldots,n-1\}^n$, $E_n$ be the even sum sequences in this set, and $I_n^\ell$ be the sum $\ell$ sequences. We will typically denote elements of these sets by $\mbf{d}$. Let $I_{m,n} = \{0,\ldots,n\}^m\times\{0,\ldots,m\}^n$, $E_{m,n}$ be the sequences with equal sums on both sides, and $E_{m,n}^\ell$ be the sequences with equal sums $\ell$. We will typically denote elements of these sets by $\mbf{s}$ of length $m$ and $\mbf{t}$ of length $n$. We will denote random variable versions of these by capital boldface instead.
\end{definition}
\begin{definition}[True degree models]\label{def:true-model}
$\mc{D}_p^n$ is the degree sequence distribution of $\mb{G}(n,p)$, which is a random variable supported on $E_n\subseteq I_n$. $\mc{D}_p^{m,n}$ is the degree sequence distribution of a bipartite graph with $m$ vertices on one side and $n$ on the other, each edge included independently with probability $p$, which is a random variable supported on $E_{m,n}\subseteq I_{m,n}$.
\end{definition}
\begin{definition}[Independent degree models]\label{def:independent-model}
$\mc{B}_p^n$ is the distribution of $n$ independent $\mr{Bin}(n-1,p)$ random variables, supported on $I_n$. $\mc{B}_p^{m,n}$ is the distribution of $m$ independent $\mr{Bin}(n,p)$ and $n$ independent $\mr{Bin}(m,p)$ variables, supported on $I_{m,n}$.
\end{definition}
\begin{definition}[Conditioned degree models]\label{def:conditioned-model}
$\mc{E}_p^n$ is the distribution of $\mc{B}_p^n$ conditioned on having even sum, supported on $E_n$. $\mc{E}_p^{m,n}$ is the distribution of $\mc{B}_p^{m,n}$ conditioned on having equal sums on both sides, supported on $E_{m,n}$.
\end{definition}
\begin{definition}[Integrated degree models]\label{def:integrated-model}
$\mc{I}_p^n$ is the distribution sampled as follows. Sample $p'\sim\mc{N}(p,p(1-p)/(n^2-n))$, conditional on being in $(0,1)$. Then sample from $\mc{E}_{p'}^n$. $\mc{I}_p^{m,n}$ is the distribution sampled as follows. Sample $p'\sim\mc{N}(p,p(1-p)/(2mn))$, conditional on being in $(0,1)$. Then sample from $\mc{E}_{p'}^{m,n}$.
\end{definition}

The key reason these degree models will prove crucial in our analysis is that they allow, losing a constant, an estimate for the probability that a random graph or bipartite graph has a particular degree sequence:
\begin{lemma}\label{lem:integrate-p-graph}
Fix $\eps\in(0,\eps_{\ref{lem:integrate-p-graph}}]$ and $C>0$. Let a sequence $\vec{d}\in\mb{Z}^n$ be $(C,\eps)$-regular if:
\begin{enumerate}[{\bfseries{A\arabic{enumi}}}]
    \item\label{A1} $\sup_{i\in[n]}|d_i-(n-1)/2|\le n^{1/2+\eps}$,
    \item\label{A2} $\sum_{i\in[n]} |d_i - (n-1)/2|^2\le Cn^2$,
    \item\label{A3} $2|\sum_{i\in[n]}d_i$.
\end{enumerate}
Then there exists $C' = C_{\ref{lem:integrate-p-graph}}'(C,\eps)$ such that for all $(C,\eps)$-regular $\vec{d}$ we have
\[\frac{1}{C'}\le\frac{\mb{P}_{\mc{D}_{1/2}^n}[\mbf{D} = \vec{d}]}{\mb{P}_{\mc{I}_{1/2}^n}[\mbf{D} = \vec{d}]}\le C'.\]
\end{lemma}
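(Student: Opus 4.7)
The plan is to expand both $\mb{P}_{\mc{D}_{1/2}^n}[\mbf D=\vec d]$ and $\mb{P}_{\mc{I}_{1/2}^n}[\mbf D=\vec d]$ in closed form up to multiplicative constants and verify they agree. Set $M = \binom{n}{2}$, $2m = \sum_i d_i$ (even by \cref{A3}), and $\eta := 2m - M$; Cauchy--Schwarz together with \cref{A2} gives $|\eta| = O(n^{3/2})$.

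For the true model I would apply \cref{thm:enum-graph} to $N(\vec d)$, taking $\eps_{\ref{lem:integrate-p-graph}}\le\eps_{\ref{thm:enum-graph}}$. Under \cref{A1,A2,A3}, the exponential prefactor $\exp(\tfrac14 - \gamma_2^2/(4\mu^2(1-\mu)^2))$ is $\Theta(1)$, since \cref{A2} forces $\gamma_2^2 = O(1)$ and \cref{A1} forces $\mu = 1/2 + O(n^{-1/2+\eps})$. Stirling applied to $\binom{M}{m}\binom{2M}{2m}^{-1}$, together with a Taylor expansion around the maximum at $m=M/2$ (and using $|\eta|=O(n^{3/2})$ to bound the quartic error by $O(\eta^4/M^3)=O(1)$), then yields
\[
\mb{P}_{\mc{D}_{1/2}^n}[\mbf D = \vec d] \;=\; \frac{N(\vec d)}{2^M}\;\asymp\; 4^{-M}\exp\bigl(\eta^2/(2M)\bigr) \prod_{i=1}^n \binom{n-1}{d_i}.
\]

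For the integrated model I would unfold \cref{def:integrated-model,def:conditioned-model,def:independent-model}. Since $\mb{P}_{\mc{B}_{p'}^n}[\sum_i D_i\text{ even}]=\tfrac12(1+(1-2p')^{n(n-1)})$ differs from $1/2$ by a super-exponentially small error for $p'$ in the bulk of $\phi$, and since both the Gaussian tails and the $(0,1)$-conditioning of \cref{def:integrated-model} contribute negligibly, this reduces to
\[
\mb{P}_{\mc{I}_{1/2}^n}[\mbf D = \vec d] \;\asymp\; \prod_{i=1}^n \binom{n-1}{d_i} \int \phi(p')\,(p')^{2m}(1-p')^{2(M-m)}\,dp'.
\]
I would then evaluate this integral by Laplace's method: substituting $p' = 1/2 + t$ and Taylor-expanding $(1/2+t)^{2m}(1/2-t)^{2(M-m)} = 4^{-M}\exp(4\eta t - 4Mt^2 + R(t))$ with $R(t) = O(|\eta|t^3 + Mt^4)$, and combining with the Gaussian weight $\exp(-4Mt^2)$ coming from $\phi$ (whose variance is $\sigma^2 = 1/(8M)$), gives a combined quadratic $4\eta t - 8Mt^2$ peaked at $t^\ast = \eta/(4M) = O(n^{-1/2})$ with bulk width $O(1/n)$. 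In this window $R(t) = O(1)$ and higher-order Taylor terms are $o(1)$, so a standard Gaussian integration produces $\asymp 4^{-M}\exp(\eta^2/(2M))$, matching the true-model asymptotic.

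The main obstacle is the Laplace/integration step: one must check that the Taylor remainder $R(t)$, the truncation of $p'$ to a neighborhood of $1/2$, the $(0,1)$-conditioning, and the even-sum conditioning \emph{each} contribute only an $O(1)$ factor. Crucially, one cannot hope to sharpen this to a $(1+o(1))$-comparison: when $|\eta|$ saturates the $O(n^{3/2})$ regularity bound, both the quartic term $Mt^4$ in the Laplace expansion and the analogous quartic correction in Stirling for $\binom{M}{m}\binom{2M}{2m}^{-1}$ contribute $\Theta(1)$ rather than $o(1)$, so only a constant-factor comparison is available---precisely what the lemma asserts.
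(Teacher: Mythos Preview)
Your proposal is correct and follows essentially the same approach as the paper: expand $\mb{P}_{\mc{D}_{1/2}^n}$ via \cref{thm:enum-graph} and a Stirling/quadratic reduction of $\binom{M}{m}\binom{2M}{2m}^{-1}$, expand $\mb{P}_{\mc{I}_{1/2}^n}$ by unfolding the definitions (handling the even-sum conditioning via the factor $2/(1+(2p'-1)^{2M})\asymp 2$ in the bulk), and then match the resulting Gaussian integral by a Laplace-type argument centered at $t^\ast=\eta/(4M)$. The paper's proof is organized slightly differently---it substitutes $p'=1/2+z/(2\sqrt{N})$, $2m=N+y\sqrt{N}$ and splits the integral into $|z|\ge 2|y|+N^{1/4}$ (tail) and $|z|\le 2|y|+N^{1/4}$ (bulk)---but your Laplace framing is equivalent, and your observation that only a constant-factor comparison is possible when $|\eta|\asymp n^{3/2}$ is exactly the point.
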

\begin{lemma}\label{lem:integrate-p-bip-graph}
Fix $\eps\in(0,\eps_{\ref{lem:integrate-p-bip-graph}}]$, $\theta > 0$, and $C>0$. Let a pair of degree sequences $(\vec{s},\vec{t})\in\mb{Z}^{n_1}\times\mb{Z}^{n_2}$ be $(C,\theta, \eps)$-regular if:
\begin{enumerate}[{\bfseries{B\arabic{enumi}}}]
    \item\label{B1} $\theta\le n_1/n_2\le\theta^{-1}$,
    \item\label{B2} $\sum_{i\in[n_1]}s_i = \sum_{i\in[n_2]}t_i$,
    \item\label{B3} $|s_i-n_2/2|\le n_2^{1/2+\eps}$ for $i\in[n_1]$,
    \item\label{B4} $|t_i-n_1/2|\le n_1^{1/2+\eps}$ for $i\in[n_2]$,
    \item\label{B5} $\sum_{i\in[n_1]}|s_i-n_2/2|^2+\sum_{i\in[n_2]}|t_i-n_1/2|^2\le Cn_1^2$.
\end{enumerate}
Then there exists $C' = C_{\ref{lem:integrate-p-bip-graph}}'(C,\theta, \eps)$ such that for all $(C,\theta,\eps)$-regular $(\vec{s},\vec{t})$ we have
\[\frac{1}{C'}\le\frac{\mb{P}_{\mc{D}_{1/2}^{n_1,n_2}}[\mbf{S} = \vec{s}\wedge\mbf{T} = \vec{t}]}{\mb{P}_{\mc{I}_{1/2}^{n_1,n_2}}[\mbf{S} = \vec{s}\wedge\mbf{T} = \vec{t}]}\le C'.\]
\end{lemma}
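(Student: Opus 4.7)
The plan is to adapt the proof of \cref{lem:integrate-p-graph} to the bipartite setting, substituting \cref{thm:enum-bigraph} for \cref{thm:enum-graph}. The regularity conditions \ref{B1}--\ref{B5} are tailored exactly so that \cref{thm:enum-bigraph} applies: \ref{B1} gives the comparability of the two sides, \ref{B3}--\ref{B4} provide the per-degree deviation bounds, and \ref{B2} together with \ref{B5} ensures that the average density $\mu$ is within $O(n^{-1/2+\eps})$ of $1/2$ and that both $\gamma_2(\mbf{s})$ and $\gamma_2(\mbf{t})$ are bounded, so that the exponential correction factor appearing in \cref{thm:enum-bigraph} is bounded above and below by constants depending only on $C$, $\theta$, $\eps$.

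First I would apply \cref{thm:enum-bigraph} to express $\mb{P}_{\mc{D}_{1/2}^{n_1,n_2}}[\mbf{S}=\vec{s}\wedge\mbf{T}=\vec{t}]$, up to a $1+O(n_1^{-1/8})$ multiplicative factor, as $2^{-n_1 n_2}$ times an explicit product involving $\prod_i \binom{n_2}{s_i}$, $\prod_j \binom{n_1}{t_j}$, the central binomial $\binom{n_1 n_2}{m}^{-1}$ (where $m = \sum_i s_i = \sum_j t_j$), and the exponential correction factor. In parallel I would expand the integrated model probability using \cref{def:integrated-model} and \cref{def:conditioned-model} as
\[
\mb{P}_{\mc{I}_{1/2}^{n_1,n_2}}[\mbf{S}=\vec{s}\wedge\mbf{T}=\vec{t}] = \int_{(0,1)} \phi(p')\cdot\frac{(p')^{2m}(1-p')^{2n_1 n_2 - 2m}\prod_i \binom{n_2}{s_i}\prod_j \binom{n_1}{t_j}}{\mb{P}_{\mc{B}_{p'}^{n_1,n_2}}[\text{sums equal}]}\,dp',
\]
where $\phi$ is the truncated Gaussian density with mean $1/2$ and variance $1/(8n_1 n_2)$. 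The denominator is the probability that two independent $\mr{Bin}(n_1 n_2, p')$ random variables coincide, which by a local central limit theorem equals $\Theta(1/\sqrt{n_1 n_2 p'(1-p')})$ uniformly for $p'$ in a small neighborhood of $1/2$. Since the products $\prod_i \binom{n_2}{s_i}\prod_j \binom{n_1}{t_j}$ appear identically on both sides, they cancel in the ratio and the lemma reduces to comparing the remaining factors.

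Next I would evaluate the $p'$ integral by Laplace's method. Writing $p' = 1/2 + q$ and $\Delta = m - n_1 n_2/2$, the log-integrand (aside from the binomial products and the denominator) equals
\[
-4n_1 n_2 q^2 + (n_1 n_2 + 2\Delta)\log(1+2q) + (n_1 n_2 - 2\Delta)\log(1-2q) + O(1),
\]
which is strictly concave in $q$ with unique saddle at $q^\ast = \Delta/(n_1 n_2) + $ lower-order corrections. Cauchy--Schwarz applied to \ref{B5} gives $|\Delta| \le O(n^{3/2})$, so $q^\ast$ lies within the effective support of the Gaussian factor; a routine Gaussian integration around $q^\ast$, combined with Stirling's approximation, matches the central binomial $\binom{n_1 n_2}{m}^{-1}$ of \cref{thm:enum-bigraph} up to a bounded ratio.

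The main obstacle is bookkeeping: one must simultaneously control three $\Theta(1)$ multiplicative factors, namely the exponential correction of \cref{thm:enum-bigraph}, the Gaussian normalization emerging from Laplace's method, and the local CLT normalization in the denominator. Fortunately only a bounded (rather than $(1+o(1))$) ratio is required, and condition \ref{B5} is exactly what guarantees each of these factors lies in a compact set depending only on $C$, $\theta$, and $\eps$. This yields the conclusion of the lemma with $C_{\ref{lem:integrate-p-bip-graph}}'$ depending on $C$, $\theta$, and $\eps$.
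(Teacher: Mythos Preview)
Your proposal is correct and follows essentially the same route as the paper: apply \cref{thm:enum-bigraph} to the true model, expand the integrated model as a Gaussian average over $p'$ with the equal-sums conditioning handled by a local CLT of size $\Theta(N^{-1/2})$, cancel the common products $\prod_i\binom{n_2}{s_i}\prod_j\binom{n_1}{t_j}$, and reduce to showing the remaining one-dimensional integral matches $2^{-2N}\exp((2m-N)^2/(2N))$ up to constants. The paper carries this out with the explicit substitution $p'=1/2+z/(2\sqrt{N})$, $2m=N+y\sqrt{N}$ and a Taylor expansion split into the regions $|z|\ge 2|y|+N^{1/4}$ and $|z|\le 2|y|+N^{1/4}$, which is exactly the Laplace-method computation you outline.
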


These lemmas follow from \cref{thm:enum-graph,thm:enum-bigraph} and some basic computations, which we now provide. We will need the following technical binomial coefficient estimate.
\begin{lemma}\label{lem:binom-middle}
For $|t|\le n^{4/5}$, 
\[\binom{n}{n/2+t}\binom{n}{n/2}^{-1} = \bigg(1+O\bigg(\frac{1}{n} +\frac{t^2}{n^2}+\frac{ t^6}{n^5}\bigg)\bigg)\exp\bigg(-\frac{2t^2}{n}-\frac{4t^4}{3n^3}\bigg).\]
\end{lemma}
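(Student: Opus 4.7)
The plan is to apply Stirling's formula to the three factorials arising in
\[\log\binom{n}{n/2+t} - \log\binom{n}{n/2} = 2\log(n/2)! - \log(n/2+t)! - \log(n/2-t)!,\]
in the form $\log k! = k\log k - k + \tfrac12\log(2\pi k) + \tfrac{1}{12k} + O(1/k^3)$. Since $|t|\le n^{4/5}$ forces $n/2\pm t = \Theta(n)$, this expansion is valid uniformly, with additive error $O(1/n^3)$ per factorial.

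I would then process the three tiers of Stirling terms separately. For the $k\log k - k$ pieces, a routine rearrangement after the substitution $u = 2t/n$ reduces the contribution to $-\tfrac{n}{2}\bigl[(1+u)\log(1+u) + (1-u)\log(1-u)\bigr]$. The key structural observation is that this function is even in $u$, so only even powers appear in its Taylor series:
\[(1+u)\log(1+u) + (1-u)\log(1-u) = \sum_{k\ge 1}\frac{2u^{2k}}{2k(2k-1)} = u^2 + \frac{u^4}{6} + \frac{u^6}{15} + \frac{u^8}{28} + \cdots.\]
Translating back with $u = 2t/n$ yields exactly $-\tfrac{2t^2}{n} - \tfrac{4t^4}{3n^3} - \tfrac{32t^6}{15n^5} + O(t^8/n^7)$; the first two terms form the exponent in the claim, while the tail is absorbed by the multiplicative error, since for $|t|\le n^{4/5}$ we have $t^8/n^7 = (t^6/n^5)(t^2/n^2) \lesssim t^6/n^5$.

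For the two subleading Stirling tiers, the $\tfrac12\log(2\pi k)$ contributions telescope (the $\log(2\pi)$ and $\log(n/2)$ constants cancel) into $-\tfrac12\log(1 - 4t^2/n^2) = O(t^2/n^2)$; the $\tfrac{1}{12k}$ contributions combine into $\tfrac{1}{3n} - \tfrac{1}{3n(1-4t^2/n^2)} = O(t^2/n^3) = O(1/n)$; and the pooled Stirling remainders contribute $O(1/n^3) = O(1/n)$. All three fit cleanly into the stated multiplicative error $O(1/n + t^2/n^2 + t^6/n^5)$, and exponentiating the identity gives the lemma.

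The main obstacle is purely bookkeeping. Because $|t|$ is allowed as large as $n^{4/5}$, one must carry enough terms of both the Stirling expansion and of the main-term Taylor series; the sharp match between the $u^8$-scale truncation remainder and the declared error $t^6/n^5$ is precisely what pins $n^{4/5}$ as the maximal admissible range at this level of precision, and one should be careful not to discard a subleading Stirling contribution that would merely look like $O(1/n^3)$ in the easier regime $|t| \lesssim \sqrt{n\log n}$ but whose $t$-dependence must be tracked here.
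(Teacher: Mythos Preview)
Your proof is correct and is essentially the same approach as the paper's: both apply Stirling's formula to the three factorials, expand the main term $-\tfrac{n}{2}[(1+u)\log(1+u)+(1-u)\log(1-u)]$ with $u=2t/n$, and absorb the $\sqrt{n^2/(n^2-4t^2)}$ factor and Stirling remainders into the stated multiplicative error. The only cosmetic difference is that the paper uses the cruder form $k!=(1+O(1/k))\sqrt{2\pi k}(k/e)^k$ rather than carrying the $\tfrac{1}{12k}$ correction, which you show is $O(1/n)$ anyway.
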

\begin{proof}
Note that 
\begin{align*}
\binom{n}{n/2+t}\binom{n}{n/2}^{-1} &= \frac{(n/2)!^{2}}{(n/2+t)!(n/2-t)!}\\
&=\bigg(1+O\bigg(\frac{1}{n}\bigg)\bigg)\sqrt{\frac{n^2}{n^2-4t^2}}\bigg(1+\frac{2t}{n}\bigg)^{-(n/2+t)}\bigg(1-\frac{2t}{n}\bigg)^{-(n/2-t)}\\
&=\bigg(1+O\bigg(\frac{1}{n} +\frac{ t^2}{n^2}\bigg)\bigg)\exp\bigg(-\frac{n}{2}\bigg(\frac{4t^2}{n^2}+\frac{8t^4}{3n^4} + O\bigg(\frac{t^6}{n^6}\bigg)\bigg)\\
&=\bigg(1+O\bigg(\frac{1}{n} +\frac{t^2}{n^2}+\frac{ t^6}{n^5}\bigg)\bigg)\exp\bigg(-\frac{2t^2}{n}-\frac{4t^4}{3n^3}\bigg)
\end{align*}
using Stirling's formula. 
\end{proof}

Now we prove \cref{lem:integrate-p-graph,lem:integrate-p-bip-graph}.

\begin{proof}[Proof of \cref{lem:integrate-p-graph}]
Let $N = n(n-1)/2$ and $2m = \sum_{i\in[n]}d_i$. Applying \cref{thm:enum-graph} and noting that the associated statistics satisfy $\gamma_2^2,\mu = \Theta_C(1)$ under the given conditions \cref{A1,A2}, we have
\begin{equation}\label{eq:D-model-graph}
\mb{P}_{\mc{D}_{1/2}^n}[\mbf{D}=\vec{d}]\asymp_{C,\eps}2^{-N}\binom{2N}{2m}^{-1}\binom{N}{m}\prod_{i=1}^{n}\binom{n-1}{d_i}\asymp_{C,\eps}2^{-2N}\exp\bigg(\frac{(2m-N)^2}{2N}\bigg)\prod_{i=1}^{n}\binom{n-1}{d_i}.
\end{equation}
The second part comes from
\begin{align*}
\binom{2N}{2m}^{-1}\binom{N}{m}(1/2)^{N} &\asymp \frac{\binom{2N}{N}}{\binom{2N}{2m}}\cdot \frac{\binom{N}{m}}{\binom{N}{N/2}}\cdot (1/2)^{2N}\asymp_C(1/2)^{2N}\exp\bigg(\frac{(2m-N)^2}{2N}\bigg)
\end{align*}
by \cref{lem:binom-middle}, since $m = N/2+O_C(N^{3/4})$ follows from \cref{A2} and Cauchy--Schwarz.

On the other hand, the definition of $\mc{I}_{1/2}^n$ and \cite[Lemma~2.2]{MW97} demonstrate
\begin{align}
\mb{P}_{\mc{I}_{1/2}^n}[\mbf{D}=\vec{d}]&\asymp\prod_{i=1}^{n}\binom{n-1}{d_i}\int_0^1\frac{2}{1+(2p'-1)^{2N}}(p')^{2m}(1-p')^{2N-2m}\cdot\sqrt{\frac{4N}{\pi}}\exp(-4N(p'-1/2)^2)dp'\notag\\
&\asymp\prod_{i=1}^{n}\binom{n-1}{d_i}\int_0^1 2\sqrt{\frac{4N}{\pi}}\exp(-4N(p'-1/2)^2)(p')^{2m}(1-p')^{2N-2m}dp'.\label{eq:I-model-graph}
\end{align}
It suffices to prove that the expressions \cref{eq:D-model-graph,eq:I-model-graph} are within multiplicative constants.

To compute the desired integral we set $2m = N + y\sqrt{N}$ (recall that $|y|\lesssim_C N^{1/4}$) and let $p' = 1/2 + z/(2\sqrt{N})$, $z_0=-\sqrt{N}$, $z_1=\sqrt{N}$. Then the integral under question can be reparametrized as 
\[T(m) = \frac{2}{\sqrt{\pi}}\int_{z_0}^{z_1}t(z)dz\]
where 
\begin{align*}
t(z) = \exp\big(-z^2 + (N + y \sqrt{N})\log(1/2+z/(2\sqrt{N})) + (N-y\sqrt{N})\log(1/2-z/(2\sqrt{N})\big).
\end{align*}

We break into a number of cases in order to show this integral is, up to constants, of size $2^{-2N}\exp((2m-N)^2/(2N))$. We first bound the contribution to the integral from $z$ such that $|z|\ge 2|y|+N^{1/4}$. Note that
\begin{align*}
(N + y \sqrt{N})&\log\Big(\frac{1}{2}+\frac{z}{2\sqrt{N}}\Big) + (N-y\sqrt{N})\log\Big(\frac{1}{2}-\frac{z}{2\sqrt{N}}\Big)\\
&\le(N + y \sqrt{N})\log\Big(\frac{1}{2}+\frac{y}{2\sqrt{N}}\Big) + (N-y\sqrt{N})\log\Big(\frac{1}{2}-\frac{y}{2\sqrt{N}}\Big)\le -2N\log 2 + 2y^2
\end{align*}
hence we have
\[t(z)\le 2^{-2N}\exp(-z^2 + 2y^2)\]
and thus the contribution from $|z|\ge 2|y|+N^{1/4}$ is trivially seen to be negligible compared to the target value. Now when $|z|\le 2|y|+N^{1/4}$ we find from Taylor series that
\[t(z) = 2^{-2N}\exp\bigg(\frac{y^2-4(z-y/2)^2}{2} + O\bigg(\frac{z^4}{N} + \frac{z^2y}{N}\bigg)\bigg).\]
Noting that $z = O_C(N^{1/4})$ in this range, up to a constant multiplicative factor depending on $C$ we see this trivially integrates to give the desired result noting that $\exp((2m-N)^2/(2N)) = \exp(y^2/2)$.
\end{proof}
\begin{proof}[Proof of \cref{lem:integrate-p-bip-graph}]
Let $N = n_1n_2$ and $m = \sum_{i\in[n_1]}s_i$. Applying \cref{thm:enum-bigraph} and \cref{lem:binom-middle}, similar to the proof of \cref{lem:integrate-p-graph} we have $m = N/2+O_C(N^{3/4})$ and find
\begin{align}
\mb{P}_{\mc{D}_{1/2}^{n_1,n_2}}[\mbf{S}=\vec{s}\wedge\mbf{T}=\vec{t}]&\asymp_{C,\theta,\eps}2^{-N}\binom{N}{m}^{-1}\prod_{i=1}^{n_1}\binom{n_2}{s_i}\prod_{i=1}^{n_2}\binom{n_1}{t_i}\notag\\
&\asymp_{C,\theta,\eps}2^{-2N}N^{1/2}\exp\bigg(\frac{(2m-N)^2}{2N}\bigg)\prod_{i=1}^{n_1}\binom{n_2}{s_i}\prod_{i=1}^{n_2}\binom{n_1}{t_i}.\label{eq:D-model-bip-graph}
\end{align}
On the other hand, the definition of $\mc{I}_{1/2}^{n_1,n_2}$ demonstrates
\begin{align}
\frac{\mb{P}_{\mc{I}_{1/2}^{n_1,n_2}}[\mbf{S}=\vec{s}\wedge\mbf{T}=\vec{t}]}{\prod_{i=1}^{n_1}\binom{n_2}{s_i}\prod_{i=1}^{n_2}\binom{n_1}{t_i}}&\asymp\int_0^1\bigg(\frac{N}{\pi(1/2)^2}\bigg)^{1/2}\exp\bigg(-\frac{N}{(1/2)^2}(p'-1/2)^2\bigg)\frac{(p')^{2m}(1-p')^{2(N-m)}}{\mb{P}[\mr{Bin}(N,p') = \mr{Bin}(N,p')]}dp'\notag\\
&\asymp N\int_0^1\exp\bigg(-\frac{N}{(1/2)^2}(p'-1/2)^2\bigg)(p')^{2m}(1-p')^{2(N-m)}dp',\label{eq:I-model-bip-graph}
\end{align}
where the second line uses that $\mb{P}[\mr{Bin}(N,p')=\mr{Bin}(N,p')] = \Theta(1/\sqrt{N})$ for $p'\in[1/3,2/3]$ and that the integrand is so small when $p\notin[1/3,2/3]$ that it contributes only lower order terms.

Therefore it suffices to prove that $4^{-N}\exp((2m-N)^2/(2N))$ and 
\[T(m) = \int_{0}^{1}\bigg(\frac{N}{\pi(1/2)^2}\bigg)^{1/2}\exp\bigg(-\frac{N}{(1/2)^2}(p'-1/2)^2\bigg)(p')^{2m}(1-p')^{2(N-m)}dp'\]
are within constants. We now proceed in an identical manner to the proof of \cref{lem:integrate-p-graph}. Let $2m = N + y \sqrt{N}$, $p' = 1/2 + z/(2\sqrt{N})$ and note that 
\[T(m) = \frac{1}{\sqrt{\pi}}\int_{-\sqrt{N}}^{\sqrt{N}}t(z)dz\]
where
\[t(z) = \exp\big(-z^2 + (N+y\sqrt{N})\log(1/2+z/(2\sqrt{N})) + (N-y\sqrt{N})\log(1/2-z/(2\sqrt{N}))\big).\]
This is the same situation as the end of the proof of \cref{lem:integrate-p-graph}, and we finish in the same way.
\end{proof}

\section{Eliminating Atypical Degree Sequences}\label{sec:atypical-switching}
In order to apply \cref{lem:integrate-p-graph,lem:integrate-p-bip-graph} we need to eliminate degree sequences which are too atypical. This involves eliminating graphs where the largest degrees are $n^{1/2+\eta}$ bigger than the smallest and graphs for which there exists a subset $S$ such that $\sum_{v\in V(G)}|\deg(v,S) - |S|/2|^2 \ge Cn^2$ for $C$ sufficiently large. 

To eliminate high and low degrees we use switchings in the style of \cite[Section~4]{MWW02}; however the work of \cite[Section~4]{MWW02} uses that the class of graphs considered is downwards-closed. In our model, however, there are constraints which are ``two-sided'' for various vertices and therefore our analysis is more delicate. The crucial observation is that conditional on the graph not containing a very dense subgraph, only an $n^{-1/2+\eta}$ fraction of vertices can be critical with respect to the constraints (see \cref{lem:critical-vtx}). This itself is proved via switchings, but under this assumption a straightforward argument proves the necessary degree bounds.

The second condition is substantially easier to guarantee and only requires noting that for sufficiently large $C$ the above occurs with (very) exponentially small probability. The necessary statement essentially appears in \cite[Lemma~4.11]{FKNSS22} but with an extra disjointness condition that we easily remove.

\subsection{Eliminating highly irregular subgraphs}
We first show that it is typical for the degree sequence of $\mb{G}(n,1/2)$ and all its dense subgraphs in an appropriate sense to have variance of order $n$. This will hold with sufficiently high probability that it will be robust to conditioning even on very unlikely events.

We will require a version of the classical Bernstein inequality.
\begin{theorem}[{\cite[Theorem~2.8.1]{Ver18}}]\label{thm:bernstein}
For a random variable $X$ define the $\psi_1$-norm
\[\snorm{X}_{\psi_1}=\inf\{t>0\colon\mb{E}[\exp(|X|/t)]\le 2\}.\]
There is an absolute constant $c = c_{\ref{thm:bernstein}}> 0$ such that the following holds. If $X_1,\ldots,X_N$ are independent random variables then 
\[\mb{P}\bigg[\bigg|\sum_{i=1}^NX_i\bigg|\ge t\bigg]\le 2\exp\bigg(-c\min\bigg(\frac{t^2}{\sum_{i=1}^N\snorm{X_i}_{\psi_1}^2},\frac{t}{\max_i\snorm{X_i}_{\psi_1}}\bigg)\bigg)\]
for all $t\ge 0$.
\end{theorem}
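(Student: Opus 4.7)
The plan is to follow the standard Chernoff--moment-generating-function approach. First I would reduce to the one-sided tail $\mb{P}[\sum_i X_i\ge t]$, since applying the same argument to $-X_i$ and union-bounding yields the two-sided statement. We may also assume $\mb{E} X_i = 0$ without loss of generality: centering multiplies $\snorm{X_i}_{\psi_1}$ by at most an absolute constant, because $|\mb{E} X_i|\le(\log 2)\snorm{X_i}_{\psi_1}$ follows from Jensen's inequality applied to the $\psi_1$ definition. Then for every $\lambda>0$, Markov's inequality applied to $e^{\lambda\sum_i X_i}$ together with independence gives
\[
\mb{P}\Big[\sum_{i=1}^N X_i \ge t\Big] \le e^{-\lambda t}\prod_{i=1}^N \mb{E}[e^{\lambda X_i}].
\]

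The key technical input is the MGF bound for a centered $\psi_1$ variable: writing $K_i = \snorm{X_i}_{\psi_1}$, there are absolute constants $c_0, C_0 > 0$ such that $\mb{E}[e^{\lambda X_i}]\le \exp(C_0 \lambda^2 K_i^2)$ whenever $|\lambda| K_i\le c_0$. I would prove this by Taylor-expanding $e^{\lambda X_i} = 1 + \lambda X_i + \sum_{p\ge 2}(\lambda X_i)^p/p!$, using $\mb{E} X_i = 0$ to kill the linear term, and bounding absolute moments via the $\psi_1$ definition: from $\mb{E}[e^{|X_i|/K_i}]\le 2$ together with $|x|^p/p!\le e^{|x|}$ one deduces $\mb{E}|X_i|^p \le 2\,p!\,K_i^p$. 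Substituting, the tail of the series is a geometric series in $|\lambda|K_i$, and using $|\lambda|K_i\le c_0 < 1$ yields the claimed quadratic exponent.

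Substituting back into the Chernoff bound and writing $\sigma^2 = \sum_i K_i^2$ and $K_\ast = \max_i K_i$, the remaining task is to minimize $-\lambda t + C_0 \lambda^2 \sigma^2$ over $\lambda\in(0, c_0/K_\ast]$. The unconstrained optimum $\lambda^\ast = t/(2C_0 \sigma^2)$ lies in the allowed range precisely when $t\le c_0\sigma^2/(2C_0 K_\ast)$; in this subgaussian regime it yields exponent $-t^2/(4C_0\sigma^2)$. Otherwise the optimum is attained at the boundary $\lambda = c_0/K_\ast$, giving exponent $-c_0 t/(2K_\ast)$ (plus a harmless $O(1)$ correction that is absorbed into $c$ after the boundary substitution). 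Taking the minimum of the two bounds produces the stated form.

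The main obstacle, such as it is in this classical result, is the MGF estimate: one must carefully track the allowed range $|\lambda|K_i\le c_0$, since outside this range $\mb{E} e^{\lambda X_i}$ need not even be finite. This restriction on $\lambda$ is exactly what forces the two-regime crossover at $t\sim\sigma^2/K_\ast$ and distinguishes the sub-exponential Bernstein tail from the purely subgaussian Hoeffding-type bound, where no such boundary effect arises.
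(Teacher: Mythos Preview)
The paper does not prove this theorem; it is quoted from \cite[Theorem~2.8.1]{Ver18} and used as a black box, so there is no proof in the paper to compare against. Your outline is the standard Chernoff-method proof found in Vershynin's book.

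One remark: the paper's transcription omits the mean-zero hypothesis present in Vershynin's Theorem~2.8.1, and your centering reduction does not actually repair this. Replacing $X_i$ by $X_i-\mb{E}X_i$ shifts the sum $\sum_i X_i$ by $\sum_i\mb{E}X_i$, which can be of order $N\max_iK_i$; the deviation threshold $t$ would have to shift as well, and the bound as literally stated fails for non-centered variables (take $X_i\equiv 1$). This is an issue with the paper's statement rather than with your argument for the correctly stated mean-zero version, which is fine apart from a harmless algebra slip in the crossover threshold ($\lambda^\ast\le c_0/K_\ast$ gives $t\le 2C_0c_0\sigma^2/K_\ast$, not $c_0\sigma^2/(2C_0K_\ast)$).
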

\begin{lemma}\label{lem:large-var-deg}
Given $c,K>0$, there is $C=C_{\ref{lem:large-var-deg}}(K)>0$ so the following holds. Let $G\sim\mb{G}(2n,1/2)$ and let $\mc{E}_{\mr{big}}$ be the event that for some $S\subseteq[2n]$, we have
\[\sum_{v\in[2n]}(\deg(v,S)-|S|/2)^2\ge Cn^2.\]
Then $\mb{P}[\mc{E}_{\mr{big}}]\le\exp(-Kn)$ for $n$ large.
\end{lemma}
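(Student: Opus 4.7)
The plan is to prove the per-subset bound $\mb{P}[T(S)\ge Cn^2]\le\exp(-K'n)$, with $T(S):=\sum_v(\deg(v,S)-|S|/2)^2$ and $K'>K+2\log 2$, and then take a union bound over the $2^{2n}$ choices of $S\subseteq[2n]$. Since $\mb{E}[T(S)]=\Theta(n|S|)\lesssim n^2$, the goal is exponential decay at scale $Cn^2$ above the mean, with rate increasing in $C$.

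Fix $S$ and split $T(S)=T_{\mr{out}}(S)+T_{\mr{in}}(S)$ according to whether the summing vertex lies outside or inside $S$. The variables $\deg(v,S)-|S|/2$ for $v\notin S$ depend on pairwise disjoint edge sets (the cross edges from $v$ into $S$) and are thus independent centered $\mr{Bin}(|S|,1/2)$ random variables; their squared deviations are independent sub-exponential with $\psi_1$-norm $O(|S|)=O(n)$, so \cref{thm:bernstein} yields $\mb{P}[T_{\mr{out}}(S)-\mb{E}T_{\mr{out}}(S)\ge C'n^2]\le\exp(-\Omega(C'n))$. For $T_{\mr{in}}(S)$, conditioning on the non-inside edges (which leave $T_{\mr{in}}$ unaffected) reduces to $G[S]\sim\mb{G}(|S|,1/2)$; writing $B$ for its centered adjacency matrix,
\[T_{\mr{in}}(S)=\|B\mathbf{1}-\mathbf{1}/2\|^2=\|B\mathbf{1}\|^2-\mathbf{1}^\top B\mathbf{1}+|S|/4,\]
where $\mathbf{1}^\top B\mathbf{1}$ is subgaussian with variance $O(|S|^2)$ and is thus negligible at scale $n^2$. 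The dominant $\|B\mathbf{1}\|^2=\mathbf{1}^\top B^2\mathbf{1}$ has deterministic diagonal contribution $|S|(|S|-1)/4$, and its fluctuation is, up to a constant factor, the mean-zero quadratic form $Q:=\sum_{e\ne e',\,|e\cap e'|=1}B_eB_{e'}$ summing over pairs of distinct edges of $K_{|S|}$ sharing one vertex. The coefficient matrix $M$ of $Q$ is the adjacency matrix of the line graph of $K_{|S|}$; counting cherries gives $\|M\|_F^2=O(|S|^3)$, and bounding row sums gives $\|M\|_{\mr{op}}=O(|S|)$. Hanson--Wright concentration for bounded symmetric variables then yields $\mb{P}[|Q|\ge C'n^2]\le\exp(-c\min(C'^2,C')\,n)$.

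Combining, choosing $C$ large enough makes the per-subset failure probability $\le\exp(-K'n)$ with $K'>K+2\log 2$, and the final union bound over the $2^{2n}$ subsets gives the claim. The main technical obstacle is securing a quadratic-form concentration rate fast enough to absorb the $4^n$ union-bound factor; this is purely a matter of tracking matrix-norm constants in Hanson--Wright, with no conceptual difficulty. Essentially this argument is the content of \cite[Lemma~4.11]{FKNSS22}; the only modification needed here is to dispense with the auxiliary disjointness assumption on $S$ made there, which the crude union bound easily accommodates.
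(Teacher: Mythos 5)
Your proposal is correct, but it handles the dependencies inside $S$ by a different device than the paper. The paper avoids the quadratic-form concentration entirely: it observes that if $\sum_{v}(\deg(v,S)-|S|/2)^2\ge Cn^2$, then a random split $S=T\cup U$ produces (after Cauchy--Schwarz and a conditional-probability argument) some $T\subseteq S$ with $\sum_{v\notin T}(\deg(v,T)-|T|/2)^2\gtrsim Cn^2$. The point is that restricting the outer sum to $v\notin T$ makes the summands functions of pairwise disjoint edge sets, so the whole thing becomes a single application of Bernstein's inequality (\cref{thm:bernstein}) to a sum of independent sub-exponential variables, followed by a union bound over the $2^{2n}$ choices of $T$. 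Your argument instead splits $T(S)=T_{\mr{out}}+T_{\mr{in}}$ with the fixed $S$, applies Bernstein to the genuinely independent $T_{\mr{out}}$ part, and controls the internal part $T_{\mr{in}}$ directly as a quadratic form in the Rademacher-like edge variables via Hanson--Wright, using that the coefficient matrix is (up to the diagonal) the adjacency matrix of the line graph of $K_{|S|}$, so $\|M\|_F^2=O(n^3)$ and $\|M\|_{\mr{op}}=O(n)$. Both routes give the required $\exp(-\Omega(Cn))$ per-subset tail and absorb the $4^n$ union-bound factor for $C$ large; the paper's random-split trick is arguably more elementary (Bernstein only, no matrix-norm bookkeeping) at the cost of a factor-of-8 loss, while yours is more direct and, as you note, stays closer in spirit to \cite[Lemma~4.11]{FKNSS22}. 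One small point of care if you flesh this out: make sure the Hanson--Wright constant you invoke and the Bernstein constant in $T_{\mr{out}}$ are both tracked so that the final rate genuinely exceeds $K+2\log 2$ after choosing $C$ large, since you need $\min(C'^2,C')$ (not $C'^2$ alone) to dominate; you already flag this as bookkeeping and it indeed is.
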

\begin{proof}
Suppose there is some $S$ with
\[\sum_{v\in[2n]}(\deg(v,S)-|S|/2)^2\ge Cn^2,\]
with $C$ large to be chosen later. Let $T$ be a uniformly random subset of $S$ and $U=S\setminus T$. Let us denote by $\mb{E}_T$ the result of averaging only over the randomness of $T$. Then given some $v\in[2n]$,
\begin{align*}
\mb{E}_T[\mbm{1}_{v\notin T}(\deg(v,T)-|T|/2)^2&+\mbm{1}_{v\notin U}(\deg(v,U)-|U|/2)^2]\\
&\ge\frac{1}{2}\mb{E}_T[(\deg(v,T)-|T|/2)^2+(\deg(v,U)-|U|/2)^2]\\
&\ge\frac{1}{4}\mb{E}_T[(\deg(v,S)-|S|/2)^2]
\end{align*}
where the first line comes from the fact that $\deg(v,U),\deg(v,T)$ are determined purely by the information of $T\cap(S\setminus\{v\})$, and that $v\notin T$ and $v\notin U$ then conditionally occur with probability at least $1/2$, and the second line follows from Cauchy--Schwarz and $S=T\cup U$ with $T,U$ disjoint. Summing over $v$, we find that
\[\mb{E}_T\bigg[\sum_{v\in[2n]}\bigg(\mbm{1}_{v\notin T}(\deg(v,T)-|T|/2)^2+\mbm{1}_{v\notin U}(\deg(v,U)-|U|/2)^2\bigg)\bigg]\ge Cn^2/4\]
hence there is some $T\subseteq S$ such that $\sum_{v\in[2n]\setminus T}(\deg(v,T)-|T|/2)^2\ge Cn^2/8$. That is, if $\mc{E}_{\mr{big}}$ holds then one of $2^{2n}$ possible $T$ satisfy this inequality. Therefore, by the union bound and adjusting the value of $K$ appropriately, it suffices to understand for a fixed $T$ the chances of $\sum_{v\in[2n]\setminus T}(2\deg(v,T)-|T|)^2\ge Cn^2/2$ occurring.

Now note that over the randomness of $\mb{G}(n,1/2)$, each $(2\deg(v,T)-|T|)^2$ is distributed as $X^2$ where $X$ is the sum of $|T|$ independent Rademacher 
variables (uniform on $\{\pm1\}$). Furthermore, as we vary $v\in[2n]\setminus T$, these variables are independent. We easily check that $\snorm{X^2}_{\psi_1}=O(|T|)=O(n)$ and hence by \cref{thm:bernstein} we have
\[\mb{P}\bigg[\sum_{v\in[2n]\setminus T}(2\deg(v,T)-|T|)^2\ge t\bigg]\le2\exp\big(-\Omega(\min\{t^2/n^3,t/n\})\big).\]
Plugging in $t=Cn^2/2$ and choosing $C$ sufficiently large compared to $K$, the result follows.
\end{proof}

Next we show friendly partitions are not super-exponentially unlikely (in particular, although conditioning on having two fixed partitions be friendly will tilt the probability space, the event $\mc{E}_{\mr{var}}$ from \cref{lem:large-var-deg} with appropriately chosen parameters will still hold).
\begin{lemma}\label{lem:friendly-exp-lower}
Given $c>0$ there is $C=C_{\ref{lem:friendly-exp-lower}}(c)>0$ such that the following holds for all $|\gamma|\le 1$. Fix a pair of equipartitions of $[2n] = A_1 \cup A_2 = B_1 \cup B_2$ such that $cn\le |A_1 \cap B_1|\le (1-c)n$. Sample $G\sim\mb{G}(2n,1/2)$ and let $\mc{E}_2$ be the event that $\deg(v,A_i)\ge\deg(v,A_{i+1})+\gamma\sqrt{n}$ for all $v\in A_i$ and $\deg(v,B_i)\ge\deg(v,B_{i+1})+\gamma\sqrt{n}$ for all $v\in B_i$. Then for $n$ large,
\[\mb{P}[\mc{E}_2]\ge\exp(-Cn).\]
\end{lemma}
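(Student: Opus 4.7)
The plan is to condition on a carefully chosen subset of edges and then apply the Harris--FKG inequality. Write $C_{ij} := A_i \cap B_j$ and $n_{ij} := |C_{ij}|$; the equipartition hypothesis together with $cn \le n_{11} \le (1-c)n$ forces $n_{11} = n_{22}$, $n_{12} = n_{21}$, and all $n_{ij} \in [cn,(1-c)n]$. For $v \in C_{ij}$, the $A$- and $B$-friendliness constraints combine into the single inequality
\[
\deg(v, C_{ij}) - \deg(v, C_{i+1,j+1}) \ge \gamma\sqrt{n} + |\deg(v, C_{i,j+1}) - \deg(v, C_{i+1,j})|,
\]
which I denote $F_v$, with threshold $M_v := \gamma\sqrt{n} + |\deg(v,C_{i,j+1}) - \deg(v,C_{i+1,j})|$. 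I condition on all ``cross-type'' edges (those in the bipartite graphs $C_{ij} \leftrightarrow C_{i,j+1}$ and $C_{ij} \leftrightarrow C_{i+1,j}$): this freezes every $M_v$, and the remaining randomness consists of the mutually-independent within-cell edges (TYPE~I) and diagonal edges (TYPE~IV, those between $C_{ij}$ and $C_{i+1,j+1}$).

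Under this conditioning, each $F_v$ is monotone increasing in TYPE~I edges and monotone decreasing in TYPE~IV edges, \emph{consistently across vertices}: a shared within-cell edge $\{u,v\}$ enters both $F_u$ and $F_v$ with positive sign, while a shared diagonal edge enters both with negative sign (using $n_{ij}=n_{i+1,j+1}$, the roles of $v$'s own cell and diagonally opposite cell swap exactly when viewed from the other endpoint). Applying the involution $X_e \mapsto 1-X_e$ on every TYPE~IV edge (which preserves the conditional Bernoulli($1/2$)-product measure) makes every $F_v$ monotone increasing in the new variables, and Harris' inequality yields
\[
\mb{P}[\mc{E}_2] \ge \mb{E}\bigg[\prod_{v \in [2n]} \mb{P}[F_v \mid \text{cross-type edges}]\bigg].
\]

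Each conditional probability $\mb{P}[F_v \mid \cdot]$ is the probability that a difference of two independent sums of $\Theta(n)$ Bernoulli($1/2$)'s exceeds $M_v$; by standard Chernoff-type lower bounds (or by Gaussian tail bounds combined with Berry--Esseen in the small-$M_v$ regime) one has $\mb{P}[F_v \mid \cdot] \ge c\exp(-CM_v^2/n)/(1+M_v/\sqrt{n})$ uniformly in $M_v \le c_0 n$. Taking logarithms, applying Jensen's inequality to the convex function $e^x$ to push the outer expectation inside the exponential, and using $\mb{E}[D_v^2] = O(n)$ for the centered difference $D_v := \deg(v,C_{i,j+1}) - \deg(v,C_{i+1,j})$ (hence $\mb{E}[M_v^2] = O(n)$), one obtains $\mb{E}\big[\sum_v \log\mb{P}[F_v\mid\cdot]\big] \ge -C'n$, whence $\mb{P}[\mc{E}_2] \ge \exp(-C_{\ref{lem:friendly-exp-lower}} n)$. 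The main subtle point is the orientation-consistency check in the FKG step; once that is in hand, the Jensen / Gaussian-tail bookkeeping is routine, and the super-polynomially rare conditionings where some $M_v > c_0 n$ can be discarded trivially.
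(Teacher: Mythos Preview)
Your approach is correct and genuinely different from the paper's proof. The paper lower-bounds $\mb{P}[\mc{E}_2]$ by planting specific degree profiles (each vertex has $\Theta(\sqrt{n})$ excess degree into its own cell $A_i\cap B_j$ and near-average degree to the other three cells) and then counting realizations via the McKay--Wormald and Canfield--Greenhill--McKay enumeration formulas (\cref{thm:enum-graph,thm:enum-bigraph}). You instead exploit a structural observation: once the four ``same-row/same-column'' bipartite pieces are revealed, the remaining within-cell and diagonal edge indicators enter every friendliness constraint with a sign that is \emph{consistent across the two endpoints}, so after a global flip on the diagonal edges all events $F_v$ become simultaneously increasing and Harris--FKG applies. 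This replaces the enumeration machinery entirely by routine one-dimensional binomial tail lower bounds and a Jensen step over the cross-type randomness. Your route is more elementary and arguably more portable; the paper's route has the mild advantage of staying inside the enumeration framework already in place for the main moment computation.

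One point to tighten: the Jensen step must be applied \emph{after} restricting to the event $A=\{\max_v M_v\le c_0 n\}$, i.e.\ write $\mb{E}\big[\prod_v p_v\big]\ge\mb{P}[A]\,\mb{E}\big[e^{\sum_v\log p_v}\,\big|\,A\big]\ge\mb{P}[A]\,e^{\mb{E}[\sum_v\log p_v\,|\,A]}$ and then bound $\mb{E}[M_v^2\mid A]\le\mb{E}[M_v^2]/\mb{P}[A]=O(n)$. Without this ordering $\mb{E}[\log p_v]=-\infty$ formally. You clearly have this in mind, but it should be stated explicitly rather than folded into the final clause.
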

\begin{proof}
Let us consider the event $\mc{E}$ that for every $i,j\in\{1,2\}$ and $v\in A_i\cap B_j$, we have $\deg(v,A_i\cap B_j)-|A_i\cap B_j|/2\in[4\sqrt{n},8\sqrt{n}]$ and further for every $(i',j')\neq(i,j)$ we have $\deg(v,A_{i'}\cap B_{j'})-|A_{i'}\cap B_{j'}|/2\in[-\sqrt{n},\sqrt{n}]$. It is easy to check that $\mc{E}_2$ is satisfied in such a circumstance. We now compute a lower bound for $\mb{P}[\mc{E}]$ using \cref{thm:enum-graph,thm:enum-bigraph}.

Notice that there are $4$ degree sequences for each $G[A_i\cap B_j]$, call them $(d_v^{(i,j)})_{v\in A_i\cap B_j}$ and $12$ degree sequences for each $G[A_i\cap B_j,A_{i'}\cap B_{j'}]$, call them $(d_v^{(i,j,i',j')})_{v\in A_i\cap B_j}$ (switching the roles of $(i,j),(i',j')$ gives the degrees on both sides of a given pair of parts). The event $\mc{E}$ is merely a system of constraints on the elements of these degree sequences. Let us consider any sequence of values satisfying the constraints given by $\mc{E}$ and such that the two following additional properties hold:
\begin{itemize}
    \item $2|\sum_{v\in A_i\cap B_j}d_v^{(i,j)}$ for all $i,j\in\{1,2\}$;
    \item $\sum_{v\in A_i\cap B_j}d_v^{(i,j,i',j')}=\sum_{v\in A_{i'}\cap B_{j'}}d_v^{(i',j',i,j)}$ for all $i,j,i',j'\in\{1,2\}$ with $(i,j)\neq(i',j')$.
\end{itemize}
We easily see that there are at least $(\Omega(\sqrt{n}))^{8n}$ many choices of degree sequences with this property: each individual value $\deg(v,A_i\cap B_j)$ for $v\in[2n]$ and $i,j\in\{1,2\}$ has at least $\sqrt{n}$ choices, and it is easy to see that at least a $\exp(-n)$ fraction of all these choices satisfy the above constraints. To solve the parity constraints, it is easy to see around half of the sequences $d_v^{(i,j)}$ work for each $i,j$. To solve the equality constraints one can check that most pairs of sequences $(d_v^{(i,j,i',j')})_{v\in A_i\cap B_j}$ and $(d_v^{(i',j',i,j)})_{v\in A_{i'}\cap B_{j'}}$ have sums differing by say $O(n\log n)$ due to variance considerations, and then modifying $O(\sqrt{n}(\log n)^2)$ values in both sequences by approximately $\sqrt{n}/\log n$ each will allow for the appropriate balancing. The fraction of possible sequences attained from this procedure is at least $(1/\sqrt{n})^{O(\sqrt{n}(\log n)^2)}\ge\exp(-n)$.

Furthermore, \cref{thm:enum-graph,thm:enum-bigraph} apply to these degree sequences. If we let $n_{ij}=|A_i\cap B_j|$ and $2m_{ij}=\sum_{v\in A_i\cap B_j}d_v^{(i,j)}$, then \cref{thm:enum-graph} applied to $(d_v^{(i,j)})_{v\in A_i\cap B_j}$ leads to $\mu=1/2+O(1/\sqrt{n})$, $\gamma_2^2=\Theta(1)$, and therefore on the order of at least
\begin{align*}
\gtrsim\binom{n_{ij}(n_{ij}-1)/2}{m_{ij}}\binom{n_{ij}(n_{ij}-1)}{2m_{ij}}^{-1}\prod_{v\in A_i\cap B_j}\binom{n_{ij}-1}{d_v^{(i,j)}}.
\end{align*}
One can check that each binomial coefficient in the product is $\Omega(2^{n_{ij}-1}/\sqrt{n})$, while the two initial binomial coefficients when divided contribute at least $\exp(-Kn)2^{-n_{ij}(n_{ij}-1)/2}$ for some constant $K$ depending only on $c$. Multiplying we have a contribution of at least say
\[\exp(-2Kn)\cdot2^{\binom{n_{ij}}{2}}(1/\sqrt{n})^{n_{ij}}\] many possibilities for the graph $G[A_i\cap B_j]$ given this degree sequence.

Similarly, the number of choices for $G[A_i\cap B_j,A_{i'}\cap B_{j'}]$ can be seen to be at least say
\[\exp(-2Kn)\cdot 2^{n_{ij}n_{i'j'}}(1/\sqrt{n})^{n_{ij}+n_{i'j'}}.\]

Multiplying over $4$ graphs $G[A_i\cap B_j]$ and $6$ graphs $G[A_i\cap B_j,A_{i'}\cap B_{j'}]$, this becomes at least $\exp(-20Kn)2^{\binom{2n}{2}}(1/\sqrt{n})^{8n}$ total. Multiplying by the number of choices of degree sequences from earlier, we have at least say $\exp(-40Kn)2^{\binom{2n}{2}}$ total choices of realizations $G\sim\mb{G}(2n,1/2)$ satisfying the desired property $\mc{E}$ (and hence $\mc{E}_2$), if $K$ was chosen sufficiently large. The desired result immediately follows taking $C=40K$, which depends only on $c$.
\end{proof}

As an immediate consequence we have the following proposition.
\begin{proposition}\label{prop:friendly-var-deg}
Given the setup and notation of \cref{lem:friendly-exp-lower}, there is $K=K_{\ref{prop:friendly-var-deg}}(c)>0$ such that the following holds. Let $G\sim\mb{G}(2n,1/2)$ and let $\mc{E}_{\mr{var}}$ be the event that
\[\sum_{v\in[2n]}(\deg(v,S)-|S|/2)^2\le Kn^2\]
for all $S\subseteq[2n]$. Then $\mb{P}[\mc{E}_{\mr{var}}|\mc{E}_2]\ge1-\exp(-n)$ for $n$ large.
\end{proposition}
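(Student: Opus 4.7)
The plan is to deduce \cref{prop:friendly-var-deg} as an immediate conditioning argument combining the two preceding lemmas: \cref{lem:large-var-deg}, which gives an unconditional super-exponential tail bound on the existence of subsets with anomalously large degree-variance contribution, and \cref{lem:friendly-exp-lower}, which lower bounds the unconditional probability of $\mc{E}_2$ by $\exp(-Cn)$ for some constant $C = C_{\ref{lem:friendly-exp-lower}}(c)$ depending only on $c$. The key observation is that any event can increase in probability by at most a factor $1/\mb{P}[\mc{E}_2] \le \exp(Cn)$ upon conditioning on $\mc{E}_2$, so as long as we take the large-deviation bound on the bad event to beat this factor by at least a multiplicative $\exp(-n)$, we win.

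Concretely, I would first set $C = C_{\ref{lem:friendly-exp-lower}}(c)$, so that $\mb{P}[\mc{E}_2] \ge \exp(-Cn)$ for $n$ large. Then I would invoke \cref{lem:large-var-deg} with the parameter $K$ of that lemma set to $K' := C + 2$, obtaining a constant $K = C_{\ref{lem:large-var-deg}}(C + 2)$ such that, letting $\mc{E}_{\mr{big}}$ denote the event that some $S \subseteq [2n]$ satisfies $\sum_v (\deg(v,S) - |S|/2)^2 \ge Kn^2$, we have the unconditional bound $\mb{P}[\mc{E}_{\mr{big}}] \le \exp(-(C+2)n)$. By construction, $\mc{E}_{\mr{big}}$ is precisely the complement of the event $\mc{E}_{\mr{var}}$ appearing in the statement for this choice of $K$, so the proposition's constant $K_{\ref{prop:friendly-var-deg}}(c)$ can be declared equal to this $K$.

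The final step is the trivial bound
\[
\mb{P}[\mc{E}_{\mr{var}}^c \mid \mc{E}_2] \;=\; \frac{\mb{P}[\mc{E}_{\mr{big}} \cap \mc{E}_2]}{\mb{P}[\mc{E}_2]} \;\le\; \frac{\mb{P}[\mc{E}_{\mr{big}}]}{\mb{P}[\mc{E}_2]} \;\le\; \frac{\exp(-(C+2)n)}{\exp(-Cn)} \;=\; \exp(-2n),
\]
which for large $n$ is comfortably below $\exp(-n)$, yielding the claimed bound. There is essentially no obstacle here: all the nontrivial work has been done in \cref{lem:large-var-deg} (which uses Bernstein via \cref{thm:bernstein} together with a randomized splitting trick) and \cref{lem:friendly-exp-lower} (which uses the McKay--Wormald and Canfield--Greenhill--McKay enumeration formulas to produce many friendly realizations directly). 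The proposition serves merely to package the resulting bound in the conditional form needed when the rest of the paper conditions on $\mc{E}_2$ while invoking the degree-regularity hypotheses of \cref{lem:integrate-p-graph,lem:integrate-p-bip-graph}.
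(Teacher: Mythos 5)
Your proof is correct and is essentially the same as the paper's: both apply \cref{lem:large-var-deg} with a suitably large parameter and then use the trivial conditioning bound $\mb{P}[\mc{E}_{\mr{big}}\mid\mc{E}_2]\le\mb{P}[\mc{E}_{\mr{big}}]/\mb{P}[\mc{E}_2]$ together with the $\exp(-Cn)$ lower bound on $\mb{P}[\mc{E}_2]$ from \cref{lem:friendly-exp-lower}. The only difference is the cosmetic choice of the input constant to \cref{lem:large-var-deg} (you use $C+2$, the paper uses $2C+1$), which is immaterial.
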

\begin{proof}
Let $C=C_{\ref{lem:friendly-exp-lower}}(c)$ and apply \cref{lem:large-var-deg} with $K$ replaced by $1+2C_{\ref{lem:friendly-exp-lower}}(c)$. We have $\mb{P}[\mc{E}_{\mr{big}}|\mc{E}_2]\le\mb{P}[\mc{E}_{\mr{big}}]/\mb{P}[\mc{E}_2]\le\exp(-(2C+1)n)/\exp(-Cn)\le\exp(-n)$, and the result follows.
\end{proof}

\subsection{Eliminating high degree vertices}
We require a tail bound of the binomial random model based on the first value in the tail, and a generalization of this estimate to arbitrary upwards-closed families of sets.
\begin{lemma}\label{lem:binomial-tail}
For $0\le\ell\le n$ we have that
\[\frac{\binom{n}{\ell}}{\sum_{k=\ell}^n\binom{n}{k}}\lesssim \frac{|\ell-n/2|}{n}+\frac{1}{\sqrt{n}}.\]
\end{lemma}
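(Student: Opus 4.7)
The plan is to split on whether $\ell \le n/2$ or $\ell > n/2$. In the first case the denominator is already enormous: the tail $\sum_{k=\ell}^{n}\binom{n}{k}$ contains everything from $\lfloor n/2\rfloor+1$ upward and therefore is at least $2^{n-1}$, while Stirling gives $\binom{n}{\ell}\le\binom{n}{\lfloor n/2\rfloor}\lesssim 2^{n}/\sqrt{n}$. The ratio is then $\lesssim 1/\sqrt{n}$, which already matches the right-hand side since $1/\sqrt{n}$ is one of the two terms.

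For the remaining case $\ell>n/2$, I would set $t=\ell-n/2>0$. If $t\ge cn$ for some small absolute $c$, the claimed bound $|\ell-n/2|/n\gtrsim 1$ is trivial, so we may assume $t\le n/10$. The goal in this regime is to show
\[\sum_{k=\ell}^{n}\binom{n}{k}\gtrsim\binom{n}{\ell}\cdot\min\!\left(\sqrt{n},\,\frac{n}{t+1}\right),\]
which immediately gives the ratio bound $\lesssim 1/\sqrt{n}+t/n$.

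To prove this lower bound I would exploit the explicit recursion
\[\frac{\binom{n}{\ell+j}}{\binom{n}{\ell}}=\prod_{i=0}^{j-1}\frac{n-\ell-i}{\ell+i+1}=\prod_{i=0}^{j-1}\left(1-\frac{2t+2i+1}{n/2+t+i+1}\right).\]
Under the assumption $t+j\le n/8$, every factor on the right lies in $[1/2,1]$, so the inequality $\log(1-x)\ge -2x$ yields a constant $C'>0$ with
\[\frac{\binom{n}{\ell+j}}{\binom{n}{\ell}}\ge\exp\!\left(-\frac{C'(tj+j^{2})}{n}\right).\]
Choosing $j_{0}=\lfloor\min(\sqrt{n},\,n/(t+1))/(2\sqrt{C'})\rfloor$ keeps the exponent bounded, so each of the $j_{0}+1$ terms $\binom{n}{\ell},\binom{n}{\ell+1},\dots,\binom{n}{\ell+j_{0}}$ is $\gtrsim\binom{n}{\ell}$, and summing produces the desired lower bound on the tail sum.

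The calculation is essentially routine; the only point requiring mild care is organising the case split so that the Taylor expansion is valid (which is why one first disposes of $t\ge cn$ and $\ell\le n/2$) and checking that the choice of $j_{0}$ simultaneously controls both the linear term $tj/n$ and the quadratic term $j^{2}/n$ in the exponent, which is exactly why the bound naturally takes the form $1/\sqrt{n}+t/n$.
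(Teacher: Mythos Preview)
Your proposal is correct and follows essentially the same approach as the paper's proof: both handle small $\ell$ via the crude bound $\binom{n}{\ell}\lesssim 2^n/\sqrt{n}$ against a tail of size $\gtrsim 2^n$, dispose of $|\ell-n/2|\gtrsim n$ trivially, and in the remaining range lower-bound the tail sum by showing the first $\asymp\min(\sqrt{n},n/t)$ terms are each comparable to $\binom{n}{\ell}$. The paper packages the last step as a geometric series $\sum_{j=0}^{\sqrt{n}}(1+16|\ell-n/2|/n)^{-j}\gtrsim n/|\ell-n/2|$ rather than your Taylor bound plus explicit choice of $j_0$, and its case boundary is at $\ell=n/2+\sqrt{n}$ rather than $n/2$, but these are cosmetic differences.
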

\begin{proof}
For $\ell\le n/2+\sqrt{n}$, we have that $\binom{n}{\ell}\lesssim 2^n/\sqrt{n}$ and $\sum_{k=\ell}^n\binom{n}{k}\gtrsim 2^n$ and hence the result follows. For $|\ell-n/2|\ge n/5$ the result is trivial. Finally when $n/2+\sqrt{n}\le\ell\le n/2+n/5$ we have that 
\begin{align*}
\frac{\sum_{k=\ell}^n\binom{n}{k}}{\binom{n}{\ell}} &\ge\sum_{t=0}^{\sqrt{n}}\binom{n}{\ell+t}\binom{n}{\ell}^{-1}\\
&\ge\sum_{t=0}^{\sqrt{n}}((\ell+t)/(n-\ell-t+1))^{-t}\\
&\gtrsim \sum_{t=0}^{\sqrt{n}}(1+16|\ell-n/2|/n)^{-t}\gtrsim n/|\ell-n/2|.\qedhere
\end{align*}
\end{proof}
\begin{lemma}\label{lem:binomial-family}
Let $\mc{F}$ be an upwards-closed family of subsets of $[n]$, i.e., if $F_1\in\mc{F}$ and $F_2\supseteq F_1$ then $F_2\in\mc{F}$. Let $\mc{F}_\ell$ be the elements of $\mc{F}$ of size $\ell$. Then
\[\frac{|\mc{F}_\ell|}{|\mc{F}_\ell|+|\mc{F}_{\ell+1}|+\cdots+|\mc{F}_n|}\le\frac{\binom{n}{\ell}}{\sum_{i=\ell}^n\binom{n}{i}}\lesssim\frac{|\ell-n/2|}{n}+\frac{1}{\sqrt{n}}.\]
\end{lemma}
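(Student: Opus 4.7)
The plan is straightforward: the second inequality is exactly \cref{lem:binomial-tail}, so the content is in the first inequality. I want to show that for an upward-closed family $\mc{F}$, the ``density profile'' $k \mapsto |\mc{F}_k|/\binom{n}{k}$ is non-decreasing in $k$. Once that is established, for every $k \ge \ell$ we get $|\mc{F}_k| \ge (\binom{n}{k}/\binom{n}{\ell})\,|\mc{F}_\ell|$, and summing over $k \ge \ell$ produces
\[
\sum_{k=\ell}^{n}|\mc{F}_k| \;\ge\; \frac{|\mc{F}_\ell|}{\binom{n}{\ell}}\sum_{k=\ell}^{n}\binom{n}{k},
\]
which rearranges to the first inequality in the statement.

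To prove the monotonicity, I would use a standard double counting on the bipartite ``containment'' graph between $\mc{F}_k$ and $\mc{F}_{k+1}$. Every $S \in \mc{F}_k$ has exactly $n-k$ supersets of size $k+1$, each of which lies in $\mc{F}$ by upward closure, hence in $\mc{F}_{k+1}$. On the other hand every $T \in \mc{F}_{k+1}$ has exactly $k+1$ subsets of size $k$, only some of which need belong to $\mc{F}_k$. Counting incidences gives
\[
(n-k)\,|\mc{F}_k| \;\le\; (k+1)\,|\mc{F}_{k+1}|,
\]
and since $\binom{n}{k+1}/\binom{n}{k} = (n-k)/(k+1)$, this rearranges to $|\mc{F}_{k+1}|/\binom{n}{k+1} \ge |\mc{F}_k|/\binom{n}{k}$, which is the required monotonicity.

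Then one combines the two halves: the chain $|\mc{F}_\ell|/\binom{n}{\ell} \le |\mc{F}_k|/\binom{n}{k}$ for $k \ge \ell$ gives the first inequality, and \cref{lem:binomial-tail} supplies the second. There is no real obstacle; the only thing to note is that the monotonicity genuinely requires upward closure (otherwise one cannot assert that all $n-k$ supersets of an element of $\mc{F}_k$ lie in $\mc{F}$), and this is exactly how the hypothesis enters. I expect the write-up to be a few lines long.
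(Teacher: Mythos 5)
Your proof is correct and takes essentially the same approach as the paper: both arguments establish monotonicity of the density profile $k \mapsto |\mc{F}_k|/\binom{n}{k}$ by double counting containment pairs and using the upward-closure hypothesis, then conclude by rearranging and invoking \cref{lem:binomial-tail}. The only difference is cosmetic: the paper compares $\mc{F}_\ell$ and $\mc{F}_s$ directly for arbitrary $s\ge\ell$ (via $\binom{n-\ell}{s-\ell}|\mc{F}_\ell|\le\binom{s}{\ell}|\mc{F}_s|$ and the identity $\binom{n-\ell}{s-\ell}/\binom{s}{\ell}=\binom{n}{s}/\binom{n}{\ell}$), whereas you compare adjacent levels $k$ and $k+1$ and chain the resulting inequalities; both are valid and equally short.
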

\begin{proof}
Every size $\ell$ set within $\mc{F}$ has exactly $\binom{n-\ell}{s-\ell}$ size $s$ sets in $\mc{F}$ containing it by the upwards-closed condition. Furthermore, every size $s$ set within $\mc{F}$ has at most $\binom{s}{\ell}$ size $\ell$ sets in $\mc{F}$ contained within it. Therefore, $\binom{n-\ell}{s-\ell}|\mc{F}_\ell|\le\binom{s}{\ell}|\mc{F}_s|$ for all $\ell\le s\le n$. The result easily follows noting that $\binom{n-\ell}{s-\ell}/\binom{s}{\ell}=\binom{n}{s}/\binom{n}{\ell}$ and then using \cref{lem:binomial-tail}.
\end{proof}

Now we prove various structural properties hold whp for friendly partitions in $\mb{G}(2n,1/2)$.
\begin{proposition}\label{prop:large-degree-1}
Fix $\eta>0$ and consider $|\gamma|\le 1$. Fix an equipartitions $[2n] = A_1 \cup A_2$. Sample $G\sim\mb{G}(2n,1/2)$ and let $\mc{E}_1$ be the event that $\deg(v,A_i)\ge\deg(v,A_{i+1})+\gamma\sqrt{n}$ for all $v\in A_i$. Let $\mc{E}_{\mr{irreg},1}$ denote the event that there exists a vertex $v\in[2n]$ and $i\in\{1,2\}$ such that $|\deg(v,A_i)-n|\ge n^{1/2+\eta}$. Then for $n$ large,
\[\mb{P}[\mc{E}_{\mr{irreg},1}|\mc{E}_1]\le n^{-\omega(1)}.\]
\end{proposition}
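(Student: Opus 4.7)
The plan is to follow the switching strategy in the style of \cite[Section~4]{MWW02}, as outlined in the preamble of \cref{sec:atypical-switching}. Since $\mc{E}_1$ is not downward-closed (deleting the edge $vw$ may break friendliness at $w$), the naive \cite{MWW02} argument does not apply directly; I would circumvent this by first conditioning on the event that very few vertices are \emph{critical}, meaning they lie within a fixed constant of their friendliness threshold.

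First, I would establish an analogue of \cref{prop:friendly-var-deg} for the single-partition event $\mc{E}_1$: the event $\mc{E}_{\mr{var}}$ that $\sum_{v\in[2n]}(\deg(v,S)-|S|/2)^2\le Kn^2$ for all $S\subseteq[2n]$ holds with conditional probability $1-\exp(-n)$ given $\mc{E}_1$. The proof is essentially identical to that of \cref{lem:friendly-exp-lower,prop:friendly-var-deg}: a direct enumeration using \cref{thm:enum-graph,thm:enum-bigraph} gives $\mb{P}[\mc{E}_1]\ge\exp(-Cn)$, and combining with the $\exp(-(C+1)n)$ upper bound of \cref{lem:large-var-deg} yields the claim. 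I would then invoke \cref{lem:critical-vtx}, which under $\mc{E}_1\cap\mc{E}_{\mr{var}}$ guarantees that with probability $1-n^{-\omega(1)}$ at most $O(n^{1/2+\eta/3})$ vertices have friendliness slack $\deg(v,A_i)-\deg(v,A_{i+1})-\gamma\sqrt{n}$ smaller than a suitable constant.

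Second, fix a vertex $v\in A_1$ and, for each $d$, let $\mc{G}_d$ denote the set of graphs in $\mc{E}_1$ also satisfying the two auxiliary events above and with $\deg(v,A_1)=d$. The switching is: from $G\in\mc{G}_d$, delete an edge $vw$ where $w\in A_1$ is non-critical and has sufficient slack that deletion preserves both $\mc{E}_1$ and the auxiliary events. The forward count is at least $(d-O(n^{1/2+\eta/3}))|\mc{G}_d|$ because at most that many edges at $v$ end at a critical $w$, while the backward count is at most $(n-d)|\mc{G}_{d-1}|$, since from $G'\in\mc{G}_{d-1}$ any non-neighbor of $v$ in $A_1\setminus\{v\}$ suffices and there are $(n-1)-(d-1)=n-d$ such non-neighbors. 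Thus
\[\frac{|\mc{G}_d|}{|\mc{G}_{d-1}|}\le\frac{n-d}{d-O(n^{1/2+\eta/3})}.\]
Iterating from $d=\lceil n/2\rceil$ to $d=\lceil n/2\rceil+K$ with $K\ge n^{1/2+\eta}$, the product telescopes to $\exp(-\Omega(K^2/n))=\exp(-\Omega(n^{2\eta}))$, which is superpolynomially small in $n$.

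A symmetric switching handles the tail $\deg(v,A_1)\le n/2-n^{1/2+\eta}$; one can arrange it by exchanging the roles of edge addition and deletion to obtain an analogous estimate, again relying on \cref{lem:critical-vtx} to guarantee the near-expected forward count. Summing these superpolynomially small bounds over the tail values of $d$, and then union bounding over $v\in[2n]$ and $i\in\{1,2\}$, gives the claimed $n^{-\omega(1)}$ bound. The main obstacle is the two-sided nature of $\mc{E}_1$, which is precisely what \cref{lem:critical-vtx} is designed to overcome; its proof (itself by switchings) is the most intricate part of this section, and is the reason the overall argument is substantially more delicate than in the monotone setting of \cite{MWW02}.
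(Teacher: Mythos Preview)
Your overall strategy (switchings with the critical-vertex bound of \cref{lem:critical-vtx}) is in the right spirit, but there is a real gap in the forward-count step. When you delete $vw$ with $w\in A_1$ non-critical, you must also preserve $v$'s own constraint $\deg(v,A_1)-\deg(v,A_2)\ge\gamma\sqrt{n}$. If $v$ itself is critical in $G\in\mc{G}_d$, then \emph{no} deletion of a $v$--$A_1$ edge stays in $\mc{E}_1$, so your forward count is $0$, not $d-O(n^{1/2+\eta/3})$. The few-critical auxiliary event does not rescue this: it bounds the total number of critical vertices, not whether the fixed vertex $v$ is critical, and by symmetry $\mb{P}[v\text{ critical}\mid\mc{E}_1]=O(n^{-1/2})$, far from $n^{-\omega(1)}$. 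One repair is to first run the \emph{monotone} switchings (for $v\in A_1$, adding $v$--$A_1$ edges and deleting $v$--$A_2$ edges always preserve $\mc{E}_1$) to establish, say, $\deg(v,A_2)\le n/2+n^{1/2+3\eta/4}$ with conditional probability $1-n^{-\omega(1)}$; on that event, every $G$ with $\deg(v,A_1)\ge n/2+n^{1/2+3\eta/4}+\gamma\sqrt{n}+1$ has $v$ non-critical, and your ratio bound is then valid from that point upward, which still kills the tail at $n^{1/2+\eta}$. The same bootstrapping is needed for the lower tail of $\deg(v,A_2)$.

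The paper's execution is different and avoids the iteration entirely. It reveals all edges except those at $v$; conditional on $\mc{E}_1$ and this information, $N(v)$ is uniform subject to $v$'s single inequality together with at most $n^{1/2+\eta/2}$ forced edges/non-edges coming from critical $w$ (this is the one place \cref{lem:critical-vtx} enters). In the unconditioned product model on the non-forced coordinates one then directly compares $\mb{P}[\mc{E}_{\mr{bad}}]\le\exp(-\Omega(n^{2\eta}))$ against $\mb{P}[\mc{E}_1]\ge\exp(-n^{3\eta/2})$. This is shorter and sidesteps the $v$-criticality issue, since $v$'s constraint becomes a condition on $N(v)$ rather than an obstruction to a switching move.
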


\begin{proposition}\label{prop:large-degree-2}
Fix $c,\eta>0$ and consider $|\gamma|\le 1$. Fix a pair of equipartitions of $[2n] = A_1 \cup A_2 = B_1 \cup B_2$ such that $cn\le |A_1 \cap B_1|\le (1-c)n$. Sample $G\sim\mb{G}(2n,1/2)$ and let $\mc{E}_2$ be the event that $\deg(v,A_i)\ge\deg(v,A_{i+1})+\gamma\sqrt{n}$ for all $v\in A_i$ and $\deg(v,B_i)\ge\deg(v,B_{i+1})+\gamma\sqrt{n}$ for all $v\in B_i$. Let $\mc{E}_{\mr{irreg},2}$ denote the event that there exists a vertex $v\in[2n]$ and $i,j\in\{1,2\}$ such that $|\deg(v,A_i\cap B_j)-|A_i\cap B_j|/2|\ge n^{1/2+\eta}$. Then for $n$ large,
\[\mb{P}[\mc{E}_{\mr{irreg},2}|\mc{E}_2]\le n^{-\omega(1)}.\]
\end{proposition}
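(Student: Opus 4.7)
My plan is to mirror the high-degree switching strategy of \cref{prop:large-degree-1}, adapted to track the coupled constraints imposed by two simultaneous partitions. First, by \cref{prop:friendly-var-deg} I may condition on $\mc{E}_{\mr{var}}$, losing only a factor of $\exp(-n)$ in conditional probability. Then I would invoke \cref{lem:critical-vtx} separately for each of $\{A_1, A_2\}$ and $\{B_1, B_2\}$ and union bound: with probability $1 - n^{-\omega(1)}$ given $\mc{E}_2 \cap \mc{E}_{\mr{var}}$, at most $O(n^{1/2+\eta/4})$ vertices lie within a $\sqrt{n}$-window of equality in either partition's friendliness inequality. Call this event $\mc{E}_{\mr{crit}}$ and the non-exceptional vertices \emph{non-critical}.

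For the main switching, fix $v, i, j$ and let $\mc{G}_k$ be the set of graphs in $\mc{E}_2 \cap \mc{E}_{\mr{var}} \cap \mc{E}_{\mr{crit}}$ with $\deg(v, A_i \cap B_j) = k$. For $k \ge |A_i \cap B_j|/2 + n^{1/2+\eta}$, define a switch by deleting an edge $vu$ with $u \in A_i \cap B_j$ and inserting a non-edge $vw$ with $w \in A_{i+1} \cap B_{j+1}$; direct bookkeeping shows this reduces $\deg(v, A_i \cap B_j)$ by exactly $1$, decreases the $A$- and $B$-friendliness of $v$ by $2$ each, and decreases those of $u$ and $w$ by $1$ each. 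Hence the switch remains in $\mc{G}_{k-1}$ provided $v, u, w$ are all non-critical in both partitions: the vertex $v$ is automatic, since $\mc{E}_{\mr{var}}$ forces $\deg(v, A_{i+1}) \approx n/2$ so that $\deg(v, A_i) - \deg(v, A_{i+1}) \gtrsim n^{1/2+\eta} \gg \sqrt{n}$ (and symmetrically for $B$), while $\mc{E}_{\mr{crit}}$ rules out all but $O(n^{1/2+\eta/4})$ bad choices of $u$ and $w$. Counting forward switches from $G \in \mc{G}_k$ as $\gtrsim k \cdot |A_{i+1} \cap B_{j+1}|/2$ and reverse switches from $G' \in \mc{G}_{k-1}$ as $\lesssim (|A_i \cap B_j| - k + 1) \cdot |A_{i+1} \cap B_{j+1}|/2$ gives
\[
\frac{|\mc{G}_k|}{|\mc{G}_{k-1}|} \le (1 + o(1)) \cdot \frac{|A_i \cap B_j| - k + 1}{k}.
\]

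Iterating from $k = \lceil |A_i \cap B_j|/2 \rceil$ up to any target $k \ge |A_i \cap B_j|/2 + n^{1/2+\eta}$ produces a cumulative multiplicative factor of $\exp(-\Omega(n^{2\eta}))$, which is super-polynomially small. The lower tail $\deg(v, A_i \cap B_j) \le |A_i \cap B_j|/2 - n^{1/2+\eta}$ is handled symmetrically by exchanging the roles of edges and non-edges, and a union bound over $O(n)$ vertices and the four choices of $(i,j)$ completes the argument. The main obstacle is \cref{lem:critical-vtx} itself, whose proof must run a separate switching argument against ``two-sided'' friendliness constraints—leveraging $\mc{E}_{\mr{var}}$ to forbid dense subgraphs that would otherwise block the switches—and which replaces the downwards-closed property used in \cite{MWW02}.
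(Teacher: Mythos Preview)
Your switching outline has a genuine gap at the step ``the vertex $v$ is automatic, since $\mc{E}_{\mr{var}}$ forces $\deg(v,A_{i+1})\approx n/2$.''  The event $\mc{E}_{\mr{var}}$ is only an $\ell_2$ bound $\sum_{u}(\deg(u,S)-|S|/2)^2\le Kn^2$; it gives no pointwise control on the single vertex $v$.  Worse, even granting $\deg(v,A_{i+1})\approx n/2$, you would still need $\deg(v,A_i)\ge n/2+n^{1/2+\eta}$, and this requires information about $\deg(v,A_i\cap B_{j+1})$ which you do not have --- knowing only that $\deg(v,A_i\cap B_j)$ is large says nothing about the other three block-degrees of $v$.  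So it is entirely possible that $\deg(v,A_i\cap B_j)\ge|A_i\cap B_j|/2+n^{1/2+\eta}$ while $v$ sits exactly at its $A$- or $B$-threshold, and then your switch immediately exits $\mc{E}_2$.  This is not a minor bookkeeping issue: it is the heart of the coupling between the four block-degrees of $v$, and it is exactly what makes the two-partition case harder than \cref{prop:large-degree-1}.  (There is a secondary issue that a switch can create new critical vertices and hence exit $\mc{E}_{\mr{crit}}$, so $\mc{G}_k\to\mc{G}_{k-1}$ is not guaranteed; this is fixable with a graded family of events but adds work.)

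The paper sidesteps this entirely.  Rather than switch on $\deg(v,A_i\cap B_j)$, it reveals all edges of $G$ except those incident to $v$; conditionally, the neighborhood of $v$ is then uniform subject to $v$'s own two friendliness inequalities together with at most $n^{1/2+\eta/2}$ forced edges/non-edges coming from the critical vertices (bounded via \cref{lem:critical-vtx}).  In this model one shows directly that $\mb{P}[\mc{E}_2]\ge\exp(-n^{3\eta/2})$ while $\mb{P}[|\deg(v,A_i\cap B_j)-|A_i\cap B_j|/2|\ge n^{1/2+\eta}]\le\exp(-\Omega(n^{2\eta}))$, and dividing finishes.  The point is that $v$'s own two constraints are handled as part of the conditional model rather than as obstructions to a switch, so the question of whether $v$ is critical never arises.
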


To prove these propositions, we need the following estimate on ``critical'' vertices in friendly partitions.

\begin{lemma}\label{lem:critical-vtx}
Fix $c>0$ and a positive integer $k$, and consider $|\gamma|\le 1$ with $\gamma\sqrt{n}\in\mb{Z}$. Fix a pair of equipartitions of $[2n] = A_1 \cup A_2 = B_1 \cup B_2$ such that $cn\le |A_1 \cap B_1|\le (1-c)n$. Sample $G\sim\mb{G}(2n,1/2)$ and let $\mc{E}_2$ be the event that $\deg(v,A_i)\ge\deg(v,A_{i+1})+\gamma\sqrt{n}$ for all $v\in A_i$ and $\deg(v,B_i)\ge\deg(v,B_{i+1})+\gamma\sqrt{n}$ for all $v\in B_i$. Let $\mc{E}_1$ be the event that this holds just for $A$. Define a vertex to be \emph{critical for $A$} if $v\in A_i$ and $\deg(v,A_i) = \deg(v,A_{i+1})+\gamma\sqrt{n}$ and let the number of critical for $A$ vertices be $X$. Then
\[\mb{E}[X^k|\mc{E}_b]\lesssim_{c,k}n^{k/2}\quad\emph{for}~b\in\{1,2\}.\]
\end{lemma}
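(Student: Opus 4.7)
The plan is to use a switching argument to bound, for each ordered tuple of $k$ distinct vertices $v_1,\ldots,v_k \in [2n]$, the conditional probability
\[
\mb{P}[v_1,\ldots,v_k \text{ all critical for } A \mid \mc{E}_b] \lesssim_{c,k} n^{-k/2}.
\]
Summing over ordered $k$-tuples, and using that tuples with only $j<k$ distinct entries contribute $\lesssim n^{j/2}$ (a weaker instance of the same bound), then yields $\mb{E}[X^k \mid \mc{E}_b] \lesssim_{c,k} n^{k/2}$.

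The heart of the proof is a switching that is unconditionally safe with respect to both $A$- and $B$-friendliness. Given a critical $v := v_i \in A_s \cap B_t$, pick a neighbor $u \in A_{s+1}\cap B_{t+1}$ of $v$ and a non-neighbor $w \in A_s\cap B_{t+1}$ of $v$, then replace the edge $vu$ by $vw$. A direct check shows $D_v^{(A)} := \deg(v,A_s)-\deg(v,A_{s+1})$ increases by $2$, the four quantities $D_u^{(A)},D_w^{(A)},D_u^{(B)},D_w^{(B)}$ each increase by $1$, and $D_v^{(B)}$ is unchanged (the two modified edges are both $B$-cross at $v$, and their contributions cancel). All other degree-differences in the graph are untouched, so if $G \in \mc{E}_b$ has $v$ critical then the resulting $G'$ lies in $\mc{E}_b$ with $D_v^{(A)}=\gamma\sqrt{n}+2$. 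Setting $T_{\vec{\ell}} := \{G \in \mc{E}_b : D_{v_i}^{(A)} = \gamma\sqrt{n}+2\ell_i \text{ for all } i\}$, the hypothesis $cn \le |A_1\cap B_1|\le(1-c)n$ forces each of the four quadrants $A_s\cap B_t$ to have size $\Theta_c(n)$, and on the event $\mc{E}_{\mr{var}}$ from \cref{prop:friendly-var-deg} (which holds save on a conditional probability $\exp(-n)$ subset of $\mc{E}_b$) the degree of each $v$ into each quadrant is also $\Theta_c(n)$. Hence the number of forward switches from a typical $G \in T_{\vec{\ell}}$ and the number of backward switches to a typical $G' \in T_{\vec{\ell}'}$, where $\vec{\ell}'$ increments one coordinate of $\vec{\ell}$, are both $\Theta_c(n^2)$; a double count gives $|T_{\vec{\ell}}| \asymp_c |T_{\vec{\ell}'}|$.

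Iterating up to $L := \lfloor c'\sqrt{n}\rfloor$ times at each coordinate---the quadrant degrees drift by only $O(\sqrt{n})=o(n)$ during the iteration, so the $\Theta_c(n^2)$ counts persist throughout---gives $|T_{\vec{\ell}}| \asymp_c |T_{\vec{0}}|$ for all $\vec{\ell} \in [0,L]^k$. Summing,
\[
|\mc{E}_b| \ge \sum_{\vec{\ell} \in [0,L]^k} |T_{\vec{\ell}}| \gtrsim L^k |T_{\vec{0}}| \asymp n^{k/2} |T_{\vec{0}}|,
\]
which yields the claimed per-tuple bound. The main technical obstacle will be the joint analysis when $k>1$: the $k$ simultaneous switches at $v_1,\ldots,v_k$ must be counted with care, since collisions in which a single vertex plays the role of $u$ or $w$ for two distinct $v_i$'s occur in $O(n^{2k-1})$ ways and must be absorbed into the leading $\Theta(n^{2k})$ multi-switch count. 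The case $b=1$ is strictly simpler: one ignores the $B$-constraints entirely and may take $u \in A_{s+1}$, $w \in A_s$ without any quadrant restriction, and the assumption on $|A_1\cap B_1|$ is not needed.
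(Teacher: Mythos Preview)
There are two genuine gaps.

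\textbf{First,} your arithmetic for $D_w^{(B)}$ is wrong. With $v\in A_s\cap B_t$ and $w\in A_s\cap B_{t+1}$, adding the edge $vw$ increases $\deg(w,B_t)$ (since $v\in B_t$), so $D_w^{(B)}=\deg(w,B_{t+1})-\deg(w,B_t)$ \emph{decreases} by $1$. If $w$ happens to be critical for $B$, the switched graph leaves $\mc{E}_2$ and the forward switch is not unconditionally safe. This is easily repaired by taking $w\in A_s\cap B_t$ instead: then all six of $D_v^{(A)},D_v^{(B)},D_u^{(A)},D_u^{(B)},D_w^{(A)},D_w^{(B)}$ increase, and the switch is genuinely monotone for both partitions.

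\textbf{Second,} and more seriously, $\mc{E}_{\mr{var}}$ does \emph{not} force every vertex to have degree $\Theta_c(n)$ into every quadrant. It only says $\sum_v(\deg(v,S)-|S|/2)^2\le Kn^2$, so pointwise one only gets $|\deg(v,S)-|S|/2|\le\sqrt{K}\,n$, and there is no reason the constant $K=K_{\ref{prop:friendly-var-deg}}(c)$ should satisfy $\sqrt{K}<c/2$. Hence for a fixed $v_i$ and a fixed $G\in T_{\vec{0}}\cap\mc{E}_{\mr{var}}$ the number of forward switches at $v_i$ need not be $\Theta_c(n^2)$, and the double count does not give $|T_{\vec{0}}|\lesssim|T_{\vec{\ell}'}|$. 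You might hope to also condition on a pointwise event such as $\mc{E}_{\mr{irreg},2}^c$, but the control on that event in \cref{prop:large-degree-2} is proved \emph{using} the present lemma, so this would be circular.

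The paper sidesteps both issues by a different route. Rather than switching, it reveals all edges outside the quadrant $A_i\cap B_j$ containing the tuple; conditionally, $G[A_i\cap B_j]$ is uniform subject to \emph{lower bounds} on each vertex degree. After further revealing all edges except those between $\{v_1,\ldots,v_k\}$ and the rest of the quadrant, the neighbourhood of each $v_r$ is uniform over an upwards-closed family, and \cref{lem:binomial-family} bounds the probability of landing exactly on the minimum layer by $O(|\ell_r-n_{ij}/2|/n+n^{-1/2})$, with $\ell_r$ the threshold for $v_r$ determined by the revealed outside-degrees. Multiplying over $r$, summing over tuples and applying Cauchy--Schwarz recovers precisely an $\ell^2$ quantity of the form bounded by $\mc{E}_{\mr{var}}$ --- so $\mc{E}_{\mr{var}}$ is used only as an averaged bound, never pointwise.
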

\begin{proof}
Let $Y$ be the number of critical for $A$ vertices inside $A_1\cap B_1$. We will show $\mb{E}[Y^k|\mc{E}_2]\lesssim_{c,k}n^{k/2}$. By symmetry we will find the same holds for each $A_i\cap B_j$, which will imply the result for $b=2$. The case for $b=1$ is similar so we forgo the details.

Let us further reveal the values $\deg(v,A_i\cap B_j)$ for all $i,j\in\{1,2\}$ and $v\in V(G)\setminus (A_i\cap B_j)$. Thus the remaining randomness is over $\mb{G}(A_i\cap B_j,1/2)$ for each $i,j\in\{1,2\}$, and we condition these graphs on certain degree inequalities. Notice that the four parts are independent since we have revealed the information of the degrees between the parts. Whether a vertex is critical thus also only depends on its degree within a part. Let us focus on the part $A_1\cap B_1$, which will allow us to control $Y$. The graph $G[A_i\cap B_j]$ is a uniform random graph satisfying various lower bounds on the degrees of its vertices. For each vertex, given the revealed information of the degrees to the outside, the conditions $\deg(v,A_1)\ge\deg(v,A_2)+\gamma\sqrt{n}$ and $\deg(v,B_1)\ge\deg(v,B_2)+\gamma\sqrt{n}$ lead to precisely these conditions on $\deg(v,A_i\cap B_j)$. If the latter inequality provides a sharper inequality than the former, then $v$ will not be critical for $A$. If the former inequality is sharper, then it can be critical for $A$ precisely when it equals the minimum allowed value.

Let $\mc{G}$ be the information of all edges in $G$ outside of $A_1\cap B_1$. We have by linearity of expectation that
\begin{equation}\label{eq:critical-vtx-moment}
\mb{E}[Y(Y-1)\cdots(Y-k+1)|\mc{E}_2,\mc{G}]\le\sum_{\substack{v_1,\ldots,v_k\in A_1\cap B_1\\\text{distinct}}}\mb{P}[v_1,\ldots,v_k\text{ are critical for }A|\mc{E}_2,\mc{G}].
\end{equation}
Now given a choice of $v_1,\ldots,v_k$, let us further reveal everything in the graph $G[A_1\cap B_1]$ except the edges between $\{v_1,\ldots,v_k\}$ and $A_1\cap B_1$ and furthermore reveal the edges internal to $\{v_1,\ldots,v_k\}$. The remaining random portion is a uniformly random bipartite graph between $\{v_1,\ldots,v_k\}$ and $(A_1\cap B_1)\setminus\{v_1,\ldots,v_k\}$ subject to certain lower bounds for degrees on both sides (coming from our original inequalities). We now consider the maximum probability that $v_1$ is critical given the neighborhoods of $v_2,\ldots,v_k$. Let us think of the possible neighborhoods of $v_1$ (disregarding the revealed part among $\{v_2,\ldots,v_k\}$) as subsets of $(A_1\cap B_1)\setminus\{v_1,\ldots,v_k\}$. Since the conditioned information only gives lower bounds on certain degrees (which may or may not force certain vertices to connect to $v_1$ to meet their ``quota''), we see that the family $\mc{F}$ of possibilities is upwards-closed. By \cref{lem:binomial-family} we find that the resulting probability that $v$ is critical is at most
\[O\bigg(\frac{|\ell-(|A_1\cap B_1|-k)/2|}{cn}+\frac{1}{\sqrt{cn}}\bigg)\]
where $\ell$ is the lower bound on the size of $N(v)\cap((A_1\cap B_1)\setminus\{v_1,\ldots,v_k\})$ coming from the condition $\deg(v,A_1)\ge\deg(v,A_2)+\lceil\gamma\sqrt{n}\rceil$. That is,
\[\ell=\deg(v_1,A_2)-\deg(v_1,A_1\cap B_2)-\deg(v_1,\{v_2,\ldots,v_k\})+\lceil\gamma\sqrt{n}\rceil.\]
Since $k$ is constant, the bound becomes
\[O\bigg(\frac{|\deg(v_1,A_2)-\deg(v_1,A_1\cap B_2)-|A_1\cap B_1|/2|}{n}+\frac{1}{\sqrt{n}}\bigg).\]
Multiply over $v_1,\ldots,v_k$ to obtain a bound on the probability that all of $v_1,\ldots,v_k$ are critical (note that the above is a bound on $v_1$ even conditioned on any outcomes for the other vertices, so this is valid). Plugging into \cref{eq:critical-vtx-moment}, we find
\begin{align*}
\mb{E}[Y(Y-1)\cdots&(Y-k+1)|\mc{E}_2,\mc{G}]\lesssim\bigg(\sum_{v\in A_1\cap B_1}\frac{|\deg(v,A_2)-\deg(v,A_1\cap B_2)-|A_1\cap B_1|/2|}{n}+\frac{1}{\sqrt{n}}\bigg)^k\\
&\lesssim n^{k/2}+n^{-k}\bigg(\sum_{v\in A_1\cap B_1}|\deg(v,A_2)-\deg(v,A_1\cap B_2)-|A_1\cap B_1|/2|\bigg)^k\\
&\lesssim n^{k/2}+n^{-k/2}\bigg(\sum_{v\in A_1\cap B_1}|\deg(v,A_2)-\deg(v,A_1\cap B_2)-|A_1\cap B_1|/2|^2\bigg)^{k/2}.
\end{align*}

Now by \cref{prop:friendly-var-deg}, there is some appropriately large constant $A$ so that conditional on $\mc{E}_2$, with probability at least $1-\exp(-n)$ we have $\sum_{v\in[2n]}(\deg(v,S)-|S|/2)^2\le An^2$ for all $S\subseteq[2n]$ (event $\mc{E}_{\mr{var}}$). In particular, note that
\begin{equation}\label{eq:switching-variance-check}
\sum_{v\in A_1\cap B_1}|\deg(v,A_2)-\deg(v,A_1\cap B_2)-|A_1\cap B_1|/2|^2\le 4An^2
\end{equation}
holds under the event $\mc{E}_{\mr{var}}$. Furthermore, the truth of this particular inequality is a function only of the information of $\mc{G}$. Therefore, we easily see
\[\mb{E}[Y(Y-1)\cdots(Y-k+1)|\mc{E}_2]\lesssim(n^{k/2}+n^{-k/2}(4An^2)^{k/2})+\exp(-n)\cdot n^k\lesssim n^{k/2}\]
by considering whether $\mc{G}$ satisfies \cref{eq:switching-variance-check} or not. The desired estimate follows immediately.
\end{proof}

Now we use switchings to prove \cref{prop:large-degree-1,prop:large-degree-2}.
\begin{proof}[Proof of \cref{prop:large-degree-1,prop:large-degree-2}]
We focus on \cref{prop:large-degree-2}; the case for \cref{prop:large-degree-1} is analogous but simpler, so we truncate the details. Furthermore, it suffices to prove the estimate for the probability for a single choice of $i^\ast,j^\ast\in\{1,2\}$ and $v\in V(G)$. Let us assume $v\in A_1\cap B_1$ without loss of generality. We wish to understand $\mb{P}[|\deg(v,A_{i^\ast}\cap B_{j^\ast})-|A_{i^\ast}\cap B_{j^\ast}|/2|\ge n^{1/2+\eta}|\mc{E}_2]$.

Let us reveal the identities of all edges in $G$ except the potential edges between $v$ and $V(G)\setminus\{v\}$. Call this information $\mc{G}$. Conditional on $\mc{E}_2,\mc{G}$, our distribution on the neighborhood of $v$ is uniform over choices such that $v$ satisfies its two degree inequalities and such that every other vertex satisfies its inequalities as well. But note that for the typical other vertex, whether $v$ connects to it will not influence whether the inequalities are satisfied. In fact, this will only be the case for vertices that are ultimately critical (for $A$ or $B$) in the graph $G$. By \cref{lem:critical-vtx} with $k$ growing sufficiently slowly as a function of $n$, we see that there are at most $n^{1/2+\eta/4}$ critical vertices for $A$ with probability $1-n^{-\omega(1)}$, and similar for critical vertices for $B$ by symmetry. Thus, we may assume $\mc{G}$ is such that the number of vertices with ``forced'' edges and non-edges to $v$ is at most $n^{1/2+\eta/2}$ in total.

Now the neighborhood of $v$ has the following distribution: there are at most $n^{1/2+\eta/2}$ total designated forced edges and non-edges to the four parts $A_i\cap B_j$, and then the remaining edges are uniform subject to the constraints $\deg(v,A_1)-\deg(v,A_2)\ge\gamma\sqrt{n}$ and $\deg(v,B_1)-\deg(v,B_2)\ge\gamma\sqrt{n}$. We wish to understand the chance that $|\deg(v,A_{i^\ast}\cap B_{j^\ast})-|A_{i^\ast}\cap B_{j^\ast}|/2|\ge n^{1/2+\eta}$, call this event $\mc{E}_{\mr{bad}}$, in this model. Let $C$ be the set of vertices which are forced to either connect or be disconnected from $v$, with $C_1$ connected and $C_0$ disconnected. Heuristically, we are close to a binomial random situation with some ``planted'' deviations coming from $C$ on the order of $n^{1/2+\eta/2}$, so the chance that some degree deviates by $n^{1/2+\eta}$ ought to be extremely unlikely.

We make this heuristic precise in order to complete the proof. Having revealed the above information, let us consider the distribution $\mbf{\Delta}$ of sampling the neighborhood of $v$ within $V(G)\setminus(C\cup\{v\})$ uniformly at random. The above analysis shows it is enough to give an upper bound on $\mb{P}[\mc{E}_{\mr{bad}}\cap\mc{E}_2]/\mb{P}[\mc{E}_2]$ in this model (given the revealed information including $C_0,C_1$).

First, we claim $\mb{P}[\mc{E}_2]\ge\exp(-n^{3\eta/2})$. Indeed, choose positive integers $n_{ij}$ for $i,j\in\{1,2\}$ such that $n_{11}+n_{12}-n_{21}-n_{22}\ge\gamma\sqrt{n}+n^{1/2+\eta/2}$ and $n_{11}+n_{21}-n_{12}-n_{22}\ge\gamma\sqrt{n}+n^{1/2+\eta/2}$ and $|n_{ij}-|A_i\cap B_j|/2|\le 4n^{1/2+\eta/2}$ for $i,j\in\{1,2\}$. This is easily seen to be possible since $|A_i\cap B_j|\in[cn,(1-c)n]$ and $|A_i|=|B_j|=n$. Then the probability that $v$ has $n_{ij}$ neighbors to $(A_i\cap B_j)\setminus C$ for each $i,j\in\{1,2\}$ in the distribution $\mbf{\Delta}$ is easily seen to be at least $\exp(-O((n^{1/2+\eta/2})^2/n))\ge\exp(-n^{3\eta/2})$ for $n$ large. Here we are using the easy estimate $\binom{x}{x/2+t}/2^x=\exp(-\Theta(t^2/x))$ for $t\in[-x/2,x/2]$ satisfying $|t|\ge x^{1/2+\eta/4}$.

Similarly, we see that $\mb{P}[\mc{E}_{\mr{bad}}]\le\exp(-\Omega(n^{2\eta}))$ since if $\mc{E}_{\mr{bad}}$ holds then $\deg(v,(A_i\cap B_j)\setminus C)\ge|(A_i\cap B_j)\setminus C|/2+n^{1/2+\eta}/2$ is easily verified, and under the independent distribution $\mbf{\Delta}$ this can be bounded as above. Dividing, we obtain
\[\mb{P}[\mc{E}_{\mr{bad}}\cap\mc{E}_2]/\mb{P}[\mc{E}_2]\le n^{-\omega(1)},\]
and combining with the earlier analysis we are done.
\end{proof}

\section{Convexity via the Pr\'ekopa--Leindler Inequality}\label{sec:convex}
In this section we collect a pair of log-convexity claims which are proved via observing that the marginals of a log-concave function are log-concave. This fact is a well known consequence of the Pr\'ekopa--Leindler inequality \cite{Lei72,Pre71}. We note here that the use of log-concavity of such Gaussian functionals also appears in the work of Gamarnik and Li \cite[Lemma~4.6]{GL18} (and for the similar purpose of simplifying an associated maximization problem arising in the second moment computation); we provide a short proof of the necessary results in this section.

\begin{theorem}\label{thm:prekopa-leindler}
If $f(x,y)$ is a log-concave function over $(x,y)\in\mb{R}^m\times\mb{R}^n$, then $g(x) = \int_{\mb{R}^n}f(x,y)dy$ is a log-concave function on $\mb{R}^m$.
\end{theorem}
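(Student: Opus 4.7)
The plan is to derive this as a direct application of the Pr\'ekopa--Leindler inequality, which states: if measurable $u,v,w\colon\mb{R}^n\to[0,\infty)$ and $\lambda\in(0,1)$ satisfy $w(\lambda y_1+(1-\lambda)y_2)\ge u(y_1)^\lambda v(y_2)^{1-\lambda}$ for all $y_1,y_2\in\mb{R}^n$, then $\int w\ge(\int u)^\lambda(\int v)^{1-\lambda}$. Log-concavity of $g$ is equivalent to showing that for every $x_1,x_2\in\mb{R}^m$ and $\lambda\in(0,1)$,
\[g(\lambda x_1+(1-\lambda)x_2)\ge g(x_1)^\lambda g(x_2)^{1-\lambda},\]
so the strategy is to set up the Pr\'ekopa--Leindler hypothesis in the $y$-variable using the joint log-concavity of $f$ in $(x,y)$.

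Concretely, I would fix such $x_1,x_2,\lambda$ and define the nonnegative functions $u(y)=f(x_1,y)$, $v(y)=f(x_2,y)$, and $w(y)=f(\lambda x_1+(1-\lambda)x_2,y)$. Since $f$ is log-concave on $\mb{R}^m\times\mb{R}^n$, applying the defining inequality to the pair of points $(x_1,y_1)$ and $(x_2,y_2)$ with weights $\lambda,1-\lambda$ gives
\[f(\lambda x_1+(1-\lambda)x_2,\,\lambda y_1+(1-\lambda)y_2)\ge f(x_1,y_1)^\lambda f(x_2,y_2)^{1-\lambda},\]
which is exactly the Pr\'ekopa--Leindler hypothesis $w(\lambda y_1+(1-\lambda)y_2)\ge u(y_1)^\lambda v(y_2)^{1-\lambda}$. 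Invoking Pr\'ekopa--Leindler then yields $\int w\ge(\int u)^\lambda(\int v)^{1-\lambda}$, which is precisely the desired log-concavity inequality for $g$.

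There is essentially no obstacle: the only minor point to address is measurability and the possibility that $g(x_i)=0$ (in which case the inequality holds trivially, or one can restrict attention to the convex set where $g>0$), and the convention $0^0=1$ handled in the standard way. The choice to integrate out $y$ rather than $x$ is just notational, so the same argument gives log-concavity of marginals with respect to any subset of variables. One could also cite this statement directly from standard references on log-concavity (e.g.\ Pr\'ekopa~\cite{Pre71} and Leindler~\cite{Lei72}, as the paper already mentions), but presenting the two-line reduction above keeps the section self-contained, which appears to be the authors' intent given that they provide ``a short proof of the necessary results.''
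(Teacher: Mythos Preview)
Your proof is correct and is the standard derivation of this fact from the Pr\'ekopa--Leindler inequality. Note, however, that the paper does not actually supply a proof of this theorem: it is stated as a known consequence of Pr\'ekopa--Leindler with citations to \cite{Lei72,Pre71}, and the phrase ``a short proof of the necessary results'' in the surrounding text refers to the subsequent \cref{lem:log-concave-1,lem:log-concave-2}, not to \cref{thm:prekopa-leindler} itself.
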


For the first moment we will require the following log-concavity.
\begin{lemma}\label{lem:log-concave-1}
The following function is log-concave:
\[\alpha\mapsto\mb{P}_{Z\sim \mc{N}(0,1)}[Z\ge(\gamma + \alpha)\sqrt{2}].\]
\end{lemma}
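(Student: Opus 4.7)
The plan is to realize the probability in question as a marginal of an explicit log-concave density on $\mb{R}^2$ and then invoke \cref{thm:prekopa-leindler}.

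First, I would write
\[
\mb{P}_{Z\sim\mc{N}(0,1)}[Z\ge(\gamma+\alpha)\sqrt{2}] = \int_{\mb{R}} f(\alpha,z)\,dz, \qquad f(\alpha,z) := \mbm{1}[z\ge(\gamma+\alpha)\sqrt{2}]\cdot \tfrac{1}{\sqrt{2\pi}}e^{-z^2/2}.
\]
Next I would verify that $f$ is log-concave on $\mb{R}^2$. Its support is the closed half-plane $H=\{(\alpha,z):z-(\gamma+\alpha)\sqrt{2}\ge 0\}$, which is convex; and on $H$ we have $\log f(\alpha,z) = -\tfrac{1}{2}\log(2\pi) - z^2/2$, which is concave as a function of $(\alpha,z)$ (indeed, it is independent of $\alpha$ and concave in $z$). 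Hence $f$ is log-concave on $\mb{R}^2$.

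Applying \cref{thm:prekopa-leindler} with $m=n=1$ to the log-concave function $f(\alpha,z)$ and integrating out $z$ then yields that $\alpha\mapsto \int_{\mb{R}} f(\alpha,z)\,dz$ is log-concave, which is exactly the claim.

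The only step that requires any care is the verification of log-concavity of the indicator-times-Gaussian $f$; this is immediate once one recognizes that a product of an indicator of a convex set with a log-concave function (here $e^{-z^2/2}$) is log-concave. No obstacles are anticipated; the argument is a one-line application of Pr\'ekopa--Leindler once the correct representation is written down.
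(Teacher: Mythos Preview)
Your proof is correct and is essentially the same as the paper's: both write the tail probability as the marginal in one variable of a log-concave function (an indicator of a half-space times a Gaussian) and apply \cref{thm:prekopa-leindler}. The only cosmetic difference is that the paper first reduces by an affine change to $\alpha\mapsto\mb{P}[Z\ge\alpha]$ and writes it as $\int_0^\infty \frac{1}{\sqrt{2\pi}}e^{-(x+\alpha)^2/2}\,dx$, whereas you keep the original expression and put the $\alpha$-dependence in the half-plane indicator; both representations are equivalent.
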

\begin{proof}
Via shifting and rescaling it suffices to prove that 
\[\alpha\mapsto\mb{P}_{Z\sim\mc{N}(0,1)}[Z\ge\alpha]\]
is log-concave. Note that 
\[\mb{P}_{Z\sim \mc{N}(0,1)}[Z\ge  \alpha] = \int_{0}^{\infty}\frac{1}{\sqrt{2\pi}}\exp\bigg(\frac{-(x+\alpha)^2}{2}\bigg)dx\]
and that 
\[\mbm{1}_{x\ge 0}\exp\bigg(\frac{-(x+\alpha)^2}{2}\bigg)\] is a log-concave function of $x$ and $\alpha$. \cref{thm:prekopa-leindler} finishes the proof, taking the marginal in $x$.
\end{proof}

For the second moment we will repeatedly used the following generalization of \cref{lem:log-concave-1} (which can be seen as the special case $\beta=1/2$).
\begin{lemma}\label{lem:log-concave-2}
Fix $\beta\in[0,1]$. The function 
\[(\alpha_1,\alpha_2)\mapsto\mb{P}_{Z_1,Z_2\sim \mc{N}(0,1)}\bigg[\sqrt{\frac{\beta}{2}}Z_1+\sqrt{\frac{1-\beta}{2}}Z_2\ge(\gamma + \alpha_1)\sqrt{2}\wedge \sqrt{\frac{\beta}{2}}Z_1-\sqrt{\frac{1-\beta}{2}}Z_2\ge(\gamma + \alpha_2)\sqrt{2}\bigg]\]
is log-concave.
\end{lemma}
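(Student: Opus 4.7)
The plan is to mimic the approach of \cref{lem:log-concave-1} and deduce log-concavity via the Pr\'ekopa--Leindler inequality in the form of \cref{thm:prekopa-leindler}, applied to the joint density on $(z_1, z_2, \alpha_1, \alpha_2) \in \mb{R}^4$. I would rewrite the probability as the integral
\[\int_{\mb{R}^2}\frac{1}{2\pi}\exp\!\left(-\frac{z_1^2+z_2^2}{2}\right)\mbm{1}_{H_1}(z_1,z_2,\alpha_1)\,\mbm{1}_{H_2}(z_1,z_2,\alpha_2)\,dz_1\,dz_2,\]
where $H_1$ and $H_2$ are the half-spaces cut out by $\sqrt{\beta/2}\,z_1+\sqrt{(1-\beta)/2}\,z_2\ge(\gamma+\alpha_1)\sqrt{2}$ and $\sqrt{\beta/2}\,z_1-\sqrt{(1-\beta)/2}\,z_2\ge(\gamma+\alpha_2)\sqrt{2}$ respectively, regarded as subsets of $\mb{R}^4$ by letting the remaining coordinate be free.

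The Gaussian density is log-concave in $(z_1,z_2)$, and trivially so when viewed as a function on $\mb{R}^4$. Each indicator $\mbm{1}_{H_j}$ is the indicator of a convex set, since each constraint is affine in $(z_1,z_2,\alpha_1,\alpha_2)$, and hence is log-concave. A product of log-concave functions is log-concave, so the integrand above is log-concave on $\mb{R}^4$. Applying \cref{thm:prekopa-leindler} with $x=(\alpha_1,\alpha_2)$ and $y=(z_1,z_2)$ yields a log-concave function of $(\alpha_1,\alpha_2)$, which is exactly the stated probability.

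There is no substantive obstacle here: the entire content is the observation that both constraints are affine in the joint variables, so their solution sets are convex, and the rest is a direct invocation of the marginalization property of log-concavity. This is a two-constraint version of \cref{lem:log-concave-1}, and the same scheme extends to any finite number of linear constraints of the shape $\langle a_i,(z_1,z_2)\rangle\ge b_i+c_i\alpha_i$.
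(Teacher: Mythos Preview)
Your proof is correct and follows essentially the same approach as the paper: write the probability as the $(z_1,z_2)$-marginal of a log-concave function on $\mb{R}^4$ (Gaussian density times indicators of half-spaces) and invoke \cref{thm:prekopa-leindler}. The paper first performs a cosmetic shift and linear change of variables before applying the same argument, but this preprocessing step is not needed and your direct route is arguably cleaner.
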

\begin{proof}
Via shifting and applying a linear transformation it suffices to prove that 
\[(\alpha_1,\alpha_2)\mapsto\mb{P}_{Z_1,Z_2\sim \mc{N}(0,1)}\bigg[\sqrt{\frac{\beta}{2}}(Z_1-\alpha_1)+\sqrt{\frac{1-\beta}{2}}(Z_2-\alpha_2)\ge 0\wedge \sqrt{\frac{\beta}{2}}(Z_1-\alpha_1)-\sqrt{\frac{1-\beta}{2}}(Z_2-\alpha_2)\ge 0\bigg]\]
is log-concave. Note that 
\begin{align*}
&\mb{P}_{Z_1,Z_2\sim \mc{N}(0,1)}\bigg[\sqrt{\frac{\beta}{2}}(Z_1-\alpha_1)+\sqrt{\frac{1-\beta}{2}}(Z_2-\alpha_2)\ge 0\wedge \sqrt{\frac{\beta}{2}}(Z_1-\alpha_1)-\sqrt{\frac{1-\beta}{2}}(Z_2-\alpha_2)\ge 0\bigg]\\
&=\int_{\substack{\sqrt{\frac{\beta}{2}}z_1+\sqrt{\frac{1-\beta}{2}}z_2\ge 0\\\sqrt{\frac{\beta}{2}}z_1-\sqrt{\frac{1-\beta}{2}}z_2\ge 0}}\frac{1}{2\pi}\exp\bigg(\frac{-(z_1+\alpha_1)^2-(z_2+\alpha_2)^2}{2}\bigg)dz_1dz_2.
\end{align*}
Applying \cref{thm:prekopa-leindler} to the log-concave function of $z_1,z_2,\alpha_1,\alpha_2$ given by
\[\mbm{1}_{\sqrt{\frac{\beta}{2}}z_1+\sqrt{\frac{1-\beta}{2}}z_2\ge 0}\mbm{1}_{\sqrt{\frac{\beta}{2}}z_1-\sqrt{\frac{1-\beta}{2}}z_2\ge 0}\exp\bigg(\frac{-(z_1+\alpha_1)^2-(z_2+\alpha_2)^2}{2}\bigg)\]
and taking the marginal in $z_1,z_2$, the result follows.
\end{proof}

\section{Moment Computations}\label{sec:moment}
Now we are ready to attack the moment computation. Recall $|\gamma|\le 1$ is such that $\gamma\sqrt{n}\in\mb{Z}$.

\subsection{The second moment of \texorpdfstring{$X_\gamma$}{Xgamma}}\label{sub:second-moment}
We first perform the second moment in detail, proving \cref{lem:second-moment}. We defer the much simpler first moment to \cref{sub:first-moment}, in which we truncate various repeated or similar details. Let $c\in(0,1/2)$ be a small constant to be chosen later.

\subsubsection{Setup and initial truncation}\label{sub:second-setup}
Consider $n_1'\in\{0,\ldots,n\}$ and $n_2'=n-n_1'$, and let $A_1=[n]$ and $A_2=\{n+1,\ldots,2n\}$ and $B_1=[n_1']\cup\{n+n_1'+1,\ldots,2n\}$ and $B_2=\{n_1'+1,n_1'+2,\ldots,n+n_1'\}$. Let $n_{ij}=|A_i\cap B_j|$, so that $n_{ij}=n_{i+j-1}'$ where we take indices $\imod{2}$. Also, define $\beta=n_{11}/n$. Given this setup let $Y_A(n_1')=1$ if $G\sim\mb{G}(2n,1/2)$ is such that $A_1\cup A_2$ is a $\gamma\sqrt{n}$-friendly partition, and $0$ otherwise; let $Y_B(n_1')$ be defined similarly. By linearity of expectation we have
\begin{align}\label{eq:first-moment}
\mb{E}X_\gamma&=\binom{2n}{n}\mb{P}[Y_A(n_1')=1],\\
\mb{E}X_\gamma^2&=\binom{2n}{n}\sum_{n_1'=0}^n\binom{n}{n_1'}^2\mb{P}[Y_A(n_1')=1\wedge Y_B(n_1')=1].\label{eq:second-moment}
\end{align}
This means that as long as $c$ is sufficiently smaller than $\eta$, we have
\begin{align}\notag
\binom{2n}{n}\sum_{n_1'<cn}\binom{n}{n_1'}^2\mb{P}[Y_A(n_1')=1\wedge Y_B(n_1')=1]&+\binom{2n}{n}\sum_{n_1'>(1-c)n}\binom{n}{n_1'}^2\mb{P}[Y_A(n_1')=1\wedge Y_B(n_1')=1]\\
&\le 2\sum_{n_1'<cn}\binom{n}{n_1'}^2\binom{2n}{n}\mb{P}[Y_A(n_1')=1]\notag\\
&\le(1+\eta)^n\mb{E}X_\gamma\label{eq:second-moment-small-overlap}
\end{align}
for $n$ large. Thus it suffices to consider the terms for which $n_1'\in[cn,(1-c)n]$ (equivalently, $\beta\in[c,1-c]$), and we wish to compute $\mb{P}[Y_A(n_1')=Y_B(n_1')=1]$. Note that whether the desired event holds or not is purely a function of the degree sequences of the $4$ graphs $G[A_i\cap B_j]$ for $i,j\in\{1,2\}$ and the $6$ bipartite graphs $G[A_i\cap B_j,A_{i'}\cap B_{j'}]$ for $i,j,i',j'\in\{1,2\}$ such that either $i<i'$ or $i=i'$ and $j<j'$. Thus, we wish to apply \cref{lem:integrate-p-graph,lem:integrate-p-bip-graph} to transfer to a simple model for such degree sequences in which it is easier to compute the desired probabilities.

\subsubsection{Transference and defining the integrated degree model}\label{sub:second-transfer}
To do this, we use the results of \cref{sec:atypical-switching}. Let $\mc{E}_2$ be the event that $Y_A(n_1')=Y_B(n_1')=1$ (which is in line with usage in \cref{sec:atypical-switching}). Let $K$ be some large constant in terms of $c$ to be determined. First apply \cref{prop:friendly-var-deg}. Let $\mc{E}_{\mr{var}}$ be the event that $\sum_{v\in[2n]}(\deg(v,S)-|S|/2)^2\ge Kn^2$ for all $S\subseteq[2n]$, where $K$ is some appropriate constant coming from the lemma. We have $\mb{P}[\mc{E}_{\mr{var}}|\mc{E}_2]\ge1-\exp(-n)$. By \cref{prop:large-degree-2} with $\eta$ replaced by $\min(\eps_{\ref{lem:integrate-p-graph}},\eps_{\ref{lem:integrate-p-bip-graph}})/2$, we have $\mb{P}[\mc{E}_{\mr{irreg},2}|\mc{E}_2]\le n^{-\omega(1)}$. Thus
\begin{equation}\label{eq:second-moment-atypical}
\mb{P}[\mc{E}_2]=(1\pm n^{-\omega(1)})\mb{P}[\mc{E}_2\wedge\mc{E}_{\mr{irreg},2}^c\wedge\mc{E}_{\mr{var}}].
\end{equation}
Note that $\mc{E}_{\mr{irreg},2}^c,\mc{E}_{\mr{var}}$ ensure that for the $4+6$ graphs defined above, they have degree sequences which satisfy \cref{A1,A2,A3} of \cref{lem:integrate-p-graph} with appropriate parameters (namely, the degree sequences of $G[A_i\cap B_j]$ are $(O(1),\eps_{\ref{lem:integrate-p-graph}})$-regular) or satisfy \cref{B1,B2,B3,B4,B5} of \cref{lem:integrate-p-bip-graph} with appropriate parameters (namely, the degree sequences of the $6$ bipartite portions of $G$ are $(O(1),\Omega(1),\eps_{\ref{lem:integrate-p-bip-graph}})$-regular). Here we are also using that $cn\le n_1',n_2'\le(1-c)n$, e.g., to establish \cref{B1}.

We deduce
\begin{equation}\label{eq:second-moment-transference}
\mb{P}[\mc{E}_2\wedge\mc{E}_{\mr{irreg},2}^c\wedge\mc{E}_{\mr{var}}]\lesssim\rho,
\end{equation}
where $\rho$ is the following probability:
\begin{itemize}
    \item For $i,j\in\{1,2\}$ sample the ``degrees'' $d_v^{(i,j)}$ for $v\in A_i\cap B_j$ (which should be thought of as corresponding to the internal degrees $\deg(v,A_i\cap B_j)$ for $v\in A_i\cap B_j$) from $\mc{I}_{1/2}^{|A_i\cap B_j|}$;
    \item For $i,j,i',j'\in\{1,2\}$ with $i<i'$ or $i=i'$ and $j<j'$, sample the ``degrees'' $d_v^{(i,j,i',j')}$ for $v\in A_i\cap B_j$ and $d_v^{(i',j',i,j)}$ for $v\in A_{i'}\cap B_{j'}$ (which correspond to the bipartite degrees $\deg(v,A_{i'}\cap B_{j'})$ for $v\in A_i\cap B_j$ and vice versa) jointly from $\mc{I}_{1/2}^{|A_i\cap B_j|,|A_{i'}\cap B_{j'}|}$;
    \item Compute the probability that for each $i,j\in\{1,2\}$ and $v\in A_i\cap B_j$ we have
    \begin{align}\label{eq:degree-A-ineq}
    d_v^{(i,j)}-d_v^{(i,j,i+1,j+1)}+d_v^{(i,j,i,j+1)}-d_v^{(i,j,i+1,j)}&\ge\gamma\sqrt{n},\\
    d_v^{(i,j)}-d_v^{(i,j,i+1,j+1)}-d_v^{(i,j,i,j+1)}+d_v^{(i,j,i+1,j)}&\ge\gamma\sqrt{n},\label{eq:degree-B-ineq}
    \end{align}
    where indices are taken $\imod{2}$.
\end{itemize}
This can be interpreted as the probability that if the degrees of our $4+6$ graphs are sampled as in \cref{lem:integrate-p-graph,lem:integrate-p-bip-graph}, we have that $A_1\cup A_2$ and $B_1\cup B_2$ are both $\gamma\sqrt{n}$-friendly.

Understanding the ``degree'' distribution is relatively simple:
\begin{itemize}
    \item Sample $4+6$ parameters $p_{ij}\sim\mc{N}(1/2,1/(4n_{ij}^2-4n_{ij}))$ for $i,j\in\{1,2\}$ and $p_{ij,i'j'}\sim\mc{N}(1/2,1/(8n_{ij}n_{i'j'}))$ for $i,j,i',j'\in\{1,2\}$ with $i<i'$ or $i=i'$ and $j<j'$ (all conditional on being in $(0,1)$), and let $p_{i'j',ij}=p_{ij,i'j'}$;
    \item Sample $d_v^{(i,j)}\sim\mr{Bin}(n_{ij}-1,p_{ij})$ and $d_v^{(i,j,i',j')}\sim\mr{Bin}(n_{i'j'},p_{ij,i'j'})$ completely independently;
    \item Condition on the $4$ divisibility conditions and $6$ equality conditions
    \begin{align}\label{eq:divisibility-1}
    2&\mid\sum_{v\in A_i\cap B_j}d_v^{(i,j)},\\
    \sum_{v\in A_i\cap B_j}d_v^{(i,j,i',j')}&=\sum_{v\in A_{i'}\cap B_{j'}}d_v^{(i',j',i,j)}.\label{eq:divisibility-2}
    \end{align}
\end{itemize}
The first step can be handled by taking a Gaussian integral over the $4+6$ parameters, and the second step is independent. Thus the major difficulty lies in the conditioning introduced by the third step. We will handle this by use of Bayes' theorem, local central limit theorems, and anticoncentration inequalities (see e.g.~\cite[Sections~2.3.2,~2.3.3]{SS21maj}), but one should think of this more as a technical detail: the probability $\rho$ ought to be mostly unchanged if this conditioning were not present, although one cannot actually directly drop the equal sum conditions.

\subsubsection{Bayes' theorem in the transferred model}\label{sub:second-bayes-theorem}
For simplicity, let us assume that any event involving $\mr{Bin}(n,p)$ for $p\notin[0,1]$ does not hold. Let $p_{ij}=1/2+\alpha_{ij}/\sqrt{4n_{ij}^2-4n_{ij}}$ and $p_{ij,i'j'}=1/2+\alpha_{ij,i'j'}/\sqrt{8n_{ij}n_{i'j'}}$. Additionally, we will use the subscript $\vec{d}$ to denote the randomness over the joint ``degree'' distribution, and the subscript $\vec{d^\ast}$ for this distribution prior to conditioning on \cref{eq:divisibility-1,eq:divisibility-2} (which is therefore a collection of independent binomials if we reveal $\vec{p}$). Now we have
\begin{align*}
\rho&=\mb{P}_{\vec{p},\vec{d}}[\text{\cref{eq:degree-A-ineq,eq:degree-B-ineq}}]=(1+o(1))\int_{\alpha_{ij},\alpha_{ij,i'j'}}\mb{P}_{\vec{d}}[\text{\cref{eq:degree-A-ineq,eq:degree-B-ineq}}]\cdot\frac{e^{-\frac{1}{2}\sum\alpha_{ij}^2-\frac{1}{2}\sum\alpha_{ij,i'j'}^2}}{(2\pi)^5}d\alpha\\
&\lesssim e^{-32n}+\int_{\snorm{\alpha}_\infty\le 8\sqrt{n}}\mb{P}_{\vec{d}}[\text{\cref{eq:degree-A-ineq,eq:degree-B-ineq}}]e^{-\frac{1}{2}\sum\alpha_{ij}^2-\frac{1}{2}\sum\alpha_{ij,i'j'}^2}d\alpha\\
&\lesssim e^{-32n}+\int_{\snorm{\alpha}_\infty\le 8\sqrt{n}}\mb{P}_{\vec{d^\ast}}[\text{\cref{eq:degree-A-ineq,eq:degree-B-ineq}}|\text{\cref{eq:divisibility-1,eq:divisibility-2}}]e^{-\frac{1}{2}\sum\alpha_{ij}^2-\frac{1}{2}\sum\alpha_{ij,i'j'}^2}d\alpha\\
&\lesssim e^{-32n}+\int_{\snorm{\alpha}_\infty\le 8\sqrt{n}}\mb{P}_{\vec{d^\ast}}[\text{\cref{eq:degree-A-ineq,eq:degree-B-ineq}}]\frac{\mb{P}_{\vec{d^\ast}}[\text{\cref{eq:divisibility-1,eq:divisibility-2}}|\text{\cref{eq:degree-A-ineq,eq:degree-B-ineq}}]}{\mb{P}_{\vec{d^\ast}}[\text{\cref{eq:divisibility-1,eq:divisibility-2}}]}e^{-\frac{1}{2}\sum\alpha_{ij}^2-\frac{1}{2}\sum\alpha_{ij,i'j'}^2}d\alpha.
\end{align*}
The $1+o(1)$ comes from the probability that the $\alpha$ are such that each $p_{ij}\in(0,1)$, etc., and the $e^{-32n}$ comes from the fact that there is a very low contribution to the integral when any coordinate of $\alpha$ is bigger than $8\sqrt{n}$ in magnitude. We remark that the sum $\sum\alpha_{ij,i'j'}^2$ is over pairs with $i<i'$ or $i=i'$ and $j<j'$, and that furthermore the integration over $\alpha_{ij,i'j'}$ is only done over such pairs; recall we are letting $\alpha_{ij,i'j'}=\alpha_{i'j',ij}$.

\subsubsection{Estimating the Bayes ratio}\label{sub:second-bayes-ratio}
Now we estimate the ratio of probabilities in the final integrand using local central limit theorems. As we only care about $\rho$ up to constant factors, we will be somewhat loose with details (see e.g.~\cite[Section~2]{SS21maj} for a similar computation carried out with precision). The denominator is the chance that in the purely independent distribution (which we labeled by $\vec{d^\ast}$), we have the given divisibilities and equalities. For the denominator, note that each condition in \cref{eq:divisibility-1,eq:divisibility-2} is independent of each other. Furthermore, each equality in \cref{eq:divisibility-2} is tantamount to a sum of independent binomials being equal to another sum of independent binomials (and each divisibility is making sure a sum of independent binomials is even). We easily deduce that the denominator has order of magnitude $\Theta((1/2)^4(1/n)^6)=\Theta(n^{-6})$ by e.g.~using a local central limit theorem (or explicit binomial calculations). Here we are implicitly using that $\snorm{\alpha}_\infty$ is not too large.

Now we focus on the numerator. It is similar to the denominator, but we first condition on the inequalities \cref{eq:degree-A-ineq,eq:degree-B-ineq}. At this stage it suffices to give an upper bound, so we focus just on bounding $\mb{P}[\cref{eq:divisibility-2}|\text{\cref{eq:degree-A-ineq,eq:degree-B-ineq}}]$.

Note that even after conditioning, the ``degrees'' $(d_v^{(i,j)},d_v^{(i,j,i,j+1)},d_v^{(i,j,i+1,j+1)},d_v^{(i,j,i+1,j)})$ are independent over all $v\in A_i\cap B_j$ (and choices of $i,j\in\{1,2\}$). Therefore, the resulting upper bound will follow from an Erd\H{o}s--Littlewood--Offord-style argument. Explicitly, first reveal the ``degrees'' for all $v\in A_1\cap B_1$. Then, reveal them for $v\in A_1\cap B_2$ and consider the probability that \cref{eq:divisibility-2} holds for $i=1,j=1,i'=1,j'=2$. Then, reveal them for $v\in A_2\cap B_1$ and consider the probability that \cref{eq:divisibility-2} holds for $i'=2,j'=1$ and $(i,j)\in\{(1,1),(1,2)\}$. Finally, reveal them for $v\in A_2\cap B_2$ and consider the probability that \cref{eq:divisibility-2} holds for $i'=2,j'=2$ and $(i,j)\in\{(1,1),(1,2),(2,1)\}$. We claim that the probabilities are, conditional on the prior revelations, bounded by $O(n^{-1})$, $O(n^{-2})$, and $O(n^{-3})$, respectively. We focus on the last claim: note that for each $v\in A_2\cap B_2$ the distribution of $(d_v^{(2,2)},d_v^{(2,2,2,1)},d_v^{(2,2,1,1)},d_v^{(2,2,1,2)})$ is that of
\[\big(\mr{Bin}(n_{22}-1,p_{22}),\mr{Bin}(n_{21},p_{22,21}),\mr{Bin}(n_{11},p_{22,11}),\mr{Bin}(n_{12},p_{22,12})\big)\]
conditional on inequalities coming from \cref{eq:degree-A-ineq,eq:degree-B-ineq}. It is not hard to see that there is a $4$-dimensional axis-aligned lattice cube of dimensions $\Omega(\sqrt{n})$ such that every lattice point in the cube is obtained by this conditional distribution with probability $\Omega(n^{-4/2})$ (this follows almost immediately from the same property for the unconditioned binomial distribution). Projecting, this means that there is a $3$-dimensional axis-aligned cube of dimensions $\Omega(\sqrt{n})$ such that every lattice point in the cube is obtained by the distribution $(d_v^{(2,2,2,1)},d_v^{(2,2,1,1)},d_v^{(2,2,1,2)})$ with probability $\Omega(n^{-3/2})$.

Now it suffices to show that the sum of $n_{22}=\Omega(n)$ independent copies of this distribution has the property that every point in $\mb{R}^3$ is obtained with probability $O(n^{-3})$. If we knew the distribution was in fact uniform on a $3$-dimensional cube of dimensions $\Omega(\sqrt{n})$, the result would be immediate by considering the dimensions independently and some explicit computations (or the L\'evy--Kolmogorov--Rogozin inequality \cite{Rog61}). We can reduce to this situation by the following process: after sampling $X=x\in\mb{R}^3$ to add to our sum, we sample a weighted coin which turns heads with probability $p_x\in[0,1]$, choosing the weights $p_x$ to ensure that conditional on heads appearing, the outcome of $X$ is uniform on the aforementioned lattice cube. It is easy to check that we can further do this so that with probability $1-\exp(-\Omega(n))$, there will be $\Omega(n)$ many heads appearing. Reveal which indices are heads and tails, and then notice that the sum corresponding to indices where heads appears is an independent sum of random variables uniform on lattice cubes.

With the claims in hand, the ultimate upshot is that
\[\frac{\mb{P}_{\vec{d^\ast}}[\text{\cref{eq:divisibility-1,eq:divisibility-2}}|\text{\cref{eq:degree-A-ineq,eq:degree-B-ineq}}]}{\mb{P}_{\vec{d^\ast}}[\text{\cref{eq:divisibility-1,eq:divisibility-2}}]}\le O(1).\]

\subsubsection{The independent model}\label{sub:second-independent}
We now deduce
\[\rho\lesssim e^{-32n}+\int_{\snorm{\alpha}_\infty\le 8\sqrt{n}}\mb{P}_{\vec{d^\ast}}[\text{\cref{eq:degree-A-ineq,eq:degree-B-ineq}}]e^{-\frac{1}{2}\sum\alpha_{ij}^2-\frac{1}{2}\sum\alpha_{ij,i'j'}^2}d\alpha,\]
and now we are integrating over probabilities coming from a purely independent model (which depends on the parameters $\alpha$). Now at this stage let us define $p_{ij}=1/2+\alpha_{ij}^\ast/\sqrt{4n_{ij}^2}$ and $p_{ij,i'j'}=1/2+\alpha_{ij,i'j'}^\ast/\sqrt{8n_{ij}n_{i'j'}}$.

Let
\[\rho^\ast:=\int_{\snorm{\alpha^\ast}_\infty\le8\sqrt{n}}\prod_{v\in V(G)}\mb{P}_{\vec{d^\ast}}[\text{\cref{eq:degree-A-ineq,eq:degree-B-ineq} for }v]e^{-\frac{1}{2}\sum\alpha_{ij}^{\ast2}-\frac{1}{2}\sum\alpha_{ij,i'j'}^{\ast2}}d\alpha^\ast,\]
and note that $\alpha^\ast=(1+O(1/n))\alpha$ and $\snorm{\alpha}_\infty\le8\sqrt{n}$ in the integrand easily imply that
\begin{equation}\label{eq:second-moment-rho-rho-star}
\rho\lesssim e^{-32n}+\rho^\ast.
\end{equation}
(Note that the shift from $\alpha$ to $\alpha^\ast$ slightly changes the region of integration, but this error can be folded into the $e^{-32n}$ error term already present.)

Now we focus on $\rho^\ast$. Recall that $n_{11}=n_{22}=\beta n$ and $n_{12}=n_{21}=(1-\beta)n$, and note that each condition \cref{eq:degree-A-ineq,eq:degree-B-ineq} for $v$ corresponds to some binomial sum and difference inequalities. These probabilities can be estimated to within $O(1/n)$ error by the results in \cref{app:binomial}. For the second moment we use \cref{lem:second-moment-estimate} to find
\begin{align*}
\rho^\ast&=\int_{\snorm{\alpha}_\infty\le 8\sqrt{n}}\prod_{v\in V(G)}\mb{P}_{\vec{d^\ast}}[\text{\cref{eq:degree-A-ineq,eq:degree-B-ineq} for }v]e^{-\frac{1}{2}\sum\alpha_{ij}^{\ast2}-\frac{1}{2}\sum\alpha_{ij,i'j'}^{\ast2}}d\alpha^\ast\\
&=\int_{\snorm{\alpha^\ast}_\infty\le8\sqrt{n}}(q_{11}(\alpha)q_{22}(\alpha))^{\beta n}(q_{12}(\alpha)q_{21}(\alpha))^{(1-\beta)n}e^{-\frac{1}{2}\sum\alpha_{ij}^{\ast2}-\frac{1}{2}\sum\alpha_{ij,i'j'}^{\ast2}}d\alpha^\ast
\end{align*}
where
\[q_{ij}(\alpha):=\mb{P}\bigg[\genfrac{}{}{0pt}{}{X^{(i,j)}-X^{(i,j,i+1,j+1)}+X^{(i,j,i,j+1)}-X^{(i,j,i+1,j)}\ge\gamma\sqrt{n}}{X^{(i,j)}-X^{(i,j,i+1,j+1)}-X^{(i,j,i,j+1)}+X^{(i,j,i+1,j)}\ge\gamma\sqrt{n}}\bigg]\]
with $X^{(i,j)}\sim\mr{Bin}(n_{ij}-1,p_{ij})$ and $X^{(i,j,i',j')}\sim\mr{Bin}(n_{i'j'},p_{ij,i'j'})$.

We respectively apply \cref{lem:second-moment-estimate} with the following parameters:
\begin{align*}
k&=\beta n,&(a_1,a_2,a_3,a_4)&=\Big(\frac{\alpha_{11}^\ast}{\sqrt{\beta^2n}},\frac{\alpha_{11,12}^\ast}{\sqrt{2\beta(1-\beta)n}},\frac{\alpha_{11,22}^\ast}{\sqrt{2\beta^2n}},\frac{\alpha_{11,21}^\ast}{\sqrt{2\beta(1-\beta)n}}\Big);\\
k&=\beta n,&(a_1,a_2,a_3,a_4)&=\Big(\frac{\alpha_{22}^\ast}{\sqrt{\beta^2n}},\frac{\alpha_{22,12}^\ast}{\sqrt{2\beta(1-\beta)n}},\frac{\alpha_{22,11}^\ast}{\sqrt{2\beta^2n}},\frac{\alpha_{22,21}^\ast}{\sqrt{2\beta(1-\beta)n}}\Big);\\
k&=(1-\beta)n,&(a_1,a_2,a_3,a_4)&=\Big(\frac{\alpha_{12}^\ast}{\sqrt{(1-\beta)^2n}},\frac{\alpha_{12,11}^\ast}{\sqrt{2\beta(1-\beta)n}},\frac{\alpha_{12,21}^\ast}{\sqrt{2(1-\beta)^2n}},\frac{\alpha_{12,22}^\ast}{\sqrt{2\beta(1-\beta)n}}\Big);\\
k&=(1-\beta)n,&(a_1,a_2,a_3,a_4)&=\Big(\frac{\alpha_{21}^\ast}{\sqrt{(1-\beta)^2n}},\frac{\alpha_{21,11}^\ast}{\sqrt{2\beta(1-\beta)n}},\frac{\alpha_{21,12}^\ast}{\sqrt{2(1-\beta)^2n}},\frac{\alpha_{21,22}^\ast}{\sqrt{2\beta(1-\beta)n}}\Big).\\
\end{align*}
These provide expressions of the form $g(\tau,a_1,a_2,a_3,a_4)$ (\cref{def:binomial-gaussian-probability-2}) where $\tau\in\{\beta,1-\beta\}$ for the functions $q_{11}(\alpha^\ast),q_{22}(\alpha^\ast),q_{12}(\alpha^\ast),q_{21}(\alpha^\ast)$, respectively, within an additive error of $O(1/n)$; call these expressions $q_{ij}^\dagger(\alpha^\ast)$, which are continuous functions (suppressing the dependence on $\gamma,\beta$). In fact, each $q_{ij}^\dagger(\alpha^\ast)$ is log-concave in $\mb{R}^{10}$ (recall $\alpha_{ij,i'j'}^\ast=\alpha_{i'j',ij}^\ast$) by \cref{lem:log-concave-2}. This is because $g(\tau,a_1,a_2,a_3,a_4)$ is seen to be log-concave in $(a_1,a_2,a_3,a_4)$. Additionally, they are lower-bounded by an absolute constant in the range $\snorm{\alpha^\ast}_\infty\le8\sqrt{n}$, which means
\begin{equation}\label{eq:q-dagger-second-moment}
\rho^\ast\lesssim\int_{\snorm{\alpha^\ast}_\infty\le8\sqrt{n}}\Big[(q_{11}^\dagger(\alpha^\ast)q_{22}^\dagger(\alpha^\ast))^{\beta n}(q_{12}^\dagger(\alpha^\ast)q_{21}^\dagger(\alpha^\ast))^{(1-\beta)n}\Big]e^{-\frac{1}{2}\sum\alpha_{ij}^{\ast2}-\frac{1}{2}\sum\alpha_{ij,i'j'}^{\ast2}}d\alpha^\ast.
\end{equation}
Now notice that the initial portion of the integrand in square brackets has the property that it is symmetric under simultaneously switching $\alpha_{12}^\ast\leftrightarrow\alpha_{21}^\ast,\alpha_{12,11}^\ast\leftrightarrow\alpha_{21,11}^\ast,\alpha_{12,22}^\ast\leftrightarrow\alpha_{21,22}^\ast$ (doing so in a way that continues to enforce the symmetry $\alpha_{ij,i'j'}^\ast=\alpha_{i'j',ij}^\ast$). This can be seen as ``switching the roles of partitions $A$ and $B$'', and one can see that it amounts to switching the indices ``$12$'' and ``$21$'' wherever they appear. Additionally, the initial portion of the integrand is symmetric under simultaneously switching $\alpha_{11}^\ast\leftrightarrow\alpha_{22}^\ast,\alpha_{11,12}^\ast\leftrightarrow\alpha_{22,12}^\ast,\alpha_{11,21}^\ast\leftrightarrow\alpha_{22,21}^\ast$. This can be seen as ``switching the roles of $A_1$ and $B_2$, as well as $A_2$ and $B_1$'', and it amounts to switching ``$11$'' and ``$22$'' wherever they appear.

We can apply these symmetries and log-concavity to obtain an upper bound on the integrand in \cref{eq:q-dagger-second-moment}. (As a basic example, if $g(x,y)$ is log-concave and symmetric then $g(x,y)=\sqrt{g(x,y)g(y,x)}\le g((x+y)/2,(x+y)/2)$.) We obtain
\begin{equation}\label{eq:log-concavity-second-moment-1}(q_{11}^\dagger(\alpha^\ast)q_{22}^\dagger(\alpha^\ast))^{\beta n}(q_{12}^\dagger(\alpha^\ast)q_{21}^\dagger(\alpha^\ast))^{(1-\beta)n}\le(q_{11}^\dagger(\alpha')q_{22}^\dagger(\alpha'))^{\beta n}(q_{12}^\dagger(\alpha')q_{21}^\dagger(\alpha'))^{(1-\beta)n}
\end{equation}
where
\[\alpha_{11}'=\alpha_{22}'=\frac{\alpha_{11}^\ast+\alpha_{22}^\ast}{2},\quad\alpha_{12}'=\alpha_{21}'=\frac{\alpha_{12}^\ast+\alpha_{21}^\ast}{2},\]
\[\alpha_{11,12}'=\alpha_{11,21}'=\alpha_{22,12}'=\alpha_{22,21}'=\frac{\alpha_{11,12}^\ast+\alpha_{11,21}^\ast+\alpha_{22,12}^\ast+\alpha_{22,21}^\ast}{4},\]
\[\alpha_{12,21}'=\alpha_{12,21}^\ast,\quad\alpha_{11,22}'=\alpha_{11,22}^\ast.\]
We have
\[q_{11}^\dagger(\alpha')=g\Big(\beta,\frac{\alpha_{11}'}{\sqrt{\beta^2n}},\frac{\alpha_{11,12}'}{\sqrt{2\beta(1-\beta)n}},\frac{\alpha_{11,22}'}{\sqrt{2\beta^2n}},\frac{\alpha_{11,21}'}{\sqrt{2\beta(1-\beta)n}}\Big)\]
and similar for the other values. But from \cref{def:binomial-gaussian-probability-2} we see that $g(\tau,a_1,a_2,a_3,a_4)$ depends only on $\beta,a_1-a_3,a_2-a_4$. By $\alpha_{11,12}'=\alpha_{11,21}'$ and similar we may deduce that
\begin{align*}
q_{11}^\dagger(\alpha')&=g\Big(\beta,\frac{\alpha_{11}'\sqrt{2}-\alpha_{11,22}'}{\sqrt{2\beta^2n}},0,0,0\Big),\quad &q_{22}^\dagger(\alpha')&=g\Big(\beta,\frac{\alpha_{22}'\sqrt{2}-\alpha_{22,11}'}{\sqrt{2\beta^2n}},0,0,0\Big),\\
q_{12}^\dagger(\alpha')&=g\Big(1-\beta,\frac{\alpha_{12}'\sqrt{2}-\alpha_{12,21}'}{\sqrt{2(1-\beta)^2n}},0,0,0\Big),\quad &q_{21}^\dagger(\alpha')&=g\Big(1-\beta,\frac{\alpha_{21}'\sqrt{2}-\alpha_{21,12}'}{\sqrt{2(1-\beta)^2n}},0,0,0\Big).
\end{align*}
Now since $g(\tau,a_1,a_2,a_3,a_4)$ is log-concave in $(a_1,a_2,a_3,a_4)$, we see
\begin{align}\label{eq:log-concavity-second-moment-2}
q_{11}^\dagger(\alpha')q_{22}^\dagger(\alpha')&\le g\Big(\beta,\frac{\alpha_{11}'\sqrt{2}-\alpha_{11,22}'}{\sqrt{2\beta^2n}},0,0,0\Big)^2=f\Big(\beta,\frac{\alpha_{11,22}'-\alpha_{11}'\sqrt{2}}{2\sqrt{2n}}\Big)^2,\\
q_{12}^\dagger(\alpha')q_{21}^\dagger(\alpha')&\le g\Big(1-\beta,\frac{\alpha_{12}'\sqrt{2}-\alpha_{12,21}'}{\sqrt{2(1-\beta)^2n}},0,0,0\Big)^2=f\Big(1-\beta,\frac{\alpha_{12,21}'-\alpha_{12}'\sqrt{2}}{2\sqrt{2n}}\Big)^2,\label{eq:log-concavity-second-moment-3}
\end{align}
using $\alpha_{11}'=\alpha_{22}'$, $\alpha_{11,22}'=\alpha_{22,11}'$, and similar, as well as the relation between $g$ and $f$ described in \cref{def:binomial-gaussian-probability-2} (recall $f$ from \cref{def:special-func}). Now \cref{eq:q-dagger-second-moment,eq:log-concavity-second-moment-1,eq:log-concavity-second-moment-2,eq:log-concavity-second-moment-3} yield
\begin{equation}\label{eq:before-rotation}
\rho^\ast\lesssim\int_{\snorm{\alpha^\ast}_\infty\le8\sqrt{n}}f\Big(\beta,\frac{\alpha_{11,22}'-\alpha_{11}'\sqrt{2}}{2\sqrt{2n}}\Big)^{2\beta n}f\Big(1-\beta,\frac{\alpha_{12,21}'-\alpha_{12}'\sqrt{2}}{2\sqrt{2n}}\Big)^{2(1-\beta)n} e^{-\frac{1}{2}\sum\alpha_{ij}^{\ast2}-\frac{1}{2}\sum\alpha_{ij,i'j'}^{\ast2}}d\alpha^\ast.
\end{equation}
Now,
\[\frac{\alpha_{11,22}'-\alpha_{11}'\sqrt{2}}{\sqrt{2}},\frac{\alpha_{11,22}'+\alpha_{11}'\sqrt{2}}{\sqrt{2}},\frac{\alpha_{12,21}'-\alpha_{12}'\sqrt{2}}{\sqrt{2}},\frac{\alpha_{12,21}'+\alpha_{12}'\sqrt{2}}{\sqrt{2}}\]
are linear combinations of $\alpha^\ast$, i.e., $v_1\cdot\alpha^\ast,v_2\cdot\alpha^\ast,v_3\cdot\alpha^\ast,v_4\cdot\alpha^\ast$ for vectors $v_1,v_2,v_3,v_4\in\mb{R}^{10}$ (here we are assuming $\alpha^\ast$ only contains one of each symmetric pair $\alpha_{ij,i'j'}^\ast=\alpha_{i'j',ij}^\ast$). One can easily check that $v_1,v_2,v_3,v_4$ are orthonormal vectors in $\mb{R}^{10}$. Thus, there exists a completion to an orthonormal basis $\{v_1,\ldots,v_{10}\}$ so that
\begin{align*}
\sum\alpha_{ij}^{\ast2}+\sum\alpha_{ij,i'j'}^{\ast2}&=\frac{(\alpha_{11,22}'-\alpha_{11}'\sqrt{2})^2+(\alpha_{11,22}'+\alpha_{11}'\sqrt{2})^2+(\alpha_{12,21}'-\alpha_{12}'\sqrt{2})^2+(\alpha_{12,21}'+\alpha_{12}'\sqrt{2})^2}{2}\\
&\qquad\qquad+\sum_{j=5}^{10}(v_j\cdot\alpha^\ast)^2.
\end{align*}
Reparametrizing the integration coordinates of \cref{eq:before-rotation} via this basis change, we find
\begin{align}
\rho^\ast&\lesssim\int_{\snorm{x}_\infty\le32\sqrt{n}}f\Big(\beta,\frac{x_1}{2\sqrt{n}}\Big)^{2\beta n}f\Big(1-\beta,\frac{x_2}{2\sqrt{n}}\Big)^{2(1-\beta)n}e^{-\frac{1}{2}\sum_{j=1}^{10}x_j^2}dx\notag\\
&\lesssim\int_{|x_1|,|x_2|\le32\sqrt{n}}f\Big(\beta,\frac{x_1}{2\sqrt{n}}\Big)^{2\beta n}f\Big(1-\beta,\frac{x_2}{2\sqrt{n}}\Big)^{2(1-\beta)n}e^{-\frac{1}{2}(x_1^2+x_2^2)}dx\notag\\
&\lesssim n\int_{|x_1|,|x_2|\le16}f(\beta,x_1)^{2\beta n}f(1-\beta,x_2)^{2(1-\beta)n}\exp(-(2x_1^2+2x_2^2)n)dx\notag\\
&\lesssim n\int_{|x_1|,|x_2|\le16}\exp\big(n\big(-2x_1^2-2x_2^2+2\beta\log f(\beta,x_1)+2(1-\beta)\log f(1-\beta,x_2)\big)\big)dx.\label{eq:second-moment-log-1}
\end{align}

Let $Q$ be the region $[-16,16]^2\subseteq\mb{R}^2$. Finally, \cref{eq:second-moment-atypical,eq:second-moment-transference,eq:second-moment-rho-rho-star,eq:second-moment-log-1} demonstrate
\begin{align*}
\mb{P}[Y_A(n_1')=Y_B(n_1')=1]=\mb{P}[\mc{E}_2]&\lesssim e^{-32n}+n\int_Qe^{n(-2x_1^2-2x_2^2+2\beta\log f(\beta,x_1)+2(1-\beta)\log f(1-\beta,x_2))}dx.\\
&\lesssim n\int_Qe^{n(-2x_1^2-2x_2^2+2\beta\log f(\beta,x_1)+2(1-\beta)\log f(1-\beta,x_2))}dx.
\end{align*}
for $n_1'/n\in[c,1-c]$. Here we eliminated $e^{-32n}$ by noting that the region near $x_1=x_2=-2$ contributes at least $\exp(-32n)$, using $|\gamma|\le 1$.

Combining this with \cref{eq:second-moment,eq:second-moment-small-overlap} we find
\begin{align*}
\mb{E}X_\gamma^2&\lesssim(1+\eta)^n\mb{E}X_\gamma+\binom{2n}{n}\sum_{c\le\beta\le1-c}\binom{n}{\beta n}^2n\int_Qe^{n(-2x_1^2-2x_2^2+2\beta\log f(\beta,x_1)+2(1-\beta)\log f(1-\beta,x_2))}dx\\
&\lesssim\frac{1}{\sqrt{n}}\sum_{c\le\beta\le1-c}\int_Qe^{n(2\log 2-2\beta\log\beta-2(1-\beta)\log(1-\beta)-2x_1^2-2x_2^2+2\beta\log f(\beta,x_1)+2(1-\beta)\log f(1-\beta,x_2))}dx\\
&\qquad+(1+\eta)^n\mb{E}X_\gamma\\
&\lesssim(1+\eta)^n\mb{E}X_\gamma+\frac{1}{\sqrt{n}}\sum_{c\le\beta\le1-c}\int_Qe^{nF_2(\beta,x_1,x_2)}dx.
\end{align*}
Here the sum is over $\beta$ with $\beta n\in\mb{Z}$. We used Stirling's formula.

Now we split the sum over $\beta$ into two regions. First we consider $\beta\in[c,.001]\cup[.999,1-c]$. Note that $F_2$ is symmetric under $\beta\leftrightarrow1-\beta$, so up to a constant factor loss we may assume $\beta\in[c,.001]$. By the first part of \cref{asm:comp}, we see $F_2(\beta,x_1,x_2)\le 2\sup_{y\in\mb{R}}F_1(y)$, so
\[\frac{1}{\sqrt{n}}\sum_{\beta\in[c,.001]\cup[.999,1-c]}\int_Qe^{nF_2(\beta,x_1,x_2)}dx\lesssim(1+\eta/2)^n\exp(2\sup_{y\in\mb{R}}F_1(y)).\]
Combining with the lower bound in \cref{lem:first-moment} (established below), this is bounded by $(1+\eta)^n\mb{E}X_\gamma$ so can be absorbed.

Finally, we consider $\beta\in[.001,.999]$. By the second part of \cref{asm:comp}, the unique optimizer of $F_2(\beta,x_1,x_2)$ for $\beta\in[.001,.999]$ and $x_1,x_2\in\mb{R}$ occurs at $(1/2,x^\ast,x^\ast)$ where $x^\ast=\argmax_{x\in\mb{R}}F_1(x)$. Additionally, the Hessian of $F_2$ evaluated at $(1/2,x^\ast,x^\ast)$ is strictly negative definite. Therefore there is some small absolute constants $\theta,\theta'>0$ and a box $Q^\ast=(1/2,x^\ast,x^\ast)+[-\theta,\theta]^3$ such that for all $(\beta,x_1,x_2)\in Q^\ast$ we have
\[F_2(\beta,\alpha_1,\alpha_2)\le F_2(1/2,x^\ast,x^\ast)-\theta((\beta-1/2)^2+(x_1-x^\ast)^2+(x_2-x^\ast)^2)\]
and also $F_2(\beta,x_1,x_2)\le F_2(1/2,x^\ast,x^\ast)-\theta'$ for $(\beta,x_1,x_2)\notin Q^\ast$ satisfying $\beta\in[.001,.999]$. Letting $Q'=(x^\ast,x^\ast)+[-\theta,\theta]^2$, we therefore find
\begin{align*}
\frac{1}{\sqrt{n}}\sum_{.001\le\beta\le.999}\int_Qe^{nF_2(\beta,x_1,x_2)}dx&\lesssim\frac{1}{\sqrt{n}}\sum_{\beta\in[1/2\pm\theta]}\int_{Q'}e^{nF_2(1/2,x^\ast,x^\ast)-\theta n((\beta-1/2)^2+(x_1-x^\ast)^2+(x_2-x^\ast)^2)}dx_1dx_2\\
&\qquad+e^{(F_2(1/2,x^\ast,x^\ast)-\theta')n}\\
&\lesssim\frac{e^{nF_2(1/2,x^\ast,x^\ast)}}{n^{3/2}}\sum_{\beta\in[1/2\pm\theta]}e^{-\theta n(\beta-1/2)^2}+e^{(F_2(1/2,x^\ast,x^\ast)-\theta')n}\\
&\lesssim n^{-1}e^{nF_2(1/2,x^\ast,x^\ast)}\lesssim 16^{-n}\binom{2n}{n}^2\exp\bigg(n\sup_{\substack{\beta\in[0,1]\\\alpha_1,\alpha_2\in \mb{R}}} F_2(\beta,\alpha_1,\alpha_2)\bigg).
\end{align*}
We deduce
\[\mb{E}X_\gamma^2\lesssim(1+\eta)^n\mb{E}X_\gamma+16^{-n}\binom{2n}{n}^2\exp\bigg(n\sup_{\substack{\beta\in[0,1]\\\alpha_1,\alpha_2\in \mb{R}}} F_2(\beta,\alpha_1,\alpha_2)\bigg),\]
as desired, proving \cref{lem:second-moment}.

\subsection{The first moment of \texorpdfstring{$X_\gamma$}{Xgamma}}\label{sub:first-moment}
Now we prove \cref{lem:first-moment}. As discussed earlier, it is significantly easier and we truncate some details (although take note that \cref{lem:first-moment} provides a matching lower bound unlike \cref{lem:second-moment}, which we used in the proofs of \cref{thm:main,lem:second-moment}).

\subsubsection{Setup and transference}\label{sub:first-setup}
Let $A_1=[n]$ and $A_2=\{n+1,\ldots,2n\}$. Let $\mc{E}_1$ be the event that $G\sim\mb{G}(2n,1/2)$ is such that $A_1\cup A_2$ is a $\gamma\sqrt{n}$-friendly partition. By linearity of expectation we have
\begin{equation}\label{eq:first-moment-linearity}
\mb{E}X_\gamma=\binom{2n}{n}\mb{P}[\mc{E}_1].
\end{equation}
By \cref{prop:friendly-var-deg,prop:large-degree-1} with $\eta$ replaced by $\min(\eps_{\ref{lem:integrate-p-graph}},\eps_{\ref{lem:integrate-p-bip-graph}})/2$, we have
\begin{equation}\label{eq:first-moment-atypical}
\mb{P}[\mc{E}_1]=(1\pm n^{-\omega(1)})\mb{P}[\mc{E}_1\wedge\mc{E}_{\mr{irreg},1}^c\wedge\mc{E}_{\mr{var}}^\ast].
\end{equation}
Here we define $\mc{E}_{\mr{var}}^\ast$ just to be the appropriate set of conditions (in a sense a subset of those from $\mc{E}_{\mr{var}}$) guaranteeing that the degree sequences of $G[A_1],G[A_2],G[A_1,A_2]$ satisfy \cref{A2} of \cref{lem:integrate-p-graph} or \cref{B5} of \cref{lem:integrate-p-bip-graph}. So $\mc{E}_{\mr{irreg},1}^c,\mc{E}_{\mr{var}}^\ast$ ensure that for the $2+1$ graphs $G[A_1]$ and $G[A_2]$ and bipartite $G[A_1,A_2]$, the degree sequences satisfy \cref{A1,A2,A3} of \cref{lem:integrate-p-graph} and \cref{B1,B2,B3,B4,B5} of \cref{lem:integrate-p-bip-graph}, respectively. We deduce
\begin{equation}\label{eq:first-moment-transfer}
\mb{P}[\mc{E}_1\wedge\mc{E}_{\mr{irreg},1}^c\wedge\mc{E}_{\mr{var}}^\ast]\asymp\rho',\end{equation}
where $\rho'$ is the following probability:
\begin{itemize}
    \item For $i\in\{1,2\}$ sample the ``degrees'' $d_v^i$ for $v\in A_i$ (which should be thought of as corresponding to the internal degrees $\deg(v,A_i)$ for $v\in A_i$) from $\mc{I}_{1/2}^n$;
    \item Sample the ``degrees'' $d_v^{(1,2)}$ for $v\in A_1$ and $d_v^{(2,1)}$ for $v\in A_2$ (which correspond to the bipartite degrees $\deg(v,A_2)$ for $v\in A_1$ and vice versa) jointly from $\mc{I}_{1/2}^{n,n}$;
    \item Compute the probability that (a) for each $i\in\{1,2\}$ and $v\in A_i$ we have
    \begin{align}\label{eq:degree-ineq}
    d_v^i-d_v^{(i,i+1)}&\ge\gamma\sqrt{n},
    \end{align}
    where indices are taken $\imod{2}$, and (b) the resulting degree sequences satisfy $\mc{E}_{\mr{irreg},1}^c\wedge\mc{E}_{\mr{var}}^\ast$.
\end{itemize}
This can be interpreted as the probability that if the degrees of our $1+2$ graphs are sampled as in \cref{lem:integrate-p-graph,lem:integrate-p-bip-graph}, we have that $A_1\cup A_2$ is $\gamma\sqrt{n}$-friendly (and the degree sequences are not too irregular). Let $\rho$ be the same probability without condition (b).

Understanding the ``degree'' distribution is relatively simple:
\begin{itemize}
    \item Sample $2+1$ parameters $p_1,p_2\sim\mc{N}(1/2,1/(4n^2-4n))$ and $p_{1,2}\sim\mc{N}(1/2,1/(8n^2))$ (all conditional on being in $(0,1)$), and let $p_{2,1}=p_{1,2}$;
    \item Sample $d_v^i\sim\mr{Bin}(n-1,p_i)$ and $d_v^{(i,i+1)}\sim\mr{Bin}(n,p_{i,i+1})$ completely independently;
    \item Condition on the $2$ divisibility conditions and $1$ equality condition
    \begin{align}\label{eq:first-divisibility-1}
    2&\mid\sum_{v\in A_i}d_v^i,\\
    \sum_{v\in A_1}d_v^{(1,2)}&=\sum_{v\in A_2}d_v^{(2,1)}.\label{eq:first-divisibility-2}
    \end{align}
\end{itemize}
The first step can be handled by taking a Gaussian integral over the $2+1$ parameters, and the second step is independent. Again we handle the conditioning from the third step by use of Bayes' theorem and local central limit theorems similar to \cref{sub:second-bayes-ratio}, but now the local central limit results used will be purely $1$-dimensional. We will also require a two-sided bound unlike before.

\subsubsection{Upper bound}\label{sub:first-upper}
We clearly have $\rho'\le\rho$. Now let $p_i=1/2+\alpha_i/\sqrt{4n^2-4n}$ and $p_{1,2}=1/2+\alpha_{1,2}/\sqrt{8n^2}$. Let $\vec{d}$ denote the distribution over the joint ``degree'' distribution, and $\vec{d^\ast}$ denote the distribution prior to conditioning on \cref{eq:first-divisibility-1,eq:first-divisibility-2}. We have, similar to in \cref{sub:second-bayes-theorem},
\begin{align*}
\rho&\lesssim\int_\alpha\mb{P}_{\vec{d}}[\cref{eq:degree-ineq}]e^{-\frac{1}{2}(\alpha_1^2+\alpha_2^2+\alpha_{1,2}^2)}d\alpha\\
&\lesssim e^{-8n}+\int_{\snorm{\alpha}_\infty\le 4\sqrt{n}}\mb{P}_{\vec{d^\ast}}[\cref{eq:degree-ineq}]\frac{\mb{P}_{\vec{d^\ast}}[\text{\cref{eq:first-divisibility-1,eq:first-divisibility-2}}|\cref{eq:degree-ineq}]}{\mb{P}_{\vec{d^\ast}}[\text{\cref{eq:first-divisibility-1,eq:first-divisibility-2}}]}e^{-\frac{1}{2}(\alpha_1^2+\alpha_2^2+\alpha_{1,2}^2)}d\alpha.
\end{align*}

Now, the denominator is easily seen to be $\Theta(1/n)$. Indeed, for the denominator we require two independent sums of random variables to be even, and a signed sum of random variables to be $0$. It follows e.g.~from a local central limit theorem for log-concave random variables (\cite{Ben73}) and the fact that the mean of this signed sum can be seen to be within standard-deviation-range (up to a constant factor) of $0$. (Here we are using that the binomial random variable is log-concave, hence a pair of independent binomials is log-concave, hence conditioning on a single linear inequality keeps the distribution log-concave, and finally that the marginal of a log-concave discrete distribution is log-concave \cite{HKS19}.)

To estimate the numerator, first note that upon conditioning on \cref{eq:degree-ineq} and all values $d_v^{(1,2)},d_v^{(2,1)}$, we are left with independent randomness for the $d_v^1$ and $d_v^2$ values. The probability both sums are even is easily seen to be approximately $1/4$. Thus, we find
\[\mb{P}_{\vec{d^\ast}}[\text{\cref{eq:first-divisibility-1,eq:first-divisibility-2}}|\cref{eq:degree-ineq}]=(1/4+o(1))\mb{P}_{\vec{d^\ast}}[\cref{eq:first-divisibility-2}|\cref{eq:degree-ineq}].\]
Now note that the $d_v^{(1,2)},d_v^{(2,1)}$ are jointly independent and thus it is easy to see (e.g.~via the L\'evy--Kolmogorov--Rogozin inequality \cite{Rog61}) that $\mb{P}_{\vec{d^\ast}}[\cref{eq:first-divisibility-2}|\cref{eq:degree-ineq}]\lesssim 1/n$. Overall, we deduce
\[\rho'\le\rho\lesssim e^{-8n}+\int_{\snorm{\alpha}_\infty\le4\sqrt{n}}\mb{P}_{\vec{d^\ast}}[\cref{eq:degree-ineq}]d\alpha.\]

Now at this stage let us define $p_i=1/2+\alpha_i^\ast/(2n)$ and $p_{i,i+1}=1/2+\alpha_{i,i+1}^\ast/(2n\sqrt{2})$. We see $\alpha^\ast=(1+O(1/n))\alpha$ so it is not hard to deduce
\begin{equation}\label{eq:first-moment-rho-independent}
\rho'\lesssim e^{-8n}+\int_{\snorm{\alpha^\ast}_\infty\le4\sqrt{n}}\mb{P}_{\vec{d^\ast}}[\cref{eq:degree-ineq}]e^{-\frac{1}{2}(\alpha_1^{\ast2}+\alpha_2^{\ast2}+\alpha_{1,2}^{\ast2})}d\alpha^\ast
\end{equation}
similar to the beginning of \cref{sub:second-independent}. At this stage, note that
\begin{align*}
\mb{P}_{\vec{d^\ast}}[\cref{eq:degree-ineq}]=&\mb{P}[\mr{Bin}(n-1,1/2+\alpha_1^\ast/(2n))-\mr{Bin}(n,1/2+\alpha_{1,2}^\ast/(2n\sqrt{2}))\ge\gamma\sqrt{n}]^n\\
&\cdot\mb{P}[\mr{Bin}(n-1,1/2+\alpha_2^\ast/(2n))-\mr{Bin}(n,1/2+\alpha_{1,2}^\ast/(2n\sqrt{2}))\ge\gamma\sqrt{n}]^n.
\end{align*}
By \cref{lem:first-moment-estimate}, we deduce
\begin{align}
\mb{P}_{\vec{d^\ast}}[\cref{eq:degree-ineq}]&\asymp\bigg(\mb{P}_{Z\sim\mc{N}(0,1)}\Big[Z\ge\gamma\sqrt{2}+\frac{\alpha_{1,2}^\ast-\alpha_1^\ast\sqrt{2}}{2\sqrt{n}}\Big]\mb{P}_{Z\sim\mc{N}(0,1)}\Big[Z\ge\gamma\sqrt{2}+\frac{\alpha_{1,2}^\ast-\alpha_2^\ast\sqrt{2}}{2\sqrt{n}}\Big]\bigg)^n\notag\\
&\lesssim\mb{P}_{Z\sim\mc{N}(0,1)}\Big[Z\ge\gamma\sqrt{2}+\frac{\alpha_{1,2}^\ast-(\alpha_1^\ast+\alpha_2^\ast)\sqrt{2}/2}{2\sqrt{n}}\Big]^{2n}.\label{eq:first-moment-probability}
\end{align}
by \cref{lem:log-concave-1}.

Now similar to \cref{sub:second-independent}, we may combine \cref{eq:first-moment-rho-independent,eq:first-moment-probability} and integrate the Gaussian in the direction orthogonal to $\alpha_{1,2}^\ast-(\alpha_1^\ast+\alpha_2^\ast)\sqrt{2}/2$. Using an appropriate identity
\[\alpha_1^{\ast2}+\alpha_2^{\ast2}+\alpha_{1,2}^{\ast2}=\frac{(\alpha_{1,2}^\ast\sqrt{2}-\alpha_1^\ast-\alpha_2^\ast)^2+(v_2\cdot\alpha^\ast)+(v_3\cdot\alpha^\ast)}{4}\]
and changing variables (with $x=(\alpha_{1,2}^\ast\sqrt{2}-\alpha_1^\ast-\alpha_2^\ast)/2$), we deduce
\begin{align*}
\rho'&\lesssim e^{-8n}+\int_{|x|\le 8\sqrt{n}}\mb{P}_{Z\sim\mc{N}(0,1)}\Big[Z\ge\gamma\sqrt{2}+\frac{x}{\sqrt{2n}}\Big]^{2n}e^{-\frac{1}{2}x^2}dx\\
&\lesssim e^{-8n}+\int_{|x|\le4}\mb{P}_{Z\sim\mc{N}(0,1)}[Z\ge(\gamma+x)\sqrt{2}]^{2n}e^{-2x^2n}dx.
\end{align*}
Finally, note that $h(x):=\log(\mb{P}_{Z\sim\mc{N}(0,1)}[Z\ge(\gamma+x)\sqrt{2}])$ is concave hence adding $-x^2$ makes it strictly concave. Furthermore, let $x^\ast$ be the unique maximizer of $-x^2+h(x)$ on $\mb{R}$. We easily find
\[\rho'\lesssim e^{-8n}+e^{2n\sup_{\alpha\in\mb{R}}(F_1(\alpha)-\log 2)},\]
similar to \cref{sub:second-independent}. Furthermore, we easily check that for $|\gamma|\le 1$, $e^{-8n}$ is lower order than the supremum here.

Hence, $\rho'\lesssim 4^{-n}\exp(2n\sup_{\alpha\in\mb{R}}F_1(\alpha))$. Combining with \cref{eq:first-moment-linearity,eq:first-moment-atypical,eq:first-moment-transfer} we deduce
\[\mb{E}X_\gamma\lesssim 4^{-n}\binom{2n}{n}\exp(2n\sup_{\alpha\in\mb{R}}F_1(\alpha))\]
as desired for the upper bound.

\subsubsection{Lower bound}\label{sub:first-lower}
For the lower bound, we must first find a way of removing the condition (b) above. To this end, let us further consider the event defined by $\rho'$ with the additional conditions $|p_1-p_2|\le 2/n$ and $\snorm{p-1/2}_\infty\le1/\sqrt{n}$, and say it has probability $\rho_0'$. Similarly, the event defined by $\rho$ with the additional condition $|p_1-p_2|\le 2/n$ is $\rho_0$. Clearly $\rho'\ge\rho_0'$.

We claim $\rho_0'=(1\pm n^{-\omega(1)})\rho_0$. To see this, note that $\rho_0'/\rho_0$ is the following probability: sample the ``degree'' distribution as above with the additional conditions $|p_1-p_2|\le 2/n$ and $\snorm{p-1/2}_\infty\le1/\sqrt{n}$, then further condition on \cref{eq:degree-ineq}. Then consider the probability both $\mc{E}_{\mr{irreg},1}^c$ and $\mc{E}_{\mr{var}}^\ast$ hold. It is easy to see that this new distribution can be thought of as follows: sample $p$ appropriately, then for each $v\in A_i$ sample $(d_v^i,d_v^{(i,i+1)})\sim(\mr{Bin}(n-1,p_i),\mr{Bin}(n,p_{i,i+1}))$ conditional on $d_v^i-d_v^{(i,i+1)}\ge\gamma\sqrt{n}$, then further condition on $2|\sum_{v\in A_i}d_v^i$ and $\sum_{v\in A_1}d_v^{(1,2)}=\sum_{v\in A_2}d_v^{(2,1)}$.

We claim that these latter conditions then occur with probability $\Omega((1/2)^2(1/n))$. The evenness conditions can be handled as in \cref{sub:first-upper}; the only condition of interest is \cref{eq:first-divisibility-2}. For this, if $p_1=p_2$ then it is the probability an alternating sum of i.i.d.~binomials equals $0$, and a log-concave local limit theorem \cite{Ben73} will show the result. For $|\alpha_1-\alpha_2|=O(1/n)$, we again use \cite{Ben73} and simply note that the mean of the signed sum of binomials is within standard-deviation-range (i.e., $O(n)$) of $0$.

Letting the random model formed merely by sampling $p,d_v^i,d_v^{(i,i+1)}$ and no further conditioning be labeled $\vec{d^\ast}$, we therefore see it suffices to show
\begin{equation}\label{eq:irregular-transferred}
\mb{P}_{\vec{d^\dagger}}[\mc{E}_{\mr{irreg},1}^c\wedge\mc{E}_{\mr{var}}^\ast]\le n^{-\omega(1)}.
\end{equation}
This is easy to show using concentration of measure. Note that, e.g., the $d_v^{(1,2)}$ and $d_v^{(2,1)}$ are jointly independent in $\vec{d^\dagger}$. It is thus not too hard to use concentration of measure to show \cref{eq:irregular-transferred}, though we truncate the (relatively standard) details.

Thus, $\rho'\gtrsim\rho_0$. Now, $\rho_0$ is very similar to $\rho$ and can be analyzed as in \cref{sub:first-upper}. Let $R$ denote the region in $\mb{R}^3$ defined by $\snorm{\alpha}_\infty\le2\sqrt{n}$ and $|\alpha_1-\alpha_2|\le4$. We have
\begin{align*}
\rho'&\gtrsim\int_R\mb{P}_{\vec{d}}[\cref{eq:degree-ineq}]e^{-\frac{1}{2}(\alpha_1^2+\alpha_2^2+\alpha_{1,2}^2)}d\alpha\\
&\ge\int_R\mb{P}_{\vec{d^\ast}}[\cref{eq:degree-ineq}]\frac{\mb{P}_{\vec{d^\ast}}[\text{\cref{eq:first-divisibility-1,eq:first-divisibility-2}}|\cref{eq:degree-ineq}]}{\mb{P}_{\vec{d^\ast}}[\text{\cref{eq:first-divisibility-1,eq:first-divisibility-2}}]}e^{-\frac{1}{2}(\alpha_1^2+\alpha_2^2+\alpha_{1,2}^2)}d\alpha.
\end{align*}

Now, the denominator is easily seen to be $\Theta(1/n)$ similar to \cref{sub:first-upper} and above we deduced that the numerator is $\Omega(1/n)$. Further reparametrizing to $p_i=1/2+\alpha_1^\ast/(2n)$ and $p_{i,i+1}=1/2+\alpha_{i,i+1}^\ast/(2n\sqrt{2})$, overall we find
\begin{equation}\label{eq:first-moment-rho-lower}
\rho'\gtrsim\int_{R^\ast}\mb{P}_{\vec{d^\ast}}[\cref{eq:degree-ineq}]e^{-\frac{1}{2}(\alpha_1^2+\alpha_2^2+\alpha_{1,2}^2)}d\alpha
\end{equation}
where $R^\ast$ is the region $\snorm{\alpha^\ast}_\infty\le\sqrt{n}$ and $|\alpha_1-\alpha_2|\le 2$.

Now, using the first line of \cref{eq:first-moment-probability}, $|\alpha_1^\ast-\alpha_2^\ast|=O(1)$, and $\snorm{\alpha^\ast}_\infty=O(\sqrt{n})$, we easily deduce that
\[\mb{P}_{\vec{d^\ast}}[\cref{eq:degree-ineq}]\gtrsim\mb{P}_{Z\sim\mc{N}(0,1)}\Big[Z\ge\gamma\sqrt{2}+\frac{\alpha_{1,2}^\ast-(\alpha_1^\ast+\alpha_2^\ast)\sqrt{2}/2}{2\sqrt{n}}\Big]^{2n}.\]
Let us focus on the region $Q^\ast$ where additionally
\[|\alpha_{1,2}^\ast-(\alpha_1^\ast+\alpha_2^\ast)\sqrt{2}/2-(2\sqrt{2n})x^\ast|\le\sqrt{2},\]
where $x^\ast=\argmax_{x\in\mb{R}}F_1(x)$.

Change coordinates in \cref{eq:first-moment-rho-lower} similar to \cref{sub:first-upper}, with $x=(\alpha_{1,2}^\ast\sqrt{2}-\alpha_1^\ast-\alpha_2^\ast)/2$. Integrating with respect to the other coordinates, we find
\begin{align*}
\rho'&\gtrsim\int_{|x-(2\sqrt{n})x^\ast|\le1}\mb{P}_{Z\sim\mc{N}(0,1)}[Z\ge(\gamma+x/(2\sqrt{n}))\sqrt{2}]^{2n}e^{-\frac{1}{2}x^2}dx\\
&\gtrsim\sqrt{n}\int_{|x-x^\ast|\le1/(2\sqrt{n})}\mb{P}_{Z\sim\mc{N}(0,1)}[Z\ge(\gamma+x)\sqrt{2}]^{2n}e^{-2x^2n}dx\\
&\gtrsim\sqrt{n}\int_{|x-x^\ast|\le1/(2\sqrt{n})}\exp(2n(F_1(x)-\log 2))dx.
\end{align*}
Since $x=x^\ast$ maximizes the expression in the exponent of the integrand, we see that $2n(F_1(x)-\log 2)$ stays within a range of size $O(1)$ when $|x-x^\ast|\le1/(2\sqrt{n})$, hence
\[\rho'\gtrsim\sqrt{n}\cdot\frac{1}{\sqrt{n}}\cdot\exp(2n\sup_{x\in\mb{R}}(F_1(x)-\log 2)).\]

Combining this with \cref{eq:first-moment-linearity,eq:first-moment-atypical,eq:first-moment-transfer}, we obtain
\[\mb{E}X_\gamma\gtrsim 4^{-n}\binom{2n}{n}\exp(2n\sup_{\alpha\in\mb{R}}F_1(\alpha)),\]
completing the lower bound of \cref{lem:first-moment} and hence the proof.

\bibliographystyle{amsplain0.bst}
\bibliography{main.bib}

\appendix
\section{Binomial Distribution Computations}\label{app:binomial}
We will require a number of precise asymptotics associated to the binomial random variables.
\begin{lemma}\label{lem:binom-comp-init}
Fix $C\ge 1$ such that $\max\{|a_1|,|a_2|\}\le C$. Then for $n$ large,
\begin{align*}
\mb{P}[&\mr{Bin}(n,1/2+a_2/(2\sqrt{n}))-\mr{Bin}(n-\ell,1/2+a_1/(2\sqrt{n})) = t] \\
&=\frac{1}{\sqrt{\pi n}}\exp\bigg(-\bigg(\frac{a_2-a_1}{2}+\frac{-t+\ell/2}{\sqrt{n}}\bigg)^2 + O_{C,|\ell|}\bigg(\frac{1}{n}+\frac{t^4}{n^3}\bigg)\bigg)\pm\exp(-\Omega((\log n)^2)).
\end{align*}
\end{lemma}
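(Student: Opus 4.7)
This is a standard local central limit theorem (LCLT) for the difference of two independent binomials whose parameters are both $1/2 + O(1/\sqrt{n})$. Let $X \sim \mr{Bin}(n, p_2)$ with $p_2 = 1/2 + a_2/(2\sqrt{n})$ and $Y \sim \mr{Bin}(n-\ell, p_1)$ with $p_1 = 1/2 + a_1/(2\sqrt{n})$, so $X - Y$ has mean $\mu := np_2 - (n-\ell)p_1 = \sqrt{n}(a_2 - a_1)/2 + \ell/2 + O_{C,|\ell|}(1/\sqrt{n})$ and variance $\sigma^2 := np_2(1-p_2) + (n-\ell)p_1(1-p_1) = n/2 - \ell/4 + O_{C,|\ell|}(1)$. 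The goal is to estimate $\mb{P}[X - Y = t] = \sum_k \mb{P}[X = k]\mb{P}[Y = k-t]$ by combining a pointwise refined Stirling expansion with a Gaussian convolution.

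\textbf{Main steps.} First, truncate the sum to $|k - n/2| \le \sqrt{n}\log n$ (and $|k - t - (n - \ell)/2| \le \sqrt{n}\log n$); standard Chernoff bounds show the remainder is $\exp(-\Omega((\log n)^2))$, which is absorbed into the stated error. On the truncated range, Stirling applied to $\binom{n}{k}$ together with a Taylor expansion of $\log(1 + x)$ through the cubic term gives
\[
\mb{P}[X = k] = \frac{1}{\sqrt{2\pi\sigma_X^2}}\exp\bigg(-\frac{(k-\mu_X)^2}{2\sigma_X^2} + O_C\Big(\frac{1}{n} + \frac{(k-\mu_X)^4}{n^3}\Big)\bigg),
\]
with $\mu_X = np_2$, $\sigma_X^2 = np_2(1-p_2)$, and analogously for $\mb{P}[Y = k - t]$. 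Completing the square,
\[
\frac{(k-\mu_X)^2}{2\sigma_X^2} + \frac{(k - t - \mu_Y)^2}{2\sigma_Y^2} = \frac{(t - \mu)^2}{2\sigma^2} + \frac{\sigma^2}{2\sigma_X^2\sigma_Y^2}(k - k^\ast)^2
\]
for some $k^\ast$. Summing over $k$, approximating by the Gaussian integral (spacing $1$ with a smooth integrand contributes an $O(1/n)$ multiplicative factor), and multiplying the prefactor $(2\pi\sigma_X\sigma_Y)^{-1}\cdot\sqrt{2\pi}\sigma_X\sigma_Y/\sigma = (2\pi\sigma^2)^{-1/2}$ yields
\[
\mb{P}[X - Y = t] = \frac{1}{\sqrt{2\pi\sigma^2}}\exp\bigg(-\frac{(t-\mu)^2}{2\sigma^2} + O_{C,|\ell|}\Big(\frac{1}{n} + \frac{t^4}{n^3}\Big)\bigg) \pm \exp(-\Omega((\log n)^2)),
\]
where the quartic-in-$t$ error descends from the analogous fourth-order Stirling terms evaluated near the saddle.

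\textbf{Matching and main obstacle.} To match the stated form, substitute $\sqrt{2\pi\sigma^2} = \sqrt{\pi n}(1 + O_{C,|\ell|}(1/n))$ and expand
\[
-\frac{(t-\mu)^2}{2\sigma^2} = -\bigg(\frac{a_2 - a_1}{2} + \frac{-t + \ell/2}{\sqrt{n}}\bigg)^2 + O_{C,|\ell|}\Big(\frac{1}{n} + \frac{t^4}{n^3}\Big),
\]
where the error absorbs the $O_{C,|\ell|}(1/\sqrt{n})$ slack in $\mu$ and the $O_{C,|\ell|}(1)$ slack in $\sigma^2$ (the latter contributing $O((t-\mu)^2/n^2)$, bounded by the claimed error using $(t-\mu)^2 \lesssim t^2 + n$). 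The main technical challenge is keeping this error uniform across $t$: for $|t - \mu| \gg \sqrt{n}\log n$ the Gaussian itself is $\exp(-\Omega((\log n)^2))$ and a direct Chernoff bound on $X - Y$ handles the probability, while for $|t - \mu| \le \sqrt{n}\log n$ the Stirling remainder $t^4/n^3$ can be up to polylog size, so one must verify that the exponentiated error is genuinely dominated by the claimed bound rather than overwhelming the Gaussian. This is the only place sharpness matters; the rest is a routine LCLT calculation.
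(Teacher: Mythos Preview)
Your proposal is correct and follows essentially the same route as the paper: both arguments truncate to $|k-n/2|\le\sqrt{n}\log n$ via Chernoff/Hoeffding, apply Stirling with a cubic-order Taylor expansion to each binomial factor, complete the square in the resulting Gaussian-like product, and replace the sum over $k$ by a Riemann integral. The paper carries this out with the explicit substitution $\tau=k-n/2$ rather than your mean/variance bookkeeping, but the computation and the tracking of the $O_{C,|\ell|}(1/n+t^4/n^3)$ error are the same.
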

\begin{proof}
If $|t|\ge\sqrt{n}\log n/4$ then the result easily follows by the Azuma--Hoeffding inequality so assume $|t|<\sqrt{n}\log n/4$. Consider the range of values $k\in[0,n]$ and let $\tau=k-n/2$. Note that 
\begin{align*}
\mb{P}[&\mr{Bin}(n,1/2+a_2/(2\sqrt{n}))-\mr{Bin}(n-\ell,1/2+a_1/(2\sqrt{n})) = t]\\
&=\sum_{|k-n/2|\le\sqrt{n}\log n}\binom{n}{k}\binom{n-\ell}{k-t}\bigg(\frac{1}{2}+\frac{a_2}{2\sqrt{n}}\bigg)^{k}\bigg(\frac{1}{2}-\frac{a_2}{2\sqrt{n}}\bigg)^{n-k}\bigg(\frac{1}{2}+\frac{a_1}{2\sqrt{n}}\bigg)^{k-t}\bigg(\frac{1}{2}-\frac{a_1}{2\sqrt{n}}\bigg)^{n-\ell-k+t}\\
&\qquad \qquad \pm \exp(-\Omega((\log n)^2))\\
&=\sum_{|k-n/2|\le\sqrt{n}\log n}\binom{n}{n/2+\tau}\binom{n-\ell}{n/2-t+\tau}\bigg(\frac{1}{4}-\frac{a_2^2}{4n}\bigg)^{n/2}\bigg(\frac{1/2 + a_2/(2\sqrt{n})}{1/2-a_2/(2\sqrt{n})}\bigg)^{\tau}\\
&\qquad\qquad\bigg(\frac{1}{4}-\frac{a_1^2}{4n}\bigg)^{(n-\ell)/2}\bigg(\frac{1/2 + a_1/(2\sqrt{n})}{1/2-a_1/(2\sqrt{n})}\bigg)^{\tau+\ell/2-t}\pm \exp(-\Omega((\log n)^2))\\
&=\frac{2}{\pi n}\sum_{|k-n/2|\le\sqrt{n}\log n}\exp\bigg(-\frac{a_1^2}{2}-\frac{a_2^2}{2}+\frac{2a_2\tau}{\sqrt{n}}+\frac{2a_1(\tau+\ell/2-t)}{\sqrt{n}}-\frac{2\tau^2}{n}-\frac{2(\tau-t+\ell/2)^2}{n}\bigg)\\
&\qquad \cdot \bigg(1+O_{C,|\ell|}\bigg(\frac{1}{n} + \frac{\tau^4+t^4}{n^3}\bigg)\bigg)\pm \exp(-\Omega((\log n)^2))\\
&=\frac{e^{-\frac{1}{4n}(\ell-a_1\sqrt{n}+a_2\sqrt{n}-2t)^2}}{\sqrt{\pi n}}\sum_{|k-n/2|\le\sqrt{n}\log n}\frac{1}{\sqrt{2\pi(n/8)}}\exp\bigg(-\frac{4}{n}\bigg(\tau+\frac{\ell-a_1\sqrt{n}-a_2\sqrt{n}-2t}{4}\bigg)^2\bigg)\\
&\qquad \cdot \bigg(1+O_{C,|\ell|}\bigg(\frac{1}{n} + \frac{\tau^4+t^4}{n^3}\bigg)\bigg)\pm \exp(-\Omega((\log n)^2))\\
&= \frac{1}{\sqrt{\pi n}}\exp\bigg(-\bigg(\frac{a_2-a_1}{2}+\frac{-t+\ell/2}{\sqrt{n}}\bigg)^2 + O_{C,|\ell|}\bigg(\frac{1}{n}+\frac{t^4}{n^3}\bigg)\bigg)\pm\exp(-\Omega((\log n)^2))
\end{align*}
where we have used Stirling's formula and that $\exp(2t)-(1+t)/(1-t) = O(t^3)$ for $|t|\le 1/2$ and the approximation of sum via a Riemann integral.
\end{proof}

We will now derive a number of further estimates which ultimately are based on \cref{lem:binom-comp-init}.

\begin{lemma}\label{lem:first-moment-estimate}
Fix $C\ge 1$ such that $\max\{|a_1|,|a_2|\}\le C$. Then 
\[\mb{P}\bigg[\mr{Bin}\bigg(n-1,\frac{1}{2}+\frac{a_1}{2\sqrt{n}}\bigg)- \mr{Bin}\bigg(n,\frac{1}{2}+\frac{a_2}{2\sqrt{n}}\bigg)\ge k\bigg] = \mb{P}_{Z\sim \mc{N}(0,1)}\bigg[Z\ge \frac{k\sqrt{2}}{\sqrt{n}} + \frac{a_2-a_1}{\sqrt{2}}\bigg]+ O_C\bigg(\frac{1}{n}\bigg).\]
\end{lemma}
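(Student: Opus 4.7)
The plan is to express the desired probability as a sum over the pointwise probability mass function supplied by Lemma \ref{lem:binom-comp-init}, and to recognize the resulting sum as a midpoint Riemann approximation to a Gaussian tail integral.

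Set $X = \mr{Bin}(n-1, 1/2+a_1/(2\sqrt n))$ and $Y = \mr{Bin}(n, 1/2+a_2/(2\sqrt n))$, so the target probability equals $\sum_{t \le -k} \mb{P}[Y - X = t]$. By Azuma--Hoeffding the contribution from $|t| \ge \sqrt n \log n$ is at most $\exp(-\Omega((\log n)^2))$, so I restrict to $|t| \le \sqrt n \log n$. On this range Lemma \ref{lem:binom-comp-init} with $\ell = 1$ (and the roles of $a_1,a_2$ matched to $Y-X$ rather than $X-Y$) yields
\[\mb{P}[Y - X = t] = \frac{1}{\sqrt{\pi n}}\exp\bigg(-\bigg(\frac{a_2 - a_1}{2} + \frac{1/2 - t}{\sqrt n}\bigg)^2\bigg)\bigg(1 + O_C\bigg(\frac{1}{n} + \frac{t^4}{n^3}\bigg)\bigg) \pm \exp(-\Omega((\log n)^2)).\]
The multiplicative error contributes $O_C(1/n)$ in total: for $|t| \le n^{3/4}$ the factor is $1 + O_C(1/n)$ times a mass of order $1$, and for $n^{3/4} \le |t| \le \sqrt n \log n$ the Gaussian prefactor is $\exp(-\Omega(\sqrt n))$, which dominates any polynomial blowup from $t^4/n^3$.

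Next I substitute $s = -t$ so that $s \ge k$, and introduce $u = s/\sqrt n$ with grid spacing $h = 1/\sqrt n$. The sum above becomes
\[\frac{1}{\sqrt n} \sum_{s \ge k} \frac{1}{\sqrt \pi}\exp\bigg(-\bigg(\frac{a_2 - a_1}{2} + \frac{s + 1/2}{\sqrt n}\bigg)^2\bigg) + O_C(1/n).\]
The crucial $+1/2$ shift places each summand at the midpoint of a length-$h$ grid cell, so this is a midpoint Riemann sum for $\int_{k/\sqrt n}^\infty \pi^{-1/2}\exp(-((a_2 - a_1)/2 + u)^2)\,du$. Smoothness and rapid decay of the integrand, together with the hypothesis $|a_i| \le C$, produce a midpoint-rule error of $O_C(h^2) = O_C(1/n)$. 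Finally, the substitution $w = \sqrt 2\,((a_2-a_1)/2 + u)$ converts this integral into $\mb{P}_{Z \sim \mc{N}(0,1)}[Z \ge k\sqrt 2/\sqrt n + (a_2 - a_1)/\sqrt 2]$, which is the desired expression.

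The main obstacle is merely the bookkeeping needed to keep both $O(1/n)$ error sources (the pointwise multiplicative error from Lemma \ref{lem:binom-comp-init} and the Riemann discretization error) simultaneously small and uniform in $k$. The key structural observation is that the $+\ell/2$ shift in the exponent of Lemma \ref{lem:binom-comp-init}, when specialized to $\ell = 1$, aligns exactly with the half-step offset required by midpoint quadrature, upgrading a would-be $O(1/\sqrt n)$ endpoint-rule error into the desired $O(1/n)$. When $|k| \gtrsim \sqrt n \log n$, both sides of the asserted identity are within $\exp(-\Omega((\log n)^2))$ of the same limiting value (either $0$ or $1$) and the estimate holds trivially.
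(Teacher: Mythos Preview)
Your proof is correct and follows essentially the same approach as the paper: both apply \cref{lem:binom-comp-init} with $\ell=1$ to the pointwise mass of $Y-X$, truncate to $|t|\le\sqrt{n}\log n$, control the multiplicative error, and then recognize the resulting sum as a midpoint Riemann approximation (via Euler--Maclaurin in the paper's phrasing) to the Gaussian tail integral before changing variables. Your explicit split at $|t|=n^{3/4}$ and your remark about the trivial regime $|k|\gtrsim\sqrt{n}\log n$ are slightly more detailed than the paper's presentation, but the argument is the same.
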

\begin{proof}
Applying \cref{lem:binom-comp-init} we have that 
\begin{align*}
\mb{P}&\bigg[\mr{Bin}\bigg(n-1,\frac{1}{2}+\frac{a_1}{2\sqrt{n}}\bigg)- \mr{Bin}\bigg(n,\frac{1}{2}+\frac{a_2}{2\sqrt{n}}\bigg)\ge k\bigg] \\
 &= \mb{P}\bigg[\mr{Bin}\bigg(n,\frac{1}{2}+\frac{a_2}{2\sqrt{n}}\bigg)- \mr{Bin}\bigg(n-1,\frac{1}{2}+\frac{a_1}{2\sqrt{n}}\bigg)\le -k\bigg] \\
 &=\sum_{-\sqrt{n}\log n\le t\le -k}\frac{1}{\sqrt{\pi n}}\exp\bigg(-\bigg(\frac{a_2-a_1}{2}+\frac{-t+1/2}{\sqrt{n}}\bigg)^2 + O_{C}\bigg(\frac{1}{n}+\frac{|t|^4}{n^3}\bigg)\bigg) \pm \exp(-\Omega((\log n)^2))\\
  &=\sum_{-\sqrt{n}\log n\le t\le -k}\frac{1}{\sqrt{\pi n}}\exp\bigg(-\bigg(\frac{a_2-a_1}{2}+\frac{-t+1/2}{\sqrt{n}}\bigg)^2 \bigg)+ O_C\bigg(\frac{1}{n}\bigg)\\
  &=\int_{-\infty}^{-k+1/2}\frac{1}{\sqrt{\pi n}}\exp\bigg(-\bigg(\frac{a_2-a_1}{2}+\frac{-t+1/2}{\sqrt{n}}\bigg)^2 \bigg)dt + O_C\bigg(\frac{1}{n}\bigg)\\
  &=\int_{-\infty}^{-k}\frac{1}{\sqrt{\pi n}}\exp\bigg(-\bigg(\frac{a_1-a_2}{2}+\frac{t}{\sqrt{n}}\bigg)^2 \bigg)dt + O_C\bigg(\frac{1}{n}\bigg)\\
  &=\int_{-\infty}^{\frac{-k\sqrt{2}}{\sqrt{n}}}\frac{1}{\sqrt{2\pi}}\exp\bigg(-\bigg(\frac{a_1-a_2}{2}+\frac{z}{\sqrt{2}}\bigg)^2 \bigg)dz + O_C\bigg(\frac{1}{n}\bigg)\\
  &= \mb{P}_{Z\sim \mc{N}(0,1)}\bigg[Z\ge \frac{k\sqrt{2}}{\sqrt{n}} + \frac{a_2-a_1}{\sqrt{2}}\bigg] + O_{C}\bigg(\frac{1}{n}\bigg)
\end{align*}
where to replace the sum by the corresponding integral we have used midpoint rule for approximating integrals (which in turn easily follows from the Euler--Maclaurin formula).
\end{proof}

For the second estimate, we define notation for ease of use.
\begin{definition}\label{def:binomial-gaussian-probability-2}
Given $\gamma\in[-1,1]$ and $\beta\in(0,1)$ and $a_1,a_2,a_3,a_4\in\mb{R}$, we define
\begin{align*}
g_\gamma(\beta,a_1,a_2,a_3,a_4)&=\mb{P}_{Z_i\sim \mc{N}(0,1)}\bigg[\sqrt{\frac{\beta}{2}}Z_1 + \sqrt{\frac{1-\beta}{2}}Z_2\ge \gamma + \frac{(a_3-a_1)\beta+(a_4-a_2)(1-\beta)}{2}\\
&\qquad\qquad\qquad\quad\wedge \sqrt{\frac{\beta}{2}}Z_1 - \sqrt{\frac{1-\beta}{2}}Z_2\ge\gamma + \frac{(a_3-a_1)\beta-(a_4-a_2)(1-\beta)}{2}\bigg].
\end{align*}
We will often suppress the $\gamma$ in the notation. We see $g(\beta,a_1,0,0,0)=f(\beta,-a_1\beta/2)$ with notation as in \cref{def:special-func}.
\end{definition}
\begin{lemma}\label{lem:second-moment-estimate}
Fix $C\ge 1$. Let $|a_i|\le C$ for $i\in [4]$ and $k = \beta n$ where $\min(\beta,1-\beta)\ge1/C$. Furthermore let
$X_1\sim \mr{Bin}\big(k-1,\frac{1}{2}+\frac{a_1}{2\sqrt{n}}\big)$, $X_2\sim \mr{Bin}\big(n-k,\frac{1}{2}+\frac{a_2}{2\sqrt{n}}\big)$, $X_3\sim \mr{Bin}\big(k,\frac{1}{2}+\frac{a_3}{2\sqrt{n}}\big)$, and $X_4\sim \mr{Bin}\big(n-k,\frac{1}{2}+\frac{a_4}{2\sqrt{n}}\big)$. Finally let $\Gamma$ be an integer and $\gamma=\Gamma/\sqrt{n}$. Then
\begin{align*}
&\mb{P}[X_1-X_3+X_2-X_4\ge \Gamma \wedge X_1-X_3-X_2+X_4\ge\Gamma]\\
&=g_\gamma(\beta,a_1,a_2,a_3,a_4) + O_C\bigg(\frac{1}{n}\bigg).
\end{align*}
\end{lemma}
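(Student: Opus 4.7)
The plan is to reduce to two independent difference-of-binomial variables $Y = X_1 - X_3$ and $Z = X_2 - X_4$, apply the pointwise local limit theorem of \cref{lem:binom-comp-init} to each, and convert the resulting double lattice sum into the Gaussian integral defining $g_\gamma$.

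First, since the $X_i$ are independent so are $Y$ and $Z$, and the event of interest is $\{Y + Z \ge \Gamma\} \cap \{Y - Z \ge \Gamma\}$, equivalently $\{y \ge \Gamma,\ \Gamma - y \le z \le y - \Gamma\}$. I apply \cref{lem:binom-comp-init} to $-Y = X_3 - X_1$ (taking its $n$ to be $k$, $\ell = 1$, and noting $1/(2\sqrt{n}) = \sqrt{\beta}/(2\sqrt{k})$ so the effective parameters become $a_1\sqrt{\beta}$, $a_3\sqrt{\beta}$) and analogously to $Z$ (taking $n$ to be $n-k$, $\ell = 0$, with effective parameters $a_2\sqrt{1-\beta}, a_4\sqrt{1-\beta}$). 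This yields Gaussian local estimates for $\mb{P}[Y = y]$ and $\mb{P}[Z = z]$ valid to multiplicative precision $1 + O_C(1/n + y^4/n^3 + z^4/n^3)$ inside the window $|y|, |z| \le \sqrt{n}\log n$; contributions from outside this window are super-polynomially small by Azuma--Hoeffding.

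Next, convert the restricted double sum to a double integral via the one-dimensional Euler--Maclaurin midpoint rule applied iteratively: first in $z$ for each fixed $y$ (producing a Gaussian tail integral in $z$), then in $y$. Each application costs $O(1/n)$ because the Gaussian-type density satisfies $|f''|/\snorm{f}_1 = O(1/n)$ given that its standard deviation is $\Theta(\sqrt{n})$; the inner tail function of $y$ has analogous smoothness, so the outer application also costs $O(1/n)$. The quartic corrections integrate to $O(\mb{E}Y^4/n^3 + \mb{E}Z^4/n^3) = O(1/n)$ using $\mb{E}Y^4, \mb{E}Z^4 = O(n^2)$. The half-integer shift produced by the $(-t + 1/2)/\sqrt{n}$ term in the $Y$-sum (where $\ell = 1$) is absorbed by translating the integration variable, exactly as in the proof of \cref{lem:first-moment-estimate}. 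After rescaling $u = y/\sqrt{n},\ v = z/\sqrt{n}$, the result is a bivariate Gaussian integral on $\{u + v \ge \gamma,\ u - v \ge \gamma\}$ in which $U = Y/\sqrt{n}$ is (asymptotically) $\mc{N}((a_1-a_3)\beta/2,\ \beta/2)$ and $V = Z/\sqrt{n}$ is $\mc{N}((a_2-a_4)(1-\beta)/2,\ (1-\beta)/2)$, independent.

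To finish, I parameterize $U = (a_1-a_3)\beta/2 + \sqrt{\beta/2}\,Z_1$ and $V = (a_2-a_4)(1-\beta)/2 + \sqrt{(1-\beta)/2}\,Z_2$ with $Z_1, Z_2$ i.i.d.\ standard normal. Rearranging $U \pm V \ge \gamma$ in terms of $Z_1, Z_2$ recovers exactly the two inequalities in \cref{def:binomial-gaussian-probability-2}, so the integral equals $g_\gamma(\beta, a_1, a_2, a_3, a_4)$. The main obstacle is bookkeeping the error terms: keeping the two-stage Euler--Maclaurin error at $O(1/n)$ over the non-rectangular constraint region (handled by the smoothness of the inner Gaussian tail in $y$), ensuring the $y^4/n^3$ and $z^4/n^3$ corrections do not blow up after integration, and verifying that the $\ell = 1$ half-integer shift does not contaminate the final answer at the $O(1/\sqrt{n})$ level.
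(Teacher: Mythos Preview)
Your proposal is correct and follows essentially the same approach as the paper: both set $T_1=X_1-X_3$, $T_2=X_2-X_4$, apply \cref{lem:binom-comp-init} to each factor (with the $\sqrt{\beta}$ rescaling of the parameters you identify), restrict to a $\sqrt{n}\log n$ window, and convert the lattice sum to the bivariate Gaussian integral defining $g_\gamma$. The only cosmetic difference is that the paper replaces each $(t_1,t_2)$ summand by a unit box integral and then tiles the region $\{t_1\pm t_2\ge\Gamma\}$ directly, handling the boundary discrepancy along the lines $\ell\sqrt{k}\pm m\sqrt{n-k}=\Gamma$ in one step, whereas you iterate the one-dimensional Euler--Maclaurin midpoint rule first in $z$ and then in $y$; both routes yield the same $O_C(1/n)$ error and are interchangeable here.
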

\begin{proof}
Let $T_1 = X_1-X_3$ and $T_2 = X_2 - X_4$. By \cref{lem:binom-comp-init} we have
\begin{align*}
\mb{P}&[X_1-X_3+X_2-X_4\ge \Gamma \wedge X_1-X_3-X_2+X_4\ge\Gamma] = \sum_{t_1 \pm t_2\ge \Gamma}\mb{P}[T_1 = t_1] \mb{P}[T_2 = t_2]\\
& = \sum_{\Gamma\le t_1 \pm t_2\le \sqrt{n}\log n}\frac{1}{\pi \sqrt{k(n-k)}}\exp\bigg(-\bigg(\frac{(a_3-a_1)\sqrt{k}}{2\sqrt{n}}+\frac{t_1+1/2}{\sqrt{k}}\bigg)^2 - \bigg(\frac{(a_4-a_2)\sqrt{n-k}}{2\sqrt{n}}+\frac{t_2}{\sqrt{n-k}}\bigg)^2 \\
&\qquad + O_{C}\bigg(\frac{1}{n} + \frac{t_1^4 + t_2^4}{n}\bigg)\bigg) + \exp(-\Omega((\log n)^2))\\
& = \sum_{\Gamma\le t_1 \pm t_2\le \sqrt{n}\log n}\frac{1}{\pi \sqrt{k(n-k)}}\exp\bigg(-\bigg(\frac{(a_3-a_1)\sqrt{k}}{2\sqrt{n}}+\frac{t_1+1/2}{\sqrt{k}}\bigg)^2 - \bigg(\frac{(a_4-a_2)\sqrt{n-k}}{2\sqrt{n}}+\frac{t_2}{\sqrt{n-k}}\bigg)^2\bigg) \\
&+ O_{C}\bigg(\frac{1}{n}\bigg)\\
& = \sum_{\Gamma\le t_1 \pm t_2\le \sqrt{n}\log n}\int_{t_1-1/2}^{t_1+1/2}\int_{t_2-1/2}^{t_2+1/2}\frac{1}{\pi \sqrt{k(n-k)}}\exp\bigg(-\bigg(\frac{(a_3-a_1)\sqrt{k}}{2\sqrt{n}}+\frac{\ell + 1/2}{\sqrt{k}}\bigg)^2 \\
&\qquad- \bigg(\frac{(a_4-a_2)\sqrt{n-k}}{2\sqrt{n}}+\frac{m}{\sqrt{n-k}}\bigg)^2\bigg)dmd\ell+ O_{C}\bigg(\frac{1}{n}\bigg)\\
& = \sum_{\Gamma\le t_1 \pm t_2\le \sqrt{n}\log n}\int_{t_1}^{t_1+1}\int_{t_2-1/2}^{t_2+1/2}\frac{1}{\pi \sqrt{k(n-k)}}\exp\bigg(-\bigg(\frac{(a_3-a_1)\sqrt{k}}{2\sqrt{n}}+\frac{\ell }{\sqrt{k}}\bigg)^2 \\
&\qquad- \bigg(\frac{(a_4-a_2)\sqrt{n-k}}{2\sqrt{n}}+\frac{m}{\sqrt{n-k}}\bigg)^2\bigg)dmd\ell+ O_{C}\bigg(\frac{1}{n}\bigg)\\
& = \sum_{\Gamma\le t_1 \pm t_2\le \sqrt{n}\log n}\int_{t_1/\sqrt{k}}^{(t_1+1)/\sqrt{k}}\int_{(t_2-1/2)/\sqrt{n-k}}^{(t_2+1/2)/\sqrt{n-k}}\frac{1}{\pi}\exp\bigg(-\bigg(\frac{(a_3-a_1)\sqrt{\beta}}{2}+\ell\bigg)^2 \\
&\qquad- \bigg(\frac{(a_4-a_2)\sqrt{1-\beta}}{2}+m\bigg)^2\bigg)dmd\ell+ O_{C}\bigg(\frac{1}{n}\bigg)\\
& = \sum_{\Gamma\le t_1 \pm t_2\le \infty}\int_{t_1/\sqrt{k}}^{(t_1+1)/\sqrt{k}}\int_{(t_2-1/2)/\sqrt{n-k}}^{(t_2+1/2)/\sqrt{n-k}}\frac{1}{\pi}\exp\bigg(-\bigg(\frac{(a_3-a_1)\sqrt{\beta}}{2}+\ell\bigg)^2 \\
&\qquad- \bigg(\frac{(a_4-a_2)\sqrt{1-\beta}}{2}+m\bigg)^2\bigg)dmd\ell+ O_{C}\bigg(\frac{1}{n}\bigg)\\
& = \int_{\Gamma\le\ell\sqrt{k} \pm m\sqrt{n-k}}\frac{1}{\pi}\exp\bigg(-\bigg(\frac{(a_3-a_1)\sqrt{\beta}}{2}+\ell\bigg)^2 - \bigg(\frac{(a_4-a_2)\sqrt{1-\beta}}{2}+m\bigg)^2\bigg)\bigg)d\ell dm+ O_{C}\bigg(\frac{1}{n}\bigg)\\
&=\mb{P}_{Z_i\sim \mc{N}(0,1)}\bigg[\sqrt{\frac{\beta}{2}}Z_1 + \sqrt{\frac{1-\beta}{2}}Z_2\ge \frac{\Gamma}{\sqrt{n}} + \frac{(a_3-a_1)\beta+(a_4-a_2)(1-\beta)}{2}\\
&\qquad \wedge \sqrt{\frac{\beta}{2}}Z_1 - \sqrt{\frac{1-\beta}{2}}Z_2\ge\frac{\Gamma}{\sqrt{n}} + \frac{(a_3-a_1)\beta-(a_4-a_2)(1-\beta)}{2}\bigg] + O_C\bigg(\frac{1}{n}\bigg).
\end{align*}
In the second-to-last line, one can see the necessary equality by considering the error terms at the line $\Gamma=\ell\sqrt{k}+m\sqrt{n-k}$ versus the given boxes (similar to the Euler--Maclaurin transference between sum and integral in the proof of \cref{lem:first-moment-estimate}).
\end{proof}

\section{Computer Assisted Verification}\label{app:computer-assist}
We now proceed via a delicate computer assisted verification in order to prove \cref{asm:comp}. We state a series of numerical claims which will be used to prove \cref{asm:comp}. Notice by symmetry of the variational problem that we may assume that $\beta \in [0,1/2]$. The first claim will handle the most numerically unstable part of the claim when $\beta$ is contained in the initial segment $\beta \in [0,.001]$.

\begin{claim}\label{clm:initial-interval}
Let $\gamma \in [\gamma_{\mr{crit}}-\eps_{\ref{asm:comp}}, \gamma_{\mr{crit}} + \eps_{\ref{asm:comp}}]$. Then 
\[\sup_{\substack{\beta\in[0,.001]\\\alpha_1, \alpha_2\in \mb{R}}} F_2(\beta,\alpha_1,\alpha_2) = 2\sup_{\alpha\in \mb{R}}F_1(\alpha).\]
\end{claim}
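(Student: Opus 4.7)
The plan is to exploit the additive separation
\[F_2(\beta,\alpha_1,\alpha_2) = 2\log 2 + 2H(\beta) + G(\beta,\alpha_1) + G(1-\beta,\alpha_2),\]
where $H(\beta) := -\beta\log\beta - (1-\beta)\log(1-\beta)$ and $G(\tau,\alpha) := -2\alpha^2 + 2\tau\log f(\tau,\alpha)$ (with the convention $0\cdot\log 0 = 0$). Setting $g(\tau) := \sup_{\alpha\in\mb{R}} G(\tau,\alpha)$, the supremum in $(\alpha_1,\alpha_2)$ decouples. The boundary values $g(0) = \sup_\alpha(-2\alpha^2) = 0$ and $g(1) = 2\sup_\alpha F_1(\alpha) - 2\log 2$ (since $f(1,\alpha) = \mb{P}[Z \ge (\gamma+\alpha)\sqrt{2}]$) identify $\sup_{\alpha_1,\alpha_2} F_2(0,\alpha_1,\alpha_2) = 2\sup F_1$, giving the ``$\ge$'' direction by taking $\beta=0$, $\alpha_1=0$, $\alpha_2=\argmax F_1$. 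The claim thus reduces to showing $g(\beta) + g(1-\beta) + 2H(\beta) \le g(1)$ for every $\beta\in(0, 0.001]$.

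I would bound $g(\beta)$ by observing that the two inequalities in the definition of $f(\beta,\alpha)$ jointly force $\sqrt{\beta/2}\,Z_1 \ge \gamma+\alpha$, so for $\gamma+\alpha>0$ we have $f(\beta,\alpha)\le \mb{P}[Z\ge(\gamma+\alpha)\sqrt{2/\beta}]$. Mill's ratio then yields
\[2\beta\log f(\beta,\alpha) \le -2(\gamma+\alpha)^2 + \beta\log\beta - 2\beta\log(\gamma+\alpha) - \beta\log(4\pi).\]
A short case split (this bound when $\gamma+\alpha \ge \sqrt{\beta}$; the trivial bound $f\le 1$ otherwise, including $\alpha\le -\gamma$) shows that $G(\beta,\alpha) \le -2\alpha^2-2(\gamma+\alpha)^2 - \beta\log(4\pi)$ in the main regime, maximized at $\alpha = -\gamma/2$ with value $-\gamma^2-\beta\log(4\pi)$, while the other regimes give at most $-2(\gamma-\sqrt{\beta})^2 < -\gamma^2$ for $\beta\le 0.001$ (using $\gamma\approx 0.248 \gg\sqrt{0.001}$). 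Thus $g(\beta) \le -\gamma^2$.

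For $g(1-\beta)-g(1)$, I would use the marginal bound $f(\tau,\alpha)\le\mb{P}[\sqrt{\tau/2}Z_1+\sqrt{(1-\tau)/2}Z_2\ge\gamma+\alpha] = f(1,\alpha)$, valid since the sum has variance $1/2$ independent of $\tau$. This gives $G(1-\beta,\alpha)\le G(1,\alpha) - 2\beta\log f(1,\alpha)$. The quadratic penalty $-2\alpha^2$ confines the optimizer of $g(1-\beta)$ to $|\alpha|\le M$ (choosing $M$ so that $-2M^2 < g(1)-1$, e.g.\ $M=2$), on which $-\log f(1,\alpha) \le C_M$ for an explicit constant, yielding $g(1-\beta)\le g(1)+2\beta C_M$. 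Combined with $2H(\beta)\le 2H(0.001) \approx 0.016$ (monotonicity of $H$ on $[0,1/2]$), the left side of the target inequality becomes $-\gamma^2 + 2\beta C_M + 2H(\beta)$, which for $\beta\le 0.001$ and $\gamma^2\ge (\gamma_{\mr{crit}}-\eps_{\ref{asm:comp}})^2\ge 0.061$ is strictly negative with a margin of roughly $0.03$.

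The hardest part will be keeping the constants tight enough: the Mill's-ratio bound degenerates as $\gamma+\alpha\to 0^+$ and must be replaced by the trivial bound in a thin window around $\alpha=-\gamma$, while the choice of $M$ balances coverage of the optimizer against smallness of $C_M$. The margin of $\gamma^2\approx 0.061$ against combined errors of roughly $0.03$ is comfortable, so moderate care in the case analysis suffices.
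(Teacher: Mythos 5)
Your proposal is correct, and it takes a genuinely different route from the paper's proof. The paper drops the $\alpha_1$-dependent contribution entirely (using only $-2\alpha_1^2+2\beta\log f(\beta,\alpha_1)\le 0$), localizes $\alpha_2$ to a numerical window by concavity, and then proves the $\beta$-derivative of the remaining upper envelope is $\lesssim 2.24-\log\beta-0.47/\sqrt{\beta}$, whose integral over $[0,.001]$ is negative: the $-\Omega(1/\sqrt{\beta})$ blow-up (the quantitative fingerprint of frozen 1-RSB, here extracted via the sign of $\mb{E}[(Z^2-1)\mathbf{1}[\cdot]]$) overwhelms the $\log(1/\beta)$ entropy gain and pushes the maximum to $\beta=0$. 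You instead \emph{keep} the $\alpha_1$-sector, bound $g(\beta)\le-\gamma^2$ by summing the two defining constraints of $f$ (giving $f(\beta,\alpha)\le\mb{P}[Z\ge(\gamma+\alpha)\sqrt{2/\beta}]$) plus Mill's ratio, bound $g(1-\beta)\le g(1)+2\beta C_M$ by dropping one constraint ($f(\tau,\alpha)\le f(1,\alpha)$, valid for all $\tau$ since the marginal $\sqrt{\tau/2}Z_1+\sqrt{(1-\tau)/2}Z_2\sim\mc N(0,1/2)$), and win on a finite margin: $-\gamma_{\mr{crit}}^2\approx -0.062$ against $2H(.001)+2(.001)C_2\approx 0.03$. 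So the trade is: the paper's decay $g(1-\beta)-g(1)\lesssim -\sqrt{\beta}$ is sharp but requires the derivative machinery, while your bound is only $O(\beta)$ but is compensated by the $-\gamma^2$ slack the paper discarded. Both are rigorous; yours is more elementary and self-contained, but it leans on the numerical coincidence that $\gamma_{\mr{crit}}^2$ comfortably exceeds the boundary entropy at $\beta=.001$, whereas the paper's argument exhibits the mechanism ($\partial_\beta F_2\to-\infty$ at $\beta=0^+$) that would remain decisive even if that margin were slim. If you write this up formally, make the case-split explicit near $\gamma+\alpha=\sqrt\beta$ (you already flag this) and verify the confinement $|\alpha|\le M$ covers the supremum by the two-case check you sketch.
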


We next control the case when $\beta \in [.001,.495]$. \begin{claim}\label{clm:middle-segment}
Let $\gamma \in [\gamma_{\mr{crit}}-\eps_{\ref{asm:comp}}, \gamma_{\mr{crit}} + \eps_{\ref{asm:comp}}]$. Then 
\[\sup_{\substack{\beta\in[.001,.495]\\\alpha_1, \alpha_2\in \mb{R}}} F_2(\beta,\alpha_1,\alpha_2) < - 10^{-5}.\]
\end{claim}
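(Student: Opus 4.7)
The plan is to partition the interval $[0.001, 0.495]$ into finitely many small subintervals $I_j = [\beta_j, \beta_{j+1}]$, construct a $\beta$-free majorant $\tilde F_j(\alpha_1, \alpha_2) \ge \sup_{\beta \in I_j} F_2(\beta, \alpha_1, \alpha_2)$ on each, and verify $\sup_{\alpha_1, \alpha_2} \tilde F_j < -10^{-5}$ via a rigorous computer-assisted calculation. The structural input I would use is that by \cref{lem:log-concave-2}, the function $\alpha \mapsto \log f(\beta, \alpha)$ is concave for each fixed $\beta$, so combined with the strictly concave quadratic $-2\alpha_1^2 - 2\alpha_2^2$, the map $(\alpha_1, \alpha_2) \mapsto F_2(\beta, \alpha_1, \alpha_2)$ is strictly concave with Hessian uniformly bounded below (in operator norm) away from zero.

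The majorant is constructed by bounding each of the three $\beta$-dependent contributions in $F_2$ separately over $I_j$: the entropy term $h(\beta) := 2\log 2 - 2\beta \log \beta - 2(1-\beta)\log(1-\beta)$ is replaced by $\sup_{\beta \in I_j} h(\beta)$, while the nonpositive terms $2\beta \log f(\beta, \alpha_1)$ and $2(1-\beta)\log f(1-\beta, \alpha_2)$ are majorized using interval arithmetic bounds on $\beta$ appearing both as a prefactor and inside $\log f$. Provided each $I_j$ is sufficiently small, $\tilde F_j$ remains close to $F_2$ and inherits strict concavity in $(\alpha_1, \alpha_2)$ since the $-2\alpha_i^2$ contribution dominates the Hessian. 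In particular $\tilde F_j$ has a unique maximum over $\mb{R}^2$.

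On each $I_j$, I would then locate an approximate maximizer $(\alpha_1^*, \alpha_2^*)$ of $\tilde F_j$ by Newton iteration, which converges rapidly by strict concavity. Using a certified lower bound on the minimum eigenvalue of the Hessian of $\tilde F_j$ in a neighborhood of $(\alpha_1^*, \alpha_2^*)$, one can upgrade a small gradient bound at $(\alpha_1^*, \alpha_2^*)$ into a small bound on the distance to the true maximizer and hence on the gap $\sup \tilde F_j - \tilde F_j(\alpha_1^*, \alpha_2^*)$. Evaluating $\tilde F_j(\alpha_1^*, \alpha_2^*)$ together with its error term via interval arithmetic (using \texttt{python-flint}) then certifies the strict inequality $\sup \tilde F_j < -10^{-5}$ on $I_j$.

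The main obstacles will be threefold. First, since $F_2$ depends on the two-dimensional Gaussian tail probability $f(\beta, \alpha)$, evaluating $f$ and $\log f$ to guaranteed precision via interval arithmetic requires careful numerical implementation. Second, the subdivision has to be chosen fine enough (and finest near $\beta = 0.495$, where $F_2$ approaches its global maximum over $\beta \in [0.001, 0.999]$) so that the majorant is tight enough to preserve the $-10^{-5}$ gap; this bookkeeping is done in the accompanying \texttt{InitialLocalizationViaConvexity.py}. Third, it is essential to produce a certified Hessian lower bound for the majorant $\tilde F_j$ and not merely for $F_2$ itself, in order to convert the gradient bound at the Newton output into a rigorous bound on $\sup \tilde F_j$, rather than a merely heuristic one.
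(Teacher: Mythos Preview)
Your plan is essentially the paper's: subdivide $[0.001,0.495]$ into subintervals, build a $\beta$-independent concave majorant on each, and certify its maximum via a value-plus-gradient-plus-Hessian bound (the paper's \cref{lem:concave-bound-trick}). The paper carries this out with a few concrete simplifications worth noting. It first reduces to the single value $\gamma=0.2484195$ by monotonicity of $f$ in $\gamma$. Rather than generic interval arithmetic inside $f$, it uses the geometric bound \cref{lem:upper-envelope}, which replaces $f(\beta,\alpha)$ on $\beta\in[\eta_1,\eta_2]$ by an explicit bivariate Gaussian probability depending only on $\eta_1,\eta_2$; since this bound has two forms according to the sign of $\gamma+\alpha$, the argument splits into the four sign regions for $(\gamma+\alpha_1,\gamma+\alpha_2)$ (\cref{clm:interval-3,clm:interval-4,clm:interval-5,clm:interval-6}). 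The resulting majorant is separable in $\alpha_1,\alpha_2$, so two one-dimensional optimizations (done by a problem-specific fixed-point iteration rather than Newton) replace your bivariate search. Finally, for $\beta\in[0.001,0.005]$ the paper drops the $2\beta\log f(\beta,\alpha_1)$ term altogether (\cref{clm:interval-1,clm:interval-2}) to sidestep the numerical instability of $f(\beta,\cdot)$ at small $\beta$.

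One point that does need care in your version: for the Hessian step you must know that the \emph{majorant} $\tilde F_j$, not just $F_2$, is concave in $(\alpha_1,\alpha_2)$. This is automatic for the paper's construction because the bound in \cref{lem:upper-envelope} is again of the form $\log f(\cdot,\cdot)$ (with shifted argument) and hence concave by \cref{lem:log-concave-2}; a generic interval-arithmetic majorant of $2\beta\log f(\beta,\alpha)$ over a $\beta$-interval need not preserve concavity in $\alpha$, so you would have to arrange this explicitly.
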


We next localize the region of interest to $\beta\in [.495,.5]$ and $\alpha_i\in [-.449,-.441]$.
\begin{claim}\label{clm:middle-segment-local}
Let $\gamma \in [\gamma_{\mr{crit}}-\eps_{\ref{asm:comp}}, \gamma_{\mr{crit}} + \eps_{\ref{asm:comp}}]$. Then 
\[\sup_{\substack{\beta\in[.495,.5]\\(\alpha_1, \alpha_2)\notin [-.449,-.441]^2}} F_2(\beta,\alpha_1,\alpha_2) < 10^{-6}.\]
\end{claim}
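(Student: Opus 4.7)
The plan is to exploit strict concavity of $F_2(\beta,\cdot,\cdot)$ in $(\alpha_1,\alpha_2)$ to reduce the exterior supremum to a boundary supremum, and then certify the boundary bound by interval arithmetic. Strict concavity on $\mb{R}^2$ for every fixed $\beta$ follows by combining \cref{lem:log-concave-2} (specializing the joint variables to the diagonal yields concavity of $\alpha\mapsto\log f(\beta,\alpha)$, hence concavity of $2\beta\log f(\beta,\alpha_1)+2(1-\beta)\log f(1-\beta,\alpha_2)$) with the strict concavity of the quadratic $-2\alpha_1^2-2\alpha_2^2$. Hence for each $\beta\in[.495,.5]$ there is a unique maximizer $(\alpha_1^\ast(\beta),\alpha_2^\ast(\beta))$ of $F_2(\beta,\cdot,\cdot)$.

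The first step is to show $(\alpha_1^\ast(\beta),\alpha_2^\ast(\beta))\in(-.449,-.441)^2$ for every $\beta\in[.495,.5]$. At $\beta=1/2$ the symmetry $\alpha_1\leftrightarrow\alpha_2$ of $F_2(1/2,\cdot,\cdot)$, combined with uniqueness, forces $\alpha_1^\ast(1/2)=\alpha_2^\ast(1/2)=\alpha(\gamma)$ (the first-order conditions for $F_2(1/2,\alpha,\alpha)=4F_1(\alpha)$ and for $F_1$ coincide), and a one-dimensional interval-arithmetic check confirms $\alpha(\gamma)\in(-.449,-.441)$ throughout the relevant $\gamma$-range. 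I would propagate this localization to the full interval $\beta\in[.495,.5]$ via the implicit function theorem, using strict negative definiteness of the Hessian of $F_2$ at $(1/2,\alpha(\gamma),\alpha(\gamma))$ (part of \cref{asm:comp}, with quantitative bounds from \cref{clm:hessian-verification}) to obtain a uniform bound on $|\partial_\beta\alpha_i^\ast|$. Since $[.495,.5]$ has length $.005$ and the box has half-width $.004$, a modest Lipschitz constant suffices.

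Once the maximizer is known to lie in the interior of the box, strict concavity forces $F_2(\beta,\alpha_1,\alpha_2)$ to be strictly decreasing along any ray from $(\alpha_1^\ast(\beta),\alpha_2^\ast(\beta))$ through a point of $\partial[-.449,-.441]^2$ and outward, so the supremum over the exterior of the box equals the supremum on the boundary. The claim therefore reduces to certifying
\[\sup_{\beta\in[.495,.5],\,(\alpha_1,\alpha_2)\in\partial[-.449,-.441]^2}F_2(\beta,\alpha_1,\alpha_2)<10^{-6},\]
which is a genuinely two-parameter numerical question (one for $\beta$, one for the rectangle perimeter).

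The final step is a grid search in \texttt{python-flint}: enclose $F_2$ via interval-arithmetic evaluation of the two-dimensional Gaussian integrals defining $f(\beta,\alpha)$, and use uniform Lipschitz bounds on $F_2$ over the compact $(\gamma,\beta,\alpha)$-region to propagate between grid points. The main obstacle is this numerical component: the target $10^{-6}$ is small, so the integral enclosures for $f$ must be kept tight and the grid spacing chosen with care so that the certified upper bound does not lose the $10^{-6}$ margin. This is entirely parallel to the numerical workflow for \cref{clm:middle-segment} and should be handled by \texttt{InitialLocalizationViaConvexity.py}.
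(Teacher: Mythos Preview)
Your approach is valid but structurally different from the paper's. The paper does not localize the maximizer via the implicit function theorem or reduce to a boundary supremum. Instead, it first uses the envelope bound of \cref{lem:upper-envelope} to replace $F_2(\beta,\alpha_1,\alpha_2)$ by an explicit function $G_2(\alpha_1,\alpha_2)$ that upper bounds $F_2$ uniformly for $\beta$ in a small interval $[\eta_1,\eta_2]$ and is separable in $\alpha_1,\alpha_2$. Then for each of $2000$ $\beta$-intervals of width $(4\cdot10^5)^{-1}$, it certifies a single near-critical point $(\alpha_1^\ast,\alpha_2^\ast)$ with $G_2(\alpha_1^\ast,\alpha_2^\ast)\le10^{-5}$, $|\alpha_i^\ast+.445|\le1.5\cdot10^{-3}$, and gradient at most $10^{-5}$; the quadratic decay from $\partial_{\alpha_i}^2G_2\le-4$ then forces $G_2<10^{-6}$ outside $[-.449,-.441]^2$. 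So the paper never evaluates $F_2$ on the box boundary at all; it evaluates the envelope $G_2$ only at near-optimizers.

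Your route trades the envelope machinery for the Hessian estimates of \cref{lem:hess-estimates} (which is logically fine: that lemma is proved independently of the present claim). The continuation argument from $\beta=1/2$ does work, since the cross-partial bound $|\partial_\beta\partial_{\alpha_i}F_2|\le2.05$ and $\partial_{\alpha_i}^2F_2\le-4$ give $|\partial_\beta\alpha_i^\ast|\le2.05/4$, hence the optimizer moves by at most about $0.0026$ over $[.495,.5]$ and stays well inside the box. The main cost is that your final step is a genuine two-parameter interval-arithmetic grid search (one for $\beta$, one for the perimeter coordinate), whereas the paper's per-interval work is just certifying one point; and the envelope bound makes the paper's $\beta$-dependence disappear inside each interval, which tightens the numerics. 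Your approach is conceptually cleaner (exterior supremum equals boundary supremum by concavity), but numerically somewhat heavier and with a thinner margin on the boundary.
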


Finally in this local region we check that the associated Hessian is strictly negative definite.
\begin{claim}\label{clm:hessian-verification}
Let $\gamma \in [\gamma_{\mr{crit}}-\eps_{\ref{asm:comp}}, \gamma_{\mr{crit}} + \eps_{\ref{asm:comp}}]$. There exists an absolute constant $\delta > 0$ such that $F_2(\beta,\alpha_1,\alpha_2)\colon\mb{R}^3\to \mb{R}$ satisfies 
\[\begin{pmatrix}
\frac{\partial^2 F_2}{\partial \alpha_1^2} & \frac{\partial^2 F_2}{\partial \alpha_1\partial \alpha_2} & \frac{\partial^2 F_2}{\partial \alpha_1\partial \beta}\\
\frac{\partial^2 F_2}{\partial \alpha_1\partial \alpha_2} & \frac{\partial^2 F_2}{\partial \alpha_2^2} & \frac{\partial^2 F_2}{\partial \alpha_2\partial \beta}\\
\frac{\partial^2 F_2}{\partial \alpha_1\partial \beta} & \frac{\partial^2 F_2}{\partial \alpha_2\partial \beta} & \frac{\partial^2 F_2}{\partial \beta^2}\\
\end{pmatrix}\preceq -\delta I_3\]
for $\beta\in [.495,.505]$ and $\alpha_i\in [-.449,-.441]$, where $I_3$ is the $3\times 3$ identity matrix and $\preceq$ denotes the semidefinite order.
\end{claim}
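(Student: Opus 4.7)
The plan is to reduce the verification to direct numerical bounds on the Hessian entries, leveraging the small size of the box and the decoupling structure of $F_2$. First, observe that $F_2$ splits as $A(\beta) + B(\beta, \alpha_1) + C(\beta, \alpha_2)$, where $A(\beta) = 2\log 2 - 2\beta\log\beta - 2(1-\beta)\log(1-\beta)$ and $B, C$ are related by $\beta \leftrightarrow 1-\beta$ together with $\alpha_1\leftrightarrow\alpha_2$. This immediately yields $\partial_{\alpha_1}\partial_{\alpha_2}F_2 \equiv 0$, and at the symmetric centerpoint $(1/2, \alpha(\gamma), \alpha(\gamma))$ additional symmetries give $\partial_\beta\partial_{\alpha_1} F_2 = -\partial_\beta\partial_{\alpha_2} F_2$ and $\partial_{\alpha_1}^2 F_2 = \partial_{\alpha_2}^2 F_2$, so only three independent Hessian entries need precise control.

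Second, I would compute the six distinct Hessian entries symbolically in terms of $f$ and its partial derivatives up to second order. The key structural observation is that large negative contributions dominate each diagonal entry: $\partial_{\alpha_i}^2 F_2 = -4 + 2\beta\,\partial_\alpha^2\log f(\beta,\alpha_i)$, where the $-4$ comes from $-2\alpha_i^2$ and $\partial_\alpha^2\log f \le 0$ by \cref{lem:log-concave-2}; while $\partial_\beta^2 F_2$ inherits the entropy contribution $A''(\beta) = -2/\beta - 2/(1-\beta)$ which equals $-8$ at $\beta = 1/2$. Moreover, using $f(1/2,\alpha) = \mb{P}[Z \ge (\gamma+\alpha)\sqrt{2}]^2$, the Hessian at the centerpoint reduces to explicit one-dimensional Gaussian-tail quantities, and strict negative definiteness at the center with a quantitative margin can be verified as a clean one-dimensional numerical check.

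Third, to extend uniformly to the full box $\beta \in [.495, .505]$, $\alpha_i \in [-.449, -.441]$, the plan is to use rigorous bounds on $f$ and on $f_\alpha, f_\beta, f_{\alpha\alpha}, f_{\beta\beta}, f_{\alpha\beta}$ supplied by \cref{clm:prob-bound,clm:first-derivative,clm:second-derivative}. Partition the box into a finite grid of small cells and, on each cell, apply interval arithmetic (via \texttt{python-flint} wrapping \texttt{Arb}) to produce a symmetric interval-matrix upper bound for the Hessian, combining the bounds for derivatives of $f$ with exact evaluation of the elementary pieces. Uniform strict negative definiteness follows cellwise from Sylvester's criterion: verify the $(1,1)$ entry is $\le -\delta$, the leading $2\times 2$ principal minor is $\ge \delta'$, and the $3\times 3$ determinant is $\le -\delta''$, for absolute constants $\delta,\delta',\delta''>0$.

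The main obstacle is obtaining rigorous bounds on the two-dimensional Gaussian integrals defining $f_{\beta\beta}$ and $f_{\alpha\beta}$, since no interval arithmetic library currently implements rigorous multidimensional numerical integration with the needed convenience. This difficulty is navigated by observing that the $-4$ term from $-2\alpha_i^2$ and the $A''(\beta)$ term from the entropy are numerically dominant, so the contributions from derivatives of $\log f$ need only be estimated to $2$--$3$ decimal places for the Sylvester check to succeed with margin. Consequently \cref{clm:prob-bound,clm:first-derivative,clm:second-derivative} can be checked with non-rigorous but well-behaved tools (\texttt{scipy} and \texttt{mpmath}) without compromising rigour, while the final algebraic determinant/minor inequalities on each grid cell are closed out with interval arithmetic.
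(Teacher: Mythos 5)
Your proposal follows essentially the same route as the paper: the same structural facts are identified ($\partial_{\alpha_1}\partial_{\alpha_2}F_2\equiv 0$ from separability, $\partial_{\alpha_i}^2F_2\le-4$ via log-concavity, dominance of the entropy term $-2/\beta-2/(1-\beta)$ in $\partial_\beta^2F_2$), the remaining second partials $\partial^2_\beta F_2$ and $\partial_\beta\partial_{\alpha_i}F_2$ are controlled through \cref{clm:prob-bound,clm:first-derivative,clm:second-derivative}, and the conclusion is drawn by Sylvester's criterion, with the same pragmatic concession to non-rigorous $2$-dimensional integration. The only organizational difference is that the paper first proves the uniform bounds of \cref{lem:hess-estimates} over the entire box and then invokes Sylvester once, whereas you propose a cellwise grid verification; these are equivalent given the same derivative estimates.
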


We now deduce the result given the claims outlined above.
\begin{proof}[Proof of \cref{asm:comp}]
We first notice that 
\[\sup_{\substack{\beta\in[0,1]\\\alpha_1, \alpha_2\in \mb{R}}} F_2(\beta,\alpha_1,\alpha_2) \ge \max(4\sup_{\alpha\in\mb{R}}F_1(\alpha), 2\sup_{\alpha\in \mb{R}}F_1(\alpha)).\] 
This follows by evaluating $F_2(\beta,\alpha_1,\alpha_2)$ at $(\beta,\alpha_1,\alpha_2)$ at $(1/2, \alpha(\gamma), \alpha(\gamma))$ and $(0, -1, \alpha(\gamma))$ respectively. Furthermore note that for $\gamma \in [\gamma_{\mr{crit}}-\eps_{\ref{asm:comp}}, \gamma_{\mr{crit}} + \eps_{\ref{asm:comp}}]$, we have 
\begin{align*}
\log 2+\sup_{\alpha\in\mb{R}}-\alpha^2 + \mb{P}_{Z\sim{\mc{N}(0,1)}}[Z\ge(\gamma + \alpha)\sqrt{2}]&= \log 2+\sup_{\alpha\in\mb{R}}-\alpha^2 + \mb{P}_{Z\sim{\mc{N}(0,1)}}[Z\ge(\gamma_{\mr{crit}} + \alpha)\sqrt{2}] \pm \eps_{\ref{asm:comp}}\\
&\in[-\eps_{\ref{asm:comp}},\eps_{\ref{asm:comp}}]
\end{align*}
where we have used that the derivative of $x\to \mb{P}[Z\ge x]$ is bounded by $1/\sqrt{2\pi}$ in magnitude. So $\sup_{\alpha\in\mb{R}}F_1(\alpha)\in[-\eps_{\ref{asm:comp}},\eps_{\ref{asm:comp}}]$ for $\gamma$ in this range.

Therefore by \cref{clm:middle-segment} and $\eps_{\ref{asm:comp}}=10^{-25}$ it suffices to consider when $\beta\in [0,.001]$ or when $\beta \in [.495,.5]$. For the former case note that the result follows immediately from \cref{clm:initial-interval}. For the latter case note that it suffices to check $\beta \in [.495,.5]$ and $\alpha_1,\alpha_2\in [-.449,-.441]$ by \cref{clm:middle-segment-local}.  Note that this also implies for $\gamma \in [\gamma_{\mr{crit}}-\eps_{\ref{asm:comp}}, \gamma_{\mr{crit}} + \eps_{\ref{asm:comp}}]$ that $\alpha(\gamma)\in [-.449,-.441]$. 

Finally note that $F_2$ is symmetric under the transformation $F_2(\beta,\alpha_1,\alpha_2) = F_2(1-\beta,\alpha_2,\alpha_1)$. This implies that the gradient of $F_2$ vanishes at $(1/2, \alpha(\gamma), \alpha(\gamma))$. By \cref{clm:hessian-verification} we know that $F_2$ is concave on this region, so the desired result follows immediately. 
\end{proof}

We now proceed with the proof of each individual claim in the remaining subsections. The proof of each of these claims is computer-assisted, but the vast majority of the computational effort occurs in \cref{clm:middle-segment} and \cref{clm:middle-segment-local}.

\subsection{Properties of \texorpdfstring{$f(\beta,\alpha)$}{f(beta,alpha)}}
We collect a series of general properties of $f(\beta,\alpha)$ which will be used throughout the verification procedure. The first is an explicit formula for $f(\beta,\alpha)$ which serves to make it more amenable to direct computation.
\begin{claim}\label{clm:gaussian-rot}
We have that 
\[f(\beta,\alpha) = \int_{\gamma+\alpha}^\infty\int_{\gamma+\alpha}^\infty\frac{1}{2\pi\sqrt{\beta(1-\beta)}}\exp\bigg(-\frac{1}{4\beta(1-\beta)}(t_1^2+t_2^2)-\frac{1/2-\beta}{\beta(1-\beta)}t_1t_2\bigg)dt_1dt_2\]
\end{claim}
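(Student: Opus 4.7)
The plan is to make the linear change of variables
\[T_1 = \sqrt{\tfrac{\beta}{2}}Z_1+\sqrt{\tfrac{1-\beta}{2}}Z_2, \qquad T_2 = \sqrt{\tfrac{\beta}{2}}Z_1-\sqrt{\tfrac{1-\beta}{2}}Z_2,\]
so that $f(\beta,\alpha)$ becomes $\mb{P}[T_1\ge\gamma+\alpha \wedge T_2\ge\gamma+\alpha]$ by definition. Since $(T_1,T_2)$ is a linear transformation of the independent standard Gaussian pair $(Z_1,Z_2)$, it is jointly Gaussian with mean zero, and the proof reduces to writing down its density explicitly.

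Next, I would compute the covariance matrix by direct calculation: $\on{Var}(T_1)=\on{Var}(T_2)=\beta/2+(1-\beta)/2=1/2$, while $\on{Cov}(T_1,T_2)=\beta/2-(1-\beta)/2=\beta-1/2$. Thus
\[\Sigma = \begin{pmatrix} 1/2 & \beta-1/2 \\ \beta-1/2 & 1/2 \end{pmatrix}, \qquad \det\Sigma = \tfrac{1}{4}-(\beta-\tfrac{1}{2})^2 = \beta(1-\beta),\]
and inverting gives
\[\Sigma^{-1} = \frac{1}{\beta(1-\beta)}\begin{pmatrix} 1/2 & 1/2-\beta \\ 1/2-\beta & 1/2 \end{pmatrix}.\]

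Finally I would substitute $\Sigma$ and $\Sigma^{-1}$ into the standard bivariate Gaussian density formula $\frac{1}{2\pi\sqrt{\det\Sigma}}\exp(-\tfrac{1}{2}(t_1,t_2)\Sigma^{-1}(t_1,t_2)^T)$. The quadratic form expands to $\frac{1}{2\beta(1-\beta)}\bigl(\tfrac{1}{2}t_1^2+\tfrac{1}{2}t_2^2+2(\tfrac{1}{2}-\beta)t_1t_2\bigr)$, which matches exactly the exponent appearing in the claim once the factor of $-1/2$ is distributed. Integrating this density over the quadrant $\{t_1\ge\gamma+\alpha,\,t_2\ge\gamma+\alpha\}$ gives the stated formula. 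This is essentially a one-step verification; the only thing to watch is sign bookkeeping when inverting $\Sigma$ and distributing the $-1/2$ in the quadratic form, and there is no substantive obstacle.
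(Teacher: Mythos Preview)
Your proposal is correct and takes essentially the same approach as the paper: define $T_1,T_2$ as the given linear combinations of $Z_1,Z_2$, compute their covariance matrix $\Sigma$ and its inverse, and write down the bivariate Gaussian density over the quadrant $\{t_1,t_2\ge\gamma+\alpha\}$. The paper's proof is identical in structure and detail.
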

\begin{proof}
This is an immediate consequence of the fact that $T_1 = \sqrt{\beta/2}Z_1+\sqrt{(1-\beta)/2}Z_2$ and $T_2 = \sqrt{\beta/2}Z_1-\sqrt{(1-\beta)/2}Z_2$ are jointly Gaussian each having variance $1/2$ and with covariance $\beta-1/2$ between the two. In particular, $T_1,T_2$ have covariance matrix $\Sigma$ where
\[\Sigma = \begin{bmatrix}1/2&\beta-1/2\\\beta-1/2&1/2\end{bmatrix},\quad\Sigma^{-1} = \frac{1}{\beta(1-\beta)}\begin{bmatrix}1/2&1/2-\beta\\1/2-\beta&1/2\end{bmatrix}\]
and thus
\[f(\beta,\alpha) = \int_{\gamma+\alpha}^\infty\int_{\gamma+\alpha}^\infty\frac{1}{2\pi\sqrt{\beta(1-\beta)}}\exp\bigg(-\frac{1}{4\beta(1-\beta)}(t_1^2+t_2^2)-\frac{1/2-\beta}{\beta(1-\beta)}t_1t_2\bigg)dt_1dt_2.\qedhere\]
\end{proof}

We will also require the following substantially more efficient and more numerically stable version of \cref{clm:gaussian-rot}. The gain in efficiency stems from the fact that two-dimensional integrals are substantially more difficult to compute and less accurate than their one-dimensional counterparts.

\begin{claim}\label{clm:numer-stable}
We have that 
\[f(\beta,\alpha) = \frac{1}{2\pi}\int_{0}^{\rho}(1-x^2)^{-1/2}\exp\bigg(\frac{-2(\gamma+\alpha)^2}{1+x}\bigg)dx + \bigg(\int_{-\infty}^{-\sqrt{2}(\gamma+\alpha)}\frac{e^{-x^2/2}}{\sqrt{2\pi}}dx\bigg)^2\]
where $\rho = 2\beta - 1$. 
\end{claim}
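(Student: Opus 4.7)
The claim is Sheppard's formula (equivalently, Plackett's identity) for the bivariate normal orthant probability, applied after standardization. I would proceed in three short steps.

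First, standardize. As already used in \cref{clm:gaussian-rot}, the pair $T_1 = \sqrt{\beta/2}Z_1 + \sqrt{(1-\beta)/2}Z_2$, $T_2 = \sqrt{\beta/2}Z_1 - \sqrt{(1-\beta)/2}Z_2$ is centered bivariate Gaussian with marginal variance $1/2$ and covariance $\beta - 1/2$. Setting $X_i = \sqrt{2}T_i$ and $t := \sqrt{2}(\gamma + \alpha)$ gives standard normals with correlation $\rho = 2\beta - 1$, so
\[f(\beta,\alpha) = g(\rho) := \mb{P}[X_1 \ge t \wedge X_2 \ge t].\]
The goal is thus to show
\[g(\rho) = g(0) + \frac{1}{2\pi}\int_0^\rho \frac{1}{\sqrt{1-x^2}}\exp\!\left(-\frac{t^2}{1+x}\right)dx,\]
with $g(0) = \left(\int_{-\infty}^{-t}\tfrac{e^{-x^2/2}}{\sqrt{2\pi}}dx\right)^2$ by independence at zero correlation, matching the second term of the claim.

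Second, derive an ODE for $g(\rho)$. Let $\phi_\rho(x_1,x_2)$ denote the standard bivariate normal density with correlation $\rho$. A direct computation from the explicit formula yields the classical identity $\partial_\rho \phi_\rho = \partial^2_{x_1 x_2}\phi_\rho$. Differentiating under the integral sign (justified by Gaussian decay, which makes the integrand and its derivatives dominated uniformly in $\rho$ on compact subsets of $(-1,1)$) gives
\[g'(\rho) = \int_t^\infty \int_t^\infty \partial^2_{x_1 x_2}\phi_\rho(x_1,x_2)\,dx_1\,dx_2 = \phi_\rho(t,t) = \frac{1}{2\pi\sqrt{1-\rho^2}}\exp\!\left(-\frac{t^2}{1+\rho}\right),\]
where the final equality simplifies $\tfrac{2t^2-2\rho t^2}{2(1-\rho^2)} = \tfrac{t^2}{1+\rho}$.

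Third, integrate from $0$ to $\rho$ and substitute $\rho = 2\beta - 1$. This gives the claim verbatim. There is no real obstacle — the only subtleties are the (standard) justification of differentiation under the integral and the bookkeeping of the rotation and the factor $\sqrt{2}$, both of which are routine.
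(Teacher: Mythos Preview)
Your proof is correct. The paper's own proof simply cites the formula as \cite[(6)]{DW90} after the same standardization you perform, so you are in fact supplying a self-contained derivation (via Plackett's identity $\partial_\rho\phi_\rho=\partial_{x_1x_2}^2\phi_\rho$) of the very result the paper quotes.
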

\begin{proof}
This is precisely \cite[(6)]{DW90} with $\rho=2\beta-1$ and $h=k=(\gamma+\alpha)\sqrt{2}$ (note that the Gaussians $T_1,T_2$ in the proof of \cref{clm:gaussian-rot} have variance $1/2$, not $1$, hence the normalization).
\end{proof}

We note that when computing $f(\beta,\alpha)$ and the associated derivatives (see \cref{clm:derivative}) in the rigorous \texttt{python-flint}, we must truncate various integral to a large finite region. We truncate such integrals, treating replacing $\infty$ by a cutoff amount ($10^{8}$) and straightforward bounds verify that this truncation is easily absorbed in the reported error bounds. 

A crucial portion of our analysis will rely on the following upper and lower bounds for $f(\beta,\alpha)$ when $\beta$ and $\alpha$ are restricted in particular intervals. These will allow us to bound $F_2(\beta,\alpha_1,\alpha_2)$ by a uniform function which handles all $\beta$ in a specified interval at once. This is crucial as the associated envelope function on the interval will furthermore be convex in $\alpha_1,\alpha_2$ and therefore finding the corresponding maximum will be amenable to a fixed-point iteration. We return to the precise form of this iteration in the subsequent subsections. In the statement below, the probabilistic event $X\pm Y\ge t$ will serve as shorthand notation for $X+Y\ge t \wedge X-Y\ge t$.

\begin{lemma}\label{lem:upper-envelope}
Fix $\gamma$ and suppose that $\beta\in [\eta_1,\eta_2]\subseteq[0,1]$. If $-\gamma\le\alpha\le 0$ then
\[f(\eta_1,\alpha)\le f(\beta,\alpha)\le f(\eta_2,\alpha).\]
Else if $\alpha\le -\gamma$ then
\[\mb{P}\bigg[\sqrt{\frac{\eta_1}{2}}Z_1\pm \sqrt{\frac{1-\eta_1}{2}}Z_2\ge\sqrt{\frac{\eta_1}{\eta_2}}(\gamma+\alpha)\bigg]\le f(\beta,\alpha)\le\mb{P}\bigg[\sqrt{\frac{\eta_2}{2}}Z_1\pm \sqrt{\frac{1-\eta_2}{2}}Z_2\ge\sqrt{\frac{\eta_2}{\eta_1}}(\gamma+\alpha)\bigg].\]
\end{lemma}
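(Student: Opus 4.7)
The plan is to give the argument geometrically. Rewriting the two inequalities in the definition of $f(\beta,\alpha)$ by taking sums and differences yields
\[
f(\beta,\alpha) = \mb{P}[(Z_1,Z_2) \in R(\beta,\gamma+\alpha)], \quad R(\beta,\theta) := \{(z_1,z_2) : z_1 \ge \theta\sqrt{2/\beta} + |z_2|\sqrt{(1-\beta)/\beta}\},
\]
i.e.\ the epigraph of a V-shape with vertex $(\theta\sqrt{2/\beta},0)$ and slopes $\pm\sqrt{(1-\beta)/\beta}$. All four stated inequalities will follow by checking set inclusions $R(\cdot,\cdot)\subseteq R(\cdot,\cdot)$ and taking probabilities under the common Gaussian measure on $(Z_1,Z_2)$. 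The key elementary observation is that $R(\beta_1,\theta_1)\subseteq R(\beta_2,\theta_2)$ whenever $\theta_1\sqrt{2/\beta_1} \ge \theta_2\sqrt{2/\beta_2}$ and $\sqrt{(1-\beta_1)/\beta_1} \ge \sqrt{(1-\beta_2)/\beta_2}$, since then the right-hand side defining $R(\beta_1,\theta_1)$ dominates that of $R(\beta_2,\theta_2)$ for every $z_2$.

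For Case~1 ($\gamma+\alpha \ge 0$) one just notes that both the vertex abscissa $\theta\sqrt{2/\beta}$ (with $\theta=\gamma+\alpha\ge 0$) and the slope $\sqrt{(1-\beta)/\beta}$ are decreasing in $\beta$, so $R(\eta_1,\gamma+\alpha)\subseteq R(\beta,\gamma+\alpha)\subseteq R(\eta_2,\gamma+\alpha)$ and the stated sandwich is immediate.

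For Case~2 ($\gamma+\alpha \le 0$) the vertex abscissa and the slope move in opposite directions as $\beta$ varies, so direct monotonicity fails; one compensates by rescaling the threshold. With the rescaled thresholds $\sqrt{\eta_1/\eta_2}(\gamma+\alpha)$ and $\sqrt{\eta_2/\eta_1}(\gamma+\alpha)$ one computes that the vertex abscissa of $R(\eta_1,\sqrt{\eta_1/\eta_2}(\gamma+\alpha))$ equals $(\gamma+\alpha)\sqrt{2/\eta_2}$, which is $\ge (\gamma+\alpha)\sqrt{2/\beta}$ since $\gamma+\alpha\le 0$ and $\beta\le\eta_2$, while the slope $\sqrt{(1-\eta_1)/\eta_1}\ge\sqrt{(1-\beta)/\beta}$ since $\eta_1\le\beta$. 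This gives the lower inclusion $R(\eta_1,\sqrt{\eta_1/\eta_2}(\gamma+\alpha))\subseteq R(\beta,\gamma+\alpha)$, and the upper inclusion $R(\beta,\gamma+\alpha)\subseteq R(\eta_2,\sqrt{\eta_2/\eta_1}(\gamma+\alpha))$ is symmetric. The scaling factors $\sqrt{\eta_1/\eta_2}$ and $\sqrt{\eta_2/\eta_1}$ are in fact forced by the requirement that the resulting vertex abscissae line up correctly under the containment criterion above. There is no real obstacle beyond tracking signs, since $\gamma+\alpha\le 0$ reverses the direction of inequalities involving $\sqrt{2/\beta}$.
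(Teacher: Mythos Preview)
Your proof is correct and follows essentially the same geometric approach as the paper: both interpret $f(\beta,\alpha)$ as the standard Gaussian measure of a wedge with vertex $((\gamma+\alpha)\sqrt{2/\beta},0)$ and edge slopes determined by $\beta$, and obtain the bounds via region inclusions. Your write-up is in fact more explicit than the paper's (which only spells out one of the four inequalities and leaves the others as ``identical''), and your clean criterion $R(\beta_1,\theta_1)\subseteq R(\beta_2,\theta_2)$ when both the vertex abscissa and the slope dominate is exactly the right distillation.
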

\begin{proof}
We consider the upper bound for the second inequality (the case $\alpha\le-\gamma$). Note that $f(\beta,\alpha)$ corresponds to integrating the standard bivariate normal density between the lines $\sqrt{\frac{\beta}{2}}z_1+\sqrt{\frac{1-\beta}{2}}z_2\ge\gamma+\alpha$ and $\sqrt{\frac{\beta}{2}}z_1-\sqrt{\frac{1-\beta}{2}}z_2\ge\gamma+\alpha$. Note that these lines intersect at the point $\big(\sqrt{\frac{2}{\beta}}(\gamma + \alpha), 0\big)$ and the lines emanating from this point have slopes $\sqrt{\frac{\beta}{1-\beta}}$ and $-\sqrt{\frac{\beta}{1-\beta}}$. Note that shifting $\beta$ to $\eta_2$ and the intersection point of the lines to $\big(\sqrt{\frac{2}{\eta_1}}(\gamma + \alpha), 0\big)$ gives a strictly larger region and corresponds to the upper bound. The remaining three inequalities are obtained in an identical manner.
\end{proof}

We will also require the following explicit formula for the partial derivative of $f(\beta,\alpha)$ in $\alpha$.
\begin{claim}\label{clm:derivative}
We have that
\[\partial_af(\beta,a) = -\frac{2}{\sqrt{\pi}}\exp(-(\gamma+a)^2)\mb{P}[\sqrt{2\beta(1-\beta)}Z\ge(2-2\beta)(\gamma+a)].\]
\end{claim}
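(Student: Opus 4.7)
The plan is to start from the explicit double-integral representation of $f(\beta,\alpha)$ provided by \cref{clm:gaussian-rot}, namely
\[
f(\beta,a) = \int_{\gamma+a}^\infty\int_{\gamma+a}^\infty \frac{1}{2\pi\sqrt{\beta(1-\beta)}}\exp\!\Big(-\tfrac{1}{4\beta(1-\beta)}(t_1^2+t_2^2)-\tfrac{1/2-\beta}{\beta(1-\beta)}t_1 t_2\Big)\, dt_1\, dt_2,
\]
and then differentiate under the integral sign. Since $a$ appears only in the two lower limits and the integrand is symmetric in $(t_1,t_2)$, Leibniz's rule yields
\[
\partial_a f(\beta,a) = -2\int_{\gamma+a}^\infty \frac{1}{2\pi\sqrt{\beta(1-\beta)}}\exp\!\Big(-\tfrac{1}{4\beta(1-\beta)}((\gamma+a)^2 + t^2) - \tfrac{1/2-\beta}{\beta(1-\beta)}(\gamma+a)t\Big)\, dt.
\]

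Next I would complete the square in $t$ inside the exponential. Setting $c := \gamma+a$ and grouping terms gives
\[
-\tfrac{1}{4\beta(1-\beta)}\bigl(t^2 + 2(1-2\beta) c\, t + c^2\bigr) = -\tfrac{(t + (1-2\beta)c)^2}{4\beta(1-\beta)} - c^2,
\]
using the identity $1 - (1-2\beta)^2 = 4\beta(1-\beta)$. After substituting $u = t + (1-2\beta)c$, so that $u$ ranges from $(2-2\beta)c$ to $\infty$, the integral becomes
\[
\partial_a f(\beta,a) = -2 e^{-c^2}\int_{(2-2\beta)c}^\infty \frac{1}{2\pi\sqrt{\beta(1-\beta)}}\exp\!\Big(-\tfrac{u^2}{4\beta(1-\beta)}\Big)\, du.
\]

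Finally, I would recognize the remaining integrand as $\tfrac{1}{\sqrt{\pi}}$ times the density of $\mc{N}(0, 2\beta(1-\beta))$, so that the integral equals $\tfrac{1}{\sqrt{\pi}}\mb{P}[\sqrt{2\beta(1-\beta)}\,Z \geq (2-2\beta)c]$ for $Z\sim\mc{N}(0,1)$. Substituting back $c = \gamma+a$ gives exactly the claimed formula. The whole proof is routine; the only step requiring any care is the algebraic identity that makes the completed-square constant equal to $-c^2$ (so that the prefactor cleanly becomes $\exp(-(\gamma+a)^2)$), which is the main place a sign or coefficient error could creep in.
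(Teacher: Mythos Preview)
Your proof is correct and follows essentially the same approach as the paper: differentiate the double-integral formula from \cref{clm:gaussian-rot} using Leibniz's rule and the $(t_1,t_2)$-symmetry, complete the square in the remaining variable, and identify the resulting one-dimensional Gaussian integral as $\frac{1}{\sqrt{\pi}}$ times a normal tail probability. The paper's proof is slightly terser (it leaves the substitution $u=t+(1-2\beta)c$ implicit), but the computation is identical.
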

\begin{proof}
By symmetry we compute that
\begin{align*}
\partial_af(\beta,a) &= -2\int_{\gamma+a}^\infty\frac{1}{2\pi\sqrt{\beta(1-\beta)}}\exp\bigg(-\frac{1}{4\beta(1-\beta)}(t_1^2+(\gamma+a)^2)-\frac{1/2-\beta}{\beta(1-\beta)}(\gamma+a)t_1\bigg)dt_1\\
&= -\frac{2}{\sqrt{\pi}}\exp(-(\gamma+a)^2)\int_{\gamma+a}^\infty\frac{1}{\sqrt{4\pi\beta(1-\beta)}}\exp\bigg(-\frac{(t+(1-2\beta)(\gamma+a))^2}{4\beta(1-\beta)}\bigg)dt\\
&= -\frac{2}{\sqrt{\pi}}\exp(-(\gamma+a)^2)\mb{P}[\sqrt{2\beta(1-\beta)}Z\ge(2-2\beta)(\gamma+a)]. \qedhere
\end{align*}
\end{proof}

Finally, we will repeatedly require the following elementary estimate that a point with small derivative can be used, when combined with an \emph{a priori} second derivative estimate and a gradient bound, to derive an upper bound on the function.

\begin{lemma}\label{lem:concave-bound-trick}
Let $f\colon\mb{R}\to\mb{R}$ such that $\frac{\partial^2}{\partial x^2}f(x)\le -M<0$. Then
\[\sup_{x\in \mb{R}}f(x)\le \inf_{z\in \mb{R}} f(z) + \frac{f'(z)^2}{2M}.\]
\end{lemma}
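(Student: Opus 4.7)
The plan is to use a strong concavity estimate via Taylor's theorem. Fix any $z \in \mb{R}$. Since $\partial_x^2 f(x) \le -M$ everywhere, the second-order Taylor expansion with integral remainder gives, for every $x \in \mb{R}$,
\[f(x) = f(z) + f'(z)(x-z) + \int_z^x (x-t)f''(t)\, dt \le f(z) + f'(z)(x-z) - \frac{M}{2}(x-z)^2.\]
This is just the standard fact that a function with Hessian bounded above by $-M$ lies below its ``quadratic tangent'' with curvature $-M$.

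The next step is to maximize the right-hand quadratic in $x$. It is a downward parabola with vertex at $x^\ast = z + f'(z)/M$, and substituting gives the upper bound
\[f(x) \le f(z) + \frac{f'(z)^2}{2M}\]
for all $x$. Taking the supremum in $x$ and then the infimum over $z$ yields the claim. The proof is completely routine; there is no real obstacle, since the only ingredient is the elementary quadratic upper envelope for strongly concave functions on $\mb{R}$. This lemma will be applied later by producing an approximate critical point $z$ (via Newton iteration) at which $f'(z)$ is small and $f(z)$ is controlled, so that the upper bound $f(z) + f'(z)^2/(2M)$ is numerically sharp enough to certify the supremum.
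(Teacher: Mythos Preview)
Your proof is correct and essentially identical to the paper's own argument: apply Taylor's theorem to get the quadratic upper bound $f(x)\le f(z)+f'(z)(x-z)-\tfrac{M}{2}(x-z)^2$, maximize over $x$, then take $\sup_x$ and $\inf_z$. (As a minor aside, your vertex $x^\ast=z+f'(z)/M$ is the correct one; the paper writes $z-f'(z)/M$, which appears to be a typo, but the resulting bound is the same either way.)
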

\begin{proof}
Given any $z\in\mb{R}$, a straightforward application of Taylor's theorem shows that
\[f(x)\le f(z)+f'(z)(x-z)-\frac{M}{2}(x-z)^2.\]
The quadratic on the right is maximized when $x=z-f'(z)/M$, which implies $f(x)\le f(z)+f'(z)^2/(2M)$. Now taking a supremum over $x$ and infimum over $z$ finishes.
\end{proof}

An identical bivariate version of the above claim follows from the one-variable version, considering a line between the points $x,z$ of interest.
\begin{lemma}\label{lem:concave-bound-trick-2}
Let $f\colon\mb{R}^2\to\mb{R}$ such that $\nabla^2 f(x)\preceq -MI_2\prec 0$, where $I_2$ is the $2\times 2$ identity matrix and $\preceq$ denotes the semidefinite order. Then
\[\sup_{\vec{x}\in \mb{R}^2}f(x)\le \inf_{\vec{z}\in \mb{R}^2} f(\vec{z}) + \frac{\snorm{\nabla f(\vec{z})}_2^2}{2M}.\]
\end{lemma}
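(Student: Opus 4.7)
The plan is to reduce directly to the one-variable case, Lemma~\ref{lem:concave-bound-trick}, by restricting $f$ to the line through $\vec{z}$ and $\vec{x}$, as the paper itself suggests. Fix $\vec{x}, \vec{z} \in \mb{R}^2$; if $\vec{x} = \vec{z}$ the desired bound is trivial, so assume $\vec{u} := \vec{x} - \vec{z} \ne 0$ and define $g\colon\mb{R}\to\mb{R}$ by $g(t) = f(\vec{z} + t\vec{u})$.

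By the chain rule, $g'(t) = \nabla f(\vec{z} + t\vec{u}) \cdot \vec{u}$ and $g''(t) = \vec{u}^\top \nabla^2 f(\vec{z} + t\vec{u}) \vec{u}$. The hypothesis $\nabla^2 f \preceq -M I_2$ then gives $g''(t) \le -M \snorm{\vec{u}}_2^2 < 0$ for every $t$, so $g$ satisfies the hypotheses of Lemma~\ref{lem:concave-bound-trick} with strong concavity constant $M\snorm{\vec{u}}_2^2$.

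Applying that lemma at the point $z = 0$, I get
\[f(\vec{x}) = g(1) \le \sup_{t\in\mb{R}} g(t) \le g(0) + \frac{g'(0)^2}{2M\snorm{\vec{u}}_2^2} = f(\vec{z}) + \frac{(\nabla f(\vec{z}) \cdot \vec{u})^2}{2M\snorm{\vec{u}}_2^2}.\]
By Cauchy--Schwarz, $(\nabla f(\vec{z}) \cdot \vec{u})^2 \le \snorm{\nabla f(\vec{z})}_2^2 \snorm{\vec{u}}_2^2$, so the last term is bounded by $\snorm{\nabla f(\vec{z})}_2^2/(2M)$. Taking the supremum over $\vec{x}\in\mb{R}^2$ and the infimum over $\vec{z}\in\mb{R}^2$ yields the claimed inequality.

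There is no substantive obstacle: the entire argument is a one-line reduction to the univariate case, and the only nontrivial input beyond Lemma~\ref{lem:concave-bound-trick} is Cauchy--Schwarz to convert the directional derivative into the gradient norm. No compactness, existence of maximizers, or regularity beyond $C^2$ (which is implicit in the Hessian hypothesis) is needed.
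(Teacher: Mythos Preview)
Your proof is correct and follows exactly the approach the paper indicates: reduce to the one-variable Lemma~\ref{lem:concave-bound-trick} by restricting to the line through $\vec{z}$ and $\vec{x}$, then use Cauchy--Schwarz to replace the directional derivative by the gradient norm.
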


\subsection{Bounds on \texorpdfstring{$\gamma_{\mr{crit}}$}{gamma crit}}
In order to proceed with our analysis we will require sufficiently precise bounds on the value of $\gamma_{\mr{crit}}$.

\begin{claim}\label{clm:numeric-bounds}
We have 
\[.24841951\le \gamma_{\mr{crit}}\le .24841959.\]
\end{claim}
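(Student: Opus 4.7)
\medskip

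\textbf{Proof proposal.} The plan is to reduce this to two rigorous numerical evaluations of the univariate function $F_1$ at the two endpoints, combined with the monotonicity of $\gamma \mapsto \sup_{\alpha} F_1(\alpha; \gamma)$ already observed in the remark following \cref{def:const}. Define $g(\gamma) := \log 2 + \sup_{\alpha \in \mb{R}} F_1(\alpha)$, making the $\gamma$-dependence of $F_1$ explicit. Since the function $\gamma \mapsto \log \mb{P}[Z \ge (\gamma+\alpha)\sqrt{2}]$ is strictly decreasing in $\gamma$ for each fixed $\alpha$, the supremum $g(\gamma)$ is strictly decreasing. Thus it suffices to verify $g(.24841951) > 0$ and $g(.24841959) < 0$.

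To evaluate $g(\gamma)$ rigorously at a specific $\gamma$, I would use the strict concavity of $F_1$: by \cref{lem:log-concave-1} the function $\alpha \mapsto \log \mb{P}[Z \ge (\gamma+\alpha)\sqrt{2}]$ is concave, and $-\alpha^2$ contributes $-2$ to the second derivative, so $F_1''(\alpha) \le -2$ globally. Consequently, by \cref{lem:concave-bound-trick} with $M = 2$, for any test point $\alpha_0$ we obtain the two-sided bound
\[
F_1(\alpha_0) \;\le\; \sup_{\alpha} F_1(\alpha) \;\le\; F_1(\alpha_0) + \tfrac{1}{4} F_1'(\alpha_0)^2.
\]
Using the explicit derivative
\[
F_1'(\alpha) \;=\; -2\alpha \,-\, \sqrt{2}\,\frac{\varphi((\gamma+\alpha)\sqrt{2})}{\mb{P}[Z \ge (\gamma+\alpha)\sqrt{2}]},
\]
where $\varphi$ is the standard normal density, one performs a few Newton iterations in floating point to locate a near-critical point $\alpha_0$, then certifies the bounds via interval arithmetic. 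Because we only need roughly $8$ decimal digits of accuracy for $\gamma_{\mr{crit}}$, taking $|F_1'(\alpha_0)| \le 10^{-5}$ makes the gap $\tfrac14 F_1'(\alpha_0)^2$ negligible compared to the target separation $g(.24841951) - g(.24841959)$, which by the chain rule and a routine estimate on $\frac{\partial}{\partial \gamma} g$ is of order $10^{-8}$.

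Concretely, I would use \texttt{python-flint}/\texttt{Arb} to evaluate $\mb{P}[Z \ge x]$ (for instance via the complementary error function, which \texttt{Arb} provides with certified interval bounds), plug in $\gamma = .24841951$ and the numerically located $\alpha_0(\gamma)$, and return a rigorous interval enclosing $g(\gamma)$; likewise at $\gamma = .24841959$. The verification then amounts to checking that the returned intervals lie strictly in $(0, \infty)$ and $(-\infty, 0)$ respectively, which by the discussion above reduces to a handful of certified evaluations of the normal tail probability.

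The only obstacle is arranging the precision: since $\gamma_{\mr{crit}}$ must be pinned down to $\sim 10^{-8}$, every intermediate quantity (tail probabilities, derivative ratios involving $\varphi/\Phi^c$) must be carried at sufficient working precision in interval arithmetic, and the Newton step to find $\alpha_0$ must be tight enough that $F_1'(\alpha_0)^2/4$ is much smaller than the target gap. This is entirely routine for \texttt{Arb}, which handles arbitrary precision intervals, and no other mathematical input beyond the concavity bound $F_1'' \le -2$ and the monotonicity in $\gamma$ is needed.
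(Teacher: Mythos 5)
Your proposal matches the paper's proof almost exactly: both use strict monotonicity in $\gamma$, the bound $F_1''\le -2$ from \cref{lem:log-concave-1}, a certified evaluation at a Newton-located near-critical $\alpha_0$, and \cref{lem:concave-bound-trick} to convert the small-derivative certificate into an upper bound on $\sup_\alpha F_1$. One tiny slip: since $F_1$ already carries the $\log 2$ (see \cref{def:special-func}), the defining equation for $\gamma_{\mr{crit}}$ is $\sup_\alpha F_1(\alpha)=0$, not $\log 2 + \sup_\alpha F_1(\alpha)=0$; this is purely notational and does not affect the argument.
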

\begin{proof}
Notice that $F_1(\alpha)$ satisfies $\frac{\partial^2}{\partial \alpha^2}F_1(\alpha) \le -2$ due to \cref{lem:log-concave-1}.

Letting $\gamma = .24841951$ and $\alpha = -0.445183267$ we have $F_1(\alpha) \ge 4 \cdot 10^{-8}$ by numerical computation and this provides the necessary lower bound on $\gamma_{\mr{crit}}$.

For the upper bound we set $\gamma = .24841959$ and $\alpha = -0.44518333$. We have $F_1(\alpha)\le -2 \cdot 10^{-8}$ and $|F_1'(\alpha)|\le 10^{-5}$ by numerical computation. This implies the desired result via \cref{lem:concave-bound-trick}.
\end{proof}

The most important technical point for \cref{clm:numeric-bounds} is hidden under the hood of producing the candidate values for \cref{clm:numeric-bounds}. In theory these bounds could be produced via a repeated bisection procedure, but a more tailored optimization approach exists which relies on the structure of $F_1$ and $F_2$ in a precise manner. The key idea is to rely on a fixed-point iteration where the fixed point corresponds to a critical point of our desired function. In particular note that 
\[F_1'(\alpha) = -2\alpha - \frac{\exp(-(\gamma+\alpha)^2)}{\sqrt{\pi}\mb{P}[Z\ge (\gamma + \alpha)\sqrt{2}]}.\]
Rearranging the equation $F_1'(\alpha) = 0$ we are naturally led to an iterative procedure with
\[\alpha_{i+1} = \frac{-\exp(-(\gamma+\alpha_i)^2)}{2\sqrt{\pi}\mb{P}[Z\ge (\gamma + \alpha_{i})\sqrt{2}]}.\]
Starting with even very crude estimates for $\alpha_0$ the above procedure converges to a numerical fixed point extraordinarily quickly (being that this is in essence a proxy for Newton iteration tailored to this problem).

Furthermore the separable nature of $F_2(\beta,\alpha_1,\alpha_2)$ with respect to the variables $\alpha_1,\alpha_2$ allows an essentially identical procedure to produce upper bound on $F_2(\beta,\alpha_1,\alpha_2)$ for fixed $\beta$. The crucial issue for \cref{lem:upper-envelope} therefore is that the functional form we use is chosen to bound an interval of $\beta$ values uniformly, while still allowing for a fixed point iteration of the form specified to succeed.

\subsection{Proof of \texorpdfstring{\cref{clm:initial-interval}}{Claim B.1}}
For the proof of \cref{clm:initial-interval}, the key idea is that for fixed $\alpha_1,\alpha_2$ the derivative in $\beta$ of $F_2(\beta,\alpha_1,\alpha_2)$ is $-\infty$ at $\beta=0$. This is a  manifestation of the fact that the model exhibits a frozen 1-RSB structure and the proof is a sufficiently quantitative version of this fact.
\begin{proof}[Proof of \cref{clm:initial-interval}]
Notice that
\[F_2(\beta,\alpha_1,\alpha_2)\le 2\log 2 - 2\beta\log\beta - 2(1-\beta)\log(1-\beta) - 2\alpha_2^2+ 2(1-\beta)\log f(1-\beta,\alpha_2).\]
So, letting
\[G_2(\beta,\alpha_2) = 2\log 2 - 2\beta\log\beta - 2(1-\beta)\log(1-\beta) - 2\alpha_2^2+ 2(1-\beta)\log f(1-\beta,\alpha_2),\]
we will prove the stronger claim that 
\[\sup_{\substack{\beta\in[0,.001]\\\alpha_2\in\mb{R}}} G_2(\beta,\alpha_2) = 2\sup_{\alpha\in \mb{R}}F_1(\alpha).\]
We first prove that it suffices to restrict attention to the case where $-.53\le \alpha_2\le -.37$. By numerical computation and $\beta\in[0,.001]$, we find
\[G_2(\beta,\alpha_2) \le 2\log 2 +.01582 + 2(1-.001)(-\alpha_2^2 + \log\mb{P}[Z\ge (\gamma + \alpha_2)\sqrt{2}]).\]
Note that this upper bound, call it $H(\alpha_2)$, is a concave function of $\alpha_2$ by \cref{lem:log-concave-1}. We can check $H(\alpha_2)\le -10^{-3}$ for $\alpha_2\in\{-.53,-.37\}$ and $H(-.45)\ge 10^{-2}$ by numerical computation. Thus $H$ is increasing on $(-\infty,-.53]$ and decreasing on $[-.37,+\infty)$, so $H(\alpha_2)\le -10^{-3}$ for $\alpha_2\notin[-.53,-.37]$. From the proof of \cref{asm:comp} recall that $\sup_{\alpha\in\mb{R}}F_1(\alpha)\in[-\eps_{\ref{asm:comp}},\eps_{\ref{asm:comp}}]$. Thus we immediately find that it suffices to consider $\alpha_2\in [-.53,-.37]$.

A trivial inequality and the second part of \cref{lem:upper-envelope} (recall \cref{clm:numeric-bounds}) yields
\[f(1-\beta,\alpha_2)\ge f(1-\beta,-.37)\ge \mb{P}\bigg[\sqrt{\frac{.999}{2}}Z_1 \pm \sqrt{\frac{.001}{2}}Z_2\ge\sqrt{.999}(\gamma-.37)\bigg]\ge .538\]
by numerical computation for the final inequality. We next note that 
\begin{align*}
\frac{\partial}{\partial \beta}G_2(\beta,\alpha_2) &=2\log(1-\beta) -2\log\beta - 2\log f(1-\beta,\alpha_2) + \frac{2(1-\beta)}{f(1-\beta,\alpha_2)}\frac{\partial}{\partial \beta}[f(1-\beta,\alpha_2)]\\
&\le 1.239 -2\log\beta + \frac{2(1-\beta)}{f(1-\beta,\alpha_2)}\frac{\partial}{\partial \beta}[f(1-\beta,\alpha_2)]
\end{align*}
by the previous inequality, numerical computation, and $\beta\in[0,.001]$.

Notice from \cref{clm:gaussian-rot} and differentiation under the integral sign that
\begin{align*}
\frac{\partial}{\partial \beta}[f(1-\beta,\alpha_2)] &= \int_{[\gamma + \alpha,\infty)^2}\frac{P(t_1,t_2,\beta)}{4(\beta(1-\beta))^2}\Bigg(\frac{\exp\big(-\frac{1}{4\beta(1-\beta)}(t_1^2+t_2^2)-\frac{\beta-1/2}{\beta(1-\beta)}t_1t_2\big)}{2\pi\sqrt{\beta(1-\beta)}}\Bigg)dt_1dt_2
\end{align*}
where 
\begin{align*}
P(t_1,t_2,\beta) &= 2(1-\beta)(\beta)(-1+2\beta) + (1-\beta)^2(t_1-t_2)^2-\beta^2(t_1+t_2)^2\\
&\le 2(1-\beta)\beta^2+(1-\beta)^2((t_1-t_2)^2-2\beta).
\end{align*}
This implies that 
\begin{align*}
\frac{\partial}{\partial \beta}[f(1-\beta,\alpha_2)] & \le \frac{f(1-\beta,\alpha_2)}{2(1-\beta)} + \int_{[\gamma + \alpha,\infty]^2}\frac{((t_1-t_2)^2-2\beta)}{4\beta^2}\Bigg(\frac{\exp\big(-\frac{1}{4\beta(1-\beta)}(t_1^2+t_2^2)-\frac{\beta-1/2}{\beta(1-\beta)}t_1t_2\big)}{2\pi\sqrt{\beta(1-\beta)}}\Bigg)dt_1dt_2.
\end{align*}
Performing an identical change of variables to that in \cref{clm:gaussian-rot} (in the reverse direction and with $\beta$ replaced by $1-\beta$), we find 
\begin{align*}
\frac{\partial}{\partial \beta}G_2(\beta,\alpha_2) &\le 1.239-2\log\beta + \frac{2(1-\beta)}{f(1-\beta,\alpha_2)}\frac{\partial}{\partial \beta}[f(1-\beta,\alpha_2)]\\
&\le 2.239 -2\log\beta + \frac{(1-\beta)}{\beta}\frac{\mb{E}\Big[(Z_2^2-1)\mbm{1}\Big[\sqrt{\frac{1-\beta}{2}}Z_1\pm \sqrt{\frac{\beta}{2}}Z_2\ge \gamma + \alpha_2\Big]\Big]}{f(1-\beta,\alpha_2)}.
\end{align*}
Notice that a trivial inequality and \cref{lem:upper-envelope} yield
\[f(1-\beta,\alpha_2)\le f(1-\beta,-.53)\le \mb{P}\bigg[Z_1 \ge \frac{1}{\sqrt{.999}}(\gamma-.53)\bigg]\le 0.611\]
by numerical computation.

Now we find $\mb{E}\Big[(Z_2^2-1)\mbm{1}\Big[\sqrt{\frac{1-\beta}{2}}Z_1\pm \sqrt{\frac{\beta}{2}}Z_2\ge \gamma + \alpha_2\Big]\Big]\le 0$ since it is even true for any fixed value of $Z_1$ (conditioning a Gaussian to lie in a symmetric interval containing $0$ only decreases its variance). In fact, using the transformation $z_1=\sqrt{2/(1-\beta)}(\gamma+\alpha_2)+\sqrt{\beta/(1-\beta)}t$ we have the inequality
\begin{align*}
\mb{E}&\Big[(Z_2^2-1)\mbm{1}\Big[\sqrt{\frac{1-\beta}{2}}Z_1\pm \sqrt{\frac{\beta}{2}}Z_2\ge \gamma + \alpha_2\Big]\Big]\\
&\qquad=\sqrt{\frac{\beta}{1-\beta}}\int_{t\ge 0}\frac{1}{\sqrt{2\pi}}\exp\bigg(-\frac{1}{2}\bigg(\sqrt{\frac{2}{1-\beta}}(\gamma+\alpha_2)+t\sqrt{\frac{\beta}{1-\beta}}\bigg)^2\bigg)\int_{-t}^t\frac{(x^2-1)e^{-x^2/2}}{\sqrt{2\pi}}dxdt\\
&\qquad\le\frac{\exp(-(\gamma-.53)^2/.999)}{\sqrt{2\pi}}\cdot \sqrt{\beta}\int_{0}^{10}\int_{-t}^{t}\frac{(x^2-1)e^{-x^2/2}}{\sqrt{2\pi}}dxdt\\
&\qquad\le -0.29 \sqrt{\beta}
\end{align*}
by numerical computation for the last line. In the first inequality, we used that for $t\in[0,10]$ we have $\gamma-.53\le z_1\le-(\gamma-.53)$ by numerical computation. We also used the earlier observation that conditioning a Gaussian to lie in $[-t,t]$ will only decrease its variance, which corresponds to the fact $\int_{-t}^t(x^2-1)e^{-x^2/2}/\sqrt{2\pi}dx<0$.

Putting together the previous three centered inequalities we have that
\begin{align*}
\frac{\partial}{\partial\beta}G_2(\beta,\alpha_2) &\le 2.239 -2\log\beta + \frac{1.635}{\beta}\mb{E}\Big[(Z_2^2-1)\mbm{1}\Big[\sqrt{\frac{1-\beta}{2}}Z_1\pm \sqrt{\frac{\beta}{2}}Z_2\ge \gamma + \alpha_2\Big]\Big]\\
&\le 2.239-\log\beta-\frac{.47}{\sqrt{\beta}}
\end{align*}
by numerical computation.

The integral of the right side above from $\beta = 0 $ to $.001$ is strictly negative by numerical computation. Therefore, the maximum of $G_2(\beta,\alpha_2)$ is achieved when $\beta = 0$ for $\alpha_2\in [-.53,-.37]$. The result then follows by definition of $F_1$ and of $G_2$.
\end{proof}

\subsection{Proof of \texorpdfstring{\cref{clm:middle-segment}}{Claim B.2}}
Note that since $f(\beta,\alpha)$ is a decreasing function of $\gamma$ it suffices to verify \cref{clm:middle-segment} when $\gamma = .2484195\le \gamma_{\mr{crit}}-\eps_{\ref{asm:comp}}$ (where we have used \cref{clm:numeric-bounds}).

\subsubsection{Initial segment of \texorpdfstring{\cref{clm:middle-segment}}{Claim B.2}}
We first handle the case where $\beta\in[.001,.005]$ via dropping the variable $\alpha_1$ in analogy with the proof of \cref{clm:initial-interval}; we do this since $f(\beta,\alpha_1)$ is numerically sensitive to compute for sufficiently small values of $\beta$.

\begin{claim}\label{clm:interval-1}
Let $\gamma = .2484195$. We have 
\[\sup_{\substack{\beta\in[.001,.005]\\\alpha_1, \alpha_2\in \mb{R}\\\alpha_2\le -\gamma}} F_2(\beta,\alpha_1,\alpha_2)\le -10^{-3}.\]
\end{claim}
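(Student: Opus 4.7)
The plan is to mirror the proof of \cref{clm:initial-interval}, adapting the envelope bounds to accommodate the slightly larger range of $\beta$. Using $f(\beta,\alpha_1)\le 1$, I drop $\alpha_1$ and set
\[G_2(\beta,\alpha_2) := 2\log 2 - 2\beta\log\beta - 2(1-\beta)\log(1-\beta) - 2\alpha_2^2 + 2(1-\beta)\log f(1-\beta,\alpha_2),\]
so that $F_2\le G_2$; it then suffices to show $G_2\le -10^{-3}$ on $\beta\in[.001,.005]$ and $\alpha_2\le-\gamma$.

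As a first step I would restrict to a compact $\alpha_2$-range. Since $\log f\le 0$ and $-2\beta\log\beta-2(1-\beta)\log(1-\beta)\le 0.07$ on $\beta\in[.001,.005]$, we have $G_2\le 2\log 2+0.07-2\alpha_2^2$, which is at most $-10^{-3}$ whenever $|\alpha_2|\ge 0.9$. So it suffices to consider $\alpha_2\in[-0.9,-\gamma]$.

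Next, to handle the continuous range of $\beta$ I would partition $[.001,.005]$ into short sub-intervals $[\beta_i,\beta_{i+1}]$ (of length roughly $10^{-4}$) and apply \cref{lem:upper-envelope} with $(\eta_1,\eta_2)=(1-\beta_{i+1},1-\beta_i)$, noting that $\gamma+\alpha_2\le 0$ on the restricted range so the second case of the lemma applies. Combined with the monotonicity of the entropy in $\beta$ on $[0,1/2]$ and the observation that when $\log f\le 0$ the quantity $(1-\beta)\log f(1-\beta,\alpha_2)$ is maximized (i.e., least negative) at the right endpoint of the sub-interval, this produces a $\beta$-uniform upper envelope $\tilde G_i(\alpha_2)\ge G_2(\beta,\alpha_2)$ on each sub-interval. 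By \cref{lem:log-concave-2}, each $\tilde G_i$ is concave in $\alpha_2$ with $\partial_{\alpha_2}^2\tilde G_i\le -4$, so by \cref{lem:concave-bound-trick} it suffices to produce for each sub-interval a candidate $\alpha_2^\dagger$ satisfying
\[\tilde G_i(\alpha_2^\dagger)+\frac{(\tilde G_i'(\alpha_2^\dagger))^2}{8}\le -10^{-3}.\]
Candidates are found via the Newton-style fixed-point iteration outlined after \cref{clm:numeric-bounds} (treating $\tilde G_i$ as a perturbed univariate analogue of $F_1$), and the resulting inequality is verified rigorously in \texttt{python-flint} using \cref{clm:numer-stable,clm:derivative}.

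The principal technical obstacle is the tightness of the margin: direct numerical evaluation suggests the true $G_2$ attains values only around $-2\times 10^{-2}$ on the region, so the slack introduced by the envelope must be kept well below $10^{-2}$. The dominant source of slack is the entropy term, whose variation across a sub-interval of length $\delta$ is of order $\delta\cdot|\log\beta_i|\lesssim 10\delta$ on our range, forcing $\delta\lesssim 10^{-4}$. The resulting number of sub-intervals is still modest, but implementing the verification requires careful interval-arithmetic control of the integrals in \cref{clm:numer-stable} to preserve enough precision at each $\alpha_2^\dagger$.
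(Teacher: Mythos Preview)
Your approach is essentially identical to the paper's: drop $\alpha_1$, partition $[.001,.005]$ into sub-intervals of length $10^{-4}$, apply the envelope from \cref{lem:upper-envelope} together with entropy monotonicity and the coefficient replacement $(1-\beta)\to(1-\eta_2)$ (your phrasing of this last step as ``maximized at the right endpoint'' is slightly imprecise---what is actually used is that $\log f\le 0$ lets you shrink the coefficient, and then the envelope bounds $f$ itself---but the mechanism is right), and finish via \cref{lem:concave-bound-trick}. The preliminary compactness restriction to $|\alpha_2|\le 0.9$ is unnecessary since \cref{lem:concave-bound-trick} already bounds the supremum over all of $\mb{R}$, but it does no harm.
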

\begin{proof}
Suppose that $\beta\in[\eta_1,\eta_2]$ with $\eta_2\le 1/2$ and $\alpha_2\le -\gamma$. Notice that 
\begin{align*}
F_2(\beta,\alpha_1,\alpha_2) &= 2\log 2-2\beta\log\beta-2(1-\beta)\log(1-\beta) -2 \alpha_1^2-2\alpha_2^2 \\
&\qquad+ 2\beta\log f(\beta,\alpha_1) + 2(1-\beta)\log f(1-\beta,\alpha_2)\\
&\le 2\log 2-2\beta\log\beta-2(1-\beta)\log(1-\beta)-2\alpha_2^2 + 2(1-\beta)\log f(1-\beta,\alpha_2)\\
&\le 2\log 2-2\eta_2\log\eta_2-2(1-\eta_2)\log(1-\eta_2)-2\alpha_2^2 + 2(1-\eta_2)\log f(1-\beta,\alpha_2)\\
&\le 2\log 2-2\eta_2\log\eta_2-2(1-\eta_2)\log(1-\eta_2)-2\alpha_2^2 \\
&\qquad + 2(1-\eta_2)\log\bigg(\mb{P}\bigg[\sqrt{\frac{1-\eta_1}{2}}Z_1\pm \sqrt{\frac{\eta_1}{2}}Z_2\ge\sqrt{\frac{1-\eta_1}{1-\eta_2}}(\gamma+\alpha_2)\bigg]\bigg)
\end{align*}
where we used \cref{lem:upper-envelope} in the last line.

We now segment $\beta$ into the intervals $[j\cdot 10^{-4}, (j+1)\cdot 10^{-4}]$ for $j = 10$ to $j = 49$. For each of these intervals considered, write it as $[\eta_1,\eta_2]$ and note that the above inequality provides a single univariate function in $\alpha_2$, independent of $\beta$, which we can use to provide an upper bound to $F_2(\beta,\alpha_1,\alpha_2)$ for $\beta$ in the specified interval and $\alpha_2\le-\gamma$. Furthermore, this function satisfies a second derivative guarantee necessary to apply \cref{lem:concave-bound-trick}, which allows us to reduce the checking to a finite numerical computation.
\end{proof}

\begin{claim}\label{clm:interval-2}
Let $\gamma = .2484195$. We have 
\[\sup_{\substack{\beta\in[.001,.005]\\\alpha_1, \alpha_2\in \mb{R}\\\alpha_2\ge -\gamma}} F_2(\beta,\alpha_1,\alpha_2)\le -10^{-2}.\]
\end{claim}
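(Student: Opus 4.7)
The approach will mirror that of \cref{clm:interval-1}: drop the $\alpha_1$-dependent terms, cover $\beta\in[.001,.005]$ by short subintervals, apply the envelope bound \cref{lem:upper-envelope} on each, and on each piece reduce the verification to a finite grid using the concavity device \cref{lem:concave-bound-trick}. The two changes from \cref{clm:interval-1} are that (i) the sign of $\gamma+\alpha_2$ is now nonnegative, so the \emph{first} case of \cref{lem:upper-envelope} is what applies, and (ii) the term $-2\alpha_2^2$ is no longer automatically large in magnitude, so the target $-10^{-2}$ must come from a genuine cancellation between the entropy and the tail term $2(1-\beta)\log f(1-\beta,\alpha_2)$.

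Concretely, since $-2\alpha_1^2+2\beta\log f(\beta,\alpha_1)\le 0$, set
\[G(\beta,\alpha_2):=2\log 2-2\beta\log\beta-2(1-\beta)\log(1-\beta)-2\alpha_2^2+2(1-\beta)\log f(1-\beta,\alpha_2),\]
so $F_2(\beta,\alpha_1,\alpha_2)\le G(\beta,\alpha_2)$. Next I would reduce to $\alpha_2\in[-\gamma,0]$: by \cref{clm:derivative}, $f(\beta',\cdot)$ is decreasing, so for $\alpha_2\ge 0$ we have $\log f(1-\beta,\alpha_2)\le\log f(1-\beta,0)$ and $-2\alpha_2^2\le 0$, giving $G(\beta,\alpha_2)\le G(\beta,0)$. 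Hence it suffices to prove $G(\beta,\alpha_2)\le -10^{-2}$ on $[.001,.005]\times[-\gamma,0]$.

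Partition $[.001,.005]$ into subintervals $[\eta_1,\eta_2]$ of small width (e.g.\ $10^{-4}$, matching \cref{clm:interval-1}). The function $-2\beta\log\beta-2(1-\beta)\log(1-\beta)$ is increasing on $[0,1/2]$, so it is bounded on $[\eta_1,\eta_2]$ by its value at $\eta_2$. Moreover, $\log f(1-\beta,\alpha_2)\le 0$ and $(1-\beta)$ is decreasing in $\beta$, while the first case of \cref{lem:upper-envelope} applied to $1-\beta\in[1-\eta_2,1-\eta_1]$ (valid since $-\gamma\le\alpha_2\le 0$) yields $f(1-\beta,\alpha_2)\le f(1-\eta_1,\alpha_2)$. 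Combining these,
\[G(\beta,\alpha_2)\le H(\alpha_2):=2\log 2-2\eta_2\log\eta_2-2(1-\eta_2)\log(1-\eta_2)-2\alpha_2^2+2(1-\eta_2)\log f(1-\eta_1,\alpha_2).\]
By \cref{lem:log-concave-2}, $\log f(1-\eta_1,\cdot)$ is concave, so $H''\le -4$. Then a finite grid in $[-\gamma,0]$ together with \cref{lem:concave-bound-trick} (taking $M=4$) and rigorous interval-arithmetic evaluation of $H$ (via the one-dimensional integral representation \cref{clm:numer-stable}) suffices to certify $H\le -10^{-2}$ on each subinterval.

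The main obstacle is that, in contrast with \cref{clm:interval-1}, the target bound is not very slack. The worst case is near $\alpha_2=-\gamma$: one has $H(-\gamma)\approx 2F_1(-\gamma)=-2\gamma^2\approx -0.123$, plus entropy and envelope corrections of size $O(\eta_2\log(1/\eta_2))$ which, at $\eta_2=5\cdot 10^{-3}$, total roughly $0.07$. Thus the subintervals near the right end of $[.001,.005]$ require a grid fine enough, and the replacement $f(1-\beta,\cdot)\to f(1-\eta_1,\cdot)$ tight enough, that the envelope loss in the $\log f$ factor is at most a few times $10^{-3}$; width $10^{-4}$ should be comfortably sufficient.
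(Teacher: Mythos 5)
Your reduction and envelope bound match the paper's exactly: both drop the $\alpha_1$-terms, partition $\beta$ into short subintervals, apply the first case of \cref{lem:upper-envelope} to replace $1-\beta$ by $1-\eta_1$ and pull the $\eta_2$ out of the entropy, and arrive at the same one-variable upper bound (the paper's $G(\alpha_2)$, your $H(\alpha_2)$), which by \cref{lem:log-concave-2} satisfies $H''\le-4$. The reduction of $\alpha_2\ge0$ to $\alpha_2=0$ via monotonicity of $f$ is fine. Where you diverge is the last step: the paper locates the \emph{unconstrained} maximizer $\alpha_2^\ast=\argmax_{\mb{R}}G$ numerically (certified via small derivative), notes $\alpha_2^\ast\approx -0.45<-\gamma$, and uses strong concavity to deduce
\[\sup_{\alpha_2\ge -\gamma}G(\alpha_2)\le \sup_{\mb{R}}G(\alpha_2)-2(\alpha_2^\ast+\gamma)^2\,\mbm{1}_{\alpha_2^\ast\le-\gamma},\]
converting the fact that the constraint is active into a quantitative quadratic penalty.

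There is a genuine gap in your final step. You invoke \cref{lem:concave-bound-trick} (at any grid point $z\in[-\gamma,0]$, $H(x)\le H(z)+H'(z)^2/(2M)$) as if it certifies $H\le -10^{-2}$ on $[-\gamma,0]$. But \cref{lem:concave-bound-trick} bounds the \emph{global} supremum $\sup_{\mb{R}}H$, and in this regime that supremum is not at or below $-10^{-2}$: at criticality, $\sup_{\mb{R}}(-2\alpha^2+2\log\mb{P}[Z\ge(\gamma+\alpha)\sqrt{2}])=-2\log2$, so the entropy term $\approx 0.06$ at $\eta_2=.005$ pushes $\sup_{\mb{R}}H$ into positive territory. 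Consequently, since the unconstrained critical point sits at $\alpha_2^\ast\approx -0.45$ outside $[-\gamma,0]$, every grid point $z\in[-\gamma,0]$ has $|H'(z)|$ bounded away from zero (e.g.\ $|H'(-\gamma)|\approx1.3$), and $H(z)+H'(z)^2/8$ evaluates to something near $+0.06$ at $z=-\gamma$. So applying \cref{lem:concave-bound-trick} verbatim cannot yield the claim. What you want is the \emph{local} Taylor bound from the lemma's proof, $H(x)\le H(z)+H'(z)(x-z)-2(x-z)^2$, applied per grid cell so that the linear term only sees the cell width $\delta$ rather than the full overshoot $|H'(z)|/M$; equivalently, observe $H'(-\gamma)<0$ so $H$ is decreasing on $[-\gamma,\infty)$ and simply evaluate $H(-\gamma)$. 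Either fix is routine and recovers the conclusion, but as written the proposal's cited lemma does not imply the needed bound.
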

\begin{proof}
Suppose that $\beta\in [\eta_1,\eta_2]$ with $\eta_2\le 1/2$ and $\alpha_2\ge -\gamma$. Notice that 
\begin{align*}
F_2(\beta,\alpha_1,\alpha_2) &= 2\log 2-2\beta\log\beta-2(1-\beta)\log(1-\beta) -2 \alpha_1^2-2\alpha_2^2\\
&\qquad+2\beta\log f(\beta,\alpha_1)+2(1-\beta)\log f(1-\beta,\alpha_2)\\
&\le 2\log 2-2\eta_2\log\eta_2-2(1-\eta_2)\log(1-\eta_2)-2\alpha_2^2 + 2(1-\eta_2)\log f(1-\eta_1, \alpha_2)
\end{align*}
by \cref{lem:upper-envelope}. We now segment $\beta$ into the intervals $[j\cdot 10^{-3}, (j+1)\cdot 10^{-3}]$ for $j = 1$ to $j = 4$. It is important to note that the $\alpha_2\in\mb{R}$ attaining the maximum of the right will lie outside the range where $\alpha_2\ge -\gamma$.

Let
\[G(\alpha_2) = 2\log 2-2\eta_2\log\eta_2-2(1-\eta_2)\log(1-\eta_2)-2\alpha_2^2 + 2(1-\eta_2)\log f(1-\eta_1, \alpha_2)\]
and $\alpha_2^\ast = \argmax_{\alpha_2\in\mb{R}} G(\alpha_2)$. By the log-concavity of $f(1-\eta_2,\alpha_2)$ (\cref{lem:log-concave-2}), we have that $G''(\alpha_2)\le -2$ and thus
\[\wt{G}(\alpha_2) = G(\alpha_2) + 2(\alpha_2-\alpha_2^{\ast})^2\]
is concave. As $\wt{G}'(\alpha_2^\ast) = 0$ and $\wt{G}$ is concave, we have that $\wt{G}(\alpha_2)\le \wt{G}(\alpha_2^\ast) = G(\alpha_2^\ast)$ for all $\alpha_2$. This implies that 
\[G(\alpha_2)\le G(\alpha_2^{\ast}) - 2(\alpha_2-\alpha_2^{\ast})^2\] and therefore
\[\sup_{\alpha_2\ge -\gamma} G(\alpha_2) \le \sup_{\alpha_2\in R} G(\alpha_2) - 2(\alpha_2^\ast+\gamma)^2\mbm{1}_{\alpha_2^\ast\le - \gamma}.\]
This allows us to incorporate an additional correction term in certain cases. We verify this by finding a ``numerical optimizer'' of $G$ and certify it is close to $\alpha_2^\ast$ by checking it has small derivative. Numerical computation finishes.
\end{proof}

\subsubsection{Upper segment of \texorpdfstring{\cref{clm:middle-segment}}{Claim B.2} in non-critical regions}
For $\gamma \approx \gamma_{\mr{crit}}$ and $\beta \approx 1/2$ we have that the maximizers satisfy $\alpha_1, \alpha_2 \approx -.445$. Therefore if not both $\gamma + \alpha_1\le 0$ and $\gamma + \alpha_2\le 0$ then given a sufficiently fine decomposition into intervals one can certify the necessary upper bound, even up to $\beta=1/2$ instead of just $.495$. We carry out this procedure in this subsection. As before, since we are concerned only with upper bounds we may assume that $\gamma = .2484195$; additionally, we will be increasingly brief in these proofs since they are quite close in nature to the bounds derived in \cref{clm:interval-1,clm:interval-2}.

\begin{claim}\label{clm:interval-3}
Let $\gamma = .2484195$. We have 
\[\sup_{\substack{\beta\in[.005,.5]\\\alpha_1,\alpha_2\ge -\gamma}} F_2(\beta,\alpha_1,\alpha_2)\le -10^{-3}.\]
\end{claim}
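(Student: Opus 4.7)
The plan mirrors \cref{clm:interval-1,clm:interval-2}: decompose $\beta \in [.005, .5]$ into sufficiently fine subintervals $[\eta_1, \eta_2]$, and on each produce a $\beta$-independent majorant of $F_2$. Since $f(\beta,\alpha_i)\le 1$ makes every $\log f$ non-positive, and the binary entropy $h(\beta) = -\beta\log\beta-(1-\beta)\log(1-\beta)$ is increasing on $[0,1/2]$, applying the first part of \cref{lem:upper-envelope} (for $\alpha_i\in[-\gamma,0]$) yields
\[
F_2(\beta,\alpha_1,\alpha_2) \le G(\alpha_1,\alpha_2) := 2\log 2 + 2h(\eta_2) - 2\alpha_1^2 - 2\alpha_2^2 + 2\eta_1\log f(\eta_2,\alpha_1) + 2(1-\eta_2)\log f(1-\eta_1,\alpha_2)
\]
uniformly for $\beta\in[\eta_1,\eta_2]$. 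The range $\alpha_i\in[0,1.5]$ is handled by a completely analogous envelope after using $\log f(\beta,\alpha_i)\le \log f(\beta,0)$ and monotonicity of $f$ in $\beta$ for $\alpha_i\ge 0$ (which follows from the proof idea of \cref{lem:upper-envelope} applied to the now-right-of-origin apex of the integration wedge); the range $|\alpha_i|\ge 1.5$ is handled by the trivial bound $F_2\le 4\log 2 - 2\alpha_1^2 - 2\alpha_2^2 \le -1$.

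The crucial structural property of $G$ is that it is concave in $(\alpha_1,\alpha_2)$ with $\nabla^2 G \preceq -4I_2$, by \cref{lem:log-concave-2} and the $-2\alpha_1^2-2\alpha_2^2$ term. For each subinterval, I would locate a numerical near-optimizer $(z_1,z_2)$ of $G$ within $[-\gamma,1.5]^2$ by a Newton-type fixed-point iteration of the style described after \cref{clm:numeric-bounds}, applied coordinate-wise. Plugging $(z_1,z_2)$ into a constrained variant of \cref{lem:concave-bound-trick-2},
\[
\sup_{\alpha_1,\alpha_2\ge-\gamma}G(\alpha_1,\alpha_2) \le G(z) + \sup_{\alpha_1,\alpha_2\ge-\gamma}\bigl[\nabla G(z)\cdot(\alpha-z) - 2\snorm{\alpha-z}^2\bigr],
\]
the inner supremum is evaluated in closed form by projecting the unconstrained maximizer $z + \nabla G(z)/4$ onto the box $[-\gamma,\infty)^2$; one then checks rigorously that the resulting bound is $\le -10^{-3}$ on each subinterval. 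This is a finite computation implemented in \texttt{InitialLocalizationViaConvexity.py} using \texttt{python-flint} interval arithmetic, with the values of $f$ and its partial derivatives computed via the numerically stable formula of \cref{clm:numer-stable} and \cref{clm:derivative}.

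The main obstacle is the region $\beta$ near $1/2$, where the unconstrained maximum of $G$ sits near $(\alpha_1^\ast,\alpha_2^\ast)\approx(-0.445,-0.445)$, which is outside the feasible set $\alpha_i\ge -\gamma\approx -0.248$. Here the constrained supremum is necessarily attained on the boundary of the constraint, and the margin to $0$ is delivered by strong concavity: the $-2\snorm{\alpha-z}^2$ term applied at the projection contributes a penalty of order $2\cdot 2\cdot(|{-\gamma}-\alpha^\ast|)^2\gtrsim 0.15$, which comfortably exceeds $10^{-3}$. This requires the subintervals near $\beta=1/2$ to be refined enough that the envelope gap $G-F_2$ stays well below $10^{-3}$, but since the boundary gap is a fixed constant independent of $\beta$, only a bounded number of subintervals are needed; the vast majority of the computational effort is spent in the easier mid-range where $\beta$ is well away from both endpoints and $G$ is already very negative by inspection.
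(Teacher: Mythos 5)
Your proposal is essentially the same approach as the paper's proof: decompose $[.005,.5]$ into subintervals, use \cref{lem:upper-envelope} to produce a $\beta$-independent majorant on each subinterval (and indeed your $G(\alpha_1,\alpha_2)$ is exactly the paper's upper envelope), exploit the strong concavity $\nabla^2 G\preceq -4I_2$ (from \cref{lem:log-concave-2} plus the $-2\alpha_1^2-2\alpha_2^2$ term), locate a near-optimizer via the fixed-point iteration, and combine \cref{lem:concave-bound-trick-2}-style bounds with the constrained-optimization correction from \cref{clm:interval-2} to turn this into a finite certifiable computation. The only minor difference is that you add explicit handling of $\alpha_i>0$ (substituting $f(\beta,\alpha_i)\le f(\beta,0)$) and of $|\alpha_i|\ge 1.5$ via the crude bound $F_2\le 4\log 2-2\alpha_1^2-2\alpha_2^2$, whereas the paper applies \cref{lem:upper-envelope} directly for all $\alpha_i\ge-\gamma$ (its first case's geometric monotonicity argument in fact holds for all $\alpha\ge-\gamma$, not just $\alpha\in[-\gamma,0]$, since increasing $\beta$ still moves the wedge apex toward the origin and widens the wedge); your workaround is a valid, if slightly more cautious, substitute. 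Your phrasing of the constrained bound via projection onto $[-\gamma,\infty)^2$ is equivalent in content to the paper's correction term $-2(\alpha_i^\ast+\gamma)^2\mbm{1}_{\alpha_i^\ast\le-\gamma}$.
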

\begin{proof}
Suppose that $\beta\in [\eta_1,\eta_2]$ with $\eta_2\le 1/2$ and $\alpha_1, \alpha_2\ge -\gamma$. Notice that by \cref{lem:upper-envelope},
\begin{align*}
F_2(\beta,\alpha_1,\alpha_2) &\le 2\log 2-2\eta_2\log\eta_2-2(1-\eta_2)\log(1-\eta_2)-2\alpha_1^2 + 2\eta_1\log f(\eta_2,\alpha_1)\\
&\qquad-2\alpha_2^2 + 2(1-\eta_2)\log f(1-\eta_1,\alpha_2).
\end{align*}
We now segment $\beta$ into $[j\cdot 10^{-3}, (j+1)\cdot 10^{-3}]$ for $j = 5$ to $j = 19$ and $[j\cdot 10^{-2}, (j+1)\cdot 10^{-2}]$ for $j = 2$ to $j = 49$ and incorporate corrections as carried out in \cref{clm:interval-2} for both $\alpha_1$ and $\alpha_2$ when applicable, i.e., when the optimum over all of $\mb{R}$ disagrees with the optimum in the desired interval. (Notice that we can perform the optimizations over the two variables independently and sum the resulting values.) Numerical computation finishes.
\end{proof}

\begin{claim}\label{clm:interval-4}
Let $\gamma = .2484195$. We have
\[\sup_{\substack{\beta\in[.005,.5]\\\alpha_1\le -\gamma ,\alpha_2\ge -\gamma}} F_2(\beta,\alpha_1,\alpha_2)\le -10^{-3}.\]
\end{claim}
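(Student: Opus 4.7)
The plan is to proceed in essentially the same spirit as the proof of \cref{clm:interval-3}, the only change being that the constraint $\alpha_1 \le -\gamma$ (equivalently $\gamma + \alpha_1 \le 0$) forces the use of the second case of \cref{lem:upper-envelope} in place of the first. As before, since $f(\beta,\alpha)$ is decreasing in $\gamma$ it suffices to verify the claim for $\gamma = .2484195$.

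Concretely, for $\beta \in [\eta_1, \eta_2] \subseteq [0,1/2]$, the second case of \cref{lem:upper-envelope} (applicable since $\alpha_1 \le -\gamma$) gives
\[f(\beta, \alpha_1) \le \mb{P}\bigg[\sqrt{\frac{\eta_2}{2}} Z_1 \pm \sqrt{\frac{1-\eta_2}{2}} Z_2 \ge \sqrt{\frac{\eta_2}{\eta_1}}(\gamma + \alpha_1)\bigg],\]
while applying the first case of \cref{lem:upper-envelope} to $f(1-\beta, \alpha_2)$ on the interval $1-\beta \in [1-\eta_2, 1-\eta_1]$ (using $\alpha_2 \ge -\gamma$) yields $f(1-\beta, \alpha_2) \le f(1-\eta_1, \alpha_2)$. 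Combining these with the entropy bound $-2\beta\log\beta - 2(1-\beta)\log(1-\beta) \le -2\eta_2 \log \eta_2 - 2(1-\eta_2)\log(1-\eta_2)$ on $[\eta_1, \eta_2] \subseteq [0, 1/2]$, and using $\log f \le 0$ to pass from the prefactors $2\beta$ and $2(1-\beta)$ to the constants $2\eta_1$ and $2(1-\eta_2)$, one arrives at a separable upper bound of the form
\[F_2(\beta, \alpha_1, \alpha_2) \le C(\eta_1, \eta_2) + u_1(\alpha_1) + u_2(\alpha_2)\]
valid uniformly over $\beta \in [\eta_1, \eta_2]$, where each $u_i$ is strictly concave with second derivative at most $-4$ (the $-4$ coming from $-2\alpha_i^2$, since the remaining log-probability piece is concave by \cref{lem:log-concave-2}).

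The next step is to segment $\beta$ using the same partition as in \cref{clm:interval-3}, namely $[j \cdot 10^{-3}, (j+1)\cdot 10^{-3}]$ for $j \in \{5, \ldots, 19\}$ together with $[j \cdot 10^{-2}, (j+1)\cdot 10^{-2}]$ for $j \in \{2, \ldots, 49\}$. On each segment the unconstrained maximizer $\alpha_i^\ast$ of each univariate $u_i$ is computed via the Newton-style fixed-point iteration described after \cref{clm:numeric-bounds}, producing an explicit near-critical value whose derivative is rigorously bounded by a tiny number; \cref{lem:concave-bound-trick} then converts this into a rigorous upper bound on $u_i$. If $\alpha_1^\ast > -\gamma$ (violating the assumption $\alpha_1 \le -\gamma$), the same concavity argument gives $\sup_{\alpha_1 \le -\gamma} u_1(\alpha_1) \le u_1(\alpha_1^\ast) - 2(\alpha_1^\ast + \gamma)^2$, incorporating an additional correction exactly as in the proof of \cref{clm:interval-2}; the symmetric correction $-2(\alpha_2^\ast + \gamma)^2 \mbm{1}[\alpha_2^\ast \le -\gamma]$ handles the constraint $\alpha_2 \ge -\gamma$.

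The main obstacle is purely numerical: ensuring the certified upper bounds on each of the roughly sixty subintervals lie safely below $-10^{-3}$. The tightest such bounds should occur for segments with $\beta$ close to $1/2$, where the unconstrained optima approach $-\gamma$ and the aforementioned correction terms shrink toward zero; indeed, this is the very reason \cref{clm:middle-segment} stops at $\beta = .495$ and defers the remaining sliver to \cref{clm:middle-segment-local}. The actual verification is performed in \texttt{python-flint} interval arithmetic, in direct parallel with \cref{clm:interval-3}.
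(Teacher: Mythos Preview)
Your proposal is correct and follows essentially the same approach as the paper: you invoke the second case of \cref{lem:upper-envelope} for $f(\beta,\alpha_1)$ (since $\alpha_1\le-\gamma$) and the first case for $f(1-\beta,\alpha_2)$, use the same partition of $[.005,.5]$ into intervals of widths $10^{-3}$ and $10^{-2}$, and apply the concavity-based corrections of \cref{clm:interval-2} when the unconstrained optimizer falls outside the relevant half-line. The only minor inaccuracy is your closing remark: it is \cref{clm:interval-6} (both $\alpha_i\le-\gamma$), not the present claim, that forces the cutoff at $.495$; here the constraint $\alpha_2\ge-\gamma$ keeps the optimum bounded away from the critical point even up to $\beta=.5$.
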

\begin{proof}
Suppose that $\beta\in[\eta_1,\eta_2]$ with $\eta_2\le 1/2$ and $\alpha_1\le -\gamma$ and $\alpha_2\ge -\gamma$. \cref{lem:upper-envelope} gives
\begin{align*}
F_2(\beta,\alpha_1,\alpha_2) &\le 2\log 2-2\eta_2\log\eta_2-2(1-\eta_2)\log(1-\eta_2)-2\alpha_2^2 + 2(1-\eta_2)\log f(1-\eta_1, \alpha_2)\\
&\qquad-2\alpha_1^2 + 2\eta_1\log\bigg(\mb{P}\bigg[\sqrt{\frac{\eta_2}{2}}Z_1\pm \sqrt{\frac{1-\eta_2}{2}}Z_2\ge\sqrt{\frac{\eta_2}{\eta_1}}(\gamma+\alpha_1)\bigg]\bigg)
\end{align*}
We now segment $\beta$ into $[j\cdot 10^{-3}, (j+1)\cdot 10^{-3}]$ for $j = 5$ to $j = 19$ and $[j\cdot 10^{-2}, (j+1)\cdot 10^{-2}]$ for $j = 2$ to $j = 49$ and incorporate the corrections as carried out in \cref{clm:interval-2} for both $\alpha_1$ and $\alpha_2$ when applicable. Numerical computation finishes.
\end{proof}

\begin{claim}\label{clm:interval-5}
Let $\gamma = .2484195$. We have 
\[\sup_{\substack{\beta\in[.005,.5]\\\alpha_1\ge -\gamma ,\alpha_2\le -\gamma}} F_2(\beta,\alpha_1,\alpha_2)\le -10^{-3}.\]
\end{claim}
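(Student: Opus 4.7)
The plan is to proceed in direct analogy with the proof of \cref{clm:interval-4}, simply interchanging which of $\alpha_1,\alpha_2$ is governed by the ``critical'' clause of \cref{lem:upper-envelope} and which by the ``non-critical'' clause. Fix an interval $\beta\in[\eta_1,\eta_2]$ with $\eta_2\le 1/2$. Since $\alpha_1\ge -\gamma$ and $\beta\le\eta_2$, the first clause of \cref{lem:upper-envelope} yields $f(\beta,\alpha_1)\le f(\eta_2,\alpha_1)$. Since $\alpha_2\le -\gamma$ and $1-\beta\in[1-\eta_2,1-\eta_1]$, the second clause of \cref{lem:upper-envelope} (applied to $1-\beta$, with $\tilde\eta_1=1-\eta_2$ and $\tilde\eta_2=1-\eta_1$) gives
\[f(1-\beta,\alpha_2)\le\mb{P}\bigg[\sqrt{\tfrac{1-\eta_1}{2}}Z_1\pm\sqrt{\tfrac{\eta_1}{2}}Z_2\ge\sqrt{\tfrac{1-\eta_1}{1-\eta_2}}(\gamma+\alpha_2)\bigg].\]
Note that one cannot simply invoke the symmetry $F_2(\beta,\alpha_1,\alpha_2)=F_2(1-\beta,\alpha_2,\alpha_1)$ to reduce to \cref{clm:interval-4}, because the swap moves $\beta$ outside $[.005,.5]$; so we must apply the envelope lemma to $1-\beta$ by hand as above.

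Plugging these two bounds into the definition of $F_2$, using that $\log f$ and $\log\mb{P}[\cdots]$ are nonpositive to replace the coefficients $2\beta,2(1-\beta)$ by their extremal values $2\eta_1,2(1-\eta_2)$, and bounding the binary entropy term by its value at $\beta=\eta_2$ (which is the maximum on $[\eta_1,\eta_2]$ since $\eta_2\le 1/2$), we obtain the envelope
\begin{align*}
F_2(\beta,\alpha_1,\alpha_2)&\le 2\log 2-2\eta_2\log\eta_2-2(1-\eta_2)\log(1-\eta_2)-2\alpha_1^2+2\eta_1\log f(\eta_2,\alpha_1)\\
&\qquad-2\alpha_2^2+2(1-\eta_2)\log\bigg(\mb{P}\bigg[\sqrt{\tfrac{1-\eta_1}{2}}Z_1\pm\sqrt{\tfrac{\eta_1}{2}}Z_2\ge\sqrt{\tfrac{1-\eta_1}{1-\eta_2}}(\gamma+\alpha_2)\bigg]\bigg),
\end{align*}
which is independent of $\beta$ and separable as $G_1(\alpha_1)+G_2(\alpha_2)+C(\eta_1,\eta_2)$. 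Each of $G_1,G_2$ is concave by \cref{lem:log-concave-2} (the probability term being log-concave in $\alpha_2$ follows from the same argument used in \cref{lem:log-concave-2}), so one can locate the unconstrained maximum via the fixed-point iteration employed in \cref{clm:numeric-bounds} and certify this near-optimum via \cref{lem:concave-bound-trick}. If the unconstrained optimum in $\alpha_1$ falls in $(-\infty,-\gamma)$ or in $\alpha_2$ falls in $(-\gamma,+\infty)$, we append the quadratic correction $-2(\alpha^\ast+\gamma)^2$ as in \cref{clm:interval-2} to recover a sharper bound over the constrained region.

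Finally, we segment $\beta$ exactly as in \cref{clm:interval-3,clm:interval-4} — into the sub-intervals $[j\cdot 10^{-3},(j+1)\cdot 10^{-3}]$ for $j=5,\ldots,19$ and $[j\cdot 10^{-2},(j+1)\cdot 10^{-2}]$ for $j=2,\ldots,49$ — and on each sub-interval perform the certified numerical optimization above in \texttt{python-flint}. Since the envelope above is separable and log-concave in each of $\alpha_1,\alpha_2$, the entire verification reduces to finitely many one-dimensional interval-arithmetic computations, which are essentially identical in nature to those already executed for \cref{clm:interval-4}. No genuinely new obstacle arises; the only matter requiring care is the bookkeeping in setting up the correct envelope via \cref{lem:upper-envelope} applied to $1-\beta$ (rather than $\beta$) for the $\alpha_2$ factor.
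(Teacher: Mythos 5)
Your proof is correct and follows essentially the same approach as the paper's: the identical envelope bound from \cref{lem:upper-envelope}, the identical segmentation of $\beta$ into sub-intervals, and the identical corrections from \cref{clm:interval-2} handled via log-concavity and \cref{lem:concave-bound-trick}. Your added remark that the symmetry $F_2(\beta,\alpha_1,\alpha_2)=F_2(1-\beta,\alpha_2,\alpha_1)$ would push $\beta$ out of $[.005,.5]$ and hence does not reduce this claim to \cref{clm:interval-4} is accurate and a helpful clarification not made explicit in the paper.
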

\begin{proof}
Suppose that $\beta\in[\eta_1,\eta_2]$ with $\eta_2\le 1/2$ and $\alpha_1\ge -\gamma$ and $\alpha_2\le -\gamma$. \cref{lem:upper-envelope} gives 
\begin{align*}
F_2(\beta,\alpha_1,\alpha_2) &\le 2\log 2-2\eta_2\log\eta_2-2(1-\eta_2)\log(1-\eta_2)-2\alpha_1^2 + 2\eta_1\log f(\eta_2, \alpha_1)\\
&\qquad-2\alpha_2^2 + 2(1-\eta_2)\log\bigg(\mb{P}\bigg[\sqrt{\frac{1-\eta_1}{2}}Z_1\pm \sqrt{\frac{\eta_1}{2}}Z_2\ge\sqrt{\frac{1-\eta_1}{1-\eta_2}}(\gamma+\alpha_2)\bigg]\bigg).
\end{align*}
We now segment $\beta$ into $[j\cdot 10^{-3}, (j+1)\cdot 10^{-3}]$ for $j = 5$ to $j = 19$ and $[j\cdot 10^{-2}, (j+1)\cdot 10^{-2}]$ for $j = 2$ to $j = 49$ and incorporate the corrections as specified in \cref{clm:interval-2} for both $\alpha_1$ and $\alpha_2$ when applicable. Numerical computation finishes.
\end{proof}

\subsubsection{Upper segment of \texorpdfstring{\cref{clm:middle-segment}}{Claim B.2} in critical region}
We now handle the final region of \cref{clm:middle-segment}. This procedure is essentially identical to the previous claims, but the set of intervals required is substantially more intricate as the maximum, which is near $0$, is attained in this region.

\begin{claim}\label{clm:interval-6}
Let $\gamma = .2484195$. We have that 
\[\sup_{\substack{\beta\in[.005,.495]\\\alpha_1,\alpha_2\le -\gamma}} F_2(\beta,\alpha_1,\alpha_2)\le -10^{-5}.\]
\end{claim}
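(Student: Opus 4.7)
The plan is to follow the template set in Claims B.3--B.6, fixing $\gamma=.2484195$ and decomposing $[.005,.495]$ into a large collection of sub-intervals $[\eta_1,\eta_2]$ on which $F_2$ can be uniformly upper-bounded by a function of only $\alpha_1,\alpha_2$, which then splits as a sum of a univariate function of $\alpha_1$ and a univariate function of $\alpha_2$ (thanks to the factorized form of $F_2$). Since the entropy $-2\beta\log\beta-2(1-\beta)\log(1-\beta)$ is monotone on $[0,1/2]$, and since $\log f(\beta,\alpha)\le 0$ allows us to replace the prefactor $2\beta$ (resp.\ $2(1-\beta)$) by $2\eta_1$ (resp.\ $2(1-\eta_2)$), we obtain for $\beta\in[\eta_1,\eta_2]\subseteq[.005,.495]$ the bound
\begin{align*}
F_2(\beta,\alpha_1,\alpha_2)&\le 2\log 2-2\eta_2\log\eta_2-2(1-\eta_2)\log(1-\eta_2)\\
&\quad-2\alpha_1^2+2\eta_1\log\mb{P}\bigg[\sqrt{\tfrac{\eta_2}{2}}Z_1\pm\sqrt{\tfrac{1-\eta_2}{2}}Z_2\ge\sqrt{\tfrac{\eta_2}{\eta_1}}(\gamma+\alpha_1)\bigg]\\
&\quad-2\alpha_2^2+2(1-\eta_2)\log\mb{P}\bigg[\sqrt{\tfrac{1-\eta_1}{2}}Z_1\pm\sqrt{\tfrac{\eta_1}{2}}Z_2\ge\sqrt{\tfrac{1-\eta_1}{1-\eta_2}}(\gamma+\alpha_2)\bigg],
\end{align*}
using the second half of \cref{lem:upper-envelope} in both $\alpha_1,\alpha_2$ (which is valid precisely because we are in the regime $\alpha_1,\alpha_2\le-\gamma$).

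Since $\log\mb{P}[\cdot]$ above is log-concave in $\alpha_i$ by \cref{lem:log-concave-2}, both univariate summands $G_i(\alpha_i)$ have second derivative $G_i''\le-4$. This lets us apply \cref{lem:concave-bound-trick} to each coordinate independently: one numerically produces a candidate near-optimizer $\alpha_i^\ast$, verifies via rigorous interval arithmetic that $|G_i'(\alpha_i^\ast)|$ is tiny, and concludes $\sup_{\alpha_i}G_i\le G_i(\alpha_i^\ast)+G_i'(\alpha_i^\ast)^2/8$. To enforce the restriction $\alpha_i\le-\gamma$, one mimics the trick of \cref{clm:interval-2}: if the unconstrained near-optimizer $\alpha_i^\ast$ lies in $(-\gamma,\infty)$, subtract the concavity correction $2(\alpha_i^\ast+\gamma)^2$ from the univariate bound, which can only improve the estimate. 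Summing the two univariate bounds, together with the entropy and $\log 2$ contributions, produces a numerical upper bound for each interval $[\eta_1,\eta_2]$; the claim follows by checking this bound is at most $-10^{-5}$ for every interval in the decomposition.

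The fixed-point iteration alluded to after \cref{clm:numeric-bounds} is used to generate the candidate near-optimizers $\alpha_i^\ast$: the stationarity equation $G_i'(\alpha)=0$ can be solved via a Newton-type contraction analogous to $\alpha\mapsto-\frac{\exp(-(\gamma+\alpha)^2)}{2\sqrt{\pi}\cdot\mb{P}[\cdot]}$, which converges extremely rapidly even from crude seeds, allowing the procedure to be automated across all sub-intervals. All other computations (evaluating $f$ and its derivative via the one-dimensional integral in \cref{clm:numer-stable} and \cref{clm:derivative}) are amenable to rigorous interval arithmetic in \texttt{python-flint}.

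The main obstacle, and the reason this claim is the most delicate of the six, is that the true supremum on $[.005,.495]$ is attained as $\beta\nearrow.495$ and the function is only barely negative there (the slack $10^{-5}$ rather than $10^{-3}$). Consequently the envelope bound derived from \cref{lem:upper-envelope} is only sharp when $\eta_2-\eta_1$ is very small near $\beta=1/2$, forcing the interval decomposition to be extremely fine (roughly on the order of $10^{-4}$ near $\beta=.495$ and coarser, say $10^{-2}$, near $\beta=.005$). Correspondingly, all floating-point sub-computations (the evaluation of $G_i(\alpha_i^\ast)$, of $G_i'(\alpha_i^\ast)$, and of the $\log$ of a Gaussian orthant probability with small argument) must be carried out with enough precision in \texttt{python-flint} that the accumulated interval width is well below $10^{-5}$; this is the primary engineering constraint but involves no new mathematical ingredient beyond the ones used in \cref{clm:interval-1,clm:interval-2,clm:interval-3,clm:interval-4,clm:interval-5}.
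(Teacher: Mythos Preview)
Your approach is essentially identical to the paper's: the same envelope bound via \cref{lem:upper-envelope}, the same separation into two univariate concave problems, and the same use of \cref{lem:concave-bound-trick} with rigorous interval arithmetic on a finely subdivided family of intervals. The one quantitative point to flag is that your estimate of the needed mesh width near $\beta=.495$ is too optimistic: the paper's partition refines down to width $(4\cdot10^{5})^{-1}=2.5\cdot10^{-6}$ in the final block $[.49375,.495]$, not $\sim10^{-4}$, so your ``primary engineering constraint'' is somewhat harsher than stated (though you correctly anticipated that this is where the difficulty lies).
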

\begin{proof}
Suppose that $\beta\in[\eta_1,\eta_2]$ with $\eta_2\le 1/2$ and $\alpha_1, \alpha_2\le -\gamma$. Notice that 
\begin{align*}
F_2(\beta,\alpha_1,\alpha_2) &\le 2\log(2)-2\eta_2\log\eta_2-2(1-\eta_2)\log(1-\eta_2)\\
&\qquad-2\alpha_1^2 + 2\eta_1\log\bigg(\mb{P}\bigg[\sqrt{\frac{\eta_2}{2}}Z_1\pm \sqrt{\frac{1-\eta_2}{2}}Z_2\ge\sqrt{\frac{\eta_2}{\eta_1}}(\gamma+\alpha_1)\bigg]\bigg)\\
&\qquad-2\alpha_2^2 + 2(1-\eta_2)\log\bigg(\mb{P}\bigg[\sqrt{\frac{1-\eta_1}{2}}Z_1\pm \sqrt{\frac{\eta_1}{2}}Z_2\ge\sqrt{\frac{1-\eta_1}{1-\eta_2}}(\gamma+\alpha_2)\bigg]\bigg).
\end{align*}
The partition for $\beta$ in this region is substantially more involved; we take $[j\cdot 10^{-3}, (j+1)\cdot 10^{-3}]$ for $j = 5$ to $j = 19$, $[j\cdot 10^{-2}, (j+1)\cdot 10^{-2}]$ for $j = 2$ to $j = 24$, $[j\cdot 10^{-3}, (j+1)\cdot 10^{-3}]$ for $j = 250$ to $j = 424$, $[j\cdot 10^{-4}, (j+1)\cdot 10^{-4}]$ for $j = 4250$ to $j = 4724$, $[j\cdot (5\cdot 10^{4})^{-1}, (j+1)\cdot (5\cdot 10^{4})^{-1}]$ for $j = 23625$ to $j = 24299$, $[j\cdot 10^{-5}, (j+1)\cdot 10^{-5}]$ for $j = 48600$ to $j = 48799$, $[j\cdot (2\cdot 10^{5})^{-1}, (j+1)\cdot (2\cdot 10^{5})^{-1}]$ for $j = 97600$ to $j = 98749$, and finally $[j\cdot (4\cdot 10^{5})^{-1}, (j+1)\cdot (4\cdot 10^{5})^{-1}]$ for $j = 197500$ to $j = 197999$. Numerical computation finishes.
\end{proof}

We are now in position to prove \cref{clm:middle-segment}.
\begin{proof}[Proof of \cref{clm:middle-segment}]
This is an immediate consequence of combining \cref{clm:interval-1,clm:interval-2,clm:interval-3,clm:interval-4,clm:interval-5,clm:interval-6}.
\end{proof}

\subsection{Proof of \texorpdfstring{\cref{clm:middle-segment-local}}{Claim B.3}}
The proof of \cref{clm:middle-segment-local} is closely related to the proofs of various claims given in \cref{clm:middle-segment}. Note that \cref{clm:interval-3,clm:interval-4,clm:interval-5} handle $\beta$ up to $1/2$, so we only need to refine our estimates in the critical region where $\alpha_1,\alpha_2\le-\gamma$. Additionally, we may assume $\beta\in[.495,.5]$ by \cref{clm:interval-6}.

However, in the neighborhood of $(\alpha_1,\alpha_2)=(-.445,-.445)$, our function might be positive. So, some care is required to show that it is negative away from a neighborhood of this point. To do this, we show that there is a point in this neighborhood which has not too large value (potentially positive), and small partial derivatives in $\alpha_1,\alpha_2$. However, our function is strongly concave. Therefore, moving far enough from this point will force our function to be negative.

\begin{proof}[Proof of \cref{clm:middle-segment-local}]
As before it suffices to prove the claim conditional on $\gamma = .2484195$. Furthermore, it suffices to consider $\alpha_1, \alpha_2\le -\gamma$ by \cref{clm:interval-3,clm:interval-4,clm:interval-5} and $\beta\in[.495,.5]$ by \cref{clm:interval-6}. Now, if $\beta\in[\eta_1,\eta_2]$ with $\eta_2\le 1/2$ and $\alpha_1, \alpha_2\le -\gamma$ then 
\begin{align*}
F_2(\beta,\alpha_1,\alpha_2) &\le 2\log 2-2\eta_2\log\eta_2-2(1-\eta_2)\log(1-\eta_2)\\
&\qquad-2\alpha_1^2 + 2\eta_1\log\bigg(\mb{P}\bigg[\sqrt{\frac{\eta_2}{2}}Z_1\pm \sqrt{\frac{1-\eta_2}{2}}Z_2\ge\sqrt{\frac{\eta_2}{\eta_1}}(\gamma+\alpha_1)\bigg]\bigg)\\
&\qquad-2\alpha_2^2 + 2(1-\eta_2)\log\bigg(\mb{P}\bigg[\sqrt{\frac{1-\eta_1}{2}}Z_1\pm \sqrt{\frac{\eta_1}{2}}Z_2\ge\sqrt{\frac{1-\eta_1}{1-\eta_2}}(\gamma+\alpha_2)\bigg]\bigg)\\
&=:G_2(\alpha_1,\alpha_2).
\end{align*}
We break $\beta$ into segments $[j\cdot (4\cdot 10^{5})^{-1}, (j+1)\cdot (4\cdot 10^{5})^{-1}]$ for $j = 198000$ to $j = 199999$. For each interval of the form $[\eta_1,\eta_2]$ using numerical computation we produce a point $(\alpha_1^\ast,\alpha_2^\ast)$ such that 
\begin{itemize}
    \item $G_2(\alpha_1^{\ast},\alpha_2^{\ast})\le 10^{-5}$;
    \item $|\alpha_1^{\ast} + .445|\le 1.5 \cdot 10^{-3}$ and $|\alpha_2^{\ast} + .445|\le 1.5 \cdot 10^{-3}$;
    \item $|\frac{\partial}{\partial \alpha_1}G_2(\alpha_1,\alpha_2)|_{(\alpha_1,\alpha_2) = (\alpha_1^{\ast},\alpha_2^{\ast})}|+ |\frac{\partial}{\partial \alpha_2}G_2(\alpha_1,\alpha_2)|_{(\alpha_1,\alpha_2) = (\alpha_1^{\ast},\alpha_2^{\ast})}|\le 10^{-5}$.
\end{itemize}
These conditions together with the strong concavity of $G_2$ imply the claimed result along with numerical computation. In particular, we use that the second derivatives in $\alpha_1,\alpha_2$ are uniformly less than $-2$, and that $G_2$ is the sum of a function of $\alpha_1$ and of $\alpha_2$ separately.
\end{proof}

\subsection{Proof of \texorpdfstring{\cref{clm:hessian-verification}}{Claim B.4}}
In order to prove the desired Hessian bound we prove various estimates on the coordinate second partial derivatives in the region for \cref{clm:hessian-verification}. To prove the desired claim we will prove the following set of second derivative estimates.

\begin{lemma}\label{lem:hess-estimates}
Let $\gamma = .2484195$. At points for which $\beta\in [.495,.505]$ and $\alpha_1,\alpha_2\in [-.45,-.44]$ we have for $i\in\{1,2\}$ that
\[\frac{\partial^2 F_2}{\partial \beta^2} \le -2.15,~~\bigg|\frac{\partial^2 F_2}{\partial \beta\partial \alpha_i}\bigg| \le 2.05.\]
\end{lemma}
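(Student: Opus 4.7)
The plan is to express each of the required second partial derivatives of $F_2$ in terms of $f$ and its partial derivatives, and then to verify the resulting inequalities by rigorous interval arithmetic on $[.495,.505]\times[-.45,-.44]^2$. Decompose
\[F_2(\beta,\alpha_1,\alpha_2) = E(\beta) + Q(\alpha_1,\alpha_2) + L(\beta,\alpha_1,\alpha_2),\]
where $E(\beta) = -2\beta\log\beta-2(1-\beta)\log(1-\beta)$, $Q(\alpha_1,\alpha_2) = -2\alpha_1^2-2\alpha_2^2$, and $L(\beta,\alpha_1,\alpha_2) = 2\beta\log f(\beta,\alpha_1) + 2(1-\beta)\log f(1-\beta,\alpha_2)$. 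The entropy contribution $E''(\beta) = -2/\beta - 2/(1-\beta)$ satisfies $E''(\beta) \le -2(1/.505 + 1/.495) < -8$ on the relevant $\beta$-range, while $Q$ contributes nothing to $\partial_\beta^2$ or $\partial_\beta\partial_{\alpha_i}$. Thus the bounds reduce to showing $L''_{\beta\beta} \le 5.85$ and $|L''_{\beta\alpha_i}| \le 2.05$ uniformly on the region.

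Setting $g(x,y) := 2x\log f(x,y)$, one obtains the explicit formulas $L''_{\beta\beta} = g_{xx}(\beta,\alpha_1) + g_{xx}(1-\beta,\alpha_2)$, $L''_{\beta\alpha_1} = g_{xy}(\beta,\alpha_1)$, $L''_{\beta\alpha_2} = -g_{xy}(1-\beta,\alpha_2)$, where
\[g_{xx} = \frac{4f_x}{f} + \frac{2x f_{xx}}{f} - \frac{2x f_x^2}{f^2},\qquad g_{xy} = \frac{2f_y}{f} + \frac{2xf_{xy}}{f} - \frac{2xf_x f_y}{f^2},\]
and all partials of $f$ are in its two arguments. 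Each of $f,f_x,f_y,f_{xx},f_{xy}$ then admits a one-dimensional integral representation obtained by differentiating the formulas of \cref{clm:numer-stable,clm:derivative} under the integral sign (with appropriate boundary terms when the integration domain depends on the variable, as in \cref{clm:derivative}). Each such integrand is smooth in $(\beta,\alpha)$ on the region of interest, so all of $f,f_x,f_y,f_{xx},f_{xy}$ can be bounded from above and below by rigorous interval arithmetic.

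Finally, I would subdivide $[.495,.505]\times[-.45,-.44]^2$ into a grid of small boxes and, on each box, use \texttt{python-flint} (wrapping \texttt{Arb}) exactly as in \cref{clm:initial-interval,clm:middle-segment,clm:middle-segment-local} to evaluate interval enclosures of $L''_{\beta\beta}$ and $L''_{\beta\alpha_i}$ via the explicit formulas above; the desired inequalities $L''_{\beta\beta} \le 5.85$ and $|L''_{\beta\alpha_i}| \le 2.05$ are then checked box by box. The main obstacle is quantitative rather than conceptual: the required slack between the derivative bounds and the target values is of order $0.05$, while the constituent quantities are of order unity, so the subdivision must be fine enough that second-order interval widening does not overwhelm the gap. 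Because $f$ is bounded below by a positive constant on the region (as $\gamma+\alpha < 0$ there, so the relevant tail probabilities are at least, say, $1/4$), all denominators $f$ and $f^2$ are safely bounded away from $0$, and a modest subdivision suffices; see the analogous (and in places substantially more delicate) verifications performed in \cref{clm:initial-interval,clm:middle-segment,clm:middle-segment-local}. This completes the verification.
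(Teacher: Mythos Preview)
Your decomposition $F_2=E+Q+L$, the bound $E''(\beta)<-8$, and the formulas for $g_{xx},g_{xy}$ are all correct, and in principle a rigorous interval-arithmetic sweep over the box would finish the job. However, this is a genuinely different route from what the paper does. The paper does not attack $L''_{\beta\beta}$ and $L''_{\beta\alpha_i}$ by direct interval enclosure of the combined expressions; instead it first isolates three scalar quantities --- $f(\beta,\alpha)$, $\partial_\beta f(\beta,\alpha)$, and the combination $\beta\,\partial_\beta^2 f+2\,\partial_\beta f$ --- and proves uniform two-sided bounds on each over the box (\cref{clm:prob-bound,clm:first-derivative,clm:second-derivative}), using the two-dimensional integral representation of \cref{clm:gaussian-rot} together with hand-estimated error terms and a coarse grid. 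Those three numerical ranges are then plugged by hand into the formulas for $g_{xx}$ and $g_{xy}$ to obtain $-2.17$ and $2.05$.

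What each approach buys: the paper's reduction makes the numerical step lightweight (only three scalar ranges, each to two or three decimal places) and exposes exactly which pieces of $f$ drive the Hessian bound, but the intermediate claims are in fact only verified with floating-point (\texttt{numpy}/\texttt{scipy}/\texttt{mpmath}), not interval arithmetic --- the paper flags this explicitly. Your approach, if carried out, would be fully rigorous and conceptually cleaner (and note that via \cref{clm:numer-stable} the $\beta$-derivatives of $f$ are closed-form, not integrals, which makes the enclosures easy). One caution: the slack for $\partial_\beta^2 F_2$ is closer to $0.02$ than $0.05$ (the paper's bound is $-2.17$ against the target $-2.15$), so your grid may need to be somewhat finer than ``modest'' for that coordinate.
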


Given the above pair of estimates we now prove the desired result regarding the Hessian of $F_2$.
\begin{proof}[Proof of \cref{clm:hessian-verification}]
Notice that by the log-concavity of the function $f(\beta,\alpha)$ from \cref{lem:log-concave-2}, by direct computation for all $\gamma$ and $\beta,\alpha_i$ such that $F_2$ is defined we have 
\[\frac{\partial^2 F_2}{\partial \alpha_i^2}\le -4.\]
Furthermore the functional form of $F_2$ immediately implies that 
\[\frac{\partial^2 F_2}{\partial \alpha_1\partial \alpha_2} = 0.\]
Additionally, note that a small shift in $\gamma$ can be equivalently recast as slightly shifting the values of $\alpha_1,\alpha_2$ when computing $\frac{\partial^2 F_2}{\partial \beta^2}, \frac{\partial^2 F_2}{\partial \beta\partial \alpha_i}$. Thus, applying \cref{lem:hess-estimates}, for $\gamma$ in the specified range and $\alpha_1,\alpha_2\in [-.449,-.441]$ the second derivative bounds listed in \cref{lem:hess-estimates} still apply.

The desired result then follows via considering the negation of the Hessian and applying Sylvester's criterion on determinants of minors for a matrix to be positive semidefinite. The matrix is strictly negative semidefinite as the determinant is also seen to be strictly bounded away from zero, and compactness guarantees the existence of an absolute constant $\delta > 0$ for the desired result.
\end{proof}

The proof of \cref{lem:hess-estimates} is mostly calculation with the function form given in \cref{clm:gaussian-rot} and various grid procedures to verify certain elementary two-dimensional inequalities are satisfied. Since the inequalities we which to certify are not particularly delicate, the procedure is mechanical, if not completely pleasant. 

We will require the following triplet of claims regarding the values of $f(\beta,\alpha)$ and certain specially chosen combinations of derivatives. The combinations naturally fall out of an analysis based on explicit computation; we present these claims as a series of unmotivated numerical claims for the sake of simplifying the verification.

\begin{claim}\label{clm:prob-bound}
Let $\gamma = .2484195$, $\beta\in [.495,.505]$, and $\alpha\in [-.45,-.44]$. Then we have 
\[f(\beta,\alpha)\in[.36544,.37761].\]
\end{claim}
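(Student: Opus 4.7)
The plan is to reduce this two-variable bound on $f(\beta,\alpha)$ over the box $[.495,.505]\times[-.45,-.44]$ to just two univariate tail computations by combining the envelope estimates of \cref{lem:upper-envelope} with the trivial monotonicity of $f$ in $\alpha$. First, note that $\gamma+\alpha\le\gamma-.44<0$ throughout the region, so the second case of \cref{lem:upper-envelope} (the regime $\alpha\le-\gamma$) applies, with $\eta_1=.495$ and $\eta_2=.505$. This yields functions
\[\mr{LB}(\alpha):=\mb{P}\bigg[\sqrt{\tfrac{.495}{2}}Z_1\pm\sqrt{\tfrac{.505}{2}}Z_2\ge\sqrt{\tfrac{.495}{.505}}(\gamma+\alpha)\bigg]\le f(\beta,\alpha)\le\mb{P}\bigg[\sqrt{\tfrac{.505}{2}}Z_1\pm\sqrt{\tfrac{.495}{2}}Z_2\ge\sqrt{\tfrac{.505}{.495}}(\gamma+\alpha)\bigg]=:\mr{UB}(\alpha)\]
uniformly for $\beta\in[.495,.505]$.

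Second, both envelopes are evidently monotonically decreasing in $\alpha$ (increasing $\alpha$ tightens the underlying half-plane constraints on the jointly Gaussian pair). Consequently, on the entire box we have $f(\beta,\alpha)\le\mr{UB}(-.45)$ and $f(\beta,\alpha)\ge\mr{LB}(-.44)$. It therefore suffices to verify the two purely numerical estimates $\mr{UB}(-.45)\le .37761$ and $\mr{LB}(-.44)\ge .36544$.

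Each of these is a bivariate normal orthant probability of the precise form handled by \cref{clm:numer-stable}, which rewrites it as a single one-dimensional integral plus a univariate Gaussian tail square. Both quantities can be computed rigorously in \texttt{python-flint} interval arithmetic with a sufficiently large truncation of the integration domain (the $\exp(-2(\gamma+\alpha)^2/(1+x))$ kernel is bounded and the tail beyond $x=10^{8}$ contributes negligibly compared to the four-decimal precision being asked for).

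The main obstacle will be ensuring that the envelope gap in \cref{lem:upper-envelope} is narrow enough over a box of width $0.01$ in $\beta$ and $0.01$ in $\alpha$: a back-of-envelope check gives $|\partial_\alpha f|\lesssim 0.6$ (via \cref{clm:derivative}) and a comparable $\beta$-sensitivity near $(1/2,-.445)$, so the total variation of $f$ across the box is at most about $0.012$, comfortably within the claimed window of width $0.01217$ once the true value (numerically near $0.371$) is inserted. So there is room for both the actual variation of $f$ and the slack introduced by the envelope, which means the rigorous interval evaluation of the two univariate endpoints should succeed without needing to subdivide the box further.
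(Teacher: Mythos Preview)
Your proposal is correct and takes essentially the same approach as the paper: both first reduce the $\beta$-dependence via the second case of \cref{lem:upper-envelope} (with $\eta_1=.495$, $\eta_2=.505$) and then use the monotonicity of $f$ (respectively the envelopes) in $\alpha$ to reduce to the two endpoint evaluations $\mr{UB}(-.45)\le .37761$ and $\mr{LB}(-.44)\ge .36544$, which are exactly the same numerical checks the paper performs. The only cosmetic difference is the order in which you apply monotonicity in $\alpha$ versus the envelope in $\beta$; the resulting quantities to be computed are identical.
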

\begin{proof}
By \cref{lem:upper-envelope} and numerical computation we have 
\[f(\beta,\alpha)\le f(\beta, -.45)\le \mb{P}\bigg[\sqrt{\frac{.505}{2}}Z_1\pm \sqrt{\frac{.495}{2}}Z_2\ge (\gamma -.45)\sqrt{\frac{.505}{.495}}\bigg]\le .37761\]
and
\[f(\beta,\alpha)\ge f(\beta, -.44)\ge \mb{P}\bigg[\sqrt{\frac{.495}{2}}Z_1\pm \sqrt{\frac{.505}{2}}Z_2\ge (\gamma -.44)\sqrt{\frac{.495}{.505}}\bigg]\ge .36544.\qedhere\]
\end{proof}

The next claim bounds the size of the first derivative of $f(\beta,\alpha)$ in $\beta$; the proof involves reducing to a certain tractable two-variable integral whose range can be determined via a direct grid search in combination with a crude bounding procedure. As these computations are substantially less numerically delicate we carried these computations out only in \texttt{scipy}. 

\begin{claim}\label{clm:first-derivative}
Let $\gamma = .2484195$, $\beta\in [.495,.505]$, and $\alpha\in[-.45,-.44]$. Then we have 
\[\frac{\partial f(\beta,\alpha)}{\partial \beta}\in[.2780,.3110].\]
\end{claim}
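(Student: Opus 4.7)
My plan is to derive a simple closed-form expression for $\partial f/\partial\beta$ by exploiting the single-integral representation from \cref{clm:numer-stable}, and then bound the resulting scalar function directly on the rectangle $\beta\in[.495,.505]$, $\alpha\in[-.45,-.44]$ without needing any two-dimensional grid search.

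Concretely, \cref{clm:numer-stable} writes $f(\beta,\alpha)$ as the sum of two terms, of which the second depends only on $\alpha$ (through $\gamma+\alpha$) and not on $\beta$. The first term is $\frac{1}{2\pi}\int_0^{\rho(\beta)}g(x,\alpha)\,dx$ where $\rho(\beta)=2\beta-1$ and
\[g(x,\alpha)=(1-x^2)^{-1/2}\exp\!\left(\frac{-2(\gamma+\alpha)^2}{1+x}\right).\]
By the fundamental theorem of calculus and the chain rule, together with the identities $1-\rho^2=4\beta(1-\beta)$ and $1+\rho=2\beta$, a one-line computation yields
\[\frac{\partial f(\beta,\alpha)}{\partial\beta}=\frac{1}{\pi}\,(1-\rho^2)^{-1/2}\exp\!\left(\frac{-2(\gamma+\alpha)^2}{1+\rho}\right)=\frac{1}{2\pi\sqrt{\beta(1-\beta)}}\exp\!\left(-\frac{(\gamma+\alpha)^2}{\beta}\right).\]

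With this closed form in hand the problem reduces to bounding an elementary scalar expression, separately on each factor. For $\beta\in[.495,.505]$ we have $\beta(1-\beta)\in[.495\cdot.505,\,1/4]=[.249975,.25]$, so the prefactor $\tfrac{1}{2\pi\sqrt{\beta(1-\beta)}}$ lies in a tiny interval around $1/\pi\approx.31831$. For the exponential, with $\gamma=.2484195$ and $\alpha\in[-.45,-.44]$, we have $\gamma+\alpha\in[-.2015805,-.1915805]$, hence $(\gamma+\alpha)^2\in[.03670,.04064]$ (using monotonicity of $x\mapsto x^2$ on a sign-definite interval). Dividing by $\beta\in[.495,.505]$ gives $(\gamma+\alpha)^2/\beta\in[.07268,.08212]$, and monotonicity of $\exp$ yields that the exponential factor lies in $[e^{-.08212},e^{-.07268}]\subseteq[.9212,.9299]$.

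Multiplying the two factor ranges gives $\partial f/\partial\beta\in[.31831\cdot.9212,\,.31833\cdot.9299]\subseteq[.2932,.2961]$, which is strictly contained in the claimed interval $[.2780,.3110]$ with substantial room to spare. The only technical issue is verifying the individual scalar bounds on $(\gamma+\alpha)^2$, $\beta(1-\beta)$, and the two exponentials to the required precision; each of these reduces to arithmetic and monotone evaluation at rational endpoints, which can be done either by hand with a small amount of elementary calculation or by rigorous interval arithmetic to eliminate any floating-point concern. There is no genuine obstacle here — the main virtue of the approach is that the differentiation collapses the two-dimensional Gaussian probability into a purely algebraic expression, so no numerical integration (rigorous or otherwise) is actually needed to certify the claim.
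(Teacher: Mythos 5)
Your route is correct, and it is genuinely different from — and cleaner than — the paper's. The paper proves this claim by differentiating under the integral sign in the two-dimensional Gaussian representation from \cref{clm:gaussian-rot}, producing a messy rational integrand, splitting it into a main term plus several error pieces, and then certifying bounds on each piece via a grid search over $\beta$ and $\alpha$ together with two-dimensional numerical integration. You instead observe that the single-integral (Drezner--Wesolowsky) form in \cref{clm:numer-stable} makes the $\beta$-dependence live entirely in the upper limit $\rho=2\beta-1$, so the fundamental theorem of calculus plus the algebraic identities $1-\rho^2=4\beta(1-\beta)$ and $1+\rho=2\beta$ collapses $\partial f/\partial\beta$ to the closed form
\[
\frac{\partial f(\beta,\alpha)}{\partial\beta}=\frac{1}{2\pi\sqrt{\beta(1-\beta)}}\exp\!\left(-\frac{(\gamma+\alpha)^2}{\beta}\right),
\]
which is exactly Plackett's classical formula for the correlation-derivative of a bivariate normal CDF applied in this setting. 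After that, the problem is pure elementary monotone arithmetic with no numerical integration of any kind. This is a real simplification: the paper explicitly flags \cref{clm:first-derivative} (along with \cref{clm:prob-bound,clm:second-derivative}) as the portions verified only in floating-point because of the lack of a rigorous multidimensional integration library, and your observation eliminates that dependence entirely for this claim. Two small arithmetic cautions before you rely on the exact decimals: your stated endpoint bounds for the exponential factor, $[.9212,.9299]$, are marginally inside-out — one computes $e^{-.08212}\approx .92116$ and $e^{-.07268}\approx .92990$, so $[.9211,.9300]$ are the safe conservative endpoints — but this changes nothing, since the resulting interval for $\partial f/\partial\beta$ is still roughly $[.293,.296]$ and sits comfortably inside the claimed $[.2780,.3110]$.
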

\begin{proof}
Let $\eta=1/2-\beta$. By differentiation under the integral sign in combination with the form presented in \cref{clm:gaussian-rot} we have that 
\begin{align*}
\frac{\partial f(\beta,\alpha)}{\partial \beta} &= \int_{\gamma + \alpha}^{\infty} \int_{\gamma + \alpha}^{\infty}\frac{t_1t_2 + \eta(2t_1^2+2t_2^2-1) + 4\eta^2t_1t_2 + 4\eta^3}{8\pi (\beta(1-\beta))^{5/2}}\\
&\qquad\qquad\qquad\qquad\times\exp\bigg(-\frac{1}{4\beta(1-\beta)}(t_1^2+t_2^2)-\frac{\eta}{\beta(1-\beta)}t_1t_2\bigg)dt_1dt_2.
\end{align*}
We break this integral into two pieces. Notice that
\begin{align*}
&\bigg|\int_{\gamma + \alpha}^{\infty} \int_{\gamma + \alpha}^{\infty}\frac{\eta(2t_1^2+2t_2^2-1) + 4\eta^2t_1t_2 + 4\eta^3}{8\pi (\beta(1-\beta))^{5/2}} \exp\bigg(-\frac{1}{4\beta(1-\beta)}(t_1^2+t_2^2)-\frac{\eta}{\beta(1-\beta)}t_1t_2\bigg)dt_1dt_2\bigg|\\
&\le \int_{\gamma -.45}^{\infty} \int_{\gamma -.45}^{\infty}\frac{|\eta(2t_1^2+2t_2^2-1)| + |4\eta^2t_1t_2| + |4\eta^3|}{8\pi (\beta(1-\beta))^{5/2}} \exp\bigg(-\frac{1}{4\beta(1-\beta)}(t_1^2+t_2^2)-\frac{\eta}{\beta(1-\beta)}t_1t_2\bigg)dt_1dt_2\\
&\le \int_{\gamma -.45}^{\infty} \int_{\gamma -.45}^{\infty}\frac{|\eta(2t_1^2+2t_2^2-1)| + |4\eta^2t_1t_2| + |4\eta^3|}{8\pi (\beta(1-\beta))^{5/2}} \exp\bigg(-.99(t_1^2+t_2^2)\bigg)dt_1dt_2\le .00975
\end{align*}
by upper bounding $|\eta|\le .005$ and $(\beta(1-\beta))^{-5/2}\le (.495 \cdot .505)^{-5/2}$ and then computing the resulting integral numerically. Furthermore, we have 
\begin{align*}
&\bigg|\int_{\gamma + \alpha}^{\infty} \int_{\gamma + \alpha}^{\infty}\bigg(\frac{t_1t_2}{8\pi (\beta(1-\beta))^{5/2}}-\frac{4t_1t_2}{\pi}\bigg)\exp\bigg(-\frac{1}{4\beta(1-\beta)}(t_1^2+t_2^2)-\frac{\eta}{\beta(1-\beta)}t_1t_2\bigg)dt_1dt_2\bigg|\\
&\le \int_{\gamma + \alpha}^{\infty} \int_{\gamma + \alpha}^{\infty}|t_1t_2|\bigg(\frac{1}{8\pi (.495\cdot .505)^{5/2}}-\frac{4}{\pi}\bigg)\exp\bigg(-.99(t_1^2+t_2^2)\bigg)dt_1dt_2\le .0001
\end{align*}
by numerical computation.

Thus it suffices to understand the value of 
\[\int_{\gamma + \alpha}^{\infty} \int_{\gamma + \alpha}^{\infty}\frac{4t_1t_2}{\pi } \exp\bigg(-\frac{1}{4\beta(1-\beta)}(t_1^2+t_2^2)-\frac{1/2-\beta}{\beta(1-\beta)}t_1t_2\bigg)dt_1dt_2\]
for $\beta \in [.495, .505]$ and $\alpha\in [-.45,-.44]$. We first handle shifts in $\alpha$; notice that the derivative in absolute value of the above expression in $\alpha$ is at most 
\[
\frac{8}{\pi}\int_{\gamma+\alpha}^{\infty}|(\gamma + \alpha)t_1|\cdot \exp(-.99(\gamma + \alpha)^2 - .99t_1^2)dt_1\le \frac{8|\gamma -.45|}{\pi}\int_{\gamma-.45}^{\infty}|t_1|\cdot \exp(- .99t_1^2)dt_1\le .27\]
and thus shifting $\alpha$ to $\alpha'$ the difference in the integral is bounded by $.27|\alpha-\alpha'|$.

Similarly, by the inequality $|\exp(x) - \exp(y)|\le\exp(\max(x,y))|x-y|$, we have that shifting $\beta$ to $\beta'$ induces an error bounded by
\[|\beta-\beta'|\int_{\gamma -.45}^{\infty} \int_{\gamma -.45}^{\infty}\frac{4|t_1t_2|}{\pi } \exp\big(-.99(t_1^2+t_2^2)\big)(.041 (t_1^2+t_2^2) + 4.01|t_1t_2|)dt_1dt_2\le 1.1 |\beta-\beta'|\]
by numerical computation. We take a net over $\beta$ and $\alpha$ each of granularity $.00025$. Applying a grid verification and applying the errors given above we find the specified result by numerical computation.
\end{proof}

Finally, we will require an upper bound on the following mixed partial derivative expression. The strategy is essentially identical to the previous proof however the precise formulas are substantially less pleasant and therefore the computation is less clean. 

\begin{claim}\label{clm:second-derivative}
Let $\gamma = .2484195$, $\beta\in[.495,.505]$, and $\alpha\in[-.45,-.44]$. Then we have
\[\frac{\beta\partial^2 f(\beta,\alpha)}{\partial \beta^2}+ \frac{2\partial f(\beta,\alpha)}{\partial \beta}\le .630.\]
\end{claim}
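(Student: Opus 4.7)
The plan is to derive a closed-form expression for $\beta\,\partial_\beta^2 f(\beta,\alpha) + 2\,\partial_\beta f(\beta,\alpha)$ and then verify the bound by a grid search on a smooth scalar function. The key observation is the classical Price-type identity for bivariate Gaussian densities: writing $\phi_c$ for the centered Gaussian density on $\mb{R}^2$ with both variances $1/2$ and covariance $c$, one has $\partial_c\phi_c(t_1,t_2) = \partial_{t_1}\partial_{t_2}\phi_c(t_1,t_2)$, as may be verified directly from the explicit formula or by observing that both sides correspond to multiplication by $-s_1 s_2$ in Fourier space. Starting from the integral representation in \cref{clm:gaussian-rot} with $c = \beta - 1/2$, differentiation under the integral sign followed by two applications of the fundamental theorem of calculus (the boundary terms at $+\infty$ vanish by rapid Gaussian decay) collapses the two-dimensional integral to a single point evaluation:
\[
\partial_\beta f(\beta,\alpha) = \phi_{c}(\gamma+\alpha,\gamma+\alpha) = \frac{1}{2\pi\sqrt{\beta(1-\beta)}}\exp\!\left(-\frac{(\gamma+\alpha)^2}{\beta}\right),
\]
where the diagonal exponent simplifies because $\frac{1/2 + (1/2-\beta)}{\beta(1-\beta)} = \frac{1}{\beta}$.

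A direct second differentiation in $\beta$, using the logarithmic derivatives of the prefactor and of the exponential, then gives
\[
\partial_\beta^2 f(\beta,\alpha) = \frac{\exp(-(\gamma+\alpha)^2/\beta)}{2\pi\sqrt{\beta(1-\beta)}}\left[\frac{2\beta-1}{2\beta(1-\beta)} + \frac{(\gamma+\alpha)^2}{\beta^2}\right],
\]
and combining (using the algebraic simplification $\frac{2\beta-1}{2(1-\beta)} + 2 = \frac{3-2\beta}{2(1-\beta)}$) yields the clean identity
\[
\beta\,\partial_\beta^2 f(\beta,\alpha) + 2\,\partial_\beta f(\beta,\alpha) = \frac{\exp(-(\gamma+\alpha)^2/\beta)}{2\pi\sqrt{\beta(1-\beta)}}\left[\frac{3-2\beta}{2(1-\beta)} + \frac{(\gamma+\alpha)^2}{\beta}\right].
\]

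With the closed form in hand, the remainder is a finite numerical check of a smooth scalar function on the rectangle $(\beta,\alpha)\in[.495,.505]\times[-.45,-.44]$. A single evaluation at the representative point $(\beta,\alpha)=(1/2,-.445)$ yields approximately $.612$, leaving a budget of roughly $.018$ up to the target $.630$ to absorb the variation across this very small region; Lipschitz estimates in $(\beta,\alpha)$ read off from the explicit formula above (using that $\beta$ is bounded away from $0$ and $1$ and that $|\gamma+\alpha|$ stays $O(1)$) then certify the bound on each cell of a modest grid. The only non-mechanical step is justifying the interchange of differentiation and integration and the vanishing of the integration-by-parts boundary terms, both of which are immediate from the Gaussian decay of $\phi_c$ and of its first partials. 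This route is both cleaner and more numerically stable than direct differentiation of the integrand along the lines of \cref{clm:first-derivative}, where the second derivative produces polynomial prefactors scaling like $(\beta(1-\beta))^{-5/2}$ (and higher) that the Price-type identity sidesteps entirely by collapsing the double integral back to a single density evaluation.
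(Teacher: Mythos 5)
Your proposal is correct, and it takes a genuinely different and considerably more elegant route than the paper. The paper differentiates under the integral sign in \cref{clm:gaussian-rot} twice in $\beta$, producing an integrand whose numerator is a degree-$5$ polynomial $P(t_1,t_2,\beta)$ with prefactor $(32\pi\beta^{7/2}(1-\beta)^{9/2})^{-1}$; it then crudely majorizes $P$ via AM--GM, splits into several pieces, and estimates each via a coarse Lipschitz-in-$(\beta,\alpha)$ grid argument that requires bounding several auxiliary two-dimensional Gaussian integrals with polynomial weights. You instead invoke the Price/Plackett identity $\partial_c\phi_c = \partial_{t_1}\partial_{t_2}\phi_c$ (with $c=\beta-1/2$ and $\partial_\beta=\partial_c$) to collapse the double integral by two applications of the fundamental theorem of calculus, obtaining the exact formula
\[
\partial_\beta f(\beta,\alpha) \;=\; \phi_c(\gamma+\alpha,\gamma+\alpha) \;=\; \frac{\exp\!\big(-(\gamma+\alpha)^2/\beta\big)}{2\pi\sqrt{\beta(1-\beta)}},
\]
and hence the closed form
\[
\beta\,\partial_\beta^2 f + 2\,\partial_\beta f \;=\; \frac{\exp\!\big(-(\gamma+\alpha)^2/\beta\big)}{2\pi\sqrt{\beta(1-\beta)}}\left[\frac{3-2\beta}{2(1-\beta)} + \frac{(\gamma+\alpha)^2}{\beta}\right].
\]
I checked all of the algebra (including the diagonal exponent simplification, the logarithmic differentiation, and the identity $\tfrac{2\beta-1}{2(1-\beta)}+2=\tfrac{3-2\beta}{2(1-\beta)}$) and it is right; the value at $(\beta,\alpha)=(1/2,-.445)$ is indeed $\approx .612$, and crude monotone bounds on each factor over the rectangle $[.495,.505]\times[-.45,-.44]$ place the function in roughly $[.605,.620]$, comfortably inside $.630$. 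This approach eliminates essentially all of the numerical delicacy: the quantity to be bounded is an elementary scalar function rather than a double integral with messy polynomial weights, so the certification reduces to interval arithmetic on rational and exponential expressions with no integration at all. One could even push the bound lower than $.630$ for free. Two remarks worth making explicit if this were to replace the paper's proof: the same Price-identity shortcut gives $\partial_\beta f$ in closed form, which would also immediately sharpen and simplify \cref{clm:first-derivative}; and the formula $\partial_\beta f = \phi_c(\gamma+\alpha,\gamma+\alpha)>0$ confirms (and tightens, to roughly $[.293,.296]$) the interval claimed there, which is a useful consistency check.
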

\begin{proof}
Let $\eta = 1/2-\beta$ and apply differentiation under the integral sign to \cref{clm:gaussian-rot}. We have
\[\frac{\beta\partial^2 f(\beta,\alpha)}{\partial \beta^2}+ \frac{2\partial f(\beta,\alpha)}{\partial \beta} = \int_{\gamma + \alpha}^{\infty} \int_{\gamma + \alpha}^{\infty}\frac{P(t_1,t_2,\beta)}{32\pi \beta^{7/2}(1-\beta)^{9/2}} \exp\bigg(-\frac{1}{4\beta(1-\beta)}(t_1^2+t_2^2)-\frac{\eta}{\beta(1-\beta)}t_1t_2\bigg)dt_1dt_2\]
where 
\begin{align*}
&P(t_1,t_2,\beta)\\
&= 16 \eta^5 (-1 + 2 t_1 t_2) + 4 \eta^4 (1 - 4 t_1t_2 + 6 t_1^2 + 6t_2^2  + 4 t_1^2t_2^2) + 
 8 \eta^3 (1 - (t_1-t_2)^2 + 2 t_1t_2(t_1^2+t_2^2))\\
&\qquad + 2 \eta^2 (-1 - 2 t_1^2 -2t_2^2 + 8 t_1^2 t_2^2 + 2 t_1^4 + 2 t_2^4 ) +\eta (-1 + 2 t_1^2 - 6 t_1t_2 + 2t_2^2+ 4 t_1t_2(t_1^2+t_2^2))\\
&\qquad + 1/4 (1 -2(t_1-t_2)^2 + 4 t_1^2t_2^2)\\
&\le \eta^2 (4.5 t_1^4 + 4.5 t_2^4 + 17t_1^2t_2^2) +\eta (-1 + 2 t_1^2 - 6 t_1t_2 + 2t_2^2+ 4 t_1t_2(t_1^2+t_2^2))\\
&\qquad + 1/4 (1 -2(t_1-t_2)^2 + 4 t_1^2t_2^2)
\end{align*}
where we have very crudely applied $|\eta|\le.005$ and the AM-GM inequality to eliminate various higher order terms which appear in the coefficients of powers of $\eta$. For the sake of simplicity, let
\begin{align*}
Q_1(t_1,t_2,\beta) &= \eta^2 (4.5 t_1^4 + 4.5 t_2^4 + 17t_1^2t_2^2),\\
Q_2(t_1,t_2,\beta) &= (-1 + 2 t_1^2 - 6 t_1t_2 + 2t_2^2+ 4 t_1t_2(t_1^2+t_2^2))\eta+ 1/4 (1 - 2(t_1-t_2)^2 + 4 t_1^2t_2^2).
\end{align*}
We have
\begin{align*}
&\int_{\gamma + \alpha}^{\infty} \int_{\gamma + \alpha}^{\infty}\frac{Q_1(t_1,t_2,\beta)}{32\pi \beta^{7/2}(1-\beta)^{9/2}} \exp\bigg(-\frac{1}{4\beta(1-\beta)}(t_1^2+t_2^2)-\frac{\eta}{\beta(1-\beta)}t_1t_2\bigg)dt_1dt_2\\
&\le \int_{\gamma -.45}^{\infty} \int_{\gamma -.45}^{\infty}\frac{Q_1(t_1,t_2,\beta)}{32\pi \beta^{7/2}(1-\beta)^{9/2}} \exp\big(-.99(t_1^2+t_2^2)\big)dt_1dt_2\le .0007
\end{align*}
by numerical computation and $|\eta|\le.005$.

We next simplify the remaining integral term further. Notice that
\begin{align*}
&\bigg|\int_{\gamma + \alpha}^{\infty} \int_{\gamma + \alpha}^{\infty}\frac{Q_2(t_1,t_2,\beta)}{32\pi \beta^{7/2}(1-\beta)^{9/2}}-\frac{4Q_2(t_1,t_2,\beta)}{\pi (1-\beta)} \exp\bigg(-\frac{1}{4\beta(1-\beta)}(t_1^2+t_2^2)-\frac{\eta}{\beta(1-\beta)}t_1t_2\bigg)dt_1dt_2\bigg|\\
&\le 9\cdot 10^{-4}\cdot \int_{\gamma -.45}^{\infty} \int_{\gamma -.45}^{\infty}|Q_2(t_1,t_2,\beta)|\exp\big(-.99(t_1^2+t_2^2)\big)dt_1dt_2\le .00041
\end{align*}
by numerical computation and $|\eta|\le.005$ and $\alpha\ge-.45$.

Therefore it suffices to understand 
\[\int_{\gamma + \alpha}^{\infty} \int_{\gamma + \alpha}^{\infty}\frac{4Q_2(t_1,t_2,\beta)}{\pi (1-\beta)} \exp\bigg(-\frac{1}{4\beta(1-\beta)}(t_1^2+t_2^2)-\frac{\eta}{\beta(1-\beta)}t_1t_2\bigg)dt_1dt_2.\]
Note that if the above integral is negative we immediately obtain the desired bound, so it suffices to provide an upper bound to
\[\frac{4}{\pi (.495)} \int_{\gamma + \alpha}^{\infty} \int_{\gamma + \alpha}^{\infty}Q_2(t_1,t_2,\beta)\exp\bigg(-\frac{1}{4\beta(1-\beta)}(t_1^2+t_2^2)-\frac{\eta}{\beta(1-\beta)}t_1t_2\bigg)dt_1dt_2.\]

We wish to replace this with the following simpler integral to consider:
\begin{equation}\label{eq:second-derivative-1}
\frac{4}{\pi (.495)} \int_{\gamma + \alpha}^{\infty} \int_{\gamma + \alpha}^{\infty}Q_2(t_1,t_2,\beta)\exp\big(-(1+4\eta^2)(t_1^2+t_2^2)-4\eta t_1t_2\big)dt_1dt_2.
\end{equation}
Note that the difference between the above two integrals can be bounded using that $|\exp(x)-\exp(y)|\le |x-y|\exp(\max(x,y))$ and that $\exp(\cdot)$ is an increasing function. Using this and numerical computation, we can find that those integrals differ by at most
\[\frac{4}{\pi (.495)} \int_{\gamma -.45}^{\infty} \int_{\gamma -.45}^{\infty}|Q_2(t_1,t_2,\beta)|\cdot (2\cdot 10^{-8}\cdot (t_1^2+t_2^2) + 2\cdot 10^{-6}|t_1t_2|)\exp\big(-.99(t_1^2+t_2^2)\big)dt_1dt_2\le 2\cdot 10^{-6}.\]

We next remove $\eta$ out of the exponent of \cref{eq:second-derivative-1} by comparing it to the integral
\begin{equation}\label{eq:second-derivative-2}
\frac{4}{\pi (.495)} \int_{\gamma + \alpha}^{\infty} \int_{\gamma + \alpha}^{\infty}Q_2(t_1,t_2,\beta) (1-4\eta^2(t_1^2+t_2^2)-4\eta t_1t_2)\exp(-t_1^2-t_2^2)dt_1dt_2
\end{equation}
at the cost of an additive error bounded by
\begin{align*}
&\frac{4}{\pi (.495)} \int_{\gamma -.45}^{\infty} \int_{\gamma -.45}^{\infty}|Q_2(t_1,t_2,\beta)|\exp(-t_1^2-t_2^2)\\
&\qquad\qquad\qquad\qquad\qquad\times|1-4\eta^2(t_1^2+t_2^2)-4\eta t_1t_2)-\exp(-4\eta^2(t_1^2+t_2^2)-4\eta t_1t_2)|dt_1dt_2\\
&\qquad\le .0003
\end{align*}
using numerical computation, relying on the inequality that $|\exp(x)-x-1|\le \exp(|x|)-|x|-1$.

We now are in position to perform a grid search on \cref{eq:second-derivative-2}; notice that the derivative in $\alpha$ is bounded by
\[\frac{8}{\pi (.495)} \int_{\gamma + \alpha}^{\infty} \sup_{\substack{t_2\in [\gamma-.45,\gamma-.44]\\|\eta|\le .005}}|Q_2(t_1,t_2,1/2-\eta)|\cdot|1-4\eta^2(t_1^2+t_2^2)-4\eta t_1t_2|\exp(-t_1^2)dt_1\]
and can be computed to be bounded by $4.2$ by numerical computation. Thus shifting $\alpha$ to $\alpha'$ causes a shift bounded by $4.2|\alpha-\alpha'|$. To compute the shift in $\beta$, notice that $Q_2(t_1,t_2,\beta) (1-4\eta^2(t_1^2+t_2^2)-4\eta t_1t_2)$ is a polynomial in $\beta$. So, differentiating under the integral sign it suffices to bound 
\[\frac{8}{\pi (.495)} \int_{\gamma -.45}^{\infty} \int_{\gamma -.45}^{\infty}\bigg|\frac{\partial}{\partial \beta}Q_2(t_1,t_2,\beta) (1-4\eta^2(t_1^2+t_2^2)-4\eta t_1t_2)\bigg|\exp(-t_1^2-t_2^2)dt_1dt_2.\]
(Recall $\eta = 1/2-\beta$.) Via a straightforward numerical computation we find that theisabove is bounded by $8.76$ and thus shifting $\beta$ to $\beta'$ causes a shift bounded by $8.76|\beta-\beta'|$.

Take a grid in $\beta$ of width $.00025$ and in $\alpha$ of width $.00025$ and note that the closest grid points are at most half the width away moving in both coordinate directions. By numerical computation and bounding the error of rounding to grid points, we obtain a bound on \cref{eq:second-derivative-2} and can use this along with the prior analysis to find that the maximum of the desired value is bounded by $.630$ as claimed.
\end{proof}

We now proceed with the proof of \cref{lem:hess-estimates}. As we will see the estimates in this region are not particularly sharp and in particular we can tolerate a substantial amount of numerical error. 
\begin{proof}[Proof of \cref{lem:hess-estimates}]
Notice that
\begin{align*}
\frac{\partial^2F_2}{\partial\beta^2} &= \frac{-2}{\beta(1-\beta)} + \frac{\partial^2}{\partial\beta^2}(2\beta \log f(\beta,\alpha_1)) + \frac{\partial^2}{\partial\beta^2}(2(1-\beta) \log f(1-\beta,\alpha_2))\\
&\le -8 + 4\max_{\substack{\beta\in [.495,.505]\\\alpha_1\in [-.45,-.44]}}\frac{\partial^2}{\partial\beta^2}(\beta\log f(\beta,\alpha_1))\\
&\le -8 + 4\max_{\substack{\beta\in [.495,.505]\\\alpha_1\in [-.45,-.44]}}\frac{\beta\frac{\partial^2}{\partial\beta^2}f(\beta,\alpha_1) + 2\frac{\partial}{\partial\beta}f(\beta,\alpha_1)}{f(\beta,\alpha_1)} -\frac{\beta(\frac{\partial}{\partial\beta}f(\beta,\alpha_1))^2}{f(\beta,\alpha_1)^2}\\
&\le -8 + 4\bigg(\frac{.630}{.36544}-.495\bigg(\frac{.2775}{.37761}\bigg)^2\bigg)\le -2.17,
\end{align*}
using \cref{clm:prob-bound,clm:first-derivative,clm:second-derivative} and numerical computation. This completes the proof of the first item of \cref{lem:hess-estimates}. For the second item by symmetry (switching $\alpha_1,\alpha_2$ and replacing $\beta$ by $1-\beta$) we may assume $i=1$. Notice that by \cref{clm:derivative},
\begin{align*}
\bigg|&\frac{\partial^2F_2}{\partial\beta\partial\alpha_1}\bigg| = \bigg|\frac{\partial^2}{\partial\beta\partial\alpha_1}(2\beta\log f(\beta,\alpha_1))\bigg| \\
&= \bigg|\frac{\partial}{\partial\beta}\bigg(\frac{\beta}{f(\beta,\alpha_1)}\mb{P}\bigg[Z\ge (\gamma + \alpha_1)\sqrt{\frac{2(1-\beta)}{\beta}}\bigg]\bigg)\bigg| \cdot \bigg|\frac{4\exp(-(\gamma + \alpha_1)^2)}{\sqrt{\pi}}\bigg|\\
&\le 2.176 \cdot \bigg|\frac{\mb{P}\big[Z\ge (\gamma + \alpha_1)\sqrt{\frac{2(1-\beta)}{\beta}}\big]}{f(\beta,\alpha_1)}\bigg(1-\frac{\beta\frac{\partial}{\partial \beta}f(\beta,\alpha_1)}{f(\beta,\alpha_1)}\bigg) + \frac{(\gamma + \alpha_1)\exp(-(\gamma+\alpha_1)^2(1-\beta)/\beta)}{2\sqrt{\pi\beta(1-\beta)}f(\beta,\alpha_1)}\bigg|\\
&\le 5.955\bigg|\mb{P}\bigg[Z\ge (\gamma + \alpha_1)\sqrt{\frac{2(1-\beta)}{\beta}}\bigg]\bigg(1-\frac{\beta\frac{\partial}{\partial \beta}f(\beta,\alpha_1)}{f(\beta,\alpha_1)}\bigg) + \frac{(\gamma + \alpha_1)\exp(-(\gamma+\alpha_1)^2(1-\beta)/\beta)}{2\sqrt{\pi\beta(1-\beta)}}\bigg|
\end{align*}
where we have used \cref{clm:prob-bound}. We note via crude and direct bounding we have that 
\[\frac{(\gamma+\alpha_1)\exp(-(\gamma+\alpha_1)^2(1-\beta)/\beta)}{2\sqrt{\pi\beta(1-\beta)}}\in [-.110,-.103].\]
Therefore it suffices to prove that 
\[\bigg|\mb{P}\bigg[Z\ge (\gamma + \alpha_1)\sqrt{\frac{2(1-\beta)}{\beta}}\bigg]\bigg(1-\frac{\beta\frac{\partial}{\partial \beta}f(\beta,\alpha_1)}{f(\beta,\alpha_1)}\bigg)\bigg|\in [-.234,.447].\]
Notice that 
\[\mb{P}\bigg[Z\ge (\gamma + \alpha_1)\sqrt{\frac{2(1-\beta)}{\beta}}\bigg]\le \mb{P}\bigg[Z\ge (\gamma -.45)\sqrt{\frac{2(.505)}{.495}}\bigg]\le .614\]
Thus it suffices to prove that 
\[\bigg|1-\frac{\beta\frac{\partial}{\partial \beta}f(\beta,\alpha_1)}{f(\beta,\alpha_1)}\bigg|\in [-.381,.728],\text{ or equivalently}\frac{\beta\frac{\partial}{\partial \beta}f(\beta,\alpha_1)}{f(\beta,\alpha_1)}\in [-.272,.619].\]
This follows from \cref{clm:prob-bound,clm:first-derivative} as well as $\beta\in[.495,.505]$. We are (finally) done.
\end{proof}

\end{document}